\documentclass[11pt,a4paper]{article}
\usepackage{amsmath, amsfonts, amssymb,amsthm}
\usepackage{a4wide}
\usepackage{parskip}
\usepackage{enumitem}
\usepackage{xcolor}
\usepackage{mathrsfs}
\usepackage{cite}
\usepackage{cleveref}
\providecommand{\texorpdfstring}[2]{#1}
\usepackage{latexsym}

\usepackage[left=1in, right=1in,top=1in,bottom=1in]{geometry}
\allowdisplaybreaks

\newcommand{\be} {\begin{equation}}
	\newcommand{\ee} {\end{equation}}
\newcommand{\bea} {\begin{eqnarray}}
	\newcommand{\eea} {\end{eqnarray}}
\newcommand{\Bea} {\begin{eqnarray*}}
	\newcommand{\Eea} {\end{eqnarray*}}
\newcommand{\noi} {\noindent}

\def\dx{\,{\rm d}x}

\def\R{{\mathbb R}}

\def\R{{\mathbb R}}

\def\t{{2^{\sharp}}}
\newcommand{\sr}{\mathcal{S}(a,b)}

\def\r{{(\rho_1,\rho_2)}}
\def\u{{(u,v)}}
\def\v{{(u_n,v_n)}}
\def\b{\mathcal{B}_{p,q}}
\def\a{\left(\||x|^{-a}\nabla u\|_2^2+\||x|^{-a}\nabla v\|_2^2\right)}
\def\an{\left(\||x|^{-a}\nabla u_n\|_2^2+\||x|^{-a}\nabla v_n\|_2^2\right)}
\def\p{(p+q)_c}
\def\pq{(p+q)\delta_{p+q}}
\def\y{\mathcal{X}}
\def\rt{(\tilde{\rho}_1,\tilde{\rho}_2)}
\newcommand{\ra} {\rightarrow}
\newcommand{\na} {\nabla}

\newcommand{\var} {\varepsilon}

\numberwithin{equation}{section}

\newtheorem{theorem}{Theorem}[section]
\newtheorem{rem}{Remark}[section]
\newtheorem{lemma}{Lemma}[section]
\newtheorem{prop}{Proposition}[section]

\numberwithin{equation}{section}

\begin{document}
	\setlength{\abovedisplayskip}{3pt}
	\setlength{\belowdisplayskip}{3pt}
	\date{}
	\title{
    Existence and Asymptotic Behavior of Normalized Ground States for a Weighted Elliptic System with Caffarelli--Kohn--Nirenberg Critical Exponent}
    \author{ {\bf Asmita Rai$\,$\footnote{e-mail: {\tt asmita.rai65@gmail.com}}, Divya Goel$\,$\footnote{e-mail: {\tt divya.mat@iitbhu.ac.in}}} \\ Department of Mathematical Sciences, Indian Institute of Technology (BHU),\\ Varanasi 221005, India.}
	\maketitle
\begin{abstract}
In this paper, we study the coupled singular weighted elliptic system
		$$\left\{
		\begin{aligned}
			-\operatorname{div}(|x|^{-2a}\nabla u)+\lambda_1 \frac{u}{|x|^{2a}}&=\beta p\frac{|v|^q|u|^{p-2}u}{|x|^{b(p+q)}}+\frac{|u|^{2^{\sharp}-2}u}{|x|^{b2^{\sharp}}},\quad\text{in}~\R^N,\\
			-\operatorname{div}(|x|^{-2a}\nabla v)+\lambda_2\frac{v}{|x|^{2a}}&=\beta q\frac{|u|^p|v|^{q-2}v}{|x|^{b(p+q)}}+\frac{|v|^{2^\sharp-2}v}{|x|^{b{2^\sharp}}},\quad\text{in}~\R^N,\\
			\displaystyle\int_{\R^N}\frac{|u|^2}{|x|^{2a}} dx=\rho_1^2,\quad &\int_{\R^N}\frac{|v|^2}{|x|^{2a}}dx=\rho_2^2.
		\end{aligned}
		\right.
		$$
    where $N\ge3$, $\beta\in\R$, $\rho_1,\rho_2>0$, $\max\{0,\tfrac{N-4}{2}\}\le a<\tfrac{N-2}{2}$, $a<b<a+1$, $2^{\sharp}:=\frac{2N}{N-2(1+a-b)}$, $p,q>1$ and $2<p+q<2^{\sharp}$. Here $2^\sharp$ is the critical exponent of the Caffarelli--Kohn--Nirenberg inequality, and $\lambda_1,\lambda_2\in\R$ are unknown, since the two weighted masses are prescribed. We show that the ground state level is not attained for $\beta\le0$. For $\beta>0$ we obtain positive normalized ground states, with positive Lagrange multipliers, in the mass subcritical, mass critical, and mass supercritical regimes, for explicit ranges of $\beta$. In the mass supercritical regime, a threshold $\beta_0\ge0$ appears: a ground state exists for every $\beta>\beta_0$ and does not exist for $0<\beta<\beta_0$.  We give conditions on $p,q$ under which $\beta_0=0$, and we prove that $\beta_0>0$ when $p,q\ge2$. Finally, we describe the ground states as $\beta\to0^+$ and as $\beta\to+\infty$: after a dilation, they converge to the ground states of the limit system without critical terms, or one component vanishes and the other concentrates on an extremal of the Caffarelli--Kohn--Nirenberg inequality.\\
\noindent\textbf{Key words:} Normalized solution; coupled elliptic system;
Caffarelli--Kohn--Nirenberg inequality; critical exponent; singular weight; Pohozaev manifold.\\

\noindent\textit{2020 Mathematics Subject Classification: 26D10, 35J20, 49J35}
\end{abstract}
	\section{Introduction}
	
		In this paper, we study the  following coupled system
		\begin{equation}\label{d400}
			\begin{cases}
				i|x|^{-2a}\partial_t\Psi_1+\operatorname{div}(|x|^{-2a}\nabla\Psi_1)+\beta p\dfrac{|\Psi_1|^{p-2}\Psi_1|\Psi_2|^{q}}{|x|^{b(p+q)}}+\dfrac{|\Psi_1|^{2^\sharp-2}\Psi_1}{|x|^{b2^\sharp}}=0,\\[2mm]
				i|x|^{-2a}\partial_t\Psi_2+\operatorname{div}(|x|^{-2a}\nabla\Psi_2)+\beta q\dfrac{|\Psi_1|^{p}|\Psi_2|^{q-2}\Psi_2}{|x|^{b(p+q)}}+\dfrac{|\Psi_2|^{2^\sharp-2}\Psi_2}{|x|^{b2^\sharp}}=0,
			\end{cases}
		\end{equation}
		for $(x,t)\in\mathbb{R}^N\times\mathbb{R}$, where $N\ge3$, $\Psi_1,\Psi_2:\mathbb{R}^N\times\mathbb{R}\to\mathbb{C}$ are the unknowns, $\beta\in\mathbb{R}$ is the coupling parameter, $p,q>1$, and the exponents and weights satisfy
		\begin{equation}\label{d401}
			\begin{gathered}
				\max\Big\{0,\frac{N-4}{2}\Big\}\le a<\frac{N-2}{2},\qquad a<b<a+1,\qquad d:=1+a-b\in(0,1),\\
				2^\sharp:=\frac{2N}{N-2d},\qquad 2<p+q<2^\sharp .
			\end{gathered}
		\end{equation}
		When $a=b=0$, so that $d=1$ and $2^\sharp=2^*=\frac{2N}{N-2}$, system \eqref{d400} is the classical coupled nonlinear Schr\"odinger system. Such systems arise as mean-field models for binary mixtures of Bose--Einstein condensates and in nonlinear optics; see \cite{bartsch2016normalized,bartsch2021normalized2,bartsch2023existence} and the references therein. In that setting $\beta>0$ describes an attractive interaction between the two components and $\beta<0$ a repulsive one.

		Among all solutions of \eqref{d400}, standing waves are of special interest. The ansatz $\Psi_1(x,t)=e^{i\lambda_1t}u(x)$, $\Psi_2(x,t)=e^{i\lambda_2t}v(x)$ leads to the stationary system
		\begin{equation}\label{d402}
			\begin{cases}
				-\operatorname{div}(|x|^{-2a}\nabla u)+\lambda_1\dfrac{u}{|x|^{2a}}=\beta p\dfrac{|u|^{p-2}u|v|^{q}}{|x|^{b(p+q)}}+\dfrac{|u|^{2^\sharp-2}u}{|x|^{b2^\sharp}},&\text{in }\mathbb{R}^N,\\[2mm]
				-\operatorname{div}(|x|^{-2a}\nabla v)+\lambda_2\dfrac{v}{|x|^{2a}}=\beta q\dfrac{|u|^{p}|v|^{q-2}v}{|x|^{b(p+q)}}+\dfrac{|v|^{2^\sharp-2}v}{|x|^{b2^\sharp}},&\text{in }\mathbb{R}^N,
			\end{cases}
		\end{equation}
		with $\lambda_1,\lambda_2\in\mathbb{R}$. The system \eqref{d402} can be studied in two ways. In the first, $\lambda_1$ and $\lambda_2$ are fixed and the masses of $u$ and $v$ are free. Scalar equations of this type, with singular weights of Caffarelli--Kohn--Nirenberg type, have been studied by many authors; see \cite{wang2000singular,ghoussoub2000multiple,felli2003perturbation,xuan2005solvability} and the references therein. In the second, the masses are prescribed and $\lambda_1,\lambda_2$ are unknown. We follow the second way.
		
		In the past decades, after the work of Jeanjean \cite{jeanjean1997existence}, many researchers have studied solutions of the scalar equation
		\begin{align*}  
			-\Delta u+\lambda u=g(u)\quad\text{in }\mathbb{R}^N,\qquad\int_{\mathbb{R}^N}|u|^2\dx=\rho^2,
		\end{align*}
		with prescribed mass. When $g$ is mass-supercritical the energy is unbounded from below on the $L^2$-sphere, and Jeanjean obtained a solution by a mountain-pass argument on the constraint; see also \cite{bartsch2013normalized} for infinitely many solutions and \cite{jeanjean2020mass} for general mass-supercritical nonlinearities. For the combined nonlinearity $g(u)=\mu|u|^{q-2}u+|u|^{p-2}u$, $2<q<p\le2^*$, Soave \cite{soave2020normalized1,soave2020normalized2} decomposed the Pohozaev manifold according to the sign of the second derivative of the fibering map. This gives a local minimizer when $q$ is mass-subcritical and a mountain-pass solution when $q$ is mass-supercritical, both for $p<2^*$ and for the Sobolev-critical case $p=2^*$, which is by now seen as the Brezis--Nirenberg problem for normalized solutions; the remaining thresholds of the critical case were settled by Wei and Wu \cite{wei2022normalized}. The same approach has since been applied to Choquard, Kirchhoff, fractional, biharmonic and Schr\"odinger--Poisson type equations, and in several of these problems two solutions, a local minimizer and a mountain-pass solution, were obtained; see \cite{li2023nonexistence,zhang2022normalized,liu2024normalized,meng2024normalized,yu2023normalized,jeanjean2021multiple,chen2023multiple,kang2025multiple,jin2025normalized} and the references therein.
		
		For systems, fewer results are available, and the two prescribed masses make the constraint an $L^2$-torus. Bartsch, Jeanjean and Soave \cite{bartsch2016normalized} proved the existence of positive normalized solutions of the cubic system
		\begin{align*}
\left\{
\begin{aligned}
-\Delta u+\lambda_1u&=\mu_1u^3+\beta uv^2,&&\text{in }\mathbb{R}^3,\\
-\Delta v+\lambda_2v&=\mu_2v^3+\beta u^2v,&&\text{in }\mathbb{R}^3.
\end{aligned}
\right.
\end{align*}
		with prescribed masses, for $\beta>0$ in suitable ranges. Bartsch and Soave \cite{bartsch2017natural} introduced a natural constraint of Nehari--Pohozaev type, on which constrained critical points are true solutions. The same constraint was later used to prove multiplicity in the competing case $\beta<0$ \cite{bartsch2019multiple}. Bartsch, Zhong and Zou \cite{bartsch2021normalized2} obtained positive solutions of the cubic system for further ranges of $\beta$. In the fully mass-subcritical case, Gou and Jeanjean \cite{gou2016existence} obtained global minimizers and proved orbital stability, and the compactness of minimizing sequences under several mass constraints was established by Ikoma \cite{Ikoma+2014+115+136}. Li and Zou \cite{li2021normalized} obtained positive normalized ground states for systems with subcritical or critical self-interaction terms and a subcritical coupling by working on the Pohozaev manifold. For systems with Sobolev-critical terms, Bartsch, Li, and Zou \cite{bartsch2023existence} studied
        \begin{equation*}
\begin{cases}
-\Delta u + \lambda_1 u = |u|^{2^*-2}u + \beta p |u|^{p-2}u |v|^q, & \text{in } \mathbb{R}^N, \\
-\Delta v + \lambda_2 v = |v|^{2^*-2}v + \beta q |u|^p |v|^{q-2}v, & \text{in } \mathbb{R}^N, \\
\displaystyle\int_{\mathbb{R}^N} |u|^2 \dx = \rho_1^2, \quad \int_{\mathbb{R}^N} |v|^2 \dx = \rho_2^2,
\end{cases}
\end{equation*}

		with $N=3,4$, $p,q>1$ and $2<p+q<2^*$, and proved nonexistence of ground states for $\beta<0$ and existence, together with the asymptotic behavior of the ground states, for $\beta>0$. The case in which the coupling term itself is Sobolev-critical was studied by Zhang and Han \cite{zhang2024normalized} and, with an additional perturbation ranging over the three mass regimes, by Meng, He, and Winkert \cite{MENG2025113845}.
		
		All the results quoted so far concern the Laplacian, that is, the case $a=b=0$. When $a\ne0$ or $b\ne0$ the operator $-\operatorname{div}(|x|^{-2a}\nabla\,\cdot)$ and the nonlinearities carry weights that are singular at the origin, and the natural functional setting is given by the Caffarelli--Kohn--Nirenberg inequalities \cite{caffarelli1984first}; sharp forms with general weights were obtained more recently by Lam \cite{lam2019general}. Two features of the weighted setting are essential for us. First, the problem is no longer invariant under translations, but it is invariant under the dilations centered at the origin, and the embedding of the weighted Sobolev space into the weighted Lebesgue space with the critical exponent $2^\sharp$ is not compact. Second, the best constant $\mathcal S(a,b)$ of the critical Caffarelli--Kohn--Nirenberg inequality and its extremals are known explicitly: the constant was computed and the extremals were identified by Chou and Chu \cite{chou1993best}, Horiuchi \cite{horiuchi1997best} and Catrina and Wang \cite{catrina2001caffarelli}, and for $a\ge0$, which is our range, the extremals are radial. The complete picture of symmetry and symmetry breaking was obtained by Dolbeault, Esteban and Loss \cite{dolbeault2016rigidity}, and a short proof of the symmetry of extremals is due to Bouchez and Willem \cite{bouchez2009extremal}. These explicit extremals are what make an energy comparison possible in the critical weighted problem.
        Normalized solutions for the weighted operator $-\operatorname{div}(|x|^{-2a}\nabla\,\cdot)$ have been studied only recently. In \cite{goel2026normalized} we considered the scalar problem \begin{equation}\label{d405}
           -\operatorname{div}(|x|^{-2a}\nabla u)+\lambda\frac{u}{|x|^{2a}}
            =\beta\frac{|u|^{q-2}u}{|x|^{bq}}+\frac{|u|^{2^\sharp-2}u}{|x|^{b2^\sharp}}
             \quad\text{in }\mathbb{R}^N,
        \end{equation}
with $0<a<\frac{N-2}{2}$, $a<b<a+1$ and $2<q<2^\sharp$, and proved the existence of a ground
state and of a second constrained critical point in the mass-subcritical case, and of a
mountain-pass ground state in the mass-critical and mass-supercritical cases. Problem
\eqref{d405} is the scalar counterpart of the system treated here.
		
		To the best of our knowledge, no paper considers normalized solutions of a coupled system with the weighted operator $-\operatorname{div}(|x|^{-2a}\nabla\,\cdot)$, Caffarelli--Kohn--Nirenberg critical growth and two prescribed masses. The unweighted system was treated in \cite{bartsch2023existence} for $N=3,4$. Motivated by these works, in this paper we study, for given $\rho_1,\rho_2>0$, the coupled singular weighted system
		\begin{equation}\label{d1}
			\begin{cases}
				-\operatorname{div}(|x|^{-2a}\nabla u)+\lambda_1\dfrac{u}{|x|^{2a}}=\beta p\dfrac{|u|^{p-2}u|v|^{q}}{|x|^{b(p+q)}}+\dfrac{|u|^{2^\sharp-2}u}{|x|^{b2^\sharp}},&\text{in }\mathbb{R}^N,\\[2mm]
				-\operatorname{div}(|x|^{-2a}\nabla v)+\lambda_2\dfrac{v}{|x|^{2a}}=\beta q\dfrac{|u|^{p}|v|^{q-2}v}{|x|^{b(p+q)}}+\dfrac{|v|^{2^\sharp-2}v}{|x|^{b2^\sharp}},&\text{in }\mathbb{R}^N,\\[2mm]
				\displaystyle\int_{\mathbb{R}^N}\frac{|u|^2}{|x|^{2a}}\dx=\rho_1^2,\qquad\int_{\mathbb{R}^N}\frac{|v|^2}{|x|^{2a}}\dx=\rho_2^2 ,
			\end{cases}
		\end{equation}
		where $a,b,d,2^\sharp,p,q$ satisfy \eqref{d401}, $\beta\in\mathbb{R}$, and $\lambda_1,\lambda_2\in\mathbb{R}$ are unknown. In this paper, we prove the existence and nonexistence of normalized ground states for \eqref{d1}, according to the size of $\beta$ and of $p+q$, and we describe their behavior as $\beta\to0^+$ and as $\beta\to+\infty$. All our results hold in every dimension $N\ge3$.
        
		From a mathematical point of view, several difficulties arise. First, $\lambda_1$ and
        $\lambda_2$ are not known in advance. Second, the embedding into the weighted Lebesgue space
with the critical exponent is not compact, and compactness is recovered only for energies below
an explicit threshold given by $\mathcal S(a,b)$, after a comparison with the extremals of the
Caffarelli--Kohn--Nirenberg inequality. Third, the only subcritical term in \eqref{d1} is the
coupling term. There are no subcritical self-interaction terms, so when $\beta$ is small nothing
prevents one component from vanishing, and the limit of a Palais--Smale sequence may be semi-trivial. Ruling this out uses the Pohozaev identity together with the fact that the extremals of the Caffarelli--Kohn--Nirenberg inequality do not have finite weighted mass. Finally, the sign of the multipliers comes from a Liouville type lemma for the weighted
operator, which forces the restriction $a\ge\frac{N-4}{2}$ in \eqref{d401}.

Our main results are stated in Section~\ref{SD1}; we describe them briefly here. For $\beta\le0$
the ground state level is never attained (Theorem~\ref{DD1}), so we concentrate on $\beta>0$.
In each of the three cases we obtain a positive normalized ground state with positive Lagrange
multipliers: as a local minimizer at a negative level in the mass-subcritical case
(Theorem~\ref{DD2}), and as a minimax level in the mass-critical case (Theorem~\ref{DD9}) and in
the mass-supercritical case (Theorem~\ref{DD6}). In the first two cases $\beta$ is below an
explicit constant. In the mass-supercritical case a threshold $\beta_0$ appears instead: a
ground state exists for $\beta>\beta_0$ and does not exist for $0<\beta<\beta_0$, with
$\beta_0=0$ when $\min\{p,q\}<2$ and $p+q$ lies in an explicit range, and $\beta_0>0$ when
$p,q\ge2$. This dichotomy has no counterpart in the scalar problem \eqref{d405}; it comes from
the interaction of the two components near the origin. We then study the limit system obtained
by dropping the critical terms (Theorem~\ref{DD4}), and the behavior of the ground states as
$\beta\to0^+$ and $\beta\to+\infty$. After a suitable dilation they converge to the ground
states of the limit system (Lemmas~\ref{D51} and~\ref{D52}, Theorem~\ref{DD10}), except as
$\beta\to0^+$ in the mass-critical and mass-supercritical cases, where one component vanishes
and the other concentrates on an extremal of the Caffarelli--Kohn--Nirenberg inequality
(Lemma~\ref{D53}). The ideas of the proofs are described at the end of Section~\ref{SD1}.
		
		Throughout the paper, $\|\cdot\|_q$ denotes the norm in $L^q(\mathbb{R}^N)$, so that $\||x|^{-b}u\|_q$ is the norm in $L^q(\mathbb{R}^N;|x|^{-bq})$, and $C$ denotes a positive constant that may change from line to line.
		
		 The paper is organized as follows. Section~\ref{SD1} collects the functional setting, the Caffarelli--Kohn--Nirenberg inequality, the Pohozaev manifold, the statements of the main results, and the ideas of the proofs. Section~\ref{SD2} treats the attractive case $\beta>0$: it contains the compactness result, Proposition~\ref{D5}, and the proofs of Theorems~\ref{DD2}, \ref{DD6} and \ref{DD9} in the three mass regimes. Section~\ref{SD3} proves Theorem~\ref{DD1} for $\beta\le0$. Section~\ref{SD4} studies the limit system and proves Theorem~\ref{DD4}. Section~\ref{SD5} contains the asymptotic analysis as $\beta\to0^+$ and $\beta\to+\infty$. The Liouville-type lemma is proved in the appendix.

	\section{Functional setting and main results}\label{SD1}
	In this section, we introduce the functional framework and the variational structure associated with the problem.
	
	For $2 \le q < \infty$, we denote by $L^q(\R^N;|x|^{-bq})$ the weighted Lebesgue space
	\begin{align*}  
		L^q(\R^N;|x|^{-bq})
		:=\Bigl\{u:\R^N\to\R \text{ measurable } :
		\int_{\R^N}|x|^{-bq}|u|^q\dx<\infty \Bigr\},
	\end{align*}
	endowed with the norm
	\begin{align*}  
		\|u\|_{L^q(\R^N;|x|^{-bq})}
		:=\Bigl(\int_{\R^N}|x|^{-bq}|u|^q\dx\Bigr)^{1/q}.
	\end{align*}
	Let $\mathcal{D}_{a}^{1,2}(\R^N)$ be the completion of $C_0^\infty(\R^N)$ with respect to the norm
	\begin{align*}  
		\|u\|_a
		:=\Bigl(\int_{\R^N}|x|^{-2a}|\nabla u|^2\dx\Bigr)^{1/2}.
	\end{align*}
	We denote by $\mathcal{D}_{a,r}^{1,2}(\R^N)$ the radial subspace.
	Define
	\begin{align*}  
		X
		:=\mathcal{D}_{a,r}^{1,2}(\R^N)\cap L_r^2(\R^N;|x|^{-2a}),
		\qquad
		\mathcal{X}:=X\times X.
	\end{align*}
	The space $\y$ will serve as the natural variational framework for the system.
	
	A key ingredient in our analysis is the Caffarelli–Kohn–Nirenberg inequality \cite{caffarelli1984first}, which ensures that there exists a positive constant $C_q = C_{q}(N, r, p, q, \gamma, \alpha, \eta,\sigma,\delta)$ such that for all $u \in C_0^\infty (\mathbb{R}^N) $,
	\begin{equation*}
		\| |x|^\gamma u \|_q \leq C_{q}\, \| |x|^\alpha \nabla u \|_p^\delta ~\| |x|^\eta u \|_r^{1-\delta}
	\end{equation*}
	where $p,~ r \geq 1,~ q > 0,~ 0 \leq \delta \leq 1 $, $\sigma\in\R$, and 
	$\frac{1}{p} + \frac{\alpha}{N}, ~\frac{1}{r} + \frac{\eta}{N} , ~\frac{1}{q} + \frac{\gamma}{N} > 0$
	where
	\begin{equation*}
		\gamma = \delta\sigma + (1 - \delta) \eta
	\end{equation*}
	and
	\begin{equation*}
		\frac{1}{q} + \frac{\gamma}{N} = \delta\left( \frac{1}{p} + \frac{\alpha-1}{N} \right) + (1 - \delta) \left( \frac{1}{r} + \frac{\eta}{N} \right).
	\end{equation*}
	Also,
	\begin{equation*}
		0 \leq \alpha - \sigma \quad \text{if} \quad \delta> 0,\quad\text{and}
	\end{equation*}
	\begin{equation*}
		\alpha - \sigma \leq 1 \quad \text{if} \quad \delta > 0, \quad \text{and} \quad \frac{1}{p} + \frac{\alpha - 1}{N} = \frac{1}{q} + \frac{\gamma}{N}.
	\end{equation*}
	Moreover, $C_{q}$ is bounded if and only if $(\alpha-\sigma)\in[0,1].$

	For our variational analysis, we require a Gagliardo--Nirenberg type inequality
	involving weighted $L^2$ norms.
	\begin{lemma}\cite{goel2026normalized}\label{lem:GN}
		Let $u\in \mathcal{D}_{a,r}^{1,2}(\R^N)$ and $2< q < 2^{\sharp}$.
		There exists a constant $C_q>0$ such that
		\begin{equation*}
			\||x|^{-b}u\|_q^q
			\le C_q^q
			\||x|^{-a}\nabla u\|_2^{\delta_q q}
			\||x|^{-a}u\|_2^{(1-\delta_q)q},
		\end{equation*}
		where
		\begin{align*}  
			\delta_q=\frac{(N-2a+2b)q-2N}{2q}.
		\end{align*}
	\end{lemma}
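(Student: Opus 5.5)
The plan is to obtain the estimate by interpolating between the critical Caffarelli--Kohn--Nirenberg inequality and the weighted $L^2$ norm. Hölder's inequality will do the interpolation, and a dilation check will confirm that the exponents are forced.

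Define $\theta\in(0,1)$ by
$$\frac1q=\frac{\theta}{2^\sharp}+\frac{1-\theta}{2},$$
which is possible because $2<q<2^\sharp$. Split the weight as $|x|^{-bq}=|x|^{-b2^\sharp\theta q/2^\sharp}\,|x|^{-2a(1-\theta)q/2}$. For this splitting to be consistent we need
$$bq=\theta q b+(1-\theta)q\,b',$$
where $b'$ is the weight exponent attached to the $L^2$ part. This is the one point needing care, since the $L^2$ weight is $|x|^{-a}$ rather than $|x|^{-b}$, so $b'=a$ instead of $b$.

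So I would instead apply Hölder in the form
$$\int |x|^{-bq}|u|^q\le \Big(\int |x|^{-b2^\sharp}|u|^{2^\sharp}\Big)^{\theta q/2^\sharp}\Big(\int |x|^{-\gamma}|u|^2\Big)^{(1-\theta)q/2},$$
where $\gamma$ is chosen so that the weights match: $bq=b\theta q+\gamma(1-\theta)q/2$. This gives $\gamma=2b\neq 2a$, so plain Hölder does not close. Therefore I will not use Hölder. I will apply the general CKN inequality recalled above directly, with exponents $p=r=2$, $\alpha=-a$, $\eta=-a$, $\sigma=-b$ and $\gamma_{\rm CKN}=-b$. The requirement $\gamma=\delta\sigma+(1-\delta)\eta$ becomes $-b=-\delta b-(1-\delta)a$, which fails in general. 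So the radial hypothesis must be used.

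For radial $u\in\mathcal D^{1,2}_{a,r}$, I would use a pointwise radial (Strauss-type) decay bound,
$$|u(x)|\le C|x|^{-(N-2-2a)/2}\||x|^{-a}\nabla u\|_2,$$
or a mixed version,
$$|u(x)|^2\le C|x|^{-(N-1-2a)}\||x|^{-a}\nabla u\|_2\||x|^{-a}u\|_2.$$
With it I would estimate $\int|x|^{-bq}|u|^{q-2}|u|^2$ by taking part of the weight from $|x|^{-2a}|u|^2$ and controlling the rest with the decay bound. Splitting the integral into $|x|<R$ and $|x|>R$ and optimizing over $R$ should yield a product of powers of the two norms.

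The exponents are then forced by scaling. Under $u_t(x)=t^{(N-2a)/2}u(tx)$ the norm $\||x|^{-a}u\|_2$ is invariant, $\||x|^{-a}\nabla u\|_2$ scales like $t$, and $\||x|^{-b}u\|_q^q$ scales like $t^{q(N-2a)/2-N+bq}$. Matching the powers of $t$ gives
$$\delta_q q=\frac{(N-2a+2b)q-2N}{2},$$
which is exactly the stated $\delta_q$. Under $u\mapsto su$ both sides are homogeneous of degree $q$.

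The main obstacle is the near-origin region, where the singular weight $|x|^{-bq}$ with $b>a$ must be absorbed. There I would interpolate with the critical CKN inequality rather than the pointwise bound, since $\||x|^{-b}u\|_{2^\sharp}\le \mathcal S(a,b)^{-1/2}\||x|^{-a}\nabla u\|_2$ handles the singularity. The far region would be handled by the decay bound together with the $L^2(|x|^{-2a})$ norm.
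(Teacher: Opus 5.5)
Your final radial argument works, but your reason for abandoning the direct route is a misreading of the CKN inequality. There $\sigma$ is not a prescribed weight; it is an auxiliary parameter defined by $\gamma=\delta\sigma+(1-\delta)\eta$. Take $p=r=2$, $\alpha=\eta=-a$, $\gamma=-b$. The balance condition $\frac1q-\frac bN=\frac12-\frac aN-\frac\delta N$ then forces $\delta=\delta_q$. It follows that $\sigma=-a-\frac{b-a}{\delta_q}$, so $\alpha-\sigma=\frac{b-a}{\delta_q}\in(0,1)$, because $\delta_q$ increases from $\delta_2=b-a$ to $\delta_{2^\sharp}=1$. The positivity conditions reduce to $a<\frac N2$ and $bq<N$; the latter follows from $q<2^\sharp$ and $a<\frac{N-2}{2}$. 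The paper gives no proof of the lemma: it quotes it from the authors' earlier work immediately after recalling that inequality, of which it is a one-line specialization. In particular, radial symmetry is not needed, contrary to your claim that it must be used.

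Your radial route is still a valid alternative once the optimization is carried out. Set $A=\||x|^{-a}\nabla u\|_2$ and $B=\||x|^{-a}u\|_2$.
\begin{itemize}
\item Near region: H\"older on $B_R$ and the critical inequality give $\int_{|x|<R}|x|^{-bq}|u|^q\le C R^{(1-\delta_q)q}A^q$, since $N(1-q/2^\sharp)=(1-\delta_q)q$.
\item Far region: your mixed Strauss bound $|u(x)|^2\le C|x|^{-(N-1-2a)}AB$ gives $\int_{|x|>R}|x|^{-bq}|u|^q\le CR^{-\kappa}A^{\frac{q-2}{2}}B^{\frac{q+2}{2}}$, where $\kappa=(b-a)q+\frac{(q-2)(N-1)}{2}>0$.
\item Choosing $R=B/A$ turns both terms into exactly $A^{\delta_q q}B^{(1-\delta_q)q}$.
\end{itemize}
Compared with the CKN route, yours uses only the critical inequality and an elementary radial lemma, at the price of needing radiality; that is harmless here, since $X$ is radial.

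Your first H\"older attempt can also be repaired without radiality. Interpolate $|x|^{-b}u$ between $L^2$ and $L^{2^\sharp}$; the weights then match automatically. Next bound $\||x|^{-b}u\|_2^2\le\||x|^{-a-1}u\|_2^{2(b-a)}B^{2(1-b+a)}$ by H\"older. Finally apply the weighted Hardy inequality $\frac{N-2-2a}{2}\||x|^{-a-1}u\|_2\le A$. This gives the lemma for all $u$ using only the critical inequality and Hardy.
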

	Denote
	\begin{align*}  
		\mathcal{B}_{p,q}(u,v)
		:=\int_{\R^N}\frac{|u|^p|v|^q}{|x|^{b(p+q)}}\dx.
	\end{align*}
	By H\"older’s inequality and Lemma~\ref{lem:GN}, we have
	\begin{align}\label{d5}
		\mathcal{B}_{p,q}(u,v)
		&\le \||x|^{-b}u\|_{p+q}^p\,\||x|^{-b}v\|_{p+q}^q \notag\\
		&\le C_{a,b}^{p+q}
		\Bigl(\||x|^{-a}\nabla u\|_2^2+\||x|^{-a}\nabla v\|_2^2\Bigr)^{\frac{(p+q)\delta_{p+q}}{2}}
		\Bigl(\||x|^{-a}u\|_2^2+\||x|^{-a}v\|_2^2\Bigr)^{\frac{(p+q)(1-\delta_{p+q})}{2}}.
	\end{align} 
	The mass-critical exponent is given by
	\begin{align*}  (p+q)_c:=\frac{2N+4}{N-2a+2b}.\end{align*}
	\begin{prop}\cite{goel2026normalized}\label{D303}
		Let $ N \geq 3 $, $ 0 < a < \frac{N-2}{2} $, and $ a < b < 1 + a $. Then, for any $ q  \in (2, \t) $ the embedding
		\begin{align*}  
			X \hookrightarrow L^q_r(\mathbb{R}^N; |x|^{-qb} )
		\end{align*}
		is compact.
	\end{prop}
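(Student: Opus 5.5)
The plan is to combine a uniform radial decay estimate, which controls the tails, with local compactness, which controls bounded annuli. Interpolation against the endpoint bounds then handles what is left near the origin and at infinity.

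\textbf{Setup.} Let $(u_n)$ be bounded in $X$; we must show a subsequence converges in $L^q_r(\R^N;|x|^{-qb})$. Since $X$ is a Hilbert space, pass to a subsequence with $u_n\rightharpoonup u$ weakly in $X$. Set $w_n=u_n-u$, so $w_n\rightharpoonup0$ and $w_n$ is bounded in $X$.

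By the critical Caffarelli--Kohn--Nirenberg inequality (the endpoint of Lemma~\ref{lem:GN} with $\delta=1$) and the $L^2$ bound, $w_n$ is bounded in $L^{2^\sharp}(|x|^{-b2^\sharp})$ and in $L^2(|x|^{-2a})$.

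\textbf{Step 1: radial decay and the tail.} Use a Strauss-type bound for radial functions in $X$. Writing $w(r)^2=-\int_r^\infty (w^2)'\,ds$ and using Cauchy--Schwarz with the weights $s^{N-1-2a}$ gives
\begin{align*}
|w(r)|^2\le C\, r^{-(N-1-2a)}\,\||x|^{-a}w\|_2\,\||x|^{-a}\nabla w\|_2 .
\end{align*}
For $q>2$ this yields, for $R$ large,
\begin{align*}
\int_{|x|>R}|x|^{-bq}|w_n|^q\dx\le \sup_{|x|>R}\bigl(|x|^{-b(q-2)+2a}|w_n|^{q-2}\bigr)\int_{|x|>R}|x|^{-2a}|w_n|^2\dx .
\end{align*}
The supremum is bounded by $C R^{-\theta}$, where
\begin{align*}
\theta=b(q-2)-2a+\tfrac{(q-2)(N-1-2a)}{2}.
\end{align*}
I need to check $\theta>0$. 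This holds if $b\ge0$ and $N-1-2a>0$, which follows from $a<\frac{N-2}{2}$ and $b>a>0$. So the tail is uniformly small as $R\to\infty$.

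\textbf{Step 2: the origin.} Interpolate between the exponents $2$ and $2^\sharp$ on the ball $B_\varepsilon$. By H\"older,
\begin{align*}
\int_{B_\varepsilon}|x|^{-bq}|w_n|^q\dx\le \Bigl(\int_{B_\varepsilon}|x|^{-2a}|w_n|^2\dx\Bigr)^{t}\Bigl(\int_{B_\varepsilon}|x|^{-b2^\sharp}|w_n|^{2^\sharp}\dx\Bigr)^{1-t},
\end{align*}
with $q=2t+2^\sharp(1-t)$. The weights must match, i.e. $bq=2at+b2^\sharp(1-t)$. Since $a\neq b$ this generally fails, so I would absorb the mismatch with a power $|x|^{\kappa}$ for some $\kappa>0$ on $B_\varepsilon$, giving a factor $\varepsilon^{\kappa}$ times bounded quantities. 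The exponent check I would do is that $q<2^\sharp$ and $b>a$ make the leftover weight $|x|^{2(b-a)t}$ positive, so it is small near $0$.

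\textbf{Step 3: the annulus.} On $A=\{\varepsilon<|x|<R\}$ the weights are bounded above and below, so $w_n$ is bounded in $H^1(A)$. Rellich--Kondrachov gives $w_n\to0$ in $L^q(A)$ for $q<2^*$. I also need $q<2^\sharp\le2^*$ here. Where this fails, I would interpolate between $L^2(A)$ convergence and the $L^{2^\sharp}$ bound.

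Combining Steps 1--3 and letting $\varepsilon\to0$, $R\to\infty$ gives $w_n\to0$ in $L^q_r(|x|^{-qb})$, which proves the compact embedding.

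\textbf{Main obstacle.} I expect the main difficulty to be the exponent bookkeeping in Steps 1 and 2, that is, showing the powers $\theta$ and $\kappa$ are strictly positive across the whole range $a<b<a+1$, $2<q<2^\sharp$. The fallback is to apply Lemma~\ref{lem:GN} locally with $\delta_q<1$, so that the $L^2$ factor carries the smallness.
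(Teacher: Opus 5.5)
Your overall plan is the right one: uniform smallness near $0$ and near $\infty$, plus Rellich on annuli. Step~3 is fine; note that $2^\sharp<2^*$ always, since $d=1+a-b<1$. The paper quotes this result from its companion work without proof, so I judge your argument on its own.

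\textbf{The gap: Step~2.} With $q=2t+2^\sharp(1-t)$,
\[
\frac{|w|^q}{|x|^{bq}}=|x|^{\kappa}\Bigl(\frac{|w|^2}{|x|^{2a}}\Bigr)^{t}\Bigl(\frac{|w|^{2^\sharp}}{|x|^{b2^\sharp}}\Bigr)^{1-t},\qquad \kappa=-bq+2at+b2^\sharp(1-t)=-2(b-a)t<0 .
\]
So the leftover weight is $|x|^{-2(b-a)t}$, which blows up at the origin. Your sign check is reversed, no factor $\varepsilon^\kappa$ appears, and this H\"older step gives nothing on $B_\varepsilon$. The endpoint $L^2(\mathbb{R}^N;|x|^{-2a})$ is too weak at $0$. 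What is missing is the weighted Hardy inequality
\[
\Bigl(\frac{N-2-2a}{2}\Bigr)^2\int_{\mathbb{R}^N}\frac{|w|^2}{|x|^{2a+2}}\dx\le\int_{\mathbb{R}^N}\frac{|\nabla w|^2}{|x|^{2a}}\dx ,
\]
valid because $a<\frac{N-2}{2}$. Interpolate instead between $L^2(\mathbb{R}^N;|x|^{-2a-2})$ and $L^{2^\sharp}(\mathbb{R}^N;|x|^{-b2^\sharp})$ with the same $t$. This leaves the weight $|x|^{2(1+a-b)t}=|x|^{2dt}$, so
\[
\int_{B_\varepsilon}\frac{|w_n|^q}{|x|^{bq}}\dx\le\varepsilon^{2dt}\,\bigl\||x|^{-1-a}w_n\bigr\|_2^{2t}\,\bigl\||x|^{-b}w_n\bigr\|_{2^\sharp}^{2^\sharp(1-t)}\le C\varepsilon^{2dt}
\]
uniformly in $n$.

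Your fallback, Lemma~\ref{lem:GN} applied to $\phi(\cdot/\varepsilon)w_n$, can also be completed, but only after proving two things it silently assumes:
\[
\int_{B_{2\varepsilon}}|x|^{-2a}|w_n|^2\dx=O(\varepsilon^2)\ \text{uniformly in } n,
\qquad
\varepsilon^{-2}\int_{\varepsilon<|x|<2\varepsilon}|x|^{-2a}|w_n|^2\dx\le C .
\]
The second is needed so that the cutoff keeps the gradient bounded. Both again require Hardy, or H\"older against the critical norm using $d+b-a=1$. A bound on $\||x|^{-a}w_n\|_2$ alone does not stop weighted mass from concentrating at $0$.

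\textbf{A slip in Step~1.} The splitting should be $|x|^{-bq}|w|^q=|x|^{2a-bq}|w|^{q-2}\cdot|x|^{-2a}|w|^2$, not with $|x|^{2a-b(q-2)}$. Your $\theta$ contains the term $-2a$ and tends to $-2a<0$ as $q\to2^+$. For example, $N=3$, $a=0.1$, $b=0.5$, $q=2.1$ gives $\theta=-0.06$, so the positivity claim is false as written. With the correct factor, $\theta=2(b-a)+(q-2)\bigl(b+\frac{N-1-2a}{2}\bigr)>0$.

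In fact the Strauss bound is not needed. The H\"older inequality from your Step~2 is exactly right on $\{|x|>R\}$, where $|x|^{-2(b-a)t}\le R^{-2(b-a)t}$; this gives $\int_{|x|>R}|x|^{-bq}|w_n|^q\dx\le CR^{-2(b-a)t}$. So the two interpolations should trade places: use $\bigl(L^2(|x|^{-2a}),L^{2^\sharp}(|x|^{-b2^\sharp})\bigr)$ at infinity and $\bigl(L^2(|x|^{-2a-2}),L^{2^\sharp}(|x|^{-b2^\sharp})\bigr)$ at the origin. With that repair the proof goes through, and it does not even use radial symmetry.
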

	Since $X$ is not compactly embedded in 
	$L_r^{\t}(\mathbb{R}^N; |x|^{-\t b})$, 
	loss of compactness occurs at the critical exponent. 
	In the critical case $q=\t$, Catrina and Wang \cite{catrina2001caffarelli} 
	studied the extremals associated with the embedding and introduced the best 
	Caffarelli--Kohn--Nirenberg constant (see also \cite{bouchez2009extremal})
	\begin{equation}\label{d191}
		\mathcal{S}(a,b)
		:=
		\inf_{u\in\mathcal{D}_a^{1,2}(\mathbb{R}^N)\setminus\{0\}}
		\frac{\||x|^{-a}\nabla u\|_2^2}
		{\||x|^{-b}u\|_\t^{2}}.
	\end{equation}
	
	In \cite{catrina2001caffarelli}, it was proved that the minimizers of 
	$\mathcal{S}(a,b)$ belong to the family $\{U_\varepsilon\}_{\varepsilon>0}$, 
	given by
	\begin{equation}\label{c96}
		U_{\varepsilon}(x)
		=
		\frac{\left(2\t A\varepsilon\right)^{\frac{N-2d}{4d}}}
		{\left(\varepsilon+|x|^{\alpha}\right)^{\frac{N-2d}{2d}}},
	\end{equation}
	where
	\begin{align*}  
		A=\left(\frac{N-2}{2}-a\right)^2,
		\qquad
		\alpha=\frac{2d(N-2-2a)}{N-2d}.
	\end{align*}
	The system under consideration \eqref{d1} admits a variational formulation. More precisely, solutions correspond to critical points of the energy functional
	$E_\beta:\mathcal{X}\to\R$ defined by
	\begin{align*}  
		E_{\beta}(u,v)
		:=\frac{1}{2}\Bigl(\||x|^{-a}\nabla u\|_2^2+\||x|^{-a}\nabla v\|_2^2\Bigr)
		-{\beta}\mathcal{B}_{p,q}(u,v)
		-\frac1{2^{\sharp}}\Bigl(\||x|^{-b}u\|_{2^{\sharp}}^{2^{\sharp}}
		+\||x|^{-b}v\|_{2^{\sharp}}^{2^{\sharp}}\Bigr),
	\end{align*}
	restricted to the mass constraint manifold of $\y$, defined by
	\begin{align*}  S(\rho_1,\rho_2):=\{(u,v)\in\y:\||x|^{-a}u\|_2^2=\rho_1^2,\||x|^{-a}v\|_2^2=\rho_2^2\}.\end{align*}
	Since $p,q>1$, $E_\beta\in C^1(\y,\R)$. The Pohozaev functional associated with \eqref{d1} is
	\begin{align*}  
		P_{\beta}(u,v)
		&:=\||x|^{-a}\nabla u\|_2^2+\||x|^{-a}\nabla v\|_2^2
		-\beta\pq\mathcal{B}_{p,q}(u,v)
		-\Bigl(\||x|^{-b}u\|_{2^{\sharp}}^{2^{\sharp}}
		+\||x|^{-b}v\|_{2^{\sharp}}^{2^{\sharp}}\Bigr).
	\end{align*}
	Let $\u\in S\r$ be a constrained critical point of $E_\beta$, i.e., $\u$
	is a critical point of $E_\beta$ restricted to the torus $S\r$. Then $\u$ belongs to the Pohozaev
	manifold $\mathcal{N}_{\beta}(\rho_1,\rho_2)$, where 
	\begin{align*}  
		\mathcal{N}_{\beta}(\rho_1,\rho_2)
		:=\{(u,v)\in S(\rho_1,\rho_2): P_{\beta}(u,v)=0\}.
	\end{align*} 
	To study the geometry of the functional, we introduce the mass-preserving scaling
	\begin{align*}  k\star u(x):=e^{\frac{N-2a}{2}k}u(e^kx)\quad\text{and}\quad k\star(u,v):=(k\star u,k\star v),\end{align*}
	which leaves the constraint manifold invariant. For  $(u,v)\in S\r$, we define the associated fiber map, $\Phi_\beta^{\u}:\R\to\R$,  by
	\begin{align*}  
		\Phi_\beta^\u(t):=E_\beta(t\star\u),
	\end{align*}
	i.e.,
	\begin{align*}  
		\Phi_{\beta}^\u(t)= \frac{e^{2t}}{2}(\||x|^{-a}\na u\|_2^2+\||x|^{-a}\na v\|_2^2)-\beta e^{(p+q)\delta_{p+q}t}\b(u,v)-\frac{e^{\t t}}{\t}(\||x|^{-b}u\|_{\t}^{\t}+\||x|^{-b}v\|_\t^\t).
	\end{align*}
	A direct computation shows that $(\Phi_{\beta}^\u)'(0)=P_\beta\u$ so that critical points of the fiber map correspond to intersections with the Pohozaev manifold. We can decompose the Pohozaev manifold $\mathcal{N}_\beta\r$ as
	\begin{equation*}
		\mathcal{N}_{\beta}\r=\mathcal{N}_{\beta}^+\r\cup\mathcal{N}_{\beta}^-\r\cup\mathcal{N}_{\beta}^0\r,
	\end{equation*}
	where
	\begin{align*}  
		\mathcal{N}_{\beta}^+\r:=\{\u\in \mathcal{N}_{\beta}\r:
		(\Phi_{\beta}^\u)''(0)>0\},\\
		\mathcal{N}_{\beta}^-\r:=\{\u\in \mathcal{N}_{\beta}\r:(\Phi^\u_{\beta})''(0)<0\},\\
		\mathcal{N}_{\beta}^0\r:=\{\u\in \mathcal{N}_{\beta}\r:(\Phi^\u_{\beta})''(0)=0\}.
	\end{align*}
	
	This decomposition plays a fundamental role in the variational analysis, as it reflects the behavior of the energy functional under mass-preserving dilations.
	The ground state level is
	\begin{align*}  m_\beta\r:=\inf_{\mathcal{N}_\beta\r}E_\beta\u.\end{align*}
	
	\begin{theorem}[Mass subcritical]\label{DD2}
		If $N\ge 3, ~\max\{0,\frac{N-4}{2}\}\le a<\frac{N-2}{2},$ $a<b<a+1,~1<p,~q<\t,$ and  $(p+q)<\p$ then for any $0<\beta<\beta_1$ (see \eqref{d200}), there exists a normalized ground state solution $(u_\beta,v_\beta)$ of \eqref{d1}, with $u_\beta,v_\beta>0$ and $\lambda_1,\lambda_2>0$. Furthermore, this solution is a local minimizer of the energy functional $E_\beta$ on $S\r.$ 
	\end{theorem}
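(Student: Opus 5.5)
We follow Soave's scheme for combined nonlinearities, adapted to the torus $S\r$ and the weighted setting. Write $\gamma:=\pq<2$ (mass subcritical) and $D(u,v):=\a$. By \eqref{d5} and the definition of $\mathcal S(a,b)$ in \eqref{d191}, on $S\r$ we have
\[
E_\beta\u\ge h(D^{1/2}),\qquad h(t):=\tfrac12 t^2-\beta C_{a,b}^{p+q}(\rho_1^2+\rho_2^2)^{\frac{(p+q)(1-\delta_{p+q})}{2}}t^{\gamma}-\tfrac{2}{\t}\,\mathcal S(a,b)^{-\t/2}t^{\t}.
\]
Since $\gamma<2<\t$, $h$ has a local minimum at negative level and then a global maximum at positive level precisely when $\beta$ is below an explicit constant. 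This is how we would define $\beta_1$ in \eqref{d200}.

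Let $0<R_0<R_1$ be the zeros of $h$. The first step is to show that for each $\u\in S\r$ the fiber map $\Phi_\beta^\u$ has exactly two critical points $s_\u<t_\u$, a local minimum and a global maximum. The map $\u\mapsto s_\u$ is $C^1$ by the implicit function theorem. It follows that $\mathcal N^0_\beta\r=\emptyset$, that $\mathcal N_\beta^+\r$ lies in $A_{R_0}:=\{D<R_0^2\}$, and that
\[
m_\beta\r=\inf_{\mathcal N^+_\beta}E_\beta=\inf_{A_{R_0}}E_\beta<0 .
\]
The inequality $<0$ is obtained by dilating with $t\to-\infty$, where the term $-\beta e^{\gamma t}\b$ dominates; here $\beta>0$ and $\b>0$ for positive functions are used. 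We also record $\inf_{\partial A_{R_0}}E_\beta\ge0$, or a slightly enlarged version of it, which is the local-minimizer structure.

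The second step is to take a minimizing sequence in $A_{R_0}$. Replacing it by $(|u_n|,|v_n|)$, which stays radial, and using Ekeland's principle together with the projection $s_{(u_n,v_n)}\star(u_n,v_n)$, we get a Palais--Smale sequence for $E_\beta|_{S\r}$ with $P_\beta(u_n,v_n)\to0$, nonnegative components, and bounded $D$. Then $(u_n,v_n)\rightharpoonup\u$ in $\y$. By Proposition~\ref{D303}, $\b(u_n,v_n)\to\b\u$. The Lagrange multipliers $\lambda_{1,n},\lambda_{2,n}$ are bounded, so they converge to $\lambda_1,\lambda_2$, and $\u$ solves \eqref{d1} without the mass constraints.

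This compactness step is the main obstacle. We must exclude loss of mass to the critical terms and show that neither component is zero. Since $m_\beta<0$ and $D$ is small, a Brezis--Lieb splitting gives $\|\,|x|^{-a}\nabla w_n\|_2^2\le \||x|^{-b}w_n\|_{\t}^{\t}+o(1)$ for the remainders $w_n=u_n-u$ (and the same for $v$). Combined with $\mathcal S(a,b)$, this gives either $w_n\to0$ or $D(w_n)\ge\mathcal S(a,b)^{\t/(\t-2)}$. The second alternative is impossible inside $A_{R_0}$ once $\beta_1$ is chosen so that $R_0^2$ is below this threshold. Hence $\nabla u_n\to\nabla u$, and $E_\beta\u\le m_\beta<0$.

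The conclusion $E_\beta\u<0$ forces $\b\u>0$, so $u\ne0$ and $v\ne0$. To get $\lambda_i>0$, argue by contradiction. If $\lambda_1\le0$, then $u\ge0$ satisfies $-\operatorname{div}(|x|^{-2a}\nabla u)\ge0$ with $u\in L^2(|x|^{-2a})$. The Liouville-type lemma from the appendix, which needs $a\ge\frac{N-4}{2}$, then gives $u\equiv0$, a contradiction. Once $\lambda_i>0$, testing the equations against $u-u_n$ shows that the weighted masses converge, so $\u\in S\r$ and the infimum is attained. Finally, the strong maximum principle applied to the weighted operator away from the origin, together with a radial ODE argument, gives $u,v>0$. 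The solution is a ground state because every point of $\mathcal N_\beta$ either lies in $\mathcal N^+_\beta$ or has positive energy. It is a local minimizer because it minimizes $E_\beta$ over the open set $A_{R_0}$.
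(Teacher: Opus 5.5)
Your route is sound, but it reaches compactness differently from the paper. The paper passes the Ekeland sequence to Proposition~\ref{D5}. There, condition \eqref{d2} is automatic since $m_\beta(\rho_1,\rho_2)<0$. Step~2 still rules out semi-trivial limits via the Pohozaev identity and $U_\varepsilon\notin L^2(\mathbb{R}^N;|x|^{-2a})$. Step~3 rules out a bubble through $c\ge m_\beta(\rho_1,\rho_2)+\frac dN\mathcal S(a,b)^{N/2d}$, which needs the monotonicity in the masses from Lemma~\ref{D4}(ii).

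You instead use the a priori bound $\limsup_n D(u_n,v_n)\le R_0^2$ inherited from $A_{R_0}$. Since $D(u_n-u,v_n-v)=D(u_n,v_n)-D(u,v)+o(1)$, a remainder carrying at least $\mathcal S(a,b)^{N/2d}$ cannot fit. So the gradients converge strongly before anything is known about $(u,v)$, and $u,v\neq0$ then follows from $E_\beta(u,v)=m_\beta(\rho_1,\rho_2)<0$. This is shorter and dispenses with Lemma~\ref{D4}(ii) and the semi-trivial analysis. The cost is that it relies on the local-minimizer geometry and does not extend to the minimax levels of Theorems~\ref{DD6} and~\ref{DD9}, which is why the paper proves the general Proposition~\ref{D5}. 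The remaining steps (Ekeland, Lemma~\ref{D2}, recovery of the masses, positivity) match the paper.

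Three points need repair.

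(1) Your lower bound has coefficient $\frac{2}{2^\sharp}\mathcal S(a,b)^{-2^\sharp/2}$. With it the admissible range becomes $0<\beta<2^{-\frac{2-(p+q)\delta_{p+q}}{2^\sharp-2}}\beta_1$, so you do not reach the $\beta_1$ of \eqref{d200} fixed in the statement. Use $x^{2^\sharp/2}+y^{2^\sharp/2}\le(x+y)^{2^\sharp/2}$ to get $\frac{1}{2^\sharp}\mathcal S(a,b)^{-2^\sharp/2}$, which is exactly the paper's $f$.

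(2) You do not need to restrict $\beta_1$ further to ensure $R_0^2<\mathcal S(a,b)^{N/2d}$, and doing so would again lose part of the stated range. Indeed, $R_0=\kappa_1<t^*$ with $t^*$ from \eqref{d9}, and $(t^*)^{2^\sharp-2}=\frac{2^\sharp(2-(p+q)\delta_{p+q})}{2(2^\sharp-(p+q)\delta_{p+q})}\mathcal S(a,b)^{2^\sharp/2}<\mathcal S(a,b)^{2^\sharp/2}$ because $2^\sharp>2$. Since $\frac{2^\sharp}{2^\sharp-2}=\frac N{2d}$, the bubble is excluded for every $\beta<\beta_1$.

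(3) The splitting must be stated for the sum of the two components. Derive it from $P_\beta(u_n,v_n)\to0$, the Pohozaev identity $P_\beta(u,v)=0$ for the weak limit solving \eqref{d214}, Brezis--Lieb, and compactness of $\mathcal B_{p,q}$. This yields $D(u_n-u,v_n-v)=\||x|^{-b}(u_n-u)\|_{2^\sharp}^{2^\sharp}+\||x|^{-b}(v_n-v)\|_{2^\sharp}^{2^\sharp}+o(1)$, which is all your dichotomy uses. A componentwise version, obtained by testing with $(u_n-u,0)$, reads $\||x|^{-a}\nabla(u_n-u)\|_2^2+\lambda_1\||x|^{-a}(u_n-u)\|_2^2=\||x|^{-b}(u_n-u)\|_{2^\sharp}^{2^\sharp}+o(1)$. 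It needs $\lambda_1\ge0$, which you only obtain later from $u\neq0$, itself a consequence of the strong convergence. As written, ``the same for $v$'' is therefore circular.
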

	\begin{theorem}[Mass supercritical]\label{DD6}
		If $N\ge 3, ~\max\{0,\frac{N-4}{2}\}\le a<\frac{N-2}{2},$ $a<b<a+1,~1<p,~q<\t,$ and  $(p+q)>\p$, then there exists $\beta_0\ge0$ such that a normalized ground state solution exists  for every $\beta>\beta_0$ and does not exist for $0<\beta<\beta_0$.  Moreover, $\beta_0=0$ if one of the following holds: \begin{itemize}
			\item[(i)]  $a=\frac{N-4}{2}$, with $N>4$.
			\item[(ii)]  $\max\{0,\frac{N-4}{2}\}<a<\frac{N-2}{2}$ ,~$r:=\min\{p,q\}<2$ and
			\begin{align*}  \max\Big\{\p,\frac{N}{N-2-2a+b},\t-\frac{2(2-r)(N-2a-2)}{4+2a-N}\Big\}<p+q<\t,\end{align*}
			\item[(iii)] $0<\frac{N-4}{2}<a<\frac{N-2}{2}$, $r<2$ and
			\begin{align*}  \p<p+q<\min\Big\{\frac{N}{N-2-2a+b},\t-\frac{2(r-1)}{N-2(1+a-b)}\Big\}\end{align*}
			\item[(iv)] $\max\{0,\frac{N-4}{2}\}<a<\frac{N-2}{2}$, $r<2$ and $\p<p+q=\frac{N}{N-2-2a+b}.$ 
		\end{itemize}          While $\beta_0>0$
		if $p,~q\ge2$. Moreover, the corresponding Lagrange multipliers $\lambda_1,~\lambda_2>0.$ 
	\end{theorem}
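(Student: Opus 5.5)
The plan mirrors the unweighted argument of Bartsch--Li--Zou, adapted to the dilation-invariant weighted setting.

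\emph{Fiber geometry.} Since $p+q>\p$, we have $\pq>2$. For each $\u\in S\r$ the map $\Phi_\beta^\u$ is positive for $t\to-\infty$ and tends to $-\infty$ as $t\to+\infty$. Its derivative vanishes exactly once, at some $t_\u$, which is a strict global maximum. Hence $\mathcal N_\beta\r=\mathcal N_\beta^-\r$ and
\begin{equation*}
m_\beta\r=\inf_{\u\in S\r}\max_{t\in\R}E_\beta(t\star\u)>0,
\end{equation*}
by \eqref{d5} and \eqref{d191}. Two facts follow directly from this minimax formula. First, $\beta\mapsto m_\beta\r$ is nonincreasing, since $\b\ge0$. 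Second, $m_\beta\r\le \frac dN\mathcal S(a,b)^{\frac N{2d}}$. The second is proved by testing with $(\tilde U_\varepsilon,\tilde U_\varepsilon)$, where $\tilde U_\varepsilon$ is a truncated and mass-normalized extremal, and letting the concentration parameter go to the limit: the coupling term only lowers the energy, and the critical parts of each component contribute $\frac dN\mathcal S^{N/2d}$. The sum would be $2\frac dN\mathcal S^{N/2d}$, so I would instead make one component concentrate and the other spread out with vanishing gradient. That gives the upper bound $\frac dN\mathcal S^{N/2d}$.

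\emph{Compactness and existence.} I would invoke Proposition~\ref{D5}. The key claim is that if $m_\beta\r<\frac dN\mathcal S(a,b)^{\frac N{2d}}$, then $m_\beta\r$ is attained by a positive solution with $\lambda_1,\lambda_2>0$. The steps are as follows.
\begin{enumerate}[label=(\alph*)]
\item Build a Palais--Smale sequence at level $m_\beta$ with $P_\beta\to0$, using the Ghoussoub minimax principle in radial functions.
\item Show the sequence is bounded, using the Pohozaev constraint.
\item Get strong convergence of the coupling term from Proposition~\ref{D303}.
\item Obtain $\lambda_i$ as limits, and use the appendix Liouville lemma (valid since $a\ge\frac{N-4}2$) to show $\lambda_i>0$.
\item Apply a Brezis--Lieb splitting; together with the threshold, this excludes loss of mass. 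It also excludes semitrivial limits, because the Pohozaev identity combined with the fact that extremals are not in $L^2(|x|^{-2a})$ leaves no other option.
\item Replace $\u$ by $(|u|,|v|)$ and use the maximum principle to get positivity.
\end{enumerate}

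\emph{Threshold.} Set
\begin{equation*}
\beta_0:=\inf\Bigl\{\beta>0:\ m_\beta\r<\tfrac dN\mathcal S(a,b)^{\frac N{2d}}\Bigr\}.
\end{equation*}
By monotonicity, existence follows for $\beta>\beta_0$. I still need to check that strict inequality holds for large $\beta$. This follows by testing with a fixed pair $\u$: then $\max_t E_\beta(t\star\u)\to0$ as $\beta\to\infty$, so $\beta_0<\infty$. For $0<\beta<\beta_0$ we have $m_\beta=\frac dN\mathcal S^{N/2d}$. Suppose a ground state $\u$ existed. Then for $\beta<\beta'<\beta_0$,
\begin{equation*}
m_{\beta'}\le\max_t E_{\beta'}(t\star\u)<\max_tE_\beta(t\star\u)=m_\beta,
\end{equation*}
where the strict inequality holds because $\b(u,v)>0$. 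This contradicts $m_{\beta'}=m_\beta$, so no ground state exists.

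\emph{Sign of $\beta_0$: main obstacle.} This part is the main work and requires sharp expansions of the test functions.
\begin{itemize}
\item For $\beta_0=0$, I would fix $\beta>0$ and test with the pair $(U_\varepsilon\text{ cut off},\ \phi)$, with $\phi$ fixed and the roles of $u$ and $v$ chosen so that the smaller exponent $r=\min\{p,q\}$ falls on the concentrating bubble. I would then expand:
\begin{itemize}
\item the mass of the cut-off bubble, which behaves like $\varepsilon^{\cdot}$ or $|\log\varepsilon|$ depending on whether $a$ is greater than or equal to $\frac{N-4}2$; this is the source of case (i);
\item the gradient and critical errors;
\item a lower bound for $\b$ of the form $\int U_\varepsilon^r|x|^{-b(p+q)}\phi^q$, which diverges or scales like a power of $\varepsilon$ according to whether $p+q$ is greater than, equal to, or less than $\frac N{N-2-2a+b}$.
\end{itemize}
Cases (ii)--(iv) are exactly the conditions under which the coupling gain dominates the mass-correction loss after optimizing in $t$. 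I expect this bookkeeping, balancing the exponents, to be the hardest step.
\item For $\beta_0>0$ when $p,q\ge2$, I would argue by contradiction. Take ground states $\u_\beta$ with $\beta\to0^+$. Using $m_\beta\to\frac dN\mathcal S^{N/2d}$ and the convexity of $s\mapsto s^{p/2}$, I would show that the Pohozaev and energy identities force $\|u_\beta\|^{2^\sharp}_{2^\sharp,b}+\|v_\beta\|^{2^\sharp}_{2^\sharp,b}$ to exceed the single-bubble level. Roughly, splitting the Sobolev quotient between two components costs, while the coupling gain is $O(\beta)$ times a quantity controlled by the product of the two norms. Since $p,q\ge2$, this gain is smaller than the splitting cost, which gives $E_\beta>\frac dN\mathcal S^{N/2d}$ for $\beta$ small, a contradiction.
\end{itemize}
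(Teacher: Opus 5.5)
Most of your plan follows the paper. This covers the fiber analysis and minimax formula (Lemmas~\ref{D6} and~\ref{D7}), and compactness below $\frac dN\mathcal S(a,b)^{N/2d}$ via Proposition~\ref{D5} with the Liouville lemma. It also covers $\beta_0$ with $\beta_0<\infty$ from large $\beta$, and non-attainment for $0<\beta<\beta_0$ via $\max_tE_{\beta'}(t\star(u,v))<\max_tE_\beta(t\star(u,v))$. Proposition~\ref{D5} also assumes the mass monotonicity of Lemma~\ref{D11}(i), which you never verify; that lemma is exactly what your $\beta_0=0$ argument lacks.

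\textbf{The gap: your test pair for $\beta_0=0$ cannot work.} Take $u_\varepsilon=\rho_1\zeta_\varepsilon/\||x|^{-a}\zeta_\varepsilon\|_2$ and a fixed bounded $\phi$ of mass $\rho_2$. The dilation acts on both components at once, and the relevant $t_\varepsilon$ satisfies $e^{t_\varepsilon}\sim\||x|^{-a}\zeta_\varepsilon\|_2\to0$.
\begin{itemize}
\item Cost of $\phi$: it gets spread out but still costs $\frac12e^{2t_\varepsilon}\||x|^{-a}\nabla\phi\|_2^2\sim\||x|^{-a}\zeta_\varepsilon\|_2^2\gtrsim\varepsilon^{\frac{N-2d}{2d}}$.
\item Gain from coupling: only $\beta e^{(p+q)\delta_{p+q}t_\varepsilon}\mathcal B_{p,q}(u_\varepsilon,\phi)$. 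When the overlap is carried by $|x|\sim1$, $\mathcal B_{p,q}(u_\varepsilon,\phi)=O(1)$. The gain is then $O(\||x|^{-a}\zeta_\varepsilon\|_2^{(p+q)\delta_{p+q}})=o(\||x|^{-a}\zeta_\varepsilon\|_2^2)$, since $(p+q)\delta_{p+q}>2$.
\item Core-dominated regime: an exponent count gives the same conclusion.
\end{itemize}
So $\max_tE_\beta(t\star(u_\varepsilon,\phi))$ stays above $\frac dN\mathcal S(a,b)^{N/2d}$, and $r<2$ plays no role. Also, the dichotomy at $\frac N{N-2-2a+b}$ concerns $\||x|^{-b}\zeta_\varepsilon\|_{p+q}^{p+q}$, not your $\int U_\varepsilon^r\phi^{p+q-r}|x|^{-b(p+q)}\,dx$. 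Your claim that (ii)--(iv) come out exactly is therefore unsupported.

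\textbf{The paper's remedy.} Lemma~\ref{D11}(i) gives $m_\beta(\rho_1,\rho_2)\le m_\beta(\rho_1,\varepsilon^\theta)$. So the second component can be $\varepsilon^\theta\zeta_\varepsilon/\||x|^{-a}\zeta_\varepsilon\|_2$, a tiny copy of the same bubble carrying the smaller exponent $q=r$. Both components then sit at one scale, the extra cost is $O(\varepsilon^{2\theta})$, and the gain is of order $\varepsilon^{q\theta}\||x|^{-b}\zeta_\varepsilon\|_{p+q}^{p+q}/\||x|^{-a}\zeta_\varepsilon\|_2^{(p+q)(1-\delta_{p+q})}$. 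Here $r<2$ makes $\varepsilon^{q\theta}\gg\varepsilon^{2\theta}$. The comparison with the truncation error $\varepsilon^{\frac{N-2d}{2d}}$ fixes the admissible $\theta$ (Lemma~\ref{D201}(II), Lemma~\ref{D19}(iii)).

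\textbf{Caution.} If you adopt this, redo the exponent count yourself rather than matching it to the stated ranges. For $p+q\le\frac N{N-2-2a+b}$, the bound $U_\varepsilon\le C\varepsilon^{\frac{N-2d}{4d}}|x|^{-(N-2-2a)}$ gives $\||x|^{-b}\zeta_\varepsilon\|_{p+q}^{p+q}\lesssim\varepsilon^{\frac{(p+q)(N-2d)}{4d}}$, up to a logarithm at equality. The gain is then $o(\varepsilon^{\frac{N-2d}{2d}})$ for every $\theta$, which is the same $|x|\sim1$ obstruction. Note that \eqref{d27} treats the first line of Lemma~\ref{D19}(iii) as a sharp order, although it is only an upper bound.

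\textbf{On $\beta_0>0$ for $p,q\ge2$.} Your sketch is fine in spirit and does not need $m_\beta\to\frac dN\mathcal S(a,b)^{N/2d}$. To make it precise as the paper does:
\begin{itemize}
\item Test each equation with its own component and drop $\lambda_{i,n}\rho_i^2>0$.
\item Use $\mathcal B_{p,q}(u_n,v_n)\le C\||x|^{-b}u_n\|_{2^\sharp}^p$, from H\"older and Lemma~\ref{lem:GN}.
\item This gives $\mathcal S(a,b)\le\||x|^{-b}u_n\|_{2^\sharp}^{2^\sharp-2}+C\beta_n\||x|^{-b}u_n\|_{2^\sharp}^{p-2}$, where $p\ge2$ keeps the last term $o(1)$.
\item The same holds for $v_n$ with $q\ge2$, and the energy identity then gives $m_{\beta_n}\ge\frac{2d}{N}\mathcal S(a,b)^{N/2d}+o(1)$.
\end{itemize}
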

	\begin{theorem}[Mass critical]\label{DD9}
		If $N\ge 3, ~\max\{0,\frac{N-4}{2}\}\le a<\frac{N-2}{2},$ $a<b<a+1,~1<p,~q<\t,$ $(p+q)=\p$ and $0<\beta<\beta^*$ (see \eqref{d222}), one of the conditions (i)--(iii) of Lemma~\ref{D93} being satisfied, then there exists a normalized ground state solution $(u_\beta,~v_\beta)$ with positive Lagrange multipliers $\lambda_1,\lambda_2$.
		
	\end{theorem}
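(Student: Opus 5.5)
We will prove the mass critical case by a minimax argument on $\mathcal N_\beta\r$, parallel to the supercritical proof but exploiting that now $\pq=2$. Since $p+q=\p$, the fiber map becomes
\begin{align*}
\Phi_\beta^{\u}(t)=\frac{e^{2t}}{2}\Bigl(\a-2\beta\b(u,v)\Bigr)-\frac{e^{\t t}}{\t}\Bigl(\||x|^{-b}u\|_\t^\t+\||x|^{-b}v\|_\t^\t\Bigr).
\end{align*}
By \eqref{d5}, with $\pq=2$, we have $2\beta\b(u,v)\le 2\beta C_{a,b}^{p+q}(\rho_1^2+\rho_2^2)^{(p+q)(1-\delta_{p+q})/2}\a$. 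We define $\beta^*$ so that this coefficient is $<1$ for $0<\beta<\beta^*$. Then the quadratic part is positive, $\Phi_\beta^{\u}$ has a unique critical point $t_{\u}$, which is a strict global maximum, and $\mathcal N_\beta\r=\mathcal N_\beta^-\r$ is a natural $C^1$ constraint. Moreover, $m_\beta\r=\inf_{S\r}\max_t E_\beta(t\star\u)>0$. This follows because on $\mathcal N_\beta\r$ the gradient norm is bounded below via $\mathcal S(a,b)$: $(1-\beta/\beta^*)\|\nabla\|^2\le \mathcal S(a,b)^{-\t/2}\|\nabla\|^{\t}$. We then obtain a Palais--Smale sequence $\v\subset S\r$ at level $m_\beta\r$ with $P_\beta\v\to0$, using Ghoussoub's minimax principle (or Jeanjean's augmented functional $(t,u,v)\mapsto E_\beta(t\star(u,v))$), with $u_n,v_n\ge0$ after replacing by $(|u|,|v|)$. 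Boundedness in $\y$ follows from $P_\beta\to0$ and the positivity of the quadratic coefficient.

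The key energy estimate is
\begin{align*}
m_\beta\r<\frac{d}{N}\mathcal S(a,b)^{\frac{N}{2d}},
\end{align*}
the threshold given by the compactness Proposition~\ref{D5}. Here we plug test pairs built from the truncated extremals $U_\varepsilon$ of \eqref{c96}, normalized to masses $\rho_1,\rho_2$: either $(\rho_1 U_\varepsilon/\||x|^{-a}U_\varepsilon\|_2,\ \rho_2 U_\varepsilon/\||x|^{-a}U_\varepsilon\|_2)$ suitably cut off, or one component an extremal and the other fixed. We then maximize in $t$. Since the coupling term enters with the same power $e^{2t}$, the maximum equals $\frac{d}{N}\bigl(\text{quadratic}\bigr)^{N/(2d)}/(\text{critical})^{(N-2d)/(2d)}$, and we need the negative contribution $-2\beta\b$ to beat the error of order the cutoff remainder in the gradient and the mass blow-up of $U_\varepsilon$. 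This is exactly where the conditions (i)--(iii) of Lemma~\ref{D93} enter: they guarantee $\b(u_\varepsilon,v_\varepsilon)$ decays slower in $\varepsilon$ than the error terms, given the weighted $L^2$ norm behavior of $U_\varepsilon$, which depends on whether $N-2a<4$, $=4$, $>4$. I expect this asymptotic comparison to be the main obstacle, and I would take it directly from Lemma~\ref{D93}.

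With the estimate, Proposition~\ref{D5} gives, up to a subsequence, that either $\v\to\u$ strongly in $\y$ or a loss of compactness occurs. We rule out the latter via the strict inequality. The argument goes as follows. Weak limits $(u,v)$ satisfy the system with $\lambda_{1,n}\to\lambda_i$ obtained from the Lagrange multiplier identities. Positivity $\lambda_1,\lambda_2>0$ follows from the Liouville-type lemma in the appendix, applied if some $\lambda_i\le0$ (this is where $a\ge\frac{N-4}{2}$ is used). Using $\lambda_i>0$, testing with $u_n-u$, $v_n-v$ together with Brezis--Lieb yields strong convergence of the masses and gradients. Semi-trivial limits are excluded because the Pohozaev identity would force a nonzero component to be a CKN extremal, which has infinite weighted mass, and because the zero limit would cost at least $\frac{d}{N}\mathcal S(a,b)^{N/(2d)}$. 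Thus $\u\in\mathcal N_\beta\r$ with $E_\beta\u=m_\beta\r$. Strict positivity of $u,v$ follows from the strong maximum principle for $-\operatorname{div}(|x|^{-2a}\nabla\cdot)+\lambda_i|x|^{-2a}$ away from the origin, applied to the radial nonnegative nontrivial components.
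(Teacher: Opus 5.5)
Your outline follows the paper's architecture: a Palais--Smale sequence at $m_\beta(\rho_1,\rho_2)$ with $P_\beta\to0$ as in Lemma~\ref{D91}, the bound $0<m_\beta(\rho_1,\rho_2)<\frac dN\mathcal S(a,b)^{N/2d}$ of Lemma~\ref{D93}, and compactness as in Proposition~\ref{D5}. However, you never use the monotonicity of $m_\beta$ in the masses, Lemma~\ref{D92}(i), which the paper's proof lists explicitly. This leaves a real hole in the compactness step.

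That monotonicity is the standing hypothesis of Proposition~\ref{D5}. In Step~3 the weak limit $(u,v)$ may have lost mass, and $E_\beta(u,v)$ is bounded below through $m_\beta(\||x|^{-a}u\|_2,\||x|^{-a}v\|_2)\ge m_\beta(\rho_1,\rho_2)$. You invoke the proposition without verifying this hypothesis, and your own sketch does not replace it. Testing with $(u_n-u,0)$, $(0,v_n-v)$ plus Brezis--Lieb only gives
\[
\||x|^{-a}\nabla(u_n-u)\|_2^2+\lambda_1\||x|^{-a}(u_n-u)\|_2^2=\||x|^{-b}(u_n-u)\|_{2^\sharp}^{2^\sharp}+o_n(1)
\]
and the analogue for $v$. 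This is compatible both with a loss of mass and with a bubble.

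You also need the splitting $P_\beta(u_n-u,v_n-v)\to0$, which follows from $P_\beta(u_n,v_n)\to0$, $P_\beta(u,v)=0$ and $\mathcal B_{p,q}(u_n-u,v_n-v)\to0$. Subtracting it gives $\lambda_1\||x|^{-a}(u_n-u)\|_2^2+\lambda_2\||x|^{-a}(v_n-v)\|_2^2\to0$. Once $\lambda_1,\lambda_2>0$, the masses are preserved and $(u,v)\in\mathcal N_\beta(\rho_1,\rho_2)$. Hence $E_\beta(u,v)\ge m_\beta(\rho_1,\rho_2)=c$, and the energy splitting forces the gradient remainder to vanish. Note that $\lambda_i>0$ requires $u,v\not\equiv0$ first, so the semi-trivial exclusion must come before the Liouville step, not after it as in your sketch.

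Alternatively, keep the paper's order and observe that in the mass-critical case $P_\beta(u,v)=0$ gives $E_\beta(u,v)=\frac dN\bigl(\||x|^{-b}u\|_{2^\sharp}^{2^\sharp}+\||x|^{-b}v\|_{2^\sharp}^{2^\sharp}\bigr)\ge0$, which is all Step~3 needs. So Lemma~\ref{D92}(i) can be bypassed here, but only by an argument you did not supply.

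The same monotonicity also drives Lemma~\ref{D93}, and the test-pair mechanism you describe for it would fail. Take both components equal to $\rho_i\zeta_\varepsilon/\||x|^{-a}\zeta_\varepsilon\|_2$. Because $(p+q)\delta_{p+q}=2$, the maximum over $t$ is exactly
\[
\frac dN\Bigl(K\,\frac{\||x|^{-a}\nabla\zeta_\varepsilon\|_2^2}{\||x|^{-b}\zeta_\varepsilon\|_{2^\sharp}^2}-R_\varepsilon\Bigr)^{N/2d},\qquad K:=\frac{\rho_1^2+\rho_2^2}{(\rho_1^{2^\sharp}+\rho_2^{2^\sharp})^{2/2^\sharp}}>1,
\]
with $0\le R_\varepsilon\le C\,\||x|^{-b}\zeta_\varepsilon\|_{p+q}^{p+q}/\||x|^{-a}\zeta_\varepsilon\|_2^{p+q-2}\to0$ by the computations in the proof of Lemma~\ref{D93}. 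So this level tends to $\frac dN(K\mathcal S(a,b))^{N/2d}$, a fixed amount above the threshold.

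Lemma~\ref{D93} works differently. It gives the second component the vanishing mass $\varepsilon^\theta$ and uses $m_\beta(\rho_1,\rho_2)\le m_\beta(\rho_1,\varepsilon^\theta)$. It then balances the loss $O(\varepsilon^{(N-2d)/2d})+O(\varepsilon^{2\theta})$ against a gain of order $\varepsilon^{r\theta}$ times that ratio. This is where $r=\min\{p,q\}<2$ and conditions (i)--(iii) come from. Since you cite Lemma~\ref{D93} as a black box, your chain of reasoning survives this point, but your account of why its hypotheses appear is incorrect.
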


	\begin{theorem}\label{DD1} If $N\ge 3, ~\max\{0,\frac{N-4}{2}\}\le a<\frac{N-2}{2},~a<b<a+1$ and $\beta\le 0$ then $m_\beta\r$ is not achieved.    
	\end{theorem}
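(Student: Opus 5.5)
The plan is to compute $m_\beta(\rho_1,\rho_2)$ explicitly for $\beta\le0$, show that it equals $\frac{d}{N}\mathcal{S}(a,b)^{\frac{N}{2d}}$, and then show that no element of $\mathcal{N}_\beta(\rho_1,\rho_2)$ attains this value.

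\textbf{Lower bound.} Let $(u,v)\in\mathcal{N}_\beta(\rho_1,\rho_2)$. Since $\beta\le0$, the relation $P_\beta(u,v)=0$ gives
\[
\||x|^{-a}\nabla u\|_2^2+\||x|^{-a}\nabla v\|_2^2\le \||x|^{-b}u\|_{2^\sharp}^{2^\sharp}+\||x|^{-b}v\|_{2^\sharp}^{2^\sharp}.
\]
Applying \eqref{d191} to each component and using $(s+t)^{2^\sharp/2}\ge s^{2^\sharp/2}+t^{2^\sharp/2}$, the right-hand side is at most $\mathcal{S}(a,b)^{-2^\sharp/2}\bigl(\||x|^{-a}\nabla u\|_2^2+\||x|^{-a}\nabla v\|_2^2\bigr)^{2^\sharp/2}$. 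This yields $T:=\||x|^{-a}\nabla u\|_2^2+\||x|^{-a}\nabla v\|_2^2\ge \mathcal{S}(a,b)^{\frac{N}{2d}}$, using $\frac{2^\sharp}{2^\sharp-2}=\frac{N}{2d}$.

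Next, eliminate the critical term with $P_\beta=0$:
\[
E_\beta(u,v)=\Bigl(\tfrac12-\tfrac1{2^\sharp}\Bigr)T-\beta\Bigl(1-\tfrac{(p+q)\delta_{p+q}}{2^\sharp}\Bigr)\mathcal{B}_{p,q}(u,v).
\]
Here $(p+q)\delta_{p+q}<2^\sharp$ because $p+q<2^\sharp$, and $\beta\le0$, so the second term is nonnegative. Hence $E_\beta(u,v)\ge \frac dN T\ge \frac dN\mathcal{S}(a,b)^{\frac N{2d}}$.

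\textbf{Upper bound.} I would take truncated radial extremals $u_\varepsilon=\varphi U_\varepsilon$ from \eqref{c96}, normalized in mass, and choose $v$ to be any fixed function of mass $\rho_2$ whose support is disjoint from that of $u_\varepsilon$ (for instance a radial function supported in an annulus far away). Then $\mathcal{B}_{p,q}=0$ and the energy splits into two parts. For each function there is a unique $t$ with $t\star(u,v)\in\mathcal{N}_\beta$, and $\max_t\Phi$ equals $\sup_t\bigl(\frac{e^{2t}}2 A-\frac{e^{2^\sharp t}}{2^\sharp}B\bigr)$ with $A,B$ the sums over both components. I would then spread $v$ out by the dilation $s\star v$ with $s\to-\infty$, which preserves its mass while sending its gradient norm and critical norm to $0$. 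Doing this for $v$, and concentrating $u_\varepsilon$ with the standard estimates $\||x|^{-a}\nabla u_\varepsilon\|_2^2=\mathcal{S}^{N/2d}+o(1)$ and $\||x|^{-b}u_\varepsilon\|_{2^\sharp}^{2^\sharp}=\mathcal{S}^{N/2d}+o(1)$ after rescaling, makes the maximum tend to $\frac dN\mathcal{S}(a,b)^{\frac N{2d}}$. Since $\beta\le0$ does not affect pairs with disjoint supports, this gives $m_\beta=\frac dN\mathcal{S}(a,b)^{\frac N{2d}}$. The point needing care is mass normalization of the truncated $U_\varepsilon$: since $U_\varepsilon\notin L^2(|x|^{-2a})$ in general, the cutoff must be chosen and the mass fixed by a $k\star$ rescaling, which leaves the ratio $A^{2^\sharp}/B^{2}$ unchanged. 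So only the Sobolev quotient matters, and that quotient tends to $\mathcal{S}(a,b)$.

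\textbf{Non-attainment.} This is the main obstacle. Suppose $(u,v)\in\mathcal{N}_\beta(\rho_1,\rho_2)$ attains $m_\beta$. Then every inequality above is an equality. In particular $T=\mathcal{S}^{N/2d}$, the inequality $(s+t)^{2^\sharp/2}\ge s^{2^\sharp/2}+t^{2^\sharp/2}$ is an equality, and the Caffarelli--Kohn--Nirenberg inequality is an equality for each nonzero component. Since $2^\sharp/2>1$, equality in the power inequality forces one of the two gradient norms to vanish, so $u=0$ or $v=0$. This contradicts $\rho_1,\rho_2>0$.

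I would double-check this argument, since it seems to avoid the facts the introduction mentions, namely that extremals have infinite mass. Those facts would be needed if the equality analysis were cruder. If needed, I would add the observation that equality in \eqref{d191} forces $u=cU_\varepsilon$, which is not in $L^2(|x|^{-2a})$ under $a\ge\frac{N-4}{2}$. This is also a contradiction, and it covers the case where $\mathcal{B}_{p,q}$ equality needs $\beta=0$.
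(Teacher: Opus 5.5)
Your proof is correct. It follows the paper's lower bound exactly, but differs in the upper bound and in the equality case.

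\textbf{Lower bound.} Your estimate $E_\beta\ge\frac dN\bigl(\||x|^{-a}\nabla u\|_2^2+\||x|^{-a}\nabla v\|_2^2\bigr)\ge\frac dN\mathcal{S}(a,b)^{N/2d}$ on $\mathcal{N}_\beta(\rho_1,\rho_2)$ is exactly the paper's.

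\textbf{Upper bound.} The paper first proves $m_\beta(\rho_1,\rho_2)=m_0(\rho_1,\rho_2)$. It does this via the characterization $m_\beta=\inf_{\mathcal{K}_\beta}F_\beta$, a dilation that lowers both masses, and the monotonicity of $m_\beta$ in the masses. It then evaluates $m_0$ by a Soave-type construction in which the second component is spread out by $k\star$. Your disjointly supported pairs make $\mathcal{B}_{p,q}$ vanish along the whole fibre $t\star(u,v)$, because dilations keep the supports disjoint. So your test energies do not depend on $\beta$, and you get $m_\beta\le\frac dN\mathcal{S}(a,b)^{N/2d}$ directly. You need neither $m_\beta=m_0$ nor monotonicity in the masses, and the same pairs give this non-strict bound for every real $\beta$. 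The paper's route yields structural facts that this theorem does not need. One small slip: the mass of $\varphi U_\varepsilon$ is fixed by a scalar multiple, not by $k\star$, which preserves mass. This is harmless, because the quotient $\||x|^{-a}\nabla u\|_2^2/\||x|^{-b}u\|_{2^\sharp}^2$ is invariant under both operations.

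\textbf{Non-attainment.} Your doubt resolves in your favour. Equality in $(s+t)^{2^\sharp/2}\ge s^{2^\sharp/2}+t^{2^\sharp/2}$ forces one gradient norm to vanish. That component is then $0$, so its weighted mass is $0$, contradicting $\rho_1,\rho_2>0$. The paper goes on to identify the other component with some $U_\varepsilon$ and to use $U_\varepsilon\notin L^2(\mathbb{R}^N;|x|^{-2a})$. That step is valid but redundant. Neither the classification of extremals nor the restriction $a\ge\frac{N-4}{2}$ (which is exactly what makes $U_\varepsilon$ have infinite weighted mass) is needed for this step, so you can drop your fallback remark.
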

	
	In this paper, we are also concerned with the asymptotic behavior of the normalized ground-state solutions obtained in  Theorems~\ref{DD2} and \ref{DD6} as $\beta\to 0^+$ and $\beta\to +\infty$. Similar results have been obtained for the case $a=b=0$ in \cite{bartsch2023existence}. The limit system is
	\begin{equation}\label{d60}
		\left\{
		\begin{aligned}
			-\text{div}(|x|^{-2a}\na u)&+\lambda_1 \frac{u}{|x|^{2a}}=p\frac{|v|^q|u|^{p-2}u}{|x|^{b(p+q)}},\\
			-\text{div}(|x|^{-2a}\na v)&+\lambda_2\frac{v}{|x|^{2a}}=q\frac{|u|^p|v|^{q-2}v}{|x|^{b(p+q)}},\\
			\||x|^{-a}u\|_2^2=\rho^2_1,&\quad \||x|^{-a}v\|_2^2=\rho_2^2.
		\end{aligned}
		\right.
	\end{equation}
	\begin{theorem}\label{DD4}
		Suppose $N\ge 3,~\max\{0,\frac{N-4}{2}\}\le a<\frac{N-2}{2},~a<b<1+a$, and $p+q\ne \p$. Then $\kappa\r$ (see \eqref{d61}) is achieved by a positive normalized ground state of \eqref{d60}, and the corresponding Lagrange multipliers satisfy $\lambda_1,\lambda_2>0$.
	\end{theorem}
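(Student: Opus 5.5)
Since $\kappa\r$ is referenced through \eqref{d61}, I take it to be the ground state level of the limit functional
\[
I(u,v)=\tfrac12\bigl(\||x|^{-a}\nabla u\|_2^2+\||x|^{-a}\nabla v\|_2^2\bigr)-\mathcal B_{p,q}(u,v)
\]
on $S\r$, namely the infimum over the Pohozaev set $\{P_0(u,v):=\|\cdot\|_a^2\text{-terms}-\pq\mathcal B_{p,q}=0\}$. The plan splits into the two regimes $p+q<\p$ and $p+q>\p$.

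\textbf{Mass subcritical case ($p+q<\p$).} Here $\pq<2$. By \eqref{d5}, $I$ is coercive and bounded below on $S\r$, so I would take $\kappa\r=\inf_{S\r}I$. Scaling $t\star(u,v)$ with $t\to-\infty$, for any $(u,v)$ with $\mathcal B_{p,q}>0$, gives $\kappa\r<0$. A minimizing sequence can be taken radial and nonnegative by replacing it with $(|u_n|,|v_n|)$. It is bounded in $\y$, so up to a subsequence it converges weakly to some $(u,v)$. By Proposition~\ref{D303}, $\mathcal B_{p,q}(u_n,v_n)\to\mathcal B_{p,q}(u,v)$. Then $\kappa\r\le I(u,v)$ fails to be strict only if the masses are preserved. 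Since $\kappa<0$, we get $\mathcal B_{p,q}(u,v)>0$, so both components are nontrivial. To exclude mass loss, I would use the strict scaling inequality. If $\|u\|$ has mass $\bar\rho_1\le\rho_1$, then the dilation $u\mapsto \theta u$ combined with $t\star$ shows that $\kappa$ is strictly decreasing in each mass. Alternatively, I would argue directly: the minimizer is found on $S(\bar\rho_1,\bar\rho_2)$, the multipliers are positive, and the Brezis--Lieb-type identity $\lambda_1(\rho_1^2-\bar\rho_1^2)+\dots=0$ forces equality.

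\textbf{Mass supercritical case ($p+q>\p$).} Here $\pq>2$. The fibre map $t\mapsto I(t\star(u,v))$ has a unique maximum whenever $\mathcal B_{p,q}>0$, so the Pohozaev set is a natural constraint and
\[
\kappa\r=\inf_{(u,v)}\max_t I(t\star(u,v))>0,
\]
with the lower bound coming from \eqref{d5}. I would take a minimizing sequence on the Pohozaev set, made nonnegative and radial. On this set $I=(\tfrac12-\tfrac1{\pq})\|\nabla\|^2$, so the sequence is bounded, and Ekeland's principle yields a Palais--Smale sequence with $P_0(u_n,v_n)\to0$. Compactness of $\mathcal B_{p,q}$ by Proposition~\ref{D303} gives $\mathcal B_{p,q}(u,v)=\lim\mathcal B_{p,q}(u_n,v_n)$. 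This limit is bounded away from zero because $\|\nabla\|^2\ge c>0$ on the set, again by \eqref{d5}. Hence $u,v\not\equiv0$.

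\textbf{Multipliers and strong convergence (both cases).} Multipliers $\lambda_{1,n},\lambda_{2,n}$ are obtained by testing the equations with $u_n$ and $v_n$. They are bounded and converge to $\lambda_1,\lambda_2$. The weak limit solves \eqref{d60} with these $\lambda_i$ and satisfies the Pohozaev identity. Combining the two identities gives
\[
\lambda_1\rho_1^2+\lambda_2\rho_2^2=\bigl(p+q-\pq\bigr)\mathcal B_{p,q}\cdot(\text{const})>0,
\]
but I need the individual signs.

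\textbf{Main obstacle: individual positivity of $\lambda_1,\lambda_2$.} I expect this to be the main difficulty. The approach is to invoke the Liouville-type lemma from the appendix, which is why $a\ge\frac{N-4}{2}$ is assumed. If $\lambda_1\le0$, then $u\ge0$ satisfies $-\operatorname{div}(|x|^{-2a}\nabla u)\ge0$ with $u\in L^2(|x|^{-2a})$ radial, and the Liouville lemma forces $u\equiv0$, a contradiction. Once $\lambda_i>0$, testing the equation for $u_n-u$ gives
\[
\|u_n-u\|_a^2+\lambda_1\||x|^{-a}(u_n-u)\|_2^2\to0,
\]
so the masses are preserved and $(u,v)\in S\r$ achieves $\kappa\r$. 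Strict positivity follows from the strong maximum principle applied away from the origin, together with radial ODE arguments near $0$.
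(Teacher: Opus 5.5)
Your proposal is correct and follows essentially the paper's proof: a Palais--Smale sequence at level $\kappa(\rho_1,\rho_2)$ (Ekeland in the mass-subcritical case, a Pohozaev-type minimax in the supercritical case), passage to the limit in $\mathcal B_{p,q}$ by the compact radial embedding, nontriviality of both components from $\kappa(\rho_1,\rho_2)<0$ (subcritical) or from $\mathcal B_{p,q}$ staying away from zero (supercritical), positivity of $\lambda_1,\lambda_2$ from Lemma~\ref{D2}, and strong convergence with both masses preserved by testing with $u_n-u$ and $v_n-v$. Two remarks: the multiplier route is the one that carries the proof, and your first suggestion for ruling out mass loss (strict decrease of $\kappa$ in each mass separately via $u\mapsto\theta u$ and $t\star$) is not justified by scaling alone, because when only one mass drops the coupling term gains just a factor $\theta^{p}$ and any pointwise gain is not uniform along a minimizing sequence; and in the supercritical case the Palais--Smale sequence should be produced as in Lemma~\ref{D9} (or by Ekeland applied to $\widetilde J$ on $S(\rho_1,\rho_2)$), since Ekeland on the Pohozaev set itself only gives a Palais--Smale sequence for the functional restricted to that set.
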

    \begin{theorem}\label{DD11}
Let $N \ge 3$, $\max\left\{0, \frac{N-4}{2}\right\} \le a < \frac{N-2}{2}$, $a < b < a+1$, and let $(u_\beta, v_\beta) \in \mathcal{N}_\beta(\rho_1, \rho_2)$ be a family of normalized ground states of system \eqref{d1} attaining $m_\beta(\rho_1, \rho_2)$.
\begin{itemize}
    
    \item[(i)] \emph{(Mass-subcritical regime)} If $p+q < (p+q)_c$, let $t_\beta := t_\beta^{(u_0, v_0)}$ be given by Lemma~\ref{D3} for any $(u_0, v_0) \in \mathcal{Z}(\rho_1, \rho_2)$. Then $e^{t_\beta} \sim \beta^{\frac{1}{2 - (p+q)\delta_{p+q}}}$ as $\beta \to 0^+$, and
    \[
    \operatorname{dist}_{\mathcal{X}}\big((-t_\beta)*(u_\beta, v_\beta), \, \mathcal{Z}(\rho_1, \rho_2)\big) \longrightarrow 0 \quad \text{as } \beta \to 0^+,
    \]
    where $\mathcal{Z}(\rho_1, \rho_2)$ denotes the set of normalized ground states of the limit system \eqref{d60}. Moreover,
    \[
    \lambda_{1,\beta} + \lambda_{2,\beta} \sim \||x|^{-a}\nabla u_\beta\|_2^2 + \||x|^{-a}\nabla v_\beta\|_2^2 \sim \beta^{\frac{2}{2 - (p+q)\delta_{p+q}}} \quad \text{as } \beta \to 0^+.
    \]

    \item[(ii)] \emph{(Mass-critical and mass-supercritical regimes)} If $p+q \ge (p+q)_c$, then as $\beta \to 0^+$, up to a subsequence, one component vanishes while the other concentrates on an extremal of the Caffarelli--Kohn--Nirenberg inequality; that is,
    \[
    \operatorname{dist}_{\mathcal{D}_a^{1,2} \times \mathcal{D}_a^{1,2}}\big((u_\beta, v_\beta), \, \mathcal{U} \times \{0\}\big) \longrightarrow 0 \quad \text{or} \quad \operatorname{dist}_{\mathcal{D}_a^{1,2} \times \mathcal{D}_a^{1,2}}\big((u_\beta, v_\beta), \, \{0\} \times \mathcal{U}\big) \longrightarrow 0,
    \]
    where $\mathcal{U} = \{U_\epsilon : \epsilon > 0\}$ is the family of extremals defined in \eqref{c96}.
\end{itemize}
\end{theorem}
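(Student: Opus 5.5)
I would treat the two regimes separately. Both rest on energy estimates for $m_\beta\r$ as $\beta\to0^+$ and on the Pohozaev identity $P_\beta(u_\beta,v_\beta)=0$.

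\emph{Regime (i).} Write $T_\beta:=\||x|^{-a}\nabla u_\beta\|_2^2+\||x|^{-a}\nabla v_\beta\|_2^2$ and $\gamma:=\pq<2$. The first step is to bound $m_\beta\r$ above. Take $(u_0,v_0)\in\mathcal Z\r$ and the scaling $t_\beta$ of Lemma~\ref{D3}, for which $t_\beta\star(u_0,v_0)\in\mathcal N^+_\beta$. Since the critical terms scale like $e^{\t t}$, their contribution is of higher order, and
\[
m_\beta\r\le E_\beta(t_\beta\star(u_0,v_0))=\beta^{\frac{2}{2-\gamma}}\,\kappa\r\,(1+o(1)).
\]
For the matching lower bound on $T_\beta$ I would use the local minimizer characterization together with \eqref{d5}, which gives $T_\beta\sim\beta^{2/(2-\gamma)}$.

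Next I rescale: set $(\tilde u,\tilde v):=(-t_\beta)\star(u_\beta,v_\beta)$ with $e^{t_\beta}\sim\beta^{1/(2-\gamma)}$. Then $\tilde E(\tilde u,\tilde v):=e^{-2t_\beta}E_\beta(u_\beta,v_\beta)$ equals the limit functional $\frac12\|\nabla\cdot\|^2-\mathcal B_{p,q}$ plus a critical term carrying the factor $e^{(\t-2)t_\beta}\to0$. Hence $(\tilde u,\tilde v)$ is bounded in $\y$ and is a minimizing sequence for $\kappa\r$ on the limit Pohozaev manifold, up to $o(1)$ errors. Radial compactness (Proposition~\ref{D303}) lets me pass to the limit in $\mathcal B_{p,q}$. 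Strong convergence in $\y$ then follows from the convergence of the Lagrange multipliers and from positivity of the limit multipliers (Theorem~\ref{DD4}), which prevents mass loss. This yields the $\operatorname{dist}_\y\to0$ claim.

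The asymptotics $\lambda_{1,\beta}+\lambda_{2,\beta}\sim T_\beta$ come from testing the equations with $u_\beta,v_\beta$ and combining with $P_\beta=0$:
\[
\lambda_1\rho_1^2+\lambda_2\rho_2^2=\beta(p+q)(1-\delta_{p+q})\mathcal B_{p,q}.
\]
The right side is comparable to $T_\beta$ after rescaling, because the limit problem has $\mathcal B_{p,q}>0$.

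\emph{Regime (ii).} Here I would show that $m_\beta\r\to\frac dN\mathcal S(a,b)^{N/(2d)}$ as $\beta\to0^+$. The upper bound uses a truncated extremal $U_\varepsilon$ in one component and a small scaled function in the other. The lower bound on $\mathcal N_\beta$ uses $P_\beta=0$ and $\beta\mathcal B_{p,q}\to0$, together with the fact that, by \eqref{d5} and $\gamma\ge2$, $T_\beta$ stays bounded away from zero. It follows that $\beta\mathcal B_{p,q}(u_\beta,v_\beta)\to0$ and $T_\beta\to\|u\|_\t^\t+\|v\|_\t^\t$, so $(u_\beta,v_\beta)$ is a minimizing sequence for the critical energy at exactly the single-bubble level. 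Since $2\cdot\frac dN\mathcal S^{N/2d}$ exceeds this level, only one component can carry the $\t$-norm. The other component must then have $\nabla$-norm tending to $0$: its $\t$-norm vanishes, and $P_\beta=0$ transfers this to the gradient.

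For the nonvanishing component, the ratio $\||x|^{-a}\nabla u\|_2^2/\||x|^{-b}u\|_\t^2\to\mathcal S(a,b)$. The remaining step is to show that a radial optimizing sequence for the CKN quotient is close in $\mathcal D^{1,2}_a$ to the manifold $\mathcal U$. I would do this by rescaling with origin-centered dilations, which are the only noncompact symmetry in the radial class, and applying concentration-compactness (or a Lions-type lemma) in the radial weighted space. This identifies the limit as some $U_\varepsilon$ up to normalization. The normalization is fixed by the Euler–Lagrange level: $\|\nabla\|^2=\|\cdot\|_\t^\t=\mathcal S^{N/2d}$ forces the constant multiple to be $1$.

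I expect the main obstacle to be this last profile-decomposition step. Dilations $\varepsilon\to0$ or $\varepsilon\to\infty$ must be allowed, since the distance to the whole family $\mathcal U$ is what is measured. One must also rule out splitting into two bubbles at different scales, and the energy level below twice the threshold is what I would use to exclude that.
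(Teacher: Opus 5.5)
Your part (ii) is essentially the paper's Lemma~\ref{D53}. The ingredients match: bounded gradients, $\beta\mathcal B_{p,q}(u_\beta,v_\beta)\to0$, the Pohozaev identity combined with the CKN inequality in each component, and concentration--compactness for radial optimizing sequences of $\mathcal S(a,b)$ (the paper simply cites \cite[Theorem 1.41]{willem2012minimax}). One point needs care. Comparing with $2\cdot\frac dN\mathcal S(a,b)^{N/2d}$ only rules out two full bubbles. What rules out a fractional split of the $L^{2^\sharp}$-mass is the inequality $\ell_1^2+\ell_2^2\le\mathcal S(a,b)^{-2^\sharp/2}(\ell_1^{2^\sharp}+\ell_2^{2^\sharp})$, whose minimal value $\mathcal S(a,b)^{N/2d}$ is attained only when $\ell_1\ell_2=0$. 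Alternatively, test each equation separately and use $\lambda_{i,\beta}>0$ to get a componentwise Nehari inequality. Your remarks on the normalization constant and on the dilation invariance of $\mathcal U$ fill in details the paper leaves implicit.

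In part (i), your scaling asymptotics and the identity $\lambda_{1,\beta}\rho_1^2+\lambda_{2,\beta}\rho_2^2=\beta(p+q)(1-\delta_{p+q})\mathcal B_{p,q}(u_\beta,v_\beta)$ are as in Lemma~\ref{D51}. The convergence step, however, takes a genuinely different route.

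The paper (Lemma~\ref{D52}) uses the sharp constant $C_0$ of \eqref{d110} and the monotonicity of two auxiliary one-variable functions to prove the comparisons \eqref{d132} and \eqref{d213}. The second gives nontriviality of the weak limit. The first gives $J(\tilde u,\tilde v)\le\kappa(\rho_1,\rho_2)$ once strong convergence is known.

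You instead compare energy levels. The upper bound on $m_\beta$ makes $(-t_\beta)\star(u_\beta,v_\beta)$ a minimizing sequence for $J$, and nontriviality follows from $\mathcal B_{p,q}(\tilde u,\tilde v)\ge-\kappa(\rho_1,\rho_2)>0$. This is more transparent and avoids $C_0$ entirely, but three points must be made explicit for it to close:
\begin{itemize}
\item[(a)] You need $\beta e^{((p+q)\delta_{p+q}-2)t_\beta}\to1$, as in \eqref{d212}, and not only $e^{t_\beta}\sim\beta^{1/(2-(p+q)\delta_{p+q})}$. Otherwise you do not land exactly at level $\kappa$, and you would converge to a dilate of $\mathcal Z(\rho_1,\rho_2)$.
\item[(b)] ``Minimizing up to $o(1)$'' requires $J\ge\kappa$ on all of $S(\rho_1,\rho_2)$. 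This holds in the mass-subcritical case because $t\mapsto J(t\star(u,v))$ has its global minimum at $t^{(u,v)}$, so $J\ge\widetilde J\ge\kappa$ by Lemma~\ref{D105}, and $J\ge0>\kappa$ when $\mathcal B_{p,q}=0$.
\item[(c)] Positivity of the limit multipliers must come from Lemma~\ref{D2} applied directly to the nonnegative, radial, nontrivial weak limit. It cannot come from Theorem~\ref{DD4}, since at that stage the limit is not known to carry the full masses.
\end{itemize}
With these, the testing argument from the proof of Theorem~\ref{DD4} gives strong convergence in $\mathcal X$, and the limit lies in $\mathcal Z(\rho_1,\rho_2)$.
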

	\begin{theorem}\label{DD10}
		Let $N \ge 3$, $\max\left\{0, \frac{N-4}{2}\right\} \le a < \frac{N-2}{2}$, $a < b < a+1$, $\t = \frac{2N}{N-2(1+a-b)}$, and $(p+q) > (p+q)_c $. Let $(u_\beta, v_\beta) \in \mathcal{N}_\beta(\rho_1, \rho_2)$ be a family of normalized ground states of system \eqref{d1} attaining $m_\beta(\rho_1, \rho_2)$. Then, as $\beta \to +\infty$:
		\begin{align*}  
			\operatorname{dist}_{\mathcal{X}}\left( s_\beta \star (u_\beta, v_\beta), \, \mathcal{Z}(\rho_1, \rho_2) \right) \longrightarrow 0
		\end{align*}
		where $\mathcal{Z}(\rho_1, \rho_2)$ is the set of normalized ground states of the limit system, and the $L^2(\R^N;| x|^{-2a})$-invariant scaling parameter is given by:
		\begin{align*}  
			s_\beta := \frac{1}{(p+q)\delta_{p+q} - 2} \ln \beta.
		\end{align*}
	\end{theorem}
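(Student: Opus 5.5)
Since $p+q>\p$, we have $\pq>2$ and $s_\beta\to+\infty$ as $\beta\to+\infty$. The plan is to rescale the ground states so that the coupling term has coefficient one. A direct computation with the mass-preserving dilation gives, for $(w,z)=s\star(u,v)$,
\begin{align*}
E_\beta((-s)\star(w,z))=e^{-2s}\Big[\tfrac12\big(\||x|^{-a}\nabla w\|_2^2+\||x|^{-a}\nabla z\|_2^2\big)-\beta e^{(2-\pq)s}\b(w,z)-\tfrac{e^{(2-\t)s}}{\t}\big(\||x|^{-b}w\|_\t^\t+\||x|^{-b}z\|_\t^\t\big)\Big].
\end{align*}
With $s=s_\beta$ we get $\beta e^{(2-\pq)s_\beta}=1$ and $\epsilon_\beta:=e^{(2-\t)s_\beta}=\beta^{-(\t-2)/(\pq-2)}\to0$. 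Hence $(\tilde u_\beta,\tilde v_\beta):=s_\beta\star(u_\beta,v_\beta)$ is a ground state on $S\r$ of the functional
\begin{align*}
\tilde E_\beta(w,z)=\tfrac12\big(\||x|^{-a}\nabla w\|_2^2+\||x|^{-a}\nabla z\|_2^2\big)-\b(w,z)-\tfrac{\epsilon_\beta}{\t}\big(\||x|^{-b}w\|_\t^\t+\||x|^{-b}z\|_\t^\t\big),
\end{align*}
and its level satisfies $\tilde m_\beta=e^{2s_\beta}m_\beta\r$. The limit functional is $J(w,z)=\frac12\a-\b(w,z)$, whose Pohozaev ground state level is $\kappa\r$ by \eqref{d61}; by Theorem~\ref{DD4}, $\kappa\r$ is attained by elements of $\mathcal Z\r$.

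\textbf{Step 1 (energy comparison).} We show $\tilde m_\beta\to\kappa\r$. For the upper bound, take $(w_0,z_0)\in\mathcal Z\r$ and let $t_\beta$ be the unique maximum point of the fiber map of $\tilde E_\beta$ at $(w_0,z_0)$. It lies on the $\mathcal N^-$ branch, as in the existence proof of Theorem~\ref{DD6}. Since $\epsilon_\beta\to0$, we get $t_\beta\to0$, and therefore $\tilde m_\beta\le\tilde E_\beta(t_\beta\star(w_0,z_0))\le\kappa\r+o(1)$. For the lower bound, $\tilde E_\beta\le J$ pointwise, and on the scaled Pohozaev set the map $t\mapsto J(t\star(w,z))$ attains its maximum at some $t$. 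Because the ground state satisfies $\tilde E_\beta(\tilde u_\beta,\tilde v_\beta)=\max_t\tilde E_\beta(t\star(\tilde u_\beta,\tilde v_\beta))$, this gives $\tilde m_\beta\ge\max_t J(t\star\cdot)-o(1)\ge\kappa\r-o(1)$. The $o(1)$ here is uniform once we know that the gradient norms are bounded above and away from zero. These bounds follow from the Pohozaev identity combined with \eqref{d5} and the bound $\tilde m_\beta\le C$: since $\pq>2$, the identity forces the gradient norm away from $0$, and the energy bound gives the upper bound.

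\textbf{Step 2 (compactness).} $(\tilde u_\beta,\tilde v_\beta)$ is bounded in $\y$ and radial, so Proposition~\ref{D303} yields convergence in $L^{p+q}(|x|^{-b(p+q)})$ along a subsequence, $\b(\tilde u_\beta,\tilde v_\beta)\to\b(w,z)$. The critical terms are multiplied by $\epsilon_\beta\to0$ and are bounded by $\mathcal S(a,b)$, so they disappear in the limit. The Lagrange multipliers are expressed through the equations and the Pohozaev identity, so they are bounded; pass to limits $\lambda_i\ge0$.

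The main obstacle is to prove $\lambda_i>0$ and $w,z\neq0$, which is what upgrades weak convergence to convergence in $\y$. Nontriviality of $\b(w,z)$ comes from the lower bound on the gradient together with the Pohozaev identity. Once $\b(w,z)>0$, both $w$ and $z$ are nonzero, so $(w,z)$ is a nontrivial solution of \eqref{d60} with some $\lambda_1,\lambda_2\ge0$. To get $\lambda_i>0$ I would invoke the appendix Liouville lemma, which applies since $a\ge\frac{N-4}{2}$. With positive multipliers, I test the difference of the equations with $(\tilde u_\beta-w,\tilde v_\beta-z)$ to obtain strong convergence of both the gradients and the weighted $L^2$ norms. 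In particular the masses are preserved, so $(w,z)\in S\r$ and $J(w,z)=\lim\tilde m_\beta=\kappa\r$, i.e. $(w,z)\in\mathcal Z\r$. Since every subsequence has a further subsequence converging to an element of $\mathcal Z\r$, we conclude $\operatorname{dist}_{\y}(s_\beta\star(u_\beta,v_\beta),\mathcal Z\r)\to0$.
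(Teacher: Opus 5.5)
Your proposal is correct and is essentially the paper's proof: the rescaling $s_\beta\star$ turns $E_\beta$ into $J_{\varepsilon_\beta}$ with $\varepsilon_\beta=\beta^{-(2^\sharp-2)/((p+q)\delta_{p+q}-2)}$ (Lemma~\ref{D306}); then come the uniform bounds and the two-sided comparison $e^{2s_\beta}m_\beta(\rho_1,\rho_2)\to\kappa(\rho_1,\rho_2)$ (Lemmas~\ref{D307} and \ref{D308}); finally compactness follows via Proposition~\ref{D303}, the Liouville Lemma~\ref{D2}, and testing the equations with $(\tilde u_\beta-w,\tilde v_\beta-z)$. The differences are minor: for the upper bound the paper does not pick an element of $\mathcal Z(\rho_1,\rho_2)$, and so does not need Theorem~\ref{DD4}, but combines $J_{\varepsilon_\beta}\le J$ with $\inf_{S(\rho_1,\rho_2)}\widetilde J=\kappa(\rho_1,\rho_2)$ from Lemma~\ref{D105}; also, you should record $P^0(w,z)=\lim P^{\varepsilon_\beta}(\tilde u_\beta,\tilde v_\beta)=0$, i.e.\ $(w,z)\in\mathcal W(\rho_1,\rho_2)$, before concluding $(w,z)\in\mathcal Z(\rho_1,\rho_2)$.
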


	We conclude with the ideas of the proofs. For $\u\in S\r$ we study the fibering map 	$t\mapsto E_\beta(t\star\u)$. In the mass-subcritical case it has two critical points, a local minimum at a negative level and a global maximum, and this splits $\mathcal N_\beta\r$ into 	$\mathcal N_\beta^+\r$ and $\mathcal N_\beta^-\r$ (Lemma~\ref{D3}). In the other two cases, it has a single critical point, which is a global maximum (Lemmas~\ref{D6} and \ref{D12}). 	Accordingly, $m_\beta\r$ is a local minimum in the first case and a minimax level in the other two (Lemmas~\ref{D7} and \ref{D301}). In each case, we obtain a Palais--Smale sequence at that level with $P_\beta\v\to0$ and $u_n^-,v_n^-\to0$ (Lemmas~\ref{D9} and~\ref{D91}). Such a 	sequence converges strongly in $\y$ provided
	\begin{align*}
		m_\beta\r<\frac dN\sr^{N/2d}+\min\{0,m_\beta\r\}
	\end{align*}
	(Proposition~\ref{D5}). The proof uses the Brezis--Lieb splitting 		\cite{brezis1983relation}, it excludes semi-trivial limits because the extremals $U_\var$ do not have finite weighted mass, and it excludes loss of mass at infinity by the monotonicity of $m_\beta$ in the masses (Lemmas~\ref{D4}, \ref{D11} and \ref{D92}). The energy condition holds 	automatically when $m_\beta\r<0$. Otherwise, we test the level with pairs built from the truncated extremals $\zeta_\var=\phi U_\var$ of \eqref{c96}, whose second component carries a	small mass $\var^\theta$ (Lemma~\ref{D19}). The competition between the loss 		$O(\var^{\frac{N-2d}{2d}})$ and the gain $\var^{q\theta}$ produces the conditions on $p,q,a,b$ 		and the threshold $\beta_0$ (Lemmas~\ref{D201} and \ref{D93}). The multipliers are positive by	a Liouville-type lemma for the weighted operator (Lemma~\ref{D2}), and this is where $a\ge\frac{N-4}{2}$ is used. The asymptotics as $\beta\to0^+$ rest on the comparison of $E_\beta$ with the limit functional $J$ of Section~\ref{SD4}. Those as $\beta\to+\infty$ rest on the dilation $s_\beta\star$, under which the critical term becomes a vanishing perturbation of $J$.
	
	\section{ Ground states for \texorpdfstring{$\beta>0$}{beta > 0}}\label{SD2}
	
	This section investigates the existence of normalized solutions for $\beta > 0$ in all three $(p+q)$ regimes. We begin with key compactness results.
	\begin{prop}\label{D5}
		Assume  $m_\beta(\gamma_1, \gamma_2) \le m_\beta(\rho_1, \rho_2)$ whenever $0 < \rho_1 \le \gamma_1$ and $0 < \rho_2 \le \gamma_2$.
 Let $\{(u_n, v_n)\} \subset S\r$ be a Palais--Smale sequence for $E_\beta\big|_{S\r}$ at level $c \neq 0$ with $P_\beta(u_n, v_n) \to 0$ as $n \to \infty$, and $u_n^- \to 0$, $v_n^- \to 0$ a.e. in $\mathbb{R}^N$. If
		\begin{equation}\label{d2}
			c < \frac{d \mathcal{S}(a,b)^{N/2d}}{N} + \min\{0, m_\beta\r\},
		\end{equation}
		then, up to a subsequence, $(u_n, v_n) \to (u,v)$ in $\mathcal{X}$ with $u, v > 0$ and $\lambda_1, \lambda_2 > 0$, and $(\lambda_{1,n}, \lambda_{2,n}) \to (\lambda_1, \lambda_2)$ in $\mathbb{R}^2$.  
	\end{prop}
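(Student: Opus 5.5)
We follow the standard scheme for Sobolev-critical normalized problems in the spirit of Bartsch--Li--Zou, adapted to the weighted radial space $\y$.

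\textbf{Step 1: boundedness, weak limit, multipliers.} Since $P_\beta\v\to0$, we have
\begin{align*}
E_\beta\v=\Bigl(\tfrac12-\tfrac1{\pq}\Bigr)\an+\beta\Bigl(\tfrac{\pq}{\t}-1\Bigr)\b\v+o(1)
\end{align*}
(or the analogous identity eliminating the gradient term). Combining this with \eqref{d5} and the fixed masses gives boundedness of $\v$ in $\y$ in all three regimes; when $\pq<2$, the coupling term grows sublinearly in the gradient norm. Up to a subsequence $\v\rightharpoonup\u$ in $\y$. By Proposition~\ref{D303}, $\b\v\to\b\u$. The Lagrange multipliers are
\begin{align*}
\lambda_{1,n}\rho_1^2=-\langle E_\beta'\v,(u_n,0)\rangle+o(1),
\end{align*}
and similarly for $\lambda_{2,n}$. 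These are bounded, so after extraction they converge to some $(\lambda_1,\lambda_2)$, and $\u$ solves \eqref{d1} with these multipliers, possibly with smaller masses. The weak limit also satisfies the Pohozaev identity $P_\beta\u=0$, and $u,v\ge0$ because $u_n^-,v_n^-\to0$ a.e.

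\textbf{Step 2: $\u$ is not trivial or semi-trivial.} If $u=v=0$, then $\b\v\to0$. Writing $\ell=\lim\an$, the identity $P_\beta\v\to0$ gives $\ell=\lim\bigl(\||x|^{-b}u_n\|_\t^\t+\||x|^{-b}v_n\|_\t^\t\bigr)$. The Caffarelli--Kohn--Nirenberg inequality then forces $\ell=0$ or $\ell\ge\sr^{N/2d}$. The first alternative gives $c=0$, which is excluded. The second gives $c=\frac dN\ell\ge \frac dN\sr^{N/2d}$, contradicting \eqref{d2}. If, say, $v=0$ and $u\ne0$, then $u$ solves the scalar equation $-\operatorname{div}(|x|^{-2a}\nabla u)+\lambda_1|x|^{-2a}u=|x|^{-b\t}u^{\t-1}$ with $u\in L^2(|x|^{-2a})$. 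From the Pohozaev and Nehari identities we get $\lambda_1\||x|^{-a}u\|_2^2=0$, so $\lambda_1=0$ and $u$ is a positive critical point of the pure critical problem. By Catrina--Wang, $u$ is then some $U_\var$ from \eqref{c96}. But $U_\var\notin L^2(|x|^{-2a})$ under our restrictions, since the decay exponent $(N-2-2a)$ of $U_\var$ is too slow when $a\ge\frac{N-4}{2}$. This is a contradiction.

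\textbf{Step 3: signs and strong convergence.} With $u,v\ge0$ both nontrivial, the strong maximum principle gives $u,v>0$. Next we show $\lambda_1,\lambda_2>0$. If $\lambda_1\le0$, then $-\operatorname{div}(|x|^{-2a}\nabla u)\ge0$ with $u\in L^2(|x|^{-2a})$. The Liouville-type Lemma~\ref{D2} from the appendix forces $u\equiv0$; this is exactly where $a\ge\frac{N-4}{2}$ enters. This contradiction gives $\lambda_1>0$, and the same argument gives $\lambda_2>0$.

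Set $(\tilde u_n,\tilde v_n)=(u_n-u,v_n-v)$. The Brezis--Lieb lemma together with $P_\beta\u=0$ gives
\begin{align*}
\||x|^{-a}\nabla\tilde u_n\|_2^2+\||x|^{-a}\nabla\tilde v_n\|_2^2=\||x|^{-b}\tilde u_n\|_\t^\t+\||x|^{-b}\tilde v_n\|_\t^\t+o(1)
\end{align*}
and $c=E_\beta\u+\frac dN\tilde\ell+o(1)$, where $\tilde\ell$ is the limit of the gradient norms of the remainders. Again $\tilde\ell=0$ or $\tilde\ell\ge\sr^{N/2d}$. Since $\u\in\mathcal N_\beta(\|u\|,\|v\|)$ has masses $\le\rho_i$, the monotonicity assumption gives $E_\beta\u\ge m_\beta(\|u\|,\|v\|)\ge m_\beta\r\ge\min\{0,m_\beta\r\}$. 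Hence $\tilde\ell\ge\sr^{N/2d}$ would contradict \eqref{d2}, so $\tilde\ell=0$ and the gradients converge strongly.

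For the masses, test the equations satisfied by $(u_n,v_n)$ and by $\u$ against $u_n-u$ and $v_n-v$ and subtract. Using strong gradient and critical convergence and $\lambda_{i,n}\to\lambda_i>0$, this gives $\lambda_1\||x|^{-a}(u_n-u)\|_2^2\to0$, and similarly for $v$. Therefore $\v\to\u$ in $\y$, and the masses are preserved.

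\textbf{Main obstacle.} The hardest step is excluding the semi-trivial limit in Step~2. It requires the Pohozaev--Nehari identities for the weighted equation to justify $\lambda_1=0$, which needs enough regularity and decay for the identities to hold, and then the nonintegrability of $U_\var$ in $L^2(|x|^{-2a})$. I would first establish the weighted Pohozaev identity via the dilation $t\star u$, which is legitimate in $\y$, rather than via a pointwise integration by parts.
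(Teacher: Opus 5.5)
Your proposal is correct and is essentially the paper's proof. It has the same ingredients:
\begin{itemize}
\item boundedness from $P_\beta(u_n,v_n)\to0$;
\item exclusion of trivial and semi-trivial weak limits via the threshold $\frac dN\mathcal{S}(a,b)^{N/2d}$ and $U_\varepsilon\notin L^2(\mathbb{R}^N;|x|^{-2a})$, which, as you correctly note, uses $a\ge\frac{N-4}{2}$;
\item the Brezis--Lieb splitting combined with the mass-monotonicity hypothesis;
\item Lemma~\ref{D2} for $\lambda_1,\lambda_2>0$;
\item recovery of the masses.
\end{itemize}
For the last step you test with $u_n-u$ and $v_n-v$, as the paper itself does in the proof of Theorem~\ref{DD4}; this is equivalent to the paper's comparison of $\lambda_1\rho_1^2+\lambda_2\rho_2^2$ with $\lambda_1\||x|^{-a}u\|_2^2+\lambda_2\||x|^{-a}v\|_2^2$.

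There are a few small corrections.
\begin{itemize}
\item Your Step 1 identity is garbled. Subtracting $\frac1{2^\sharp}P_\beta$ gives the gradient coefficient $\frac12-\frac1{2^\sharp}=\frac dN$, not $\frac12-\frac1{(p+q)\delta_{p+q}}$.
\item In the mass-critical case, boundedness actually requires $\beta<\beta^*$ via \eqref{d5}. The paper also passes over this point, although its application (Theorem~\ref{DD9}) has $\beta<\beta^*$.
\item The dilation route to the Pohozaev identity for the weak limit is not automatic in $\mathcal{X}$. The chain rule at $t=0$ needs $t\mapsto t\star(u,v)$ to be differentiable in $\mathcal{X}$, i.e.\ $x\cdot\nabla u,\,x\cdot\nabla v\in X$, and that regularity must be proved first. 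The paper simply takes this identity for granted.
\end{itemize}
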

	\begin{proof}
		The proof has three steps.
		
		\noindent
		\textbf{Step 1.} We show that $\{(u_n,v_n)\}$ is bounded in $\mathcal{X}$ and  $\{(\lambda_{1,n},\lambda_{2,n})\}$ is bounded in  $\R^2$. Consider first the case $2<p+q<\p$. For  large $n$,  since $P_\beta(u_n,v_n)\to 0$,
		\begin{align*}  
			c+1&\ge E_\beta\v-\frac{1}{\t}P_\beta\v\\
			&=\frac{d}{N}\an-\beta\frac{\t-(p+q)\delta_{p+q}}{\t}\b\v\\
			&\ge \frac{d}{N}\an-C\an^{\frac{(p+q)\delta_{p+q}}{2}},
		\end{align*}
		for some $C>0$, since $\pq<2$.  Hence, $\{(u_n,v_n)\}$ is  bounded in $\mathcal{X}$. Now suppose $\p\le p+q<\t$.
		Again, using  $P_\beta\v\to0$, we deduce     
		\begin{align*}  
			c+1&\ge E_{\beta}(u_n,v_n)-\frac{1}{2}P_\beta(u_n,v_n)\\
			&=\frac{d}{N}(\||x|^{-b}u_n\|_\t^\t+\||x|^{-b}v_n\|_\t^\t)+\beta\frac{(p+q)\delta_{p+q}-2}{2}\b(u_n,v_n)\\
			&\ge C\an +o_n(1)
		\end{align*} 
		with $C>0$, where the last step uses $P_\beta\v\to0$ to bound the gradient term by the two integrals on the previous line. Hence $\{(u_n,v_n)\}$ is bounded in $\mathcal{X}$.  Since $\v$ is a Palais--Smale sequence for $E_\beta|_{S\r}$, we have
		\begin{align*}  \lambda_{1,n}=-\frac{1}{\rho_1^2}E_\beta'\v[(u_n,0)]+o_n(1)\end{align*}
		and 
		\begin{align*}  
			\lambda_{2,n}=-\frac{1}{\rho_2^2}E_\beta'\v[(0,v_n)]+o_n(1).
		\end{align*}
		so $\{(\lambda_{1,n},\lambda_{2,n})\}$  is bounded in $\mathbb{R}^2$.  Thus, there exists $\u\in\y,~(\lambda_1,\lambda_2)\in\R^2$ such that  up to a subsequence
		\begin{align*}  
			\v\rightharpoonup \u\quad\text{in}~\y,
		\end{align*}
		\begin{align*}  
			\v\to\u\quad\text{in}~L^{p+q}_r(\R^N;|x|^{-b(p+q)})\times L^{p+q}_r(\R^N;|x|^{-b(p+q)}),
		\end{align*}
		\begin{align*}  \v\to\u\quad\text{a.e. in}~\R^N,\end{align*}
		\begin{align*}  (\lambda_{1,n},\lambda_{2,n})\to(\lambda_1,\lambda_2)\quad\text{in}~\R^2.\end{align*}
		Since $E'_{\beta}\v|_{S\r}\to0$ and $u_n^-\to 0,~v_n^-\to 0$ a.e., we obtain 
		\begin{equation}\label{d214}
			\begin{cases}
				E'_{\beta}\u+\left(\lambda_1\dfrac{u}{|x|^{2a}},\lambda_2\dfrac{v}{|x|^{2a}}\right)=0,\\
				u\ge0,~v\ge0, 
			\end{cases}
		\end{equation}
		whence $P_{\beta}\u=0.$
        
		\noindent
		\textbf{Step 2.} We prove that $u\not\equiv 0$ and $v\not\equiv 0$.
		
		Assume by contradiction that $v\equiv 0$. Then \eqref{d214} reduces to
		\begin{align*}  
			-\operatorname{div}\bigl(|x|^{-2a}\nabla u\bigr)+\lambda_1\frac {u}{|x|^{2a}}
			=\frac{|u|^{\t-2}u}{|x|^{b\t}},\qquad u\ge 0,\quad u\in X.
		\end{align*}
		The Pohozaev identity yields that either $\lambda_1=0$ or $u\equiv 0$. If $\lambda_1=0$ and $u\not\equiv 0$, then the maximum principle implies $u>0$, and hence $u=U_\var$ for some $\var>0$. This is impossible since $U_\var\notin L^2(\mathbb{R}^N;|x|^{-2a})$. Therefore, $v\equiv0$ forces $u\equiv0$ as well, and by symmetry $u\equiv0$ forces $v\equiv0$; it remains to rule out $u\equiv v\equiv0$.
		Up to a subsequence, we may assume that
		\begin{align*}  
			\||x|^{-a}\nabla u_n\|_2^2\to \ell_1^2\ge 0,
			\qquad
			\||x|^{-a}\nabla v_n\|_2^2\to \ell_2^2\ge 0.
		\end{align*}
		We now show that $\ell_1^2+\ell_2^2>0$. Suppose instead that $\ell_1=\ell_2=0$. Then
		\begin{align*}  
			E_\beta(u_n,v_n)
			=\frac{d}{N}\Bigl(\||x|^{-a}\nabla u_n\|_2^2+\||x|^{-a}\nabla v_n\|_2^2\Bigr)
			+\frac{1}{\t}P_\beta(u_n,v_n)+o_n(1)\to 0,
		\end{align*}
		as $n\to\infty$, contradicting $c\ne 0$.
		Since $P_\beta(u_n,v_n)\to 0$ and $\b\v\to\b(0,0)=0$, we obtain
		\begin{align*}  
			\ell_1^2+\ell_2^2
			&=\lim_{n\to\infty}\Bigl(\||x|^{-a}\nabla u_n\|_2^2+\||x|^{-a}\nabla v_n\|_2^2\Bigr) \\
			&=\lim_{n\to\infty}\Bigl(\||x|^{-b}u_n\|_\t^\t+\||x|^{-b}v_n\|_\t^\t\Bigr) \\
			&\le \mathcal{S}(a,b)^{-\t/2}\bigl(\ell_1^\t+\ell_2^\t\bigr).
		\end{align*}
		Consequently, 
		\begin{align*}  
			c
			&=\lim_{n\to\infty}E_\beta(u_n,v_n)
			=\frac{d}{N}(\ell_1^2+\ell_2^2) \\
			&\ge \frac{d}{N}\min\Bigl\{\eta_1^2+\eta_2^2:
			\eta_1^2+\eta_2^2\le \sr^{-\t/2}(\eta_1^\t+\eta_2^\t),\
			\eta_1+\eta_2>0,\
			\eta_1,\eta_2\ge 0\Bigr\} \\
			&=\frac{d}{N}\min\Bigl\{r^2:
			r^2\le r^\t \sr^{-\t/2}(\cos^\t\theta+\sin^\t\theta),\
			r\ne 0,\
			\theta\in\Bigl[0,\frac{\pi}{2}\Bigr]\Bigr\}
			=\frac{d}{N}\sr^{N/2d},
		\end{align*}
		which contradicts \eqref{d2}. Hence $v\not\equiv 0$, and similarly $u\not\equiv 0$.
        
		\noi
		\textbf{Step 3.}  We show that $\v\to\u$ in $\y$.
		
		Set $(\bar u_n,\bar v_n)=(u_n-u,v_n-v)$. The Brezis--Lieb lemma \cite{brezis1983relation} gives 
		\begin{align*}  
			E_\beta(u_n,v_n)-E_\beta(u,v)-E_\beta(\bar u_n,\bar v_n)=o_n(1).
		\end{align*}
		Together with the weak convergence and $P_\beta(\bar u_n,\bar v_n)=P_\beta(u_n,v_n)-P_\beta(u,v)+o_n(1)=o_n(1)$, this  gives
		\begin{align*}  
			\lim_{n\to\infty}\Bigl(\||x|^{-a}\nabla\bar u_n\|_2^2+\||x|^{-a}\nabla\bar v_n\|_2^2\Bigr)
			=
			\lim_{n\to\infty}\Bigl(\||x|^{-b}\bar u_n\|_\t^\t+\||x|^{-b}\bar v_n\|_\t^\t\Bigr).
		\end{align*}
		By the argument of Step~2, the limit either vanishes or is at least $\sr^{N/2d}$.  If the latter holds, then
		\begin{align*}  
			c
			&=\lim_{n\to\infty}E_\beta(u_n,v_n)
			=E_\beta(u,v)+\lim_{n\to\infty}E_\beta(\bar u_n,\bar v_n) \\
			&\ge m_\beta\bigl(\||x|^{-a}u\|_2,\||x|^{-a}v\|_2\bigr)
			+\lim_{n\to\infty}\frac{d}{N}\Bigl(\||x|^{-a}\nabla\bar u_n\|_2^2+\||x|^{-a}\nabla\bar v_n\|_2^2\Bigr) \\
			&\ge m_\beta\bigl(\||x|^{-a}u\|_2,\||x|^{-a}v\|_2\bigr)+\frac{d}{N}\sr^{N/2d}\\
			& \ge m_\beta\r+\frac{d}{N}\sr^{N/2d}\ge\min\{0,m_\beta\r\}+\frac dN\sr^{N/2d},
		\end{align*}
		where we used that $\u\in\mathcal N_\beta(\||x|^{-a}u\|_2,\||x|^{-a}v\|_2)$, that $\||x|^{-a}u\|_2\le\rho_1$, $\||x|^{-a}v\|_2\le\rho_2$ by weak lower semicontinuity, and the monotonicity assumption on $m_\beta$.
		which contradicts \eqref{d2}. Hence,
		\begin{align*}  
			\lim_{n\to\infty}\Bigl(\||x|^{-a}\nabla\bar u_n\|_2^2+\||x|^{-a}\nabla\bar v_n\|_2^2\Bigr)=0.
		\end{align*}
		
		It remains to show that $\lambda_1,\lambda_2>0$. If $\lambda_1\le 0$, then
		\begin{align*}  
			-\operatorname{div}\bigl(|x|^{-2a}\nabla u\bigr)
			=
			|\lambda_1|\frac{u}{|x|^{2a}}
			+\beta p\,\frac{|v|^q|u|^{p-2}u}{|x|^{b(p+q)}}
			+\frac{|u|^{2^\sharp-2}u}{|x|^{b2^\sharp}}
			\ge 0
			\quad\text{in } \mathbb{R}^N.
		\end{align*}
		By Lemma~\ref{D2}, applicable because $u\in L^2(\R^N;|x|^{-2a})$ and $a\ge\frac{N-4}{2}$, this implies $u\equiv 0$, a contradiction. Therefore $\lambda_1>0$, and similarly $\lambda_2>0$.  Testing the Palais--Smale condition with $\v$, testing \eqref{d214} with $\u$, and using the strong convergence of the gradients from Step 3 together with $\v\to\u$ in the weighted $L^{p+q}$ and $L^{\t}$ norms, we get
		\begin{align*}  
			\lambda_1\rho_1^2+\lambda_2\rho_2^2
			=
			\lambda_1\||x|^{-a}u\|_2^2+\lambda_2\||x|^{-a}v\|_2^2.
		\end{align*}
		Since $\lambda_1,\lambda_2>0$ and $\||x|^{-a}u\|_2\le\rho_1$, $\||x|^{-a}v\|_2\le\rho_2$, both differences $\rho_i^2-\||x|^{-a}\cdot\|_2^2$ must vanish. Hence $\||x|^{-a}u\|_2^2=\rho_1^2$ and $\||x|^{-a}v\|_2^2=\rho_2^2$, and therefore $(u_n,v_n)\to(u,v)$ in $\mathcal{X}$.\qed   
	\end{proof}
	
	\subsection{Mass-subcritical case}
	In this subsection we treat the case $p+q<\p$ and prove Theorem~\ref{DD2}, for $0<\beta<\beta_1$. 
	
	By the definition of $\sr$ and by inequality \eqref{d5}, we obtain 
	\begin{align*}  
		E_\beta\u&\ge\frac{1}{2}\a-\frac{1}{\t\mathcal{S}(a,b)^{\t/2}}\a^{\t/2}\\&\quad-\beta C_{a,b}^{p+q}(\rho_1^2+\rho_2^2)^{\frac{(p+q)(1-\delta_{p+q})}{2}}\a^{\frac{\pq}{2}}.
	\end{align*}
	Define $f:\R^+\to\R$,
	\begin{equation*}
		f(t):=\frac{1}{2}t^2-\beta C_{a,b}^{p+q}(\rho_1^2+\rho_2^2)^{\frac{(p+q)(1-\delta_{p+q})}{2}}t^{\pq}-\frac{1}{\t\mathcal{S}(a,b)^{\t/2}}t^{\t}.
	\end{equation*}
	If $\beta>0$ and $\pq<2$, then  $f(0^+)=0^-$ and $f(+\infty)=-\infty.$
	\begin{lemma}
		The function $f$ has only two critical points, a strict local minimum at a negative level and a strict global maximum at a positive level, for $0<\beta<\beta_1$. Additionally, there exist ${\kappa}_1:={\kappa}_1(\beta,\rho_1,\rho_2)$ and ${\kappa}_2:={\kappa}_2(\beta,\rho_1,\rho_2)$, such that $f({\kappa}_1)=f({\kappa}_2)=0$ and $f(t)>0$ if and only if $t\in({\kappa}_1,{\kappa}_2).$
	\end{lemma}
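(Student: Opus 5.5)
Write $\gamma:=\pq\in(0,2)$ and $A:=\beta C_{a,b}^{p+q}(\rho_1^2+\rho_2^2)^{\frac{(p+q)(1-\delta_{p+q})}{2}}$, so that
\begin{align*}
f(t)=\tfrac12 t^2-A t^{\gamma}-\tfrac{1}{\t\sr^{\t/2}}t^{\t},\qquad 0<\gamma<2<\t .
\end{align*}
We factor out the lowest power. For $t>0$ we have $f(t)=t^{\gamma}g(t)$ with
\begin{align*}
g(t):=\tfrac12 t^{2-\gamma}-\tfrac{1}{\t\sr^{\t/2}}t^{\t-\gamma}-A .
\end{align*}
Hence $f(t)>0$ if and only if $h(t):=\tfrac12 t^{2-\gamma}-\tfrac{1}{\t\sr^{\t/2}}t^{\t-\gamma}>A$. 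Since $0<2-\gamma<\t-\gamma$, the function $h$ satisfies $h(0^+)=0$ and $h(+\infty)=-\infty$. Moreover, $h'(t)=t^{1-\gamma}\bigl(\tfrac{2-\gamma}{2}-\tfrac{\t-\gamma}{\t\sr^{\t/2}}t^{\t-2}\bigr)$ vanishes at exactly one point $\bar t>0$. So $h$ is strictly increasing on $(0,\bar t)$, strictly decreasing on $(\bar t,\infty)$, and $\max h=h(\bar t)$ is an explicit constant depending only on $\gamma,\t,\sr$.

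We then define $\beta_1$ (this is \eqref{d200}) by $\beta_1 C_{a,b}^{p+q}(\rho_1^2+\rho_2^2)^{\frac{(p+q)(1-\delta_{p+q})}{2}}=h(\bar t)$. For $0<\beta<\beta_1$ we have $A<h(\bar t)$. Strict monotonicity of $h$ on each side of $\bar t$ then gives exactly two solutions $\kappa_1<\bar t<\kappa_2$ of $h=A$, with $h>A$ precisely on $(\kappa_1,\kappa_2)$. This yields the zero set of $f$ and the sign characterization $f>0\iff t\in(\kappa_1,\kappa_2)$.

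For the critical points we write $f'(t)=t^{\gamma-1}\bigl(t^{2-\gamma}-\gamma A-\tfrac{1}{\sr^{\t/2}}t^{\t-\gamma}\bigr)=t^{\gamma-1}(k(t)-\gamma A)$. By the same monotonicity argument, $k(t):=t^{2-\gamma}-\sr^{-\t/2}t^{\t-\gamma}$ increases and then decreases, with $k(0^+)=0$ and $k(+\infty)=-\infty$. Thus $f'$ has at most two zeros on $(0,\infty)$. We then locate them:
\begin{itemize}
\item Since $f(0^+)=0^-$ and $f<0$ on $(0,\kappa_1)$, $f$ attains a negative minimum on $(0,\kappa_1]$. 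This gives a critical point that is a strict local minimum at a negative level.
\item Since $f>0$ on $(\kappa_1,\kappa_2)$ and $f\to-\infty$, $f$ attains a positive maximum there, which is a global maximum on $(0,\infty)$.
\end{itemize}
These account for both zeros of $f'$, so there are no other critical points. Strictness follows because $f'$ changes sign at each zero: $k-\gamma A$ is strictly monotone near each of its roots, as each root lies on a strictly monotone branch of $k$.

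The only delicate point is to make sure the threshold defined through $h$ (which controls positivity) is the one that guarantees two critical points. That is automatic here, since positivity of $f$ somewhere already forces two zeros of $f'$, and the unimodality of $k$ caps the count at two. The remaining care is purely bookkeeping: defining $\beta_1$ explicitly from $h(\bar t)$.
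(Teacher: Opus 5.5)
Your proof is correct and follows the same route as the paper: you factor out $t^{(p+q)\delta_{p+q}}$, use that $h$ increases and then decreases, and define $\beta_1$ through the maximum $h(t^*)$, which yields exactly the zeros $\kappa_1<\kappa_2$ and the sign characterization of $f$. Your additional step fills in the critical-point count that the paper leaves implicit in ``the desired result follows'': you write $f'(t)=t^{\gamma-1}(k(t)-\gamma A)$ with $k$ unimodal, which gives at most two critical points, and you then locate one in $(0,\kappa_1)$ and one in $(\kappa_1,\kappa_2)$.
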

	\begin{proof}
		We rewrite $f$ as
		$$
		f(t)=t^{\pq}\left(\frac{1}{2}t^{2-(p+q)\delta_{p+q}}-\beta C_{a,b}^{p+q}(\rho_1^2+\rho_2^2)^{\frac{(p+q)(1-\delta_{p+q})}{2}}-\frac{1}{\t\mathcal{S}(a,b)^{\t/2}}t^{\t-(p+q)\delta_{p+q}}\right).
		$$
		Therefore, $f>0$ if and only if
		\begin{equation*}
			h(t)>\beta C_{a,b}^{p+q}(\rho_1^2+\rho_2^2)^{\frac{(p+q)(1-\delta_{p+q})}{2}},
		\end{equation*}
		where
		$$
		h(t)=\frac{1}{2}t^{2-(p+q)\delta_{p+q}}-\frac{1}{\t\mathcal{S}(a,b)^{\t/2}}t^{\t-(p+q)\delta_{p+q}}.
		$$
		A simple calculation gives
		\begin{equation*}
			h'(t)=\frac{2-(p+q)\delta_{p+q}}{2}t^{1-(p+q)\delta_{p+q}}-\frac{\t-(p+q)\delta_{p+q}}{\t\mathcal{S}(a,b)^{\t/2}}t^{\t-1-(p+q)\delta_{p+q}}.
		\end{equation*}
		The equation $h'(t)=0$ has a unique solution
		\begin{equation}\label{d9}
			{t}^*:=\left(\frac{\t\mathcal{S}(a,b)^{\t/2}(2-(p+q)\delta_{p+q})}{2(\t-(p+q)\delta_{p+q})}\right)^{\frac{1}{\t-2}}.\end{equation}
		Thus, $h$ is increasing on $(0,t^*)$, decreasing on $(t^*,+\infty)$, and achieves its unique global maximum at $t=t^*$. The maximum value of $h$ is
		\begin{equation*}
			h({t}^*)=\frac{\t-2}{2(\t-(p+q)\delta_{p+q})}\left(\frac{\t \mathcal{S}(a,b)^{\t/2}(2-(p+q)\delta_{p+q})}{2(\t-(p+q)\delta_{p+q})}\right)^{\frac{2-(p+q)\delta_{p+q}}{\t-2}}.
		\end{equation*}
		Consequently, $f$ is positive on an interval $({\kappa}_1,{\kappa}_2)$ if and only if $h({t}^*)>\beta C_{a,b}^{p+q}(\rho_1^2+\rho_2^2)^{\frac{(p+q)(1-\delta_{p+q})}{2}}$. This leads to  $\beta<\beta_1$ where 
		\begin{equation}\label{d200}
			\beta_1:=\frac{\t-2}{2 C_{a,b}^{p+q}(\rho_1^2+\rho_2^2)^{\frac{(p+q)(1-\delta_{p+q})}{2}}(\t-(p+q)\delta_{p+q})}\left(\frac{\t \mathcal{S}(a,b)^{\t/2}(2-(p+q)\delta_{p+q})}{2(\t-(p+q)\delta_{p+q})}\right)^{\frac{2-(p+q)\delta_{p+q}}{\t-2}}.
		\end{equation}
		Therefore, the desired result follows. \qed
	\end{proof}
	\begin{lemma}\label{D3}
		For $0<\beta<\beta_1$ and $\u\in S\r$, the function $\Phi_\beta^\u$ admits exactly two critical points $t_\beta^{\u}<s_\beta^{\u}$ and possesses two zeros $c_\beta^{\u}<d_\beta^\u$ satisfying
		\begin{align*}  
			t_\beta^{\u}<c_\beta^{\u}<s_\beta^{\u}<d_\beta^{\u}.
		\end{align*}
		Moreover,
		\begin{itemize}
			\item[i)] $\mathcal{N}_\beta\r$ forms a submanifold of $\y$, and $\mathcal{N}_\beta^0\r=\emptyset$.
			\item[ii)] one has $k\star\u\in\mathcal{N}^+_\beta\r$ if and only if $k=t_\beta^\u$, while $k\star\u\in\mathcal{N}^-_\beta\r$ if and only if $k=s_\beta^\u$.
			\item[iii)] for all $k\le c_\beta^\u$,   the following holds 
			\begin{align*}  
				\left(\||x|^{-a}\na(k\star u)\|_2^2+\||x|^{-a}\na(k\star v)\|_2^2\right)^{1/2}\le \kappa_1, 
			\end{align*}
			\begin{align*}  
				\notag E_\beta(&t_\beta^\u\star \u)\\\notag
				&=\min \big\{E_\beta(k\star\u)\mid k\in \R~ \text{and} ~\left(\||x|^{-a}\na(k\star u)\|_2^2+\||x|^{-a}\na(k\star v)\|_2^2\right)^{1/2}\le \kappa_1 \big\}<0.
			\end{align*}
			\item[iv)] the function $\Phi_\beta^\u(t)$ is strictly decreasing on $(s_\beta^\u,+\infty)$, and
			\begin{align*}  
				\Phi_\beta^\u(s_\beta^\u)=\max_{t\in\R}\Phi_\beta^\u(t)>0.
			\end{align*}
			\item[v)] the maps $\u\mapsto t_\beta^\u$ and $\u\mapsto s_\beta^\u$ are of class $C^1$.
		\end{itemize}
	\end{lemma}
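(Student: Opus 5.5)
The plan follows Soave's fibering argument, adapted to the dilation $k\star$. Fix $(u,v)\in S(\rho_1,\rho_2)$ and write
\[
A:=\||x|^{-a}\nabla u\|_2^2+\||x|^{-a}\nabla v\|_2^2,\qquad B:=\mathcal{B}_{p,q}(u,v),\qquad C:=\||x|^{-b}u\|_{\t}^{\t}+\||x|^{-b}v\|_{\t}^{\t}.
\]
Then
\[
\Phi_\beta^{(u,v)}(t)=\tfrac{1}{2}e^{2t}A-\beta e^{\gamma t}B-\tfrac{1}{\t}e^{\t t}C,\qquad \gamma:=(p+q)\delta_{p+q}\in(0,2).
\]
We have $A,C>0$ and $B\ge0$. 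Since $k\star$ preserves the masses, $e^{2t}A$ is exactly the gradient term of $t\star(u,v)$.

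\textbf{Step 1: shape of the fiber map.} The inequality preceding the definition of $f$ gives $\Phi_\beta^{(u,v)}(t)\ge f\big((e^{2t}A)^{1/2}\big)$. Since $t\mapsto (e^{2t}A)^{1/2}$ is an increasing bijection onto $(0,\infty)$, $\Phi$ is positive on a nonempty interval by the previous lemma. Moreover $\Phi(t)\to0^-$ as $t\to-\infty$ when $B>0$ (because $\gamma<2$), and $\Phi(t)\to-\infty$ as $t\to+\infty$.

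To count critical points, I would write
\[
\Phi'(t)=e^{\gamma t}\,g(t),\qquad g(t)=e^{(2-\gamma)t}A-\beta\gamma B-e^{(\t-\gamma)t}C .
\]
Then $g'(t)=0$ has exactly one solution, so $g$ increases and then decreases, with $g(-\infty)=-\beta\gamma B<0$ and $g(+\infty)=-\infty$. Hence $\Phi$ has at most two critical points. Since $\Phi$ is negative near $-\infty$, positive somewhere, and tends to $-\infty$, it has exactly two: a local minimum $t_\beta$ at a negative level and a global maximum $s_\beta$ at a positive level. It also has exactly two zeros $c_\beta<d_\beta$, interlaced as $t_\beta<c_\beta<s_\beta<d_\beta$.

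The case $B=0$ needs separate treatment. Then $\Phi$ is $\tfrac12e^{2t}A-\tfrac1{\t}e^{\t t}C$, which is positive near $-\infty$ and has a single critical point. This contradicts the statement, so one must assume $B>0$, or note that $\mathcal{B}_{p,q}(u,v)>0$ for the relevant pairs. I would point this out and restrict to $B\neq 0$. This is the main subtle point, and it should be noted as a caveat.

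\textbf{Step 2: items ii) and iii).} Item ii) follows because $(k\star(u,v))\in\mathcal N_\beta$ if and only if $\Phi'(k)=0$, using $\Phi^{k\star(u,v)}(t)=\Phi^{(u,v)}(t+k)$. The sign of $\Phi''$ at the two critical points is strict: it is positive at $t_\beta$ and negative at $s_\beta$, since each critical point of $\Phi$ is a simple zero of $g$ (where $g'\ne0$).

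Item iii) uses $f\le0$ on $(0,\kappa_1]$ and $\Phi\ge f$. If $(e^{2k}A)^{1/2}>\kappa_1$ held for some $k\le c_\beta$, then the preceding $k'$ with $(e^{2k'}A)^{1/2}\in(\kappa_1,\kappa_2)$ would give $\Phi(k')>0$ with $k'<c_\beta$, contradicting that $\Phi<0$ on $(-\infty,c_\beta)$. The minimum over the set where the gradient term is at most $\kappa_1^2$ is therefore attained on $(-\infty,c_\beta]$, where the only critical point is $t_\beta$.

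\textbf{Step 3: items iv), i) and v).} Item iv) is immediate from the sign of $g$: it is negative after its second zero $s_\beta$.

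For item i), $\mathcal N_\beta^0=\emptyset$ by the simplicity of the zeros. For the manifold structure, I would show that $(G_1,G_2,P_\beta)$, with $G_i$ the mass constraints, has surjective differential on $\mathcal N_\beta$. If $dP_\beta$ were a combination of $dG_1,dG_2$, testing with the generator of the dilation $k\star$ (which is tangent to the masses) would give $\Phi''(0)=0$, which is impossible.

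For item v), the implicit function theorem applied to $(k,(u,v))\mapsto\Phi'^{(u,v)}(k)$ at nondegenerate zeros gives $C^1$ dependence, since $E_\beta\in C^1$ and the coefficients $A,B,C$ are $C^1$ in $(u,v)$.
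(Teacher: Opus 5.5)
Your proposal is correct and follows the paper's route: the bound $\Phi_\beta^{(u,v)}(t)\ge f(e^{t}A^{1/2})$, the limits at $\pm\infty$, at most two nondegenerate critical points, and the implicit function theorem, with the critical-point count and nondegeneracy proved directly through $g$ where the paper cites Li--Zou; the one step to tighten is the submanifold claim, since the dilation generator $\frac{N-2a}{2}u+x\cdot\nabla u$ need not lie in $X$, so the rigorous form of your test is the Pohozaev identity for the Lagrange-multiplier system satisfied by $(u,v)$, which is what the cited Li--Zou lemma uses. Your caveat is right and exposes a genuine defect in the statement: the paper's proof uses $\Phi_\beta^{(u,v)}(-\infty)=0^-$, which requires $\mathcal{B}_{p,q}(u,v)>0$, whereas radial pairs with disjoint supports lie in $S(\rho_1,\rho_2)$ with $\mathcal{B}_{p,q}=0$ along the whole fiber (one critical point, a maximum, and one zero), so the lemma holds only on $\{\mathcal{B}_{p,q}>0\}$; this is harmless downstream, because such pairs with gradient norm below $\kappa_1$ have positive energy and so do not affect $m_\beta(\rho_1,\rho_2)<0$, and those lying on $\mathcal{N}_\beta(\rho_1,\rho_2)$ satisfy $(\Phi_\beta^{(u,v)})''(0)=(2-2^\sharp)A<0$, so $\mathcal{N}_\beta^0(\rho_1,\rho_2)=\emptyset$ still holds on the whole Pohozaev set.
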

	\begin{proof}
		Let $\u\in S\r$. Then
		\begin{align*}  
			\Phi_\beta^\u(k)=E_\beta(k\star\u)\ge f\left(e^k(\||x|^{-a}\na u\|_2^2+\||x|^{-a}\na v\|_2^2)^{1/2}\right).
		\end{align*}
		As a consequence,
		\begin{align*}  
			\Phi_\beta^\u(k)>0,\quad \forall\,k\in\left(\log\frac{\kappa_1}{(\||x|^{-a}\na u\|_2^2+\||x|^{-a}\na v\|_2^2)^{1/2}},\log\frac{\kappa_2}{(\||x|^{-a}\na u\|_2^2+\||x|^{-a}\na v\|_2^2)^{1/2}}\right),
		\end{align*}
		provided that $0<\beta<\beta_1$. Moreover, since
		\begin{align*}  
			\Phi_\beta^\u(-\infty)=0^-\quad\text{and}\quad \Phi_\beta^\u(+\infty)=-\infty,
		\end{align*}
		the function $\Phi_\beta^\u(s)$ has at least two critical points $t_\beta^\u<s_\beta^\u$. Here $s_\beta^\u$ corresponds to a global maximum at a positive level, whereas $t_\beta^\u$ corresponds to a local minimum at a negative level. By \cite[Remark 3.1]{li2021normalized}, $\Phi_\beta^\u(k)$ has at most two critical points in $\R$. If $k\star\u\in\mathcal{N}_\beta\r$, then $k=t_\beta^\u$ or $k=s_\beta^\u$, because $(\Phi_\beta^\u)'(k)=P_\beta(k\star\u)$. Arguing as in \cite[Lemma 3.2]{li2021normalized}, for $0<\beta<\beta_1$ we obtain that $\mathcal{N}_\beta^0\r=\emptyset$ and $\mathcal{N}_\beta\r$ is a submanifold of $\y$. Consequently, $t_\beta^\u\star\u\in \mathcal{N}_\beta^+\r$ and $s_\beta^\u\star\u\in \mathcal{N}_\beta^-\r$. By the monotonicity properties of $\Phi_\beta^\u$, it follows that the function has exactly two zeros $c_\beta^\u$ and $d_\beta^\u$ satisfying $t_\beta^\u<c_\beta^\u<s_\beta^\u<d_\beta^\u$. Finally, since
		$(\Phi_\beta^{\u})'(t_\beta^{\u})=0$
		and
		$(\Phi_\beta^{\u})''(t_\beta^{\u})\neq 0$
		(and similarly for $s_\beta^{\u}$),
		the implicit function theorem implies that the maps
		\begin{align*}  
			\u\mapsto t_\beta^{\u}
			\quad\text{and}\quad
			\u\mapsto s_\beta^{\u}
		\end{align*}
		are of class $C^1$.
		\qed
	\end{proof}
	For $R>0$, we define
	\begin{align*}  \mathcal{G}_R\r:=\{\u\in S\r|(\||x|^{-a}\na u\|_2^2+\||x|^{-a}\na v\|_2^2)^{1/2}<R\}.\end{align*}
	\begin{lemma}\label{D4}
		Let $0<\beta<\beta_1$. Then the following assertions hold:
		\begin{itemize}
			\item[(i)] 
			$\displaystyle
			m_\beta{\r}
			=
			\inf_{\mathcal{G}_{\kappa_1}{\r}} E_\beta\u
			<0.
			$
			\item[(ii)] 
			For any $0<\tilde{\rho}_1\le \rho_1$ and $0<\tilde{\rho}_2\le \rho_2$, one has
			\begin{align*}  
				m_\beta\r
				\le
				m_\beta{\rt}
				.
			\end{align*}
		\end{itemize}
	\end{lemma}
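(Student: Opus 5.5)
For (i), I will follow the standard Soave / Li--Zou scheme and lean on Lemma~\ref{D3}. Take any $(u,v)\in\mathcal{N}_\beta\r$. By Lemma~\ref{D3}(ii), either $(u,v)=t_\beta^{(u,v)}\star(u,v)$, i.e. $t_\beta^{(u,v)}=0$, or $s_\beta^{(u,v)}=0$.

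In the second case, Lemma~\ref{D3}(iv) gives $E_\beta(u,v)=\max_t\Phi_\beta^{(u,v)}(t)>0$.

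In the first case, Lemma~\ref{D3}(iii) gives $E_\beta(u,v)<0$ and $\|\nabla(u,v)\|\le\kappa_1$. Here $0=t_\beta\le c_\beta$ is used. The boundary value $\kappa_1$ is excluded: $f(\kappa_1)=0$ while $E_\beta\ge f$, so on the sphere $\|\cdot\|=\kappa_1$ the energy is nonnegative. Hence $(u,v)\in\mathcal{G}_{\kappa_1}\r$, and so $m_\beta\r\ge\inf_{\mathcal{G}_{\kappa_1}}E_\beta$. This holds because points of $\mathcal{N}^-$ have positive energy, and the infimum over $\mathcal{G}_{\kappa_1}$ is negative, as shown next.

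For the reverse inequality, take any $(u,v)\in\mathcal{G}_{\kappa_1}\r$. Then $k=0$ lies in the admissible set of Lemma~\ref{D3}(iii), so
\[
E_\beta(u,v)\ge E_\beta(t_\beta^{(u,v)}\star(u,v))\ge m_\beta\r,
\]
since $t_\beta^{(u,v)}\star(u,v)\in\mathcal{N}^+_\beta\r$. Negativity follows from the same point: every $t_\beta\star(u,v)$ has negative energy, and $S\r\neq\emptyset$.

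For (ii), I will use the classical rescaling of masses, adapted to the weighted norms. Fix $\varepsilon>0$ and pick $(u,v)\in\mathcal{G}_{\kappa_1(\tilde\rho)}\rt$ with $E_\beta(u,v)<m_\beta\rt+\varepsilon$. Using radial cutoffs (density of $C_0^\infty$ radial functions in $X$), I may assume $u,v$ have compact support in an annulus $B_R\setminus B_{r_0}$, at the cost of an $\varepsilon$ error. Let $\tilde u:=\theta_1 u$ with $\theta_1=\rho_1/\tilde\rho_1\ge1$ is not mass-neutral for the energy, so instead I will add mass far away. Choose radial $w_1,w_2\in C_0^\infty$ supported in $\{|x|>R_n\}$ with $\||x|^{-a}w_i\|_2^2=\rho_i^2-\tilde\rho_i^2$, and dilate them via $k\star w_i$ with $k\to-\infty$, so that their gradient and $L^{\t}$, $L^{p+q}$ contributions tend to $0$. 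Supports stay disjoint from $B_R$ if $w_i$ is taken supported far out; after dilation the support spreads further. Then $(u+k\star w_1,\,v+k\star w_2)\in S\r$. Because the supports are disjoint, its energy equals $E_\beta(u,v)+o_k(1)$, with the coupling term $\b$ vanishing on the cross terms. For $k$ negative enough, its gradient norm stays below $\kappa_1(\rho)$. Here I also need $\kappa_1(\tilde\rho)\le\kappa_1(\rho)$, which holds since $f$ decreases pointwise as the masses increase, while $\kappa_1$ is its first zero; a small margin is available by strictness. Applying (i) at masses $\r$ yields $m_\beta\r\le m_\beta\rt+2\varepsilon$.

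The main obstacle will be step (ii): keeping the perturbed pair inside $\mathcal{G}_{\kappa_1(\rho)}$ and controlling the weighted norms under the dilation. I expect the precise comparison of $\kappa_1$ across masses, together with the strict inequality $\|\nabla(u,v)\|<\kappa_1(\tilde\rho)$, to need care. If that fails, I would fall back on the alternative argument: the map $(\rho_1,\rho_2)\mapsto m_\beta$ is nonincreasing in each variable, obtained via $t\mapsto m_\beta$ along scaled masses, using the strict negativity of the infimum.
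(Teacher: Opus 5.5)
Your proposal is correct and follows the paper's proof. Part (i) is the same use of Lemma~\ref{D3}, and you are more careful than the paper about excluding the boundary $\||x|^{-a}\nabla(u,v)\|=\kappa_1$ and about why $k=0$ is admissible in Lemma~\ref{D3}(iii). Part (ii) is the same cutoff plus distant dilated bump construction, differing only in that you stay in $\mathcal{G}_{\kappa_1}$ and justify $\kappa_1(\tilde\rho_1,\tilde\rho_2)\le\kappa_1(\rho_1,\rho_2)$ from the pointwise monotonicity of $f$ in the masses, whereas the paper avoids that comparison by working in $\mathcal{G}_{t^*}$ with the mass-independent radius $t^*$ of \eqref{d9} (using $\kappa_1<t^*<\kappa_2$); after your cutoff you must also either renormalize the masses or, as the paper does, add the mass $\rho_i^2-\||x|^{-a}u_\delta\|_2^2$ rather than $\rho_i^2-\tilde\rho_i^2$.
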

	\begin{proof}
		$(i)$ By Lemma~\ref{D3}, we have $\mathcal{N}_\beta^+\r\subset \mathcal{G}_{\kappa_1}\r$. Moreover, 
		\begin{align*}  m_\beta\r=\inf_{\mathcal{N}_{\beta}\r}E_\beta\u=\inf_{\mathcal{N}_{\beta}^+\r}E_\beta\u<0.\end{align*}
		Since $\mathcal{N}_\beta^{+}\r \subset \mathcal{G}_{\kappa_1}{\r}$, it follows that
		$\displaystyle m_\beta\r\ge\inf_{\mathcal{G}_{\kappa_1}\r}E_\beta\u$.
		On the other hand, for any $\u \in \mathcal{G}_{\kappa_1}{\r}$, 
		Lemma~\ref{D3} ensures that there exists $t_\beta^{\u}\in\R$ such that
		$t_\beta^{\u}\star \u \in \mathcal{N}_\beta^{+}\r$ and
		\begin{align*}  m_\beta\r\le E_\beta(t_\beta^\u\star\u)\le E_\beta\u.\end{align*} Therefore, $m_\beta\r=\inf_{\mathcal{G}_{\kappa_1}\r}E_\beta\u$.\\
		\textit{(ii)} Recalling \eqref{d9}, we obtain
		\begin{equation*}
			m_\beta\r=\inf_{\mathcal{G}_{t^*}\r} E_\beta.
		\end{equation*}
		Let $\var>0$ be arbitrary. Choose $\u\in \mathcal{G}_{t^*}\rt $ such that $E_\beta\u\le m_\beta\rt+\frac{\var}{2}$ and $\phi\in C_0^\infty(\R^N)$ be a cutoff function satisfying
		\begin{align*}  0\le\phi\le1\quad\text{and}\quad\phi(x)=\begin{cases}
				0,\quad \text{if}~|x|\ge 2,\\
				1,\quad\text{if}~|x|\le 1.
		\end{cases}\end{align*} 
		For $\delta>0$, define $(u_\delta(x),v_\delta(x))=(u(x)\phi(\delta x),v(x)\phi(\delta x))$. Then $(u_\delta,v_\delta)\to\u$ in $\y$ as $\delta\to0.$ Consequently, for small $\eta>0,$ there exists $\delta>0$ sufficiently small such that
		\begin{equation}\label{d11}
			E_\beta(u_\delta,v_\delta)\le E_\beta(u,v)+\frac{\var}{4}
			\quad\text{and}\quad \left(\||x|^{-a}\na u_\delta\|_2^2+\||x|^{-a}\na v_\delta\|_2^2\right)^{1/2}<t^*-\eta.
		\end{equation}
		Let $\psi\in C_0^\infty(\R^N)$ such that $\text{supp}(\psi)\subset B(0,1+\frac{4}{\delta})\backslash B(0,\frac{4}{\delta})$, where $B(0,R)$ denotes the ball of radius $R$ centered at the origin. Define
		\begin{align*}  w_{\rho_1}=\frac{\sqrt{\rho_1^2-\||x|^{-a}u_\delta\|_2^2}}{\||x|^{-a}\psi\|_2\phantom{^2}}\psi\quad\text{and}\quad w_{\rho_2}=\frac{\sqrt{\rho_2^2-\||x|^{-a}v_\delta\|_2^2}}{\||x|^{-a}\psi\|_2\phantom{^2}}\psi.\end{align*}
		By construction, the supports are disjoint
		\begin{align*}  (\text{supp}(u_\delta)\cup\text{supp}(v_\delta))\cap(\text{supp}(k\star w_{\rho_1})\cup\text{supp}(k\star w_{\rho_2}))=\emptyset,\end{align*}
		for $k<0.$ Hence, $(u_\delta+k\star w_{\rho_1},v_\delta+k\star w_{\rho_2})\in S\r$. As $k\to-\infty$, we obtain
		\begin{equation}\label{d10}
			E_\beta(k\star (w_{\rho_1},w_{\rho_2}))\le \frac{\var}{4}\quad\text{and}\quad \left(\||x|^{-a}\na (k\star w_{\rho_1})\|_2^2+\||x|^{-a}\na (k\star w_{\rho_2})\|_2^2\right)^{1/2}\le \frac{\eta}{2}.
		\end{equation}
		It follows that
		\begin{align*}  \left(\||x|^{-a}\na( u_\delta+k\star w_{\rho_1})\|_2^2+\||x|^{-a}\na (v_\delta+k\star w_{\rho_2})\|_2^2\right)^{1/2}<t^*.\end{align*}
		Combining \eqref{d11} and \eqref{d10}, we deduce
		\begin{align*}  
			m_\beta\r&\le E_\beta(u_\delta+k\star w_{\rho_1}, v_\delta+k\star w_{\rho_2})\\
			&=E_\beta(u_\delta,v_\delta)+E_\beta(k\star w_{\rho_1},k\star w_{\rho_2})\\
			&\le m_\beta\rt+\var.
		\end{align*}
		This completes the proof.\qed
	\end{proof}
	\noindent
	\textbf{Proof of Theorem~\ref{DD2}.}
	Let $(\hat{u}_n,\hat{v}_n)\in \mathcal{G}_{\kappa_1}{\r}$ be a minimizing sequence for $m_\beta{\r}$. 
	By replacing $(\hat{u}_n,\hat{v}_n)$ with $(|\hat{u}_n|,|\hat{v}_n|)$, we may assume that $(\hat{u}_n,\hat{v}_n)$ is nonnegative.
	
	Moreover, since
	\begin{align*}  
		E_\beta(t_\beta^{(\hat{u}_n,\hat{v}_n)}\star (\hat{u}_n,\hat{v}_n))
		\le
		E_\beta(\hat{u}_n,\hat{v}_n),
	\end{align*}
	we may further assume that 
	\begin{align*}  
		(\hat{u}_n,\hat{v}_n)\in \mathcal{N}_\beta^{+}\r.
	\end{align*}
	
	By Ekeland's variational principle, there exists a Palais–Smale sequence $\v$ for $E_\beta\big|_{S\r}$ such that
	\begin{align*}  
		\|\v-(\hat{u}_n,\hat{v}_n)\|_{\y}\to0
		\quad \text{as } n\to\infty.
	\end{align*}
	Consequently,
	\begin{align*}  
		P_\beta\v
		=
		P_\beta(\hat{u}_n,\hat{v}_n)
		+
		o(1)
		\to 0,
		\quad \text{and} \quad
		u_n^-,v_n^- \to 0
		\ \text{a.e. in } \R^N.
	\end{align*}
	
	Applying Proposition~\ref{D5} with $c=m_\beta{\r}$ (its monotonicity hypothesis holds by Lemma~\ref{D4}(ii), and \eqref{d2} holds because $m_\beta\r<0$, so that $\min\{0,m_\beta\r\}=m_\beta\r$), we deduce that
	\begin{align*}  
		\v\to\u \quad \text{in } \y,
	\end{align*}
	and
	\begin{align*}  
		(\lambda_{1,n},\lambda_{2,n})
		\to
		(\lambda_1,\lambda_2)
		\in \R^+\times\R^+.
	\end{align*}
	By strong convergence, $\u\in \mathcal{N}_\beta{\r}$ is a solution of \eqref{d1}, and hence a normalized ground state.\qed
	\subsection{Mass-supercritical case}
	In this subsection, we turn our attention to the mass-supercritical regime, corresponding to 
	$(p + q) > (p + q)_c = \frac{2N + 4}{N - 2a + 2b}$, which is equivalent to $(p + q)\delta_{p+q} > 2$. 
	In contrast to the mass-subcritical case, the energy functional $E_\beta$ is unbounded from below 
	on  $S\r$, and the ground-state solution cannot be obtained as a local minimizer. 
	Therefore, we employ a minimax characterization over the Pohozaev manifold. To understand the 
	underlying geometry and construct a suitable Palais--Smale sequence, we first analyze the behavior 
	of the fibering map $\Phi_\beta^{(u,v)}$ and the structure of the Pohozaev manifold $\mathcal{N}_\beta(\rho_1, \rho_2)$.
	\begin{lemma}\label{D6}
		For $\beta>0$, the fibering map $\Phi_\beta^{\u}$ admits exactly one critical point 
		$t_\beta^{\u}$ for every $\u \in S{\r}$. 
		In addition, the following properties hold:
		\begin{itemize}
			\item[(i)] $\mathcal{N}_\beta\r=\mathcal{N}^-_\beta\r$ and $\mathcal{N}_\beta\r $ is a submanifold of $\y$.
			\item[(ii)] $t\star\u\in \mathcal{N}_\beta\r$ if and only if $t=t_\beta^\u.$
			\item[(iii)] $\Phi_\beta^\u(t)$ is strictly decreasing and concave on $(t_\beta^\u,+\infty)$ and 
			\begin{align*}  \Phi_\beta^\u(t_\beta^\u)=\max_{t\in\R}\Phi_\beta^\u(t)>0.\end{align*}
			\item[(iv)] The map $\u\mapsto t_\beta^\u$ is of class $C^1$.
		\end{itemize}
	\end{lemma}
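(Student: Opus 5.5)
The proof rests on the explicit form of the fibering map. For $\u\in S\r$ write
\begin{align*}
A:=\||x|^{-a}\na u\|_2^2+\||x|^{-a}\na v\|_2^2>0,\qquad B:=\b\u\ge0,\qquad C:=\||x|^{-b}u\|_\t^\t+\||x|^{-b}v\|_\t^\t>0,
\end{align*}
and set $\gamma:=\pq$. In the mass-supercritical regime we have $2<\gamma<\t$. Then
\begin{align*}
(\Phi_\beta^\u)'(t)=Ae^{2t}-\beta\gamma Be^{\gamma t}-Ce^{\t t}=e^{2t}\bigl(A-\beta\gamma Be^{(\gamma-2)t}-Ce^{(\t-2)t}\bigr).
\end{align*}
Because $\beta>0$, $B\ge0$, $C>0$ and $\gamma-2>0$, $\t-2>0$, the bracket $g(t)$ is strictly decreasing. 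It tends to $A>0$ as $t\to-\infty$ and to $-\infty$ as $t\to+\infty$. Hence $g$ has exactly one zero $t_\beta^\u$, and $(\Phi_\beta^\u)'$ is positive before it and negative after it. This gives uniqueness of the critical point, and it shows that $t_\beta^\u$ is the global maximum. Since $(\Phi_\beta^\u)'(t)=P_\beta(t\star\u)$, we also get (ii): $t\star\u\in\mathcal N_\beta\r$ if and only if $t=t_\beta^\u$. Positivity of the maximum follows because $\Phi_\beta^\u(t)\to0^+$ as $t\to-\infty$. Indeed, the term $\frac{A}{2}e^{2t}$ dominates there, since $\gamma>2$ and $\t>2$. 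Since $\Phi$ is increasing up to $t_\beta^\u$, we get $\Phi_\beta^\u(t_\beta^\u)>0$.

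For (iii) and the identity $\mathcal N_\beta=\mathcal N_\beta^-$, compute at any critical point, using $A=\beta\gamma Be^{(\gamma-2)t}+Ce^{(\t-2)t}$:
\begin{align*}
(\Phi_\beta^\u)''(t)=2Ae^{2t}-\beta\gamma^2Be^{\gamma t}-\t Ce^{\t t}=-\beta\gamma(\gamma-2)Be^{\gamma t}-(\t-2)Ce^{\t t}<0.
\end{align*}
Applying this at $t=0$ for $\u\in\mathcal N_\beta\r$ gives $\mathcal N_\beta^0=\mathcal N_\beta^+=\emptyset$. For concavity on $(t_\beta^\u,\infty)$, write $(\Phi_\beta^\u)''(t)=2(\Phi_\beta^\u)'(t)-\beta\gamma(\gamma-2)Be^{\gamma t}-(\t-2)Ce^{\t t}$. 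For $t>t_\beta^\u$ the first term is negative and the other two are nonpositive or negative, so $(\Phi_\beta^\u)''<0$ there. The same sign information gives strict decrease.

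The submanifold property is the only point needing care. $\mathcal N_\beta\r$ is the zero set of $(G_1,G_2,P_\beta)$ on $\y$, where $G_i$ are the two mass functionals. I would show that $dP_\beta\u$, $dG_1\u$, $dG_2\u$ are linearly independent at each point of $\mathcal N_\beta$, arguing as in \cite[Lemma 3.2]{li2021normalized}. Suppose $dP_\beta\u=\mu_1dG_1\u+\mu_2dG_2\u$. Then $\u$ solves an elliptic system, and its Pohozaev identity yields $2A-\beta\gamma^2B-\t C=0$, which is exactly $(\Phi^\u_\beta)''(0)=0$. This contradicts $\mathcal N^0_\beta=\emptyset$. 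The main technical obstacle is justifying this Pohozaev identity in the weighted radial setting. Since the paper already uses the identity $(\Phi_\beta^\u)'(0)=P_\beta$ for constrained critical points, I would use the same dilation argument: differentiate the functional along $t\star\u$. Finally, (iv) follows from the implicit function theorem applied to $F(\u,t):=(\Phi_\beta^\u)'(t)$. This map is $C^1$ on $\y\times\R$ because $E_\beta\in C^1$, and $\partial_tF(\u,t_\beta^\u)=(\Phi_\beta^\u)''(t_\beta^\u)<0$.
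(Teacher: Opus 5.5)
Your proposal is correct and follows the paper's proof. The same substitution of $P_\beta\u=0$ into $(\Phi_\beta^{\u})''(0)$ gives $\mathcal N_\beta\r=\mathcal N_\beta^-\r$, the submanifold property is the Li--Zou argument \cite{li2021normalized}, and (ii)--(iv) come from the shape of the fibering map and the implicit function theorem; you make the uniqueness step more self-contained by showing directly that $e^{-2t}(\Phi_\beta^{\u})'(t)$ is strictly decreasing. One caution: you cannot obtain the Pohozaev identity for a constrained critical point of $P_\beta$ simply by differentiating along $t\star\u$, because that curve is not differentiable in $\y$ for general elements, so the usual regularity argument for the associated Euler--Lagrange system is needed, a point the paper also leaves to \cite{li2021normalized}.
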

	\begin{proof}
		Let $\u\in\mathcal{N}_\beta\r\backslash\mathcal{N}_\beta^-\r$. Then
		\begin{equation}\label{d13}
			\||x|^{-a}\na u\|_2^2+\||x|^{-a}\na v\|_2^2=\||x|^{-b}u\|_\t^\t+\||x|^{-b}v\|_\t^\t+\beta \pq\b\u,\end{equation}
		and \begin{equation}\label{d12}
			2\left(\||x|^{-a}\na u\|_2^2+\||x|^{-a}\na v\|_2^2\right)\ge\t\left(\||x|^{-b}u\|_\t^\t+\||x|^{-b}v\|_\t^\t\right)+\beta (p+q)^2\delta_{p+q}^2\b\u.\end{equation}
		Combining \eqref{d13} and \eqref{d12}, we have
		\begin{align*}  0\ge (\t-2)\left(\||x|^{-b}u\|_\t^\t+\||x|^{-b}v\|_\t^\t\right)+\pq(\pq-2)\b\u.\end{align*}
		This yields a contradiction, since $\pq>2$. Hence, $\mathcal{N}_\beta\r=\mathcal{N}_\beta^-\r$. Moreover, arguing as in \cite[Lemma 3.2]{li2021normalized}, one concludes that  $\mathcal{N}_\beta\r$ is a submanifold of $\y$. The proofs of assertions  \textit{(ii)--(iv)} follow by repeating the arguments used in Lemma~\ref{D3}, and are therefore omitted.\qed
	\end{proof}
	\begin{lemma}\label{D7}
		$\displaystyle m_\beta\r=\inf_{S\r}\max_{k\in\R}E_\beta(k\star\u)$.
	\end{lemma}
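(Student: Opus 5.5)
We prove the two inequalities separately, using Lemma~\ref{D6} throughout: for $\beta>0$ and $p+q>\p$, every $\u\in S\r$ has a unique $t_\beta^\u\in\R$ with $t_\beta^\u\star\u\in\mathcal{N}_\beta\r$, and $\Phi_\beta^\u$ attains its global maximum there. The whole argument is therefore a direct consequence of Lemma~\ref{D6}; the work is only in reading off the two directions correctly.

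\emph{Step 1: $m_\beta\r\le\inf_{S\r}\max_{k\in\R}E_\beta(k\star\u)$.} Fix $\u\in S\r$. The scaling $k\star$ preserves the weighted $L^2$ norms, so $t_\beta^\u\star\u\in S\r$. By Lemma~\ref{D6}(ii) this point lies on $\mathcal{N}_\beta\r$. By Lemma~\ref{D6}(iii),
\begin{align*}
m_\beta\r\le E_\beta(t_\beta^\u\star\u)=\Phi_\beta^\u(t_\beta^\u)=\max_{k\in\R}\Phi_\beta^\u(k)=\max_{k\in\R}E_\beta(k\star\u).
\end{align*}
Taking the infimum over $\u\in S\r$ gives the first inequality.

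\emph{Step 2: the reverse inequality.} Take $\u\in\mathcal{N}_\beta\r$. Since $P_\beta\u=0$ and $(\Phi_\beta^\u)'(0)=P_\beta\u$, the point $0$ is a critical point of $\Phi_\beta^\u$. By the uniqueness in Lemma~\ref{D6}, $t_\beta^\u=0$, and therefore
\begin{align*}
E_\beta\u=\Phi_\beta^\u(0)=\max_{k\in\R}E_\beta(k\star\u)\ge\inf_{S\r}\max_{k\in\R}E_\beta(k\star\u).
\end{align*}
Taking the infimum over $\mathcal{N}_\beta\r$ yields $m_\beta\r\ge\inf_{S\r}\max_{k}E_\beta(k\star\u)$.

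\emph{Main obstacle.} The only delicate point is that $\max_{k\in\R}E_\beta(k\star\u)$ is genuinely attained and equals the value at $t_\beta^\u$, rather than only a supremum approached as $k\to\pm\infty$. This holds because $\Phi_\beta^\u(-\infty)=0$, $\Phi_\beta^\u(+\infty)=-\infty$, and the value at the unique critical point is positive, all of which is contained in Lemma~\ref{D6}(iii). Combining Steps 1 and 2 proves Lemma~\ref{D7}.
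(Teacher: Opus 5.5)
Your proof is correct and follows the paper's argument essentially verbatim: both inequalities are read off from Lemma~\ref{D6}, with $t_\beta^{(u,v)}\star(u,v)\in\mathcal{N}_\beta(\rho_1,\rho_2)$ giving $m_\beta(\rho_1,\rho_2)\le\max_{k}E_\beta(k\star(u,v))$, and $t_\beta^{(u,v)}=0$ on $\mathcal{N}_\beta(\rho_1,\rho_2)$ giving the reverse inequality. Your extra remarks on why the maximum is attained, and why $t_\beta^{(u,v)}=0$ there, just make explicit what the paper cites from Lemma~\ref{D6}.
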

	\begin{proof}
		By Lemma~\ref{D6}, for every $\u\in S\r$, the map $k\mapsto E_\beta(k\star\u)$
		admits a unique maximizer and
		\begin{align*}  \max_{k\in\R}E_\beta(k\star\u)=E_\beta(t_\beta^\u\star\u)\ge m_\beta\r.\end{align*}
		Taking the infimum over $S\r$, we obtain
		\begin{align*}  m_\beta\r\le \inf_{S\r}\max_{k\in\R}E_\beta(k\star\u).\end{align*}
		Conversely, if 
		$\u\in \mathcal{N}_\beta\r$, then again by Lemma~\ref{D6},
		\begin{align*}  E_\beta\u=\max_{k\in\R}E_\beta(k\star\u)\ge\inf_{S\r}\max_{k\in\R}E_\beta(k\star\u).\end{align*}
		Combining the two inequalities yields the desired identity.\qed
	\end{proof}
	\begin{lemma}\label{D9}
		There exists a Palais-Smale sequence $\v\subset S\r\cap\y$ for $E_\beta\big|_{S\r}$ at the level $m_\beta\r$, with $P_\beta\v\to0$ and $u_n^-,~v_n^-\to0$ a.e. in $\R^N$ as $n\to\infty.$
	\end{lemma}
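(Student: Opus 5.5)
We follow the Jeanjean-type minimax approach of Bartsch--Soave, adapted to the torus $S\r$. The idea is to introduce the augmented functional
\begin{align*}
\tilde E_\beta:\R\times S\r\to\R,\qquad \tilde E_\beta(k,(u,v)):=E_\beta(k\star(u,v)),
\end{align*}
and to work with the class of paths
\begin{align*}
\Gamma:=\{\gamma=(\gamma_1,\gamma_2)\in C([0,1],\R\times S\r):\ \gamma(0)\in\{0\}\times\mathcal{G}_{R_0}\r,\ \tilde E_\beta(\gamma(1))<0\},
\end{align*}
where $R_0>0$ is small. For $\beta>0$ and $\pq>2$, the estimate \eqref{d5} together with the definition of $\sr$ gives
\begin{align*}
E_\beta(u,v)\ge \tfrac12 t^2 - Ct^{\pq}-Ct^{\t}, \qquad t=\left(\||x|^{-a}\nabla u\|_2^2+\||x|^{-a}\nabla v\|_2^2\right)^{1/2}.
\end{align*}
Hence $E_\beta>0$ on $\mathcal{G}_{R_0}\r$, and $\sup_{\mathcal G_{R_0}}E_\beta$ lies strictly below $\inf\{E_\beta: t=2R_0\}$. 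By Lemma~\ref{D6}, every point of $\mathcal N_\beta\r$ has positive energy, bounded below by a constant independent of the point. Consequently the minimax level $\gamma_\beta:=\inf_{\Gamma}\max_{s}\tilde E_\beta(\gamma(s))$ is positive.

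\textbf{Step 1: identifying the minimax level.} We show $\gamma_\beta=m_\beta\r$.
\begin{itemize}
\item[(a)] For $(u,v)\in\mathcal N_\beta\r$, take the path $s\mapsto(0,((1-s)k_0+sk_1)\star(u,v))$ with $k_0\ll0\ll k_1$. Along this path the gradient norm scales like $e^{k}$, so the starting point lies in $\mathcal G_{R_0}$ and the endpoint has negative energy. By Lemma~\ref{D6}(iii), the maximum along the path is $E_\beta(u,v)$. Hence $\gamma_\beta\le m_\beta$.
\item[(b)] Conversely, any $\gamma\in\Gamma$ crosses $\mathcal N_\beta\r$. Indeed, $P_\beta(\gamma_1(s)\star\gamma_2(s))$ is positive at $s=0$ (small gradient, with $\pq>2$ and $\t>2$) and negative at $s=1$, because $E_\beta<0$ forces $P_\beta<0$ via $E_\beta-\frac1{\pq}P_\beta>0$. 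By continuity there is a crossing point, which gives $\gamma_\beta\ge m_\beta$.
\end{itemize}

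\textbf{Step 2: producing the Palais--Smale sequence.} We take a minimizing sequence of paths $\gamma_n$. Replacing them by $(\gamma_{1,n},(|\gamma_{2,n}^1|,|\gamma_{2,n}^2|))$ keeps them admissible and does not change the energy. We may further assume $\gamma_{1,n}\equiv0$: compose with $\gamma_{1,n}(s)\star$, which uses $\tilde E_\beta(k,w)=\tilde E_\beta(0,k\star w)$. We then apply Ghoussoub's minimax principle (homotopy-stable family, boundary $\{0\}\times\mathcal G_{R_0}\cup\{\tilde E_\beta<0\}$, dual set $\mathcal N_\beta$) to $\tilde E_\beta$ on $\R\times S\r$. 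This yields $(k_n,w_n)$ such that $\tilde E_\beta(k_n,w_n)\to m_\beta$, $\partial_k\tilde E_\beta(k_n,w_n)\to0$, $\|\partial_w\tilde E_\beta(k_n,w_n)\|\to0$, and $\operatorname{dist}((k_n,w_n),(0,\gamma_n([0,1])))\to0$.

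Setting $(u_n,v_n):=k_n\star w_n$, we read off:
\begin{itemize}
\item $\partial_k\tilde E_\beta=P_\beta(u_n,v_n)\to0$;
\item since $k_n\to0$ and the $\star$-action is an isometry-up-to-$e^{k}$ on the tangent spaces of $S\r$, $\|E_\beta'|_{S\r}(u_n,v_n)\|\to0$;
\item closeness to the nonnegative paths gives $u_n^-,v_n^-\to0$ in $\y$, hence a.e. along a subsequence.
\end{itemize}

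\textbf{Main obstacle.} We expect the main difficulty to be the transfer of the tangential derivative estimate in Step 2: bounding the operator norm of $\partial_w\tilde E_\beta$ against $dE_\beta|_{T S\r}$ at $k_n\star w_n$. The plan is to mirror \cite[Lemma 3.5]{li2021normalized}, i.e. Bartsch--Soave's argument. The map $\varphi\mapsto k\star\varphi$ is a linear isomorphism of the tangent spaces $T_{w}S\r\to T_{k\star w}S\r$ (it preserves the weighted $L^2$ pairing since it is mass preserving), and its norm in $\y$ is bounded by $\max\{1,e^{k}\}$, which stays uniformly controlled because $|k_n|\to0$. The same reasoning shows the argument works whether or not $\mathcal N_\beta$ is smooth, though Lemma~\ref{D6}(i),(iv) can also be used directly.
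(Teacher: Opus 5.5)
Your proposal is correct and follows essentially the same route as the paper. Both use the augmented functional $(k,(u,v))\mapsto E_\beta(k\star(u,v))$ on $\R\times S\r$, a path class starting in $\{0\}\times\mathcal G_{R_0}\r$ and ending at nonpositive energy, and nonnegative minimizing paths reduced to $k\equiv 0$. Both then apply Ghoussoub's minimax principle for homotopy-stable families to get $k_n\to0$, and transfer via $(u_n,v_n)=k_n\star w_n$. Your crossing argument in Step 1(b) adds something useful: it identifies the path minimax level with $m_\beta\r$, which the paper only attributes to Lemma~\ref{D7}.

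Two small corrections. First, the dual set in Ghoussoub's theorem should be the closed set $\{(k,w)\in\R\times S\r:\ k\star w\in\mathcal N_\beta\r\}$, which is what Step 1(b) actually shows every path meets, rather than $\mathcal N_\beta\r$ itself. Second, closeness to nonnegative paths only gives $w_n^-\to0$ in the weighted $L^2$ norm (via $|w_n^-|\le|w_n-g_n|$ for $g_n\ge0$), not in $\y$. That weaker convergence is all the a.e.\ conclusion requires.
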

	\begin{proof}
		Define $\overline{E}_\beta(k,\u):=E_\beta(k\star\u)$. By Lemma~\ref{D7}, the level $m_\beta\r$ admits the minimax characterization
		\begin{align*}  m_\beta\r=\inf_{\gamma\in\Gamma_\beta}\max_{t\in[0,1]}\overline{E}_\beta(\gamma(t)),\end{align*}
		where
		\begin{align*}  
			\Gamma_\beta:=&\big\{\gamma:[0,1]\to \R\times(S\r\cap\y)|\gamma~\text{is continuous,}~\gamma(0)\in\{0\}\times \mathcal{G}_r\r,\\&\quad\gamma(1)\in \{(0,\u)|~E_\beta\u\le0\}\big\}
		\end{align*}
		for $r>0$ arbitrarily small. The classical minimax principle  \cite{bartsch2016normalized} therefore yields a Palais–Smale sequence at the level $m_\beta\r.$
		
		Applying to such a minimizing sequence of paths the minimax principle for homotopy-stable families \cite[Theorem~3.2]{ghoussoub1993duality}, we obtain a Palais--Smale sequence $\{(k_n,(w_n,z_n))\}$ for $\overline E_\beta$ at the level $m_\beta\r$ with $k_n\to0$ and $w_n^-,z_n^-\to0$ in $X$. Setting $\v:=k_n\star(w_n,z_n)$ and using $\partial_k\overline E_\beta(k,\u)=P_\beta(k\star\u)$ together with $k_n\to0$, which makes the map $(\varphi,\psi)\mapsto k_n\star(\varphi,\psi)$ uniformly bounded on $\y$, we get a Palais--Smale sequence for $E_\beta|_{S\r}$ with the stated properties. The details are as in \cite[Lemma~3.6]{bartsch2016normalized} and \cite[Lemma~2.6]{bartsch2023existence}, and carry over because the dilation $k\star$ acts here exactly as in the unweighted case.\qed
	\end{proof}
	\begin{lemma}\label{D11}
		For given $\rho_1,~\rho_2>0$, the following hold
		\begin{itemize}
			\item[(i)] $m_\beta\r\le m_\beta\rt$ for $0<\tilde{\rho}_1<\rho_1$ and $0<\tilde{\rho}_2<\rho_2.$
			\item[(ii)] $m_\beta\r$ is nonincreasing with respect to $\beta\in(0,+\infty).$
			\item[(iii)] $\displaystyle\lim_{\beta\to+\infty}m_\beta\r=0^+.$
		\end{itemize}
	\end{lemma}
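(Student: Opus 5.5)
For (i) I will reuse the cut-off and gluing construction of Lemma~\ref{D4}(ii), with the local-minimum characterization replaced by the minimax one of Lemma~\ref{D7}. Given $\var>0$, Lemma~\ref{D7} provides $(u,v)\in S\rt$ with $\max_{k\in\R}E_\beta(k\star(u,v))\le m_\beta\rt+\var/2$. By density I may take $u,v$ nonnegative and compactly supported: truncate as $(u\phi(\delta x),v\phi(\delta x))$ and renormalize the masses. The map $(u,v)\mapsto \max_k E_\beta(k\star(u,v))=E_\beta(t_\beta^{(u,v)}\star(u,v))$ is continuous by Lemma~\ref{D6}(iv), so this costs at most $\var/4$.

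Next I add bumps $k'\star w_{\rho_i}$, with $k'\to-\infty$, supported in an annulus far from the supports of $u,v$, which raise the masses to $\rho_1,\rho_2$. This gives $(U,V)=(u+k'\star w_{\rho_1},v+k'\star w_{\rho_2})\in S\r$. Because the supports are disjoint, every term in $E_\beta$ and $P_\beta$ splits additively: the coupling term $\b$ splits since both components live on the same pieces, and the cross pieces vanish. Hence
$$E_\beta(k\star(U,V))=E_\beta(k\star(u,v))+E_\beta((k+k')\star(w_{\rho_1},w_{\rho_2})).$$

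To bound the maximum over $k$, note that $t_\beta^{(U,V)}$ stays bounded as $k'\to-\infty$. Indeed, the gradient contribution of the bumps is $e^{2k'}$ times a constant, and their $L^{2^\sharp}$ and $\b$ contributions are of even smaller order. Consequently the added energy is at most $\tfrac12 e^{2(k+k')}\|\cdot\|^2\le\var/4$ uniformly for $k$ in that bounded set. Then
$$m_\beta\r\le \max_k E_\beta(k\star(U,V))\le m_\beta\rt+\var,$$
which proves (i). I expect the main obstacle to be the uniformity in $k$ of this bump estimate. I would handle it by noting that the second summand is at most $\tfrac12 e^{2(k+k')}\|(w_{\rho_1},w_{\rho_2})\|^2$, which is small for $k\le K$. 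For large $k$, the first term goes to $-\infty$ faster than the second grows, because $2^\sharp>2$.

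For (ii), the map $\beta\mapsto E_\beta(k\star(u,v))$ is nonincreasing for fixed $(u,v)$ since $\b\ge0$. Taking $\max_k$ and then $\inf$ via Lemma~\ref{D7} shows that $m_\beta$ is nonincreasing in $\beta$.

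For (iii), positivity follows from Lemma~\ref{D6}(iii), since $m_\beta\ge0$. For smallness, I fix $(u,v)\in S\r$ with $u,v>0$, so that $\b(u,v)>0$, and drop the critical term:
$$\max_k E_\beta(k\star(u,v))\le\max_{t>0}\Big(\tfrac{t^2}{2}A-\beta t^{\pq}B\Big)=C\,\beta^{-\frac{2}{\pq-2}}\to0,$$
using $\pq>2$. Strict positivity of $m_\beta$, which the notation $0^+$ suggests, needs more care. On $\mathcal N_\beta$, the inequality \eqref{d5} together with the Caffarelli--Kohn--Nirenberg inequality gives a lower bound on the gradient norm that is uniform for each fixed $\beta$. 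Moreover $E_\beta=E_\beta-\tfrac12P_\beta>0$ there, since the coefficients $(\pq-2)/2$ and $(2^\sharp-2)/2^\sharp$ are positive. Hence $m_\beta>0$, which I would prove along these lines.
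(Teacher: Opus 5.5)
Your proof is correct. Parts (i) and (ii) are essentially the paper's argument. The paper also truncates a near-optimal pair, glues on far-away dilated bumps $k\star w_{\rho_i}$, notes that the fiber maximizer of the glued pair stays bounded above as $k\to-\infty$, and uses the additive splitting of $E_\beta$ over disjoint supports. Starting from a near-optimal point of the minimax of Lemma~\ref{D7}, rather than from a point of $\mathcal N_\beta(\tilde\rho_1,\tilde\rho_2)$, makes no difference.

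In (iii) your upper bound takes a different and simpler route. The paper tests with a pair $(\phi,\phi)$ of smaller mass and needs part (i) to compare the levels. It then splits the fiber into two ranges: $t<-t_0$, where the uncoupled energy $I(t\star\phi)$ is small, and $t\ge -t_0$, where $\beta e^{(p+q)\delta_{p+q}t}$ beats the bounded quantity $\max_t I(t\star\phi)$. You instead scale a positive pair directly into $S(\rho_1,\rho_2)$, drop the critical term, and maximize $\frac A2 s^2-\beta B s^{(p+q)\delta_{p+q}}$ explicitly. This needs neither part (i) nor the bound on $I$. It also gives the rate $m_\beta(\rho_1,\rho_2)\le C\beta^{-2/((p+q)\delta_{p+q}-2)}$. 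After optimizing over the pair, this is exactly the estimate $\beta^{2/((p+q)\delta_{p+q}-2)}m_\beta(\rho_1,\rho_2)\le\kappa(\rho_1,\rho_2)$, which the paper only obtains later, in Lemma~\ref{D307}; the paper's argument here is purely qualitative.

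For strict positivity your sketch is missing one line. Write $Q$ for the sum of the critical integrals and $G$ for the gradient sum. Then $E_\beta-\frac12P_\beta=\frac{(p+q)\delta_{p+q}-2}{2}\beta\mathcal B_{p,q}(u,v)+\frac dN Q$; note the coefficient of $Q$ is $\frac{2^\sharp-2}{2\cdot 2^\sharp}=\frac dN$. This expression does not contain $G$, so pointwise positivity plus a lower bound on $G$ does not by itself give $m_\beta>0$. You need $P_\beta(u,v)=0$ once more, in the form $G=(p+q)\delta_{p+q}\beta\mathcal B_{p,q}(u,v)+Q$. This gives $E_\beta(u,v)\ge\min\{\frac{(p+q)\delta_{p+q}-2}{2(p+q)\delta_{p+q}},\frac dN\}\,G$ on $\mathcal N_\beta(\rho_1,\rho_2)$. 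The paper avoids this step by subtracting $\frac{1}{(p+q)\delta_{p+q}}P_\beta$ instead, which produces the term $\frac{(p+q)\delta_{p+q}-2}{2(p+q)\delta_{p+q}}\,G$ directly.
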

	\begin{proof}\textit{(i)} It suffices to prove that for every $\var>0$,
		\begin{equation}\label{d17}
			m_\beta\r<m_\beta\rt+\var.
		\end{equation}
		Using the definition of $m_\beta\rt$, there exists $\u\in\mathcal{N}_\beta\rt$ such that
		\begin{equation}\label{d18}
			E_\beta\u\le m_\beta\rt+\frac{\var}{2}.
		\end{equation}
		Let $\phi\in C_0^\infty(\R^N)$ be a radial function satisfying 
		\begin{align*}  0\le\phi\le1\quad\text{and}\quad\phi(x)=\begin{cases}
				0,\quad \text{if}~|x|\ge 2,\\
				1,\quad\text{if}~|x|\le 1.
		\end{cases}\end{align*} 
		For $\delta>0$, define $(u_\delta(x),v_\delta(x))=(u(x)\phi(\delta x),v(x)\phi(\delta x))$. Then $(u_\delta,v_\delta)\to\u$ in $\y$ as $\delta\to0^+.$ By Lemma~\ref{D6}~\textit{(iv)}, $t_\beta^{(u_\delta,v_\delta)}\star(u_\delta,v_\delta)\to t_\beta^\u\star\u$ in $\y$ as $\delta\to 0^+$. Hence,  there is $\delta>0$ small such that
		\begin{equation}\label{d19}           E_\beta(t_\beta^{(u_\delta,v_\delta)}\star(u_\delta,v_\delta))\le E_\beta\u+\frac{\var}{4}.
		\end{equation}
		Next, choose $\psi\in C_0^\infty(\R^N)$ such that $\text{supp}(\psi)\subset \R^N\backslash B(0,\frac{4}{\delta})$. Define
		\begin{align*}  w_{\rho_1}=\frac{\sqrt{\rho_1^2-\||x|^{-a}u_\delta\|_2^2}}{\||x|^{-a}\psi\|_2\phantom{^2}}\psi\quad\text{and}\quad w_{\rho_2}=\frac{\sqrt{\rho_2^2-\||x|^{-a}v_\delta\|_2^2}}{\||x|^{-a}\psi\|_2\phantom{^2}}\psi.\end{align*}
		By construction, the supports are disjoint
		\begin{align*}  (\text{supp}(u_\delta)\cup\text{supp}(v_\delta))\cap(\text{supp}(k\star w_{\rho_1})\cup\text{supp}(k\star w_{\rho_2}))=\emptyset\quad\text{for}~k\le 0,\end{align*}
		and thus\begin{align*}  (\tilde{u}_k,\tilde{v}_k):=(u_\delta+k\star w_{\rho_1},v_\delta+k\star w_{\rho_2})\in S\r.\end{align*}
		Let $t_k:=t_\beta^{(\tilde{u}_k,\tilde{v}_k)}$. Then, $P_\beta(t_k\star(\tilde{u}_k,\tilde{v}_k))=0$, i.e.
		\begin{align*}  \frac{1}{e^{(\t-2)t_k}}\left(\||x|^{-a}\na \tilde{u}_k\|_2^2+\||x|^{-a}\na\tilde{v}_k\|_2^2\right)=\frac{\beta\pq}{e^{(\t-\pq)t_k}}\b(\tilde{u}_k,\tilde{v}_k)\\+\||x|^{-b}\tilde{u}_k\|_\t^\t+\||x|^{-b}\tilde{v}_k\|_\t^\t.\end{align*}
		Since $(\tilde{u}_k,\tilde{v}_k)\to(u_\delta,v_\delta)\ne(0,0)$ as $k\to-\infty$, we deduce $\displaystyle\limsup_{k\to-\infty}~t_k<+\infty.$
		Consequently, $t_k+k\to-\infty$ as $k\to-\infty$, and therefore  for $k<-1$ with $|k|$ large, 
		\begin{equation}\label{d20}
			E_\beta((t_k+k)\star(w_{\rho_1},w_{\rho_2}))<\frac{\var}{4}.
		\end{equation}
		Combining \eqref{d18}, \eqref{d19}, and \eqref{d20}, we obtain
		\begin{align*}  
			m_\beta\r&\le E_\beta(t_k\star (\tilde{u}_k,\tilde{v}_k))=E_\beta(t_k\star(u_\delta,v_\delta))+E_\beta((t_k+k)\star(w_{\rho_1},w_{\rho_2}))\\
			&\le E_\beta(t_\beta^{(u_\delta,v_\delta)}\star(u_\delta,v_\delta))+\frac{\var}{4}\le m_\beta\rt+\var,
		\end{align*}
		which proves \eqref{d17}.
		
		\noi
		\textit{(ii)} Let $0<\beta'<\beta$. The functional $E_\beta$ is  nonincreasing with respect to $\beta$, since $\b\ge0$. Hence,
		\begin{align*}  m_\beta\r=\inf_{S\r}\max_{t\in\R}E_\beta(t\star\u)\le\inf_{S\r}\max_{t\in\R}E_{\beta'}(t\star\u)=m_{\beta'}\r.\end{align*}
		Therefore, $m_\beta\r$ is non-increasing with respect to $\beta\in(0,+\infty)$.
		
		\noi
		\textit{(iii)} We first show that $m_\beta\r>0$ for every $\beta>0$. Let $\u\in\mathcal{N}_\beta\r$. Then, using the definition of $\mathcal{S}(a,b)$ and inequality \eqref{d5}, there exist $C_1,~C_2>0,$ such that
		\begin{align*}  
			&\||x|^{-a}\na u\|_2^2+\||x|^{-a}\na v\|_2^2\\ &=\beta\pq\b\u+\||x|^{-b}u\|_\t^\t+\||x|^{-b}v\|_\t^\t\\
			&\le C_1\left(\||x|^{-a}\na u\|_2^2+\||x|^{-a}\na v\|_2^2\right)^{\frac{\t}{2}}+C_2\left(\||x|^{-a}\na u\|_2^2+\||x|^{-a}\na v\|_2^2\right)^{\frac{\pq}{2}},
		\end{align*}
		it follows that $\displaystyle\inf_{\mathcal{N}_\beta\r}\left(\||x|^{-a}\na u\|_2^2+\||x|^{-a}\na v\|_2^2\right)>0$. Using the identity $P_\beta\u=0$ on $\mathcal{N}_\beta\r$, we compute
		\begin{align*}      m_\beta\r&=\inf_{\mathcal{N}_\beta\r}E_\beta\u=\inf_{\mathcal{N}_\beta\r}E_\beta\u-\frac{1}{\pq}P_\beta\u\\
			&=\inf_{\mathcal{N}_\beta\r}\frac{\pq-2}{2\pq}\left(\||x|^{-a}\na u\|_2^2+\||x|^{-a}\na v\|_2^2\right)\\
            & \qquad +\frac{\t-\pq}{\pq\t}\left(\||x|^{-b}u\|_\t^\t+\||x|^{-b}v\|_\t^\t\right)\\
			&\ge C\inf_{\mathcal N_\beta\r}\left(\||x|^{-a}\na u\|_2^2+\||x|^{-a}\na v\|_2^2\right)>0.
		\end{align*}
		We now prove \textit{(iii)}. It suffices to show that for every  $\var>0$
		there exists $\bar{\beta}>0$ such that
		\begin{equation}\label{d21}
			m_\beta\r<\var\quad \text{for any}~\beta\ge\bar{\beta}.
		\end{equation}
		Choose $\phi\in C_0^\infty(\R^N)$ such that $\||x|^{-a}\phi\|_2\le\min\{\rho_1,~\rho_2\}$. Then
		\begin{align}\label{d22}
			\notag m_\beta\r&\le m_\beta(\||x|^{-a}\phi\|_2,\||x|^{-a}\phi\|_2)\le \max_{t\in\R}E_\beta(t\star\phi,t\star\phi)\\
			&=\max_{t\in \R}\left(2I(t\star\phi)-\beta e^{\pq t}\||x|^{-b}\phi\|_{p+q}^{p+q}\right),
		\end{align}
		where $I(u):=\frac{1}{2}\||x|^{-a}\na u\|_2^2-\frac{1}{\t}\||x|^{-b}u\|_\t^\t.$
		Since $I(t\star \phi)\to0^+$ as $t\to-\infty,$  there exists $t_0$ such that $I(t\star \phi)<\frac{\var}{4}$ for any $t<-t_0$. On the other hand, choosing $\bar{\beta}>0$ sufficiently large,
		\begin{align*}  
			\max_{t\ge-t_0}\left(2I(t\star\phi)-\beta e^{\pq t}\||x|^{-b}\phi\|_{p+q}^{p+q}\right)&\le \frac{2d\||x|^{-a}\na\phi\|_2^{N/d}}{N\||x|^{-b}\phi\|_\t^{N/d}}-\beta e^{-\pq t_0}\||x|^{-b}\phi\|_{p+q}^{p+q}\\
			&<\var\quad \text{for}~\beta\ge\bar{\beta}.
		\end{align*}
		Thus, $\max_{t\in\R}\left(2I(t\star\phi)-\beta e^{\pq t}\||x|^{-b}\phi\|_{p+q}^{p+q}\right)<\var$ when $\beta\ge\bar{\beta}$, which together with \eqref{d22} yields \eqref{d21}.
		
		The proof is complete.
		\qed
	\end{proof}
	From Lemma~\ref{D11} \textit{(iii)}, it follows that
	\begin{equation*}
		\beta_0:=\inf\big\{\beta>0|~m_\beta\r<\frac{d}{N}\mathcal{S}(a,b)^{N/2d}\big\}<+\infty.
	\end{equation*}
	To determine whether $\beta_0=0$ or $\beta_0>0$, 
	we employ the test functions $\zeta_\varepsilon=\phi U_\varepsilon$, 
	where $U_\varepsilon$ is defined in \eqref{c96} and 
	$\phi\in C_0^\infty(\mathbb{R}^N)$ is a radial cut-off function satisfying
	\begin{align*}  0\le\phi\le1\quad\text{and}\quad\phi(x)=\begin{cases}
			0,\quad \text{if}~|x|\ge 2,\\
			1,\quad\text{if}~|x|\le 1.
	\end{cases}\end{align*} 
	\begin{lemma}{\cite[Proposition A.1]{goel2026normalized}}\label{D19}
		Let $\zeta_\varepsilon=\phi U_\varepsilon$. Then the following estimates hold as $\varepsilon\to0^+$
		\begin{itemize}
			\item[(i)] $\||x|^{-a}\na \zeta_{\varepsilon}\|_2^2= \sr^{\frac{N}{2d}}+O(\varepsilon^{\frac{N-2d}{2d}}),
			$ 
			\item[(ii)] $\||x|^{-b}\zeta_{\varepsilon}\|_{\t}^{\t}=\sr^{\frac{N}{2d}}+O(\varepsilon^{\frac{N}{2d}}),$
			\item[(iii)] \begin{equation*}
				\||x|^{-b}\zeta_{\varepsilon}\|_q^q=  \begin{cases}
					O\left(\varepsilon^{\frac{N-2d}{4d}}\right),&\text{if}~~2<q<\frac{N}{N-2(1+a)+b},\\
					O\left(\varepsilon^{\frac{(2N-Nq+2qd)(N-2d)}{4d(N-2-2a)}}|\log\var|\right),&\text{if}~~q=\frac{N}{N-2(1+a)+b},\\
					O\left(\varepsilon^{\frac{(2N-Nq+2qd)(N-2d)}{4d(N-2-2a)}}\right),&\text{if}~\frac{N}{N-2(1+a)+b}<q<\t.
				\end{cases}
			\end{equation*}
		\end{itemize}
	\end{lemma}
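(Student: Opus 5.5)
The plan is a direct computation in radial coordinates. It rests on one exact scaling property of the family \eqref{c96}. Set $\gamma:=\frac{N-2-2a}{2}$ and recall $\alpha=\frac{2d(N-2-2a)}{N-2d}$, so that $\frac{\gamma}{\alpha}=\frac{N-2d}{4d}$. Then
\begin{align*}
U_\varepsilon(x)=\varepsilon^{-\gamma/\alpha}\,U_1\bigl(\varepsilon^{-1/\alpha}x\bigr).
\end{align*}
This dilation leaves both $\||x|^{-a}\nabla\cdot\|_2$ and $\||x|^{-b}\cdot\|_{\t}$ invariant. Since $U_\varepsilon$ is an extremal of \eqref{d191} solving $-\operatorname{div}(|x|^{-2a}\nabla U)=|x|^{-b\t}U^{\t-1}$, we get
\begin{align*}
\||x|^{-a}\nabla U_\varepsilon\|_2^2=\||x|^{-b}U_\varepsilon\|_{\t}^{\t}=\sr^{N/2d}.
\end{align*}
Two further facts are used throughout. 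On $\{|x|\ge1\}$ we have the pointwise bounds
\begin{align*}
U_\varepsilon\le C\varepsilon^{\frac{N-2d}{4d}}|x|^{-(N-2-2a)},\qquad |\nabla U_\varepsilon|\le C\varepsilon^{\frac{N-2d}{4d}}|x|^{-(N-1-2a)},
\end{align*}
since $\alpha\frac{N-2d}{2d}=N-2-2a$. Also, $\phi\equiv1$ on $B_1$ and $\nabla\phi$ is supported in the annulus $1\le|x|\le2$.

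For (i), expand $|\nabla(\phi U_\varepsilon)|^2=\phi^2|\nabla U_\varepsilon|^2+2\phi U_\varepsilon\nabla\phi\cdot\nabla U_\varepsilon+U_\varepsilon^2|\nabla\phi|^2$.
\begin{itemize}
\item The first term differs from $\sr^{N/2d}$ by at most $\int_{|x|\ge1}|x|^{-2a}|\nabla U_\varepsilon|^2$. By the pointwise bound this is $C\varepsilon^{\frac{N-2d}{2d}}\int_1^\infty r^{-2a-2(N-1-2a)+N-1}\,dr$. The exponent equals $-(N-2a-1)<-1$, so the integral converges and the contribution is $O(\varepsilon^{\frac{N-2d}{2d}})$.
\item The other two terms live on the annulus, where $U_\varepsilon,|\nabla U_\varepsilon|=O(\varepsilon^{\frac{N-2d}{4d}})$. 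They are therefore also $O(\varepsilon^{\frac{N-2d}{2d}})$.
\end{itemize}

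For (ii), note $0\le\sr^{N/2d}-\||x|^{-b}\zeta_\varepsilon\|_{\t}^{\t}\le\int_{|x|\ge1}|x|^{-b\t}U_\varepsilon^{\t}$. Using $\t\frac{N-2d}{4d}=\frac{N}{2d}$, this is bounded by
\begin{align*}
C\varepsilon^{\frac{N}{2d}}\int_1^\infty r^{-b\t-\t(N-2-2a)+N-1}\,dr.
\end{align*}
The exponent is $N-1-\t\,\frac{N-2d}{2}-\t(1+a-b)\cdot 0-\dots$; more simply, it equals $-N-1+\dots$ after using $\t(N-2d)=2N$, and in any case it is $<-1$. So the integral is finite and the error is $O(\varepsilon^{\frac{N}{2d}})$.

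For (iii), bound $\||x|^{-b}\zeta_\varepsilon\|_q^q\le\int_{B_2}|x|^{-bq}U_\varepsilon^q$ and substitute $x=\varepsilon^{1/\alpha}y$. Using $b+\gamma=\frac{N-2d}{2}$, this gives
\begin{align*}
\varepsilon^{\frac{2N-q(N-2d)}{2\alpha}}\int_0^{2\varepsilon^{-1/\alpha}}s^{N-1-bq}\bigl(1+s^\alpha\bigr)^{-q\frac{N-2d}{2d}}\,ds .
\end{align*}
Note that $\frac{2N-q(N-2d)}{2\alpha}$ is exactly the exponent $\frac{(2N-Nq+2qd)(N-2d)}{4d(N-2-2a)}$ in the statement. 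Near $0$ the integrand is integrable because $bq<N$ for $q<\t$. At infinity it behaves like $s^{N-1-q(N-2-2a+b)}$, which gives three cases.
\begin{itemize}
\item If $q>\frac{N}{N-2-2a+b}$, the integral converges and we get the third line of (iii).
\item If $q=\frac{N}{N-2-2a+b}$, the integral grows like $\frac1\alpha|\log\varepsilon|$, which gives the logarithmic line.
\item If $q<\frac{N}{N-2-2a+b}$, the integral grows like $\varepsilon^{-\frac{N-q(N-2-2a+b)}{\alpha}}$. Combined with the prefactor this yields $O(\varepsilon^{q\frac{N-2d}{4d}})$, which is $O(\varepsilon^{\frac{N-2d}{4d}})$ since $q>2>1$ and $\varepsilon<1$.
\end{itemize}

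I expect no real obstacle. The one step needing care is the exponent bookkeeping: checking that the scaling identity absorbs the weights exactly through $b+\gamma=\frac{N-2d}{2}$ and $\alpha\frac{N-2d}{2d}=N-2-2a$. That bookkeeping is what makes the tail integrals in (i)--(ii) converge and what locates the threshold $q=\frac{N}{N-2-2a+b}$ in (iii).
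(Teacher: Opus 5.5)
Your proposal is correct and is essentially the standard direct computation behind the cited \cite[Proposition A.1]{goel2026normalized}. It uses the dilation invariance that gives $\||x|^{-a}\nabla U_\varepsilon\|_2^2=\||x|^{-b}U_\varepsilon\|_{\t}^{\t}=\sr^{N/2d}$ (the constant $2\t A$ in \eqref{c96} is exactly the one for which $-\operatorname{div}(|x|^{-2a}\nabla U_\varepsilon)=|x|^{-b\t}U_\varepsilon^{\t-1}$), the tail bounds on $\{|x|\ge1\}$, and the substitution $x=\varepsilon^{1/\alpha}y$. The one blemish is the garbled exponent line in (ii): the exponent is exactly $N-1-\t(N-2-2a+b)=-1-\t\frac{N-2-2a}{2}<-1$, using $\t(b+\gamma)=N$. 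Also, in (iii) your first case actually yields the sharper bound $O(\varepsilon^{q\frac{N-2d}{4d}})$.
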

	\begin{lemma}\label{D201}
		Suppose either $\max\{0,\frac{N-4}{2}\}< a<\frac{N-2}{2}$ or $a=\frac{N-4}{2}$ (with $N>4$), and $
		r:=\min\{p,q\}.
		$ Then, the following assertions hold
		\begin{itemize}
			\item[(I)] $m_\beta\r\le\frac{d}{N}\mathcal{S}(a,b)^{N/2d}$, for all $\beta>0$.
			\item[(II)] $\beta_0=0$ if one of the following conditions  holds: \begin{itemize}\item[(i)]  $a=\frac{N-4}{2}$ with $N>4$ and $r<2$.
				\item[(ii)]  $\max\{0,\frac{N-4}{2}\}<a<\frac{N-2}{2}$ ,$r<2$ and
				\begin{align*}  \max\Big\{\p,\frac{N}{N-2-2a+b},\t-\frac{2(2-r)(N-2a-2)}{4+2a-N}\Big\}<p+q<\t,\end{align*}
				or
				\begin{align*}  \p<p+q<\min\Big\{\frac{N}{N-2-2a+b},\t-\frac{2(r-1)}{N-2(1+a-b)}\Big\},\end{align*}
				or\begin{align*}  \p<p+q=\frac{N}{N-2-2a+b}.\end{align*} 
			\end{itemize}
			\item[(III)] for $p,~q\ge2$, $\beta_0>0$.
			\item[(IV)] if $0<\beta<\beta_0$, then $m_\beta\r=\frac{d}{N}\mathcal{S}(a,b)^{N/2d}$ is not attained. 
		\end{itemize}
		
	\end{lemma}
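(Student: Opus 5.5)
The plan is to test the minimax characterization $m_\beta\r=\inf_{S\r}\max_{k\in\R}E_\beta(k\star\u)$ of Lemma~\ref{D7} with pairs built from $\zeta_\var=\phi U_\var$. Two facts do most of the work. First, the monotonicity in the masses (Lemma~\ref{D11}(i)) lets one component carry a small mass. Second, for $I(u)=\frac12\||x|^{-a}\na u\|_2^2-\frac1{\t}\||x|^{-b}u\|_\t^\t$ one has
$$\max_{k\in\R}I(k\star(cu))=\frac dN\Big(\frac{\||x|^{-a}\na u\|_2^2}{\||x|^{-b}u\|_\t^2}\Big)^{N/2d}\quad\text{for every } c>0,$$
because the quotient is invariant under both $\star$ and multiplication by constants.

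\textbf{Step 1: item (I).} Set $u_\var=\rho_1\zeta_\var/\||x|^{-a}\zeta_\var\|_2$; this is admissible since $\zeta_\var$ has compact support. Fix a radial $w\in C_0^\infty$ and put $v=\sigma w$ with $\sigma\in(0,\rho_2)$. By Lemma~\ref{D11}(i), $m_\beta\r\le m_\beta(\rho_1,\sigma\||x|^{-a}w\|_2)$. Dropping $-\beta\b\le0$ gives
$$E_\beta(k\star(u_\var,v))\le I(k\star u_\var)+\tfrac{e^{2k}}2\sigma^2\||x|^{-a}\na w\|_2^2 .$$
The maximizing $k$ stays bounded above because the critical term of $u_\var$ dominates, so letting $\sigma\to0$ yields $m_\beta\r\le\frac dN\big(\|\na\zeta_\var\|^2/\|\zeta_\var\|_\t^2\big)^{N/2d}$. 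Letting $\var\to0$ and using Lemma~\ref{D19}(i),(ii) then gives (I).

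\textbf{Step 2: item (II).} Here we need strict inequality for every $\beta>0$. Place the small mass on the component whose exponent is $r$; say $q=r$. Take $u_\var$ as above and $v_\var=\var^{\theta}\zeta_\var/\||x|^{-a}\zeta_\var\|_2$, with $\theta$ to be chosen. Then:
\begin{itemize}
\item the coupling term gains $\beta\b(u_\var,v_\var)\gtrsim \beta\var^{r\theta}\||x|^{-b}\zeta_\var\|_{p+q}^{p+q}$, up to the mass normalisations;
\item the second component costs at most $O(\var^{2\theta})$ in the gradient;
\item the cut-off costs $O(\var^{\frac{N-2d}{2d}})$ by Lemma~\ref{D19}.
\end{itemize}
The maximizing $k_\var$ stays in a compact set, so $m_\beta<\frac dN\sr^{N/2d}$ holds as soon as
$$r\theta+\gamma_{p+q}<\min\Big\{2\theta,\tfrac{N-2d}{2d}\Big\},$$
where $\gamma_{p+q}$ is the exponent in Lemma~\ref{D19}(iii), together with the order of $\||x|^{-a}\zeta_\var\|_2$. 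This order is a power of $\var$ when $a>\frac{N-4}{2}$ and involves $|\log\var|$ when $a=\frac{N-4}{2}$.

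The condition requires $r<2$. Solving for an admissible $\theta$ in each of the three regimes of Lemma~\ref{D19}(iii) should produce exactly the listed ranges of $p+q$. This bookkeeping is the main obstacle. In particular, Lemma~\ref{D19}(iii) only gives upper bounds, so I first need matching lower bounds for $\||x|^{-b}\zeta_\var\|_{p+q}^{p+q}$; I would obtain them by integrating $U_\var$ over $|x|\le1$ directly. Once the inequality holds for every $\beta>0$, we get $\beta_0=0$.

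\textbf{Step 3: items (III) and (IV).} For (III), argue by contradiction: suppose $\beta_n\to0^+$ with $m_{\beta_n}<\frac dN\sr^{N/2d}$. Lemma~\ref{D11}(i), Lemma~\ref{D9} and Proposition~\ref{D5} give ground states $(u_n,v_n)$ with $u_n,v_n>0$ and $\lambda_{i,n}>0$, bounded in $\y$. Test the $v$-equation with $v_n$ and drop the $\lambda_{2,n}$ term. Since $q\ge2$, split
$$|x|^{-b(p+q)}|u|^p|v|^q=\big(|x|^{-2b}|v|^2\big)\big(|x|^{-b(p+q-2)}|u|^p|v|^{q-2}\big)$$
and apply H\"older with exponents $\frac{\t}{2}$ and $\frac N{2d}$; the second factor is bounded by Lemma~\ref{lem:GN}, using $p+q-2<\t-2$. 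This yields
$$\||x|^{-a}\na v_n\|_2^2\le C\beta_n\||x|^{-a}\na v_n\|_2^2+\sr^{-\t/2}\||x|^{-a}\na v_n\|_2^{\t},$$
hence $\||x|^{-a}\na v_n\|_2^2\ge(1-o(1))\sr^{N/2d}$. The same holds for $u_n$ since $p\ge2$. Then
$$m_{\beta_n}=E_{\beta_n}-\tfrac12P_{\beta_n}\ge\tfrac dN\big(\||x|^{-b}u_n\|_\t^\t+\||x|^{-b}v_n\|_\t^\t\big)\ge\tfrac{2d}N(1-o(1))\sr^{N/2d},$$
which is a contradiction.

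For (IV), take $0<\beta<\beta_0$. Monotonicity in $\beta$ (Lemma~\ref{D11}(ii)) and the definition of $\beta_0$ give $m_\beta\ge\frac dN\sr^{N/2d}$, and with (I) this is an equality. If the level were attained by some $\u$, we may take $u,v\ge0$, and Step~2 of Proposition~\ref{D5} shows both components are nontrivial, so $\b\u>0$. Then for $\beta'\in(\beta,\beta_0)$,
$$m_{\beta'}\le\max_kE_{\beta'}(k\star\u)<\max_kE_\beta(k\star\u)=m_\beta,$$
contradicting $m_{\beta'}=m_\beta$.
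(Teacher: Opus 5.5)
Your treatment of (I), (III) and (IV) follows the paper's route. The genuine gap is in (II).

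\textbf{The scale of the maximizer.} You assert that the maximizing $k_\varepsilon$ stays in a compact set. With $u_\varepsilon=\rho_1\zeta_\varepsilon/\||x|^{-a}\zeta_\varepsilon\|_2$ one has $\||x|^{-a}\nabla u_\varepsilon\|_2^2/\||x|^{-b}u_\varepsilon\|_{2^\sharp}^{2^\sharp}\sim\||x|^{-a}\zeta_\varepsilon\|_2^{2^\sharp-2}$, so in fact $e^{k_\varepsilon}\sim\||x|^{-a}\zeta_\varepsilon\|_2\to0$. This scale is what turns the coupling gain into
\[
\beta\,\varepsilon^{r\theta}\,\frac{\||x|^{-b}\zeta_\varepsilon\|_{p+q}^{p+q}}{\||x|^{-a}\zeta_\varepsilon\|_2^{(1-\delta_{p+q})(p+q)}},
\]
which is the quantity estimated in \eqref{d26}--\eqref{d27}. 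At bounded $k$ you would divide by $\||x|^{-a}\zeta_\varepsilon\|_2^{p+q}$ instead, and every exponent would come out wrong.

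The paper pins down this scale with the upper bound \eqref{d24} and the lower bound \eqref{d25}; the latter requires checking that the coupling correction tends to $0$. It then needs two-sided orders for $\||x|^{-b}\zeta_\varepsilon\|_{p+q}^{p+q}$ and $\||x|^{-a}\zeta_\varepsilon\|_2$ (you are right that Lemma~\ref{D19}(iii) gives only upper bounds), and explicit $\theta$-windows. You defer all of this as ``should produce exactly the listed ranges'', but it is the entire content of (II).

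\textbf{The computation does not return the printed ranges.}
\begin{itemize}
\item For $p+q>\frac{N}{N-2-2a+b}$ in (II)(ii), the paper's own window $(\theta_1,\theta_2)$ is nonempty if and only if $p+q>2^\sharp-\frac{2(2-r)(N-2a-2)}{(N-2d)(4+2a-N)}$. This is stronger than the first printed range, which lacks the factor $N-2d$.
\item For $p+q\le\frac{N}{N-2-2a+b}$, \eqref{d27} treats the upper bound $\varepsilon^{\frac{N-2d}{4d}}$ from Lemma~\ref{D19}(iii) as an exact order. Integrating $U_\varepsilon$ directly, as you propose, gives the exact order $\varepsilon^{\frac{(p+q)(N-2d)}{4d}}$, with an extra $|\log\varepsilon|$ at equality.
\item With that order, the gain is $o(\varepsilon^{\frac{N-2d}{2d}})$ whenever $p+q>(p+q)_c$, since $N-2a+2b=N-2d+2$. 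So it cannot beat the $O(\varepsilon^{\frac{N-2d}{2d}})$ cut-off error, and these test functions do not reach the second and third ranges.
\end{itemize}

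\textbf{Two smaller points.}
\begin{itemize}
\item In (III), your H\"older split with exponents $\frac{2^\sharp}{2}$ and $\frac{N}{2d}$ is a valid alternative to the paper's bound $\mathcal B_{p,q}(u_n,v_n)\le C\||x|^{-b}u_n\|_{2^\sharp}^p$, because $p,q\ge2$ puts $(p+q-2)\frac{N}{2d}$ in $(2,2^\sharp)$. However, your last display uses the $L^{2^\sharp}$ norms, not the gradients. Add $\||x|^{-b}v_n\|_{2^\sharp}^{2^\sharp}\ge(1-C\beta_n)\||x|^{-a}\nabla v_n\|_2^2$, which follows from the same tested equation.
\item In (IV), nontriviality of both components does not give $\mathcal B_{p,q}(u,v)>0$, since disjoint supports are not excluded. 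Instead, note that $\mathcal B_{p,q}(u,v)=0$ would place $(u,v)$ in $\mathcal N_0(\rho_1,\rho_2)$ with $E_0(u,v)=\frac dN\mathcal S(a,b)^{N/2d}$. The equality discussion in the proof of Theorem~\ref{DD1} rules this out. The paper asserts $\mathcal B_{p,q}(u,v)>0$ without comment.
\end{itemize}
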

	
	\begin{proof} \textbf{(I)} We may assume without loss of generality that $p>q$. 
		Let $\theta>0$ be a parameter to be specified later. 
		For $\varepsilon>0$, define
		
		\begin{align*}  u_\var=\frac{\rho_1}{\||x|^{-a}\zeta_\var\|_2}\zeta_\var\quad\text{and}\quad v_\var=\frac{\var^\theta}{\||x|^{-a}\zeta_\var\|_2}\zeta_\var.\end{align*}
		Then
		\begin{align*}  
			\||x|^{-a}u_\varepsilon\|_2=\rho_1,
			\qquad
			\||x|^{-a}v_\varepsilon\|_2=\varepsilon^{\theta},
		\end{align*}
		so that $(u_\varepsilon,v_\varepsilon)\in S(\rho_1,\varepsilon^{\theta})$. Let $t_\var:=t_\beta^{(u_\var,v_\var)}$ be the parameter given by Lemma~\ref{D6}. Then $t_\var\star (u_\var,v_\var)\in\mathcal{N}_\beta(\rho_1,\var^\theta)$. Choose $\var>0$ sufficiently small so that $\var^\theta<\rho_2$. Then by Lemma~\ref{D11}\textit{(i)} 
        \begin{align}\label{d29}
		\notag m_\beta\r&\le m_\beta(\rho_1,\var^\theta)\le E_\beta(t_\var\star(u_\var,v_\var))\\
			\notag &=\frac{e^{2t_\var}}{2}\left(\||x|^{-a}\na u_\var\|_2^2+\||x|^{-a}\na v_\var\|_2^2\right)-\frac{e^{\t t_\var}}{\t}\left(\||x|^{-b}u_\var\|_\t^\t+\||x|^{-b}v_\var\|_\t^\t\right)\\
          \notag  & \qquad -\beta e^{\pq t_\var}\b(u_\var,v_\var)\\
			\notag&\le\max_{k>0}\left(\frac{(\rho_1^2+\var^{2\theta})}{2}k^2\frac{\||x|^{-a}\na\zeta_\var\|_2^2}{\||x|^{-a}\zeta_\var\|_2^2}-\frac{\rho_1^\t+\var^{\t\theta}}{\t}k^\t\frac{\||x|^{-b}\zeta_\var\|_\t^\t}{\||x|^{-a}\zeta_\var\|_2^\t}\right)\\
           \notag & \qquad -C\var^{q\theta}e^{\pq t_\var}\frac{\||x|^{-b}\zeta_\var\|_{p+q}^{p+q}}{\||x|^{-a}\zeta_\var\|_2^{p+q}}\\
			\notag&= \frac{d}{N}\left(\frac{(\rho_1^2+\var^{2\theta})\||x|^{-a}\na \zeta_\var\|_2^2}{(\rho_1^\t+\var^{\t\theta})^{2/\t}\||x|^{-b}\zeta_\var\|_\t^2}\right)^{N/2d}-C\var^{q\theta}e^{\pq t_\var}\frac{\||x|^{-b}\zeta_\var\|_{p+q}^{p+q}}{\||x|^{-a}\zeta_\var\|_2^{p+q}}\\
			&=\frac{d}{N}\mathcal{S}(a,b)^{N/2d}+O(\var^{\frac{N-2d}{2d}})+O(\var^{2\theta})-C\var^{q\theta}e^{\pq t_\var}\frac{\||x|^{-b}\zeta_\var\|_{p+q}^{p+q}}{\||x|^{-a}\zeta_\var\|_2^{p+q}}.
		\end{align}
		Next, we claim that there exists $C>0$ such that
		$e^{t_\var}\ge C\||x|^{-a}\zeta_\var\|_2$ as $\var\to0$. Since $P_\beta(t_\var\star (u_\var,v_\var))=0$, we have
		\begin{align*}  e^{(\t-2)t_\var}\left(\||x|^{-b}u_\var\|_\t^\t+\||x|^{-b}v_{\var}\|_\t^\t\right)\le\||x|^{-a}\na u_\var\|_2^2+\||x|^{-a}\na v_\var\|_2^2.\end{align*}
		It gives \begin{equation}\label{d24}
			e^{t_\var}\le\left(\frac{\||x|^{-a}\na u_\var\|_2^2+\||x|^{-a}\na v_\var\|_2^2}{\||x|^{-b}u_\var\|_\t^\t+\||x|^{-b}v_{\var}\|_\t^\t}\right)^{\frac{1}{\t-2}}.
		\end{equation}
		Using the fact that $\pq>2$ and \eqref{d24}, for small $\var>0$, we obtain
		\begin{align}\label{d25}
			\notag e^{(\t-2)t_\var}&=\frac{\||x|^{-a}\na u_\var\|_2^2+\||x|^{-a}\na v_\var\|_2^2}{\||x|^{-b}u_\var\|_\t^\t+\||x|^{-b}v_{\var}\|_\t^\t}-\pq\frac{\beta\b(u_\var,v_\var)}{\||x|^{-b}u_\var\|_\t^\t+\||x|^{-b}v_{\var}\|_\t^\t}e^{(\pq-2)t_\var}\\
			\notag&\ge\frac{\||x|^{-a}\na u_\var\|_2^2+\||x|^{-a}\na v_\var\|_2^2}{\||x|^{-b}u_\var\|_\t^\t+\||x|^{-b}v_{\var}\|_\t^\t}\\\notag&\qquad-\pq\frac{\beta\b(u_\var,v_\var)}{\||x|^{-b}u_\var\|_\t^\t+\||x|^{-b}v_{\var}\|_\t^\t}\left(\frac{\||x|^{-a}\na u_\var\|_2^2+\||x|^{-a}\na v_\var\|_2^2}{\||x|^{-b}u_\var\|_\t^\t+\||x|^{-b}v_{\var}\|_\t^\t}\right)^{\frac{\pq-2}{\t-2}}\\
			&\notag=\frac{\||x|^{-a}\na\zeta_\var\|_2^2\||x|^{-a}\zeta_\var\|_2^{\t-2}}{\||x|^{-b}\zeta_\var\|_\t^\t}\Bigg(C_1\\&\qquad-C_2\var^{q\theta}\frac{\||x|^{-b}\zeta_\var\|_{p+q}^{p+q}}{\||x|^{-b}\zeta_\var\|_\t^{\frac{\t(\pq-2)}{\t-2}}\||x|^{-a}\na\zeta_\var\|_2^{\frac{2(\t-\pq)}{\t-2}}\||x|^{-a}\zeta_\var\|_2^{(p+q)(1-\delta_{p+q})}}\Bigg)
		\end{align}
		Using Lemma~\ref{D19}, we have $e^{t_\var}\ge C\||x|^{-a}\zeta_\var\|_2$, substituting this \eqref{d29}, we have our required result.
		
		\noindent \textbf{(II)} We argue as in \cite[Proposition A.2]{goel2026normalized} to estimate
		
		\begin{align*}  
			\frac{
				\var^{q\theta}\||x|^{-b}\zeta_\varepsilon\|_{p+q}^{p+q}
			}{
				\||x|^{-a}\zeta_\varepsilon\|_2^{(p+q)(1-\delta_{p+q})}
			},
		\end{align*}
		
		\noi \textbf{Case $1$.} $(p+q)>\frac{N}{N-2-2a+b}$.
		\begin{equation}\label{d26}
			\frac{\||x|^{-b}\zeta_\var\|_{p+q}^{p+q}}{\||x|^{-a}\zeta_\var\|_{2}^{(1-\delta_{p+q})(p+q)}}=
			\begin{cases}
				C_3|\log\var|^{\frac{(\delta_{p+q}-1)(p+q)}{2}},&\text{if}~0<a=\frac{N-4}{2},\\
				C_3\var^{\frac{(N-2d)(2N-(N-2d)(p+q))(4+2a-N)}{8d(N-2-2a)}},&\text{if}~\max\{\frac{N-4}{2},0\}<a<{\frac{N-2}{2}
				}.
			\end{cases}
		\end{equation}
		Let $\theta\in (\theta_1, \theta_2)$, 
	where $\theta_1= \frac{(N-2d)(2N-(N-2d)(p+q))(4+2a-N)}{8{d}(N-2-2a)(2-q)} $	 and \\$\theta_2= \frac{(N-2d)}{2dq}\left(1-\frac{(2N-(N-2d)(p+q))(4+2a-N)}{4(N-2-2a)}\right)$. Then for $\max\{\frac{N-4}{2},0\}<a<\frac{N-2}{2}$ and $1<q<2$, with $p,~ q$ additionally satisfying
		
		\begin{align*}  
			\t-\frac{2(2-q)(N-2a-2)}{4+2a-N}<p+q,
		\end{align*}
		and, when $0<a=\frac{N-4}{2}$, taking $\theta\in\left(0,\frac{N-2d}{2dq}\right)$.
		
		Using \eqref{d26}, as $\var\to 0$ in \eqref{d25}, we obtain $e^{t_\var}\ge C\||x|^{-a}\zeta_\var\|_2$. Consequently, \eqref{d29} simplifies to
		\begin{align*}  
			m_\beta\r<\frac{d}{N}\mathcal{S}(a,b)^{N/2d}.
		\end{align*}
		Hence, in this setting, the desired result follows.
		
		\noi
		\textbf{Case $2$.} $(p+q)\le\frac{N}{N-2-2a+b}$. 
		
		This case occurs when $0<\max\{\frac{N-4}{2},0\}<a<\frac{N-2}{2}$. In this regime, we have
		\begin{equation}\label{d27}
			\frac{\||x|^{-b}\zeta_\var\|_{p+q}^{p+q}}{\||x|^{-a}\zeta_\var\|_{{2}}^{(1-\delta_{p+q})(p+q)}}=
			\begin{cases}
				C_3\var^{\frac{(N-2d)((p+q)(N-2d)+2(1-N))}{8d}}|\log\var|,&\text{if}~p+q=\frac{N}{N-2(1+a){+}b},\\
				C_3\var^{\frac{(N-2d)((p+q)(N-2d)+2(1-N))}{8d}},&\text{if}~p+q<\frac{N}{N-2(1+a){+}b}.
			\end{cases}
		\end{equation}
		We choose \begin{align*}  \theta\in \left(\frac{(N-2d)((p+q)(N-2d)+2(1-N))}{8d(2-q)},\frac{(N-2d)}{2dq}\left(1-\frac{(N-2d)(p+q)+2(1-N)}{4}\right)\right),\end{align*}
		which is possible since $q<2$ and $p,q$  satisfy 
		\begin{align*}  (p+q)<\t-\frac{2(q-1)}{N-2d}.\end{align*}
		Using  \eqref{d27}, for $\var\to 0$ in \eqref{d25}, we have $e^{t_\var}\ge C\||x|^{-a}\zeta_\var\|_2$. Combining this with \eqref{d29}, we obtain
		\begin{align}
			\notag m_\beta\r< \frac{d}{N}\mathcal{S}(a,b)^{N/2d}.
		\end{align}
		Thus, when the pair $(p,q)$ satisfies the conditions in  either of the above cases, we have $\beta_0=0.$
		
		\noindent \textbf{(III)} We argue by contradiction and assume that $\beta_0 = 0$. Under this assumption, one can find a vanishing sequence $\beta_n \to 0^+$ satisfying
		\begin{equation}\label{d82}
			0 < m_{\beta_n}(\rho_1, \rho_2) < \frac{d}{N} \mathcal{S}(a, b)^{N/2d}, \quad \forall\, n \in \mathbb{N} .
		\end{equation}
		In view of Lemma~\ref{D9}, Lemma~\ref{D11}, and the compactness result in Proposition~\ref{D5} with energy level $c = m_{\beta_n}(\rho_1, \rho_2)$, the infimum $m_{\beta_n}(\rho_1, \rho_2)$ is attained by a pair $(u_n, v_n) \in \mathcal{N}_{\beta_n}(\rho_1, \rho_2)$. Hence, $(u_n, v_n)$  is a positive solution to the coupled system
		\begin{equation}\label{d83}
			\begin{cases}
				-\operatorname{div}(|x|^{-2a}\nabla u_n) + \lambda_{1,n} \dfrac{u_n}{|x|^{2a}} = \beta_n p \dfrac{|u_n|^{p-2}u_n |v_n|^q}{|x|^{b(p+q)}} + \dfrac{|u_n|^{2^\sharp-2}u_n}{|x|^{b2^\sharp}}, & \text{in } \mathbb{R}^N, \\
				-\operatorname{div}(|x|^{-2a}\nabla v_n) + \lambda_{2,n} \dfrac{v_n}{|x|^{2a}} = \beta_n q \dfrac{|u_n|^p |v_n|^{q-2}v_n}{|x|^{b(p+q)}} + \dfrac{|v_n|^{2^\sharp-2}v_n}{|x|^{b2^\sharp}}, & \text{in } \mathbb{R}^N,
			\end{cases} 
		\end{equation}
		associated with strictly positive multipliers $\lambda_{1,n}, \lambda_{2,n} > 0$ and mass constraints $\||x|^{-a}u_n\|_2 = \rho_1$, $\||x|^{-a}v_n\|_2 = \rho_2$.
		Using the Pohozaev identity $P_{\beta_n}(u_n, v_n) = 0$ on $\mathcal{N}_{\beta_n}(\rho_1, \rho_2)$, we decompose the energy level as follows
		\begin{equation}\label{d84}
			\begin{aligned}
				\frac{d}{N} \mathcal{S}(a, b)^{N/2d} > m_{\beta_n}(\rho_1, \rho_2) &= E_{\beta_n}(u_n, v_n) - \frac{1}{2}P_{\beta_n}(u_n, v_n) \\
				&= \frac{d}{N} \left( \||x|^{-b}u_n\|_{2^\sharp}^{2^\sharp} + \||x|^{-b}v_n\|_{2^\sharp}^{2^\sharp} \right) + \frac{(p+q)\delta_{p+q} - 2}{2} \beta_n \mathcal{B}_{p,q}(u_n, v_n).
			\end{aligned}
		\end{equation}
		Taking into account that $(p+q)\delta_{p+q} > 2$ and $\beta_n > 0$, relation \eqref{d84} ensures
		\begin{equation*}
			\limsup_{n \to \infty} \||x|^{-b}u_n\|_{2^\sharp}^{2^\sharp} < \mathcal{S}(a, b)^{N/2d} \quad \text{and} \quad \limsup_{n \to \infty} \||x|^{-b}v_n\|_{2^\sharp}^{2^\sharp} < \mathcal{S}(a, b)^{N/2d} .
		\end{equation*}
		Furthermore, since $p, q \ge 2$, H\"older's inequality together with the weighted Sobolev interpolation embedding leads to the bound
		\begin{equation*}
			\mathcal{B}_{p,q}(u_n, v_n) = \int_{\mathbb{R}^N} \frac{|u_n|^p |v_n|^q}{|x|^{b(p+q)}}\dx \le \||x|^{-b}u_n\|_{2^\sharp}^p \||x|^{-b}v_n\|_{\frac{2^\sharp q}{2^\sharp - p}}^q \le C_1(\rho_2) \||x|^{-b}u_n\|_{2^\sharp}^p ,
		\end{equation*}
		for some uniform constant $C_1(\rho_2) > 0$. By symmetry, we  also obtain $$\mathcal{B}_{p,q}(u_n, v_n) \le C_2(\rho_1) \||x|^{-b}v_n\|_{2^\sharp}^q.$$
		
		\noindent Testing \eqref{d83} against $u_n$ and $v_n$ respectively and applying the definition of the sharp Caffarelli--Kohn--Nirenberg constant $\mathcal{S}(a, b)$, we deduce
		\begin{align*}  
			\mathcal{S}(a, b)\||x|^{-b}u_n\|_{2^\sharp}^2 \le \||x|^{-a}\nabla u_n\|_2^2 &\le \||x|^{-b}u_n\|_{2^\sharp}^{2^\sharp} + \beta_n p C_1(\rho_2) \||x|^{-b}u_n\|_{2^\sharp}^p, \\
			\mathcal{S}(a, b)\||x|^{-b}v_n\|_{2^\sharp}^2 \le \||x|^{-a}\nabla v_n\|_2^2 &\le \||x|^{-b}v_n\|_{2^\sharp}^{2^\sharp} + \beta_n q C_2(\rho_1) \||x|^{-b}v_n\|_{2^\sharp}^q .
		\end{align*}
		Because $p \ge 2$, $q \ge 2$, and $2^\sharp > 2$, letting $n \to \infty$ with $\beta_n \to 0^+$ yields
		\begin{equation*}
			\liminf_{n \to \infty} \||x|^{-b}u_n\|_{2^\sharp}^{2^\sharp} \ge \mathcal{S}(a, b)^{N/2d} \quad \text{and} \quad \liminf_{n \to \infty} \||x|^{-b}v_n\|_{2^\sharp}^{2^\sharp} \ge \mathcal{S}(a, b)^{N/2d} .
		\end{equation*}
		Substituting these asymptotic bounds back into \eqref{d84}, we arrive at
		\begin{equation*}
        \begin{aligned}
			\liminf_{n \to \infty} m_{\beta_n}(\rho_1, \rho_2) \ge & \frac{d}{N}\left( \liminf_{n \to \infty} \||x|^{-b}u_n\|_{2^\sharp}^{2^\sharp} + \liminf_{n \to \infty} \||x|^{-b}v_n\|_{2^\sharp}^{2^\sharp} \right)\\
            & \ge \frac{2d}{N}\mathcal{S}(a, b)^{N/2d} > \frac{d}{N}\mathcal{S}(a, b)^{N/2d},
                 \end{aligned}
		\end{equation*}
		which contradicts \eqref{d82}. Consequently, we conclude that $\beta_0 > 0$ when $p, q \ge 2$. 
		\vspace{0.3cm}
		
		\noindent \textbf{(IV)} Let $\beta \in (0, \beta_0)$. By the threshold definition of $\beta_0$ combined with the monotonicity properties established in Lemma~\ref{D11} (ii) and part (I), we have the identity
		\begin{equation*}
			m_\beta(\rho_1, \rho_2) = \frac{d}{N}\mathcal{S}(a, b)^{N/2d}.
		\end{equation*}
		To establish non-attainability, suppose on the contrary that there exists a minimizer $(u, v) \in \mathcal{N}_\beta(\rho_1, \rho_2)$ satisfying $E_\beta(u, v) = m_\beta(\rho_1, \rho_2)$. Fix $\beta'$ with $\beta<\beta'<\beta_0$ and let $t':=t_{\beta'}^\u$ be given by Lemma~\ref{D6}, so that $t'\star\u\in\mathcal N_{\beta'}\r$. Since $\mathcal B_{p,q}\u>0$ and $\beta'>\beta$,
		\begin{align*}  
			{m_{\beta'}(\rho_1,\rho_2)}&{\le E_{\beta'}(t'\star\u)=E_\beta(t'\star\u)-(\beta'-\beta)e^{\pq t'}\mathcal B_{p,q}\u}\\
			&{<E_\beta(t'\star\u)\le\max_{t\in\R}E_\beta(t\star\u)=E_\beta\u=\frac dN\mathcal S(a,b)^{N/2d},}
		\end{align*}
		the last equality of the second line by Lemma~\ref{D6}(iii), since $\u\in\mathcal N_\beta\r$. Thus $m_{\beta'}\r<\frac dN\mathcal S(a,b)^{N/2d}$, so $\beta'\ge\beta_0$ by the definition of $\beta_0$, contradicting $\beta'<\beta_0$. Hence, $m_\beta(\rho_1, \rho_2)$ cannot be achieved for any $0 < \beta < \beta_0$. \qed
		
	\end{proof}
\begin{rem}\label{D304}
Let $p+q>\p$ and suppose $\beta_0>0$. Then $m_{\beta_0}\r=\frac dN\sr^{N/2d}$. Indeed, for
$\u\in S\r$ the map $\beta\mapsto\max_{t\in\R}E_\beta(t\star\u)$ is a supremum of affine functions of $\beta$, hence convex and continuous on $(0,+\infty)$. By Lemma~\ref{D7},
$m_\beta\r$ is an infimum of such maps, so $\limsup_{\beta\to\beta_0}m_\beta\r\le m_{\beta_0}\r$.
Since $m_\beta\r=\frac dN\sr^{N/2d}$ for $0<\beta<\beta_0$ by Lemma~\ref{D201}(IV), this gives $m_{\beta_0}\r\ge\frac dN\sr^{N/2d}$, and the reverse inequality is Lemma~\ref{D201}(I). Thus, a ground state at $\beta=\beta_0$, if one exists, would have energy exactly at the compactness
threshold of Proposition~\ref{D5}. Whether $m_{\beta_0}\r$ is attained is an open question.
\end{rem}
	\noi \textbf{Proof of Theorem~\ref{DD6}:} Combining Proposition~\ref{D5}, Lemma~\ref{D9}, Lemma~\ref{D11}, and Lemma~\ref{D201} yields the desired result; the endpoint $\beta=\beta_0$ is discussed in Remark~\ref{D304}. \qed

	\subsection{Mass-critical case}
	Let 
	\begin{equation}\label{d222}
		\beta^* := \frac{1}{2 C_{a,b}^{p+q} (\rho_1^2 + \rho_2^2)^{\frac{p+q-2}{2}}}.
	\end{equation}
	
	In this subsection, we consider the mass-critical case $p+q = (p+q)_c$, corresponding to $(p+q)\delta_{p+q} = 2$. To study the variational structure for $0 < \beta < \beta^*$, we first introduce and investigate the subset $\mathcal{T}_\beta(\rho_1, \rho_2) \subset  S\r$. 
	\begin{lemma}\label{D300}
		For $\beta>0$, define 
		\begin{align*}  \mathcal{T}_\beta\r:=\Big\{\u\in S\r:\||x|^{-a}\na u\|_2^2+\||x|^{-a}\na v\|_2^2>2\beta\b\u\Big\}.\end{align*}
		Then $\mathcal{T}_\beta\r\ne\emptyset.$
		Moreover, if $0<\beta<\beta^*$, then $\mathcal{T}_\beta\r=S\r.$
	\end{lemma}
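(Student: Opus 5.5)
The plan has two parts: nonemptiness for every $\beta>0$, and the inclusion $S\r\subset\mathcal T_\beta\r$ when $0<\beta<\beta^*$. Both rest on the Gagliardo--Nirenberg bound \eqref{d5} specialised to the mass-critical exponent.

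For nonemptiness, I would use the dilation $k\star$, which preserves $S\r$. Since $p+q=\p$, we have $(p+q)\delta_{p+q}=2$, so for any $\u\in S\r$
\begin{align*}
\||x|^{-a}\na(k\star u)\|_2^2+\||x|^{-a}\na(k\star v)\|_2^2=e^{2k}\a,\qquad \b(k\star\u)=e^{2k}\b\u.
\end{align*}
Both sides of the defining inequality therefore scale identically. Dilation alone cannot produce an element of $\mathcal T_\beta\r$; instead I will exhibit one directly. The simplest choice is to pick radial $u,v\in X$ with disjoint supports, normalised to masses $\rho_1,\rho_2$. Then $\b\u=0$, while the gradient term is positive because $u\not\equiv0$. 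Hence $\u\in\mathcal T_\beta\r$ for every $\beta>0$. Annuli centred at the origin give radial functions with disjoint supports, so this is easy to arrange.

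For the second part, take any $\u\in S\r$ and apply \eqref{d5} with $(p+q)\delta_{p+q}=2$. The mass factor has exponent $\frac{(p+q)(1-\delta_{p+q})}{2}=\frac{p+q-2}{2}$, and this gives
\begin{align*}
2\beta\b\u\le 2\beta C_{a,b}^{p+q}(\rho_1^2+\rho_2^2)^{\frac{p+q-2}{2}}\a=\frac{\beta}{\beta^*}\a.
\end{align*}
If $\beta<\beta^*$, the right-hand side is strictly smaller than $\a$, because the gradient norm of a nonzero element is positive; $u\ne0$ holds since $\rho_1>0$. Hence $\u\in\mathcal T_\beta\r$, and so $\mathcal T_\beta\r=S\r$.

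The only step needing care is the exponent bookkeeping in \eqref{d5} at $p+q=\p$. I need to confirm that $\frac{(p+q)(1-\delta_{p+q})}{2}=\frac{p+q-2}{2}$ exactly. This is immediate from $(p+q)\delta_{p+q}=2$, and it is precisely what makes the constant match $\beta^*$ in \eqref{d222}. There is no real obstacle beyond this check.
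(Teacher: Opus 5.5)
Your proof is correct. The inclusion $S(\rho_1,\rho_2)\subset\mathcal{T}_\beta(\rho_1,\rho_2)$ for $0<\beta<\beta^*$ is argued exactly as in the paper: apply \eqref{d5} with $(p+q)\delta_{p+q}=2$, identify the constant with $1/(2\beta^*)$, and use $\beta/\beta^*<1$ together with positivity of the gradient term.

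For nonemptiness you take a different and more elementary route.

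\textbf{The paper's route.} The paper only needs to handle $\beta\ge\beta^*$. It shows that the ratio of the gradient term to $\mathcal{B}_{p,q}$ is unbounded on $S(\rho_1,\rho_2)$. To do this it uses the oscillating radial functions $\phi_n=\sin(n|x|)$ on $\{|x|\le\pi\}$, extended by zero, and takes both components proportional to $\phi_n$. The gradient term then grows like $n^2$, while the weighted $L^2$ and $L^{p+q}$ norms stay bounded. Your remark that $k\star$ scales both sides identically in the mass-critical case explains why such a shape-changing, high-frequency construction is needed there rather than a dilation.

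\textbf{Your route.} You place the components in disjoint annuli, so $\mathcal{B}_{p,q}(u,v)=0$. The strict inequality then holds trivially for every $\beta>0$, with no case split and no use of the exponents.

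\textbf{Comparison.} Your construction is shorter and fully sufficient for the lemma as stated. The paper's construction proves more: $\mathcal{T}_\beta(\rho_1,\rho_2)$ contains pairs with $\mathcal{B}_{p,q}(u,v)>0$, in fact with identical profiles in both components, for every $\beta>0$. So the set is not merely populated by decoupled pairs on which the coupling term plays no role.
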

	\begin{proof}
		Let $0<\beta<\beta^*$ and  $\u\in S\r$. By inequality \eqref{d5}, we have
		\begin{align*}  
			2\beta\b\u\le 2\beta C_{a,b}^{p+q}(\rho_1^2+\rho_2^2)^{\frac{p+q-2}{2}}\left(\||x|^{-a}\na u\|_2^2+\||x|^{-a}\na v\|_2^2\right)<\||x|^{-a}\na u\|_2^2+\||x|^{-a}\na v\|_2^2.
		\end{align*}
		Therefore,  $\mathcal{T}_\beta\r=S\r$. Now assume $\beta\ge\beta^*$. To show that $\mathcal{T}_\beta\r\ne\emptyset, $ it suffices to prove that
		\begin{equation*}
			\sup_{\u\in S\r}\frac{\||x|^{-a}\na u\|_2^2+\||x|^{-a}\na v\|_2^2}{\b\u}=+\infty.
		\end{equation*}
		Define
		\begin{equation*}
			\phi_n(x):=\begin{cases}
				\sin n|x|,\quad&\text{if}~|x|\le\pi,\\
				0,&\text{if}~|x|>\pi.
			\end{cases}
		\end{equation*}
		Set ${(u_n,v_n)}:=\Big(\frac{\rho_1}{\||x|^{-a}\phi{_n}\|_2}\phi_n,\frac{\rho_2}{\||x|^{-a}\phi{_n}\|_2}\phi_n\Big)$, which clearly belongs to ${(u_n,v_n)}\in S\r$. Since ${\||x|^{-a}\na\phi_n\|_2^2}=O(n^2)$, $\||x|^{-a}\phi{_n}\|_2^2=O(1)$, and $\||x|^{-b}\phi_n\|_{p+q}^{p+q}=O(1)$, it follows that 
		\begin{align*}  \frac{\||x|^{-a}\na {u_n}\|_2^2+\||x|^{-a}\na {v_n}\|_2^2}{\b{(u_n,v_n)}}=\frac{(\rho_1^2+\rho_2^2){\||x|^{-a}\na\phi_n\|_2^2}}{\rho_1^p\rho_2^q\||x|^{-b}\phi_n\|_{p+q}^{p+q}{\||x|^{-a}\phi_n\|_2^{p+q-2}}}\to+\infty,\quad\text{as}~n\to\infty.\end{align*}
		The lemma follows.\qed
	\end{proof}
	\begin{lemma}\label{D12}
		For $\u\in \mathcal{T}_\beta\r$, the fibering map $\Phi_\beta^{\u}$ admits exactly one critical point 
		$t_\beta^{\u}$. 
		Moreover, the following properties hold:
		\begin{itemize}
			\item[(i)] $\mathcal{N}_\beta\r=\mathcal{N}^-_\beta\r$ and $\mathcal{N}_\beta{\r} $ is a submanifold of $\y$.
			\item[(ii)] $t\star\u\in \mathcal{N}_\beta\r$ if and only if $t=t_\beta^\u.$
			\item[(iii)] $\Phi_\beta^\u(t)$ is strictly decreasing and concave on $(t_\beta^\u,+\infty)$ and 
			\begin{align*}  \Phi_\beta^\u(t_\beta^\u)=\max_{t\in\R}\Phi_\beta^\u(t)>0.\end{align*}
			\item[(iv)] The map $\u\mapsto t_\beta^\u$ is of class $C^1$.
		\end{itemize}
	\end{lemma}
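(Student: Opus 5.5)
In the mass-critical case $\pq=2$, so for $\u\in\mathcal T_\beta\r$ the fibering map reads
\begin{align*}
\Phi_\beta^\u(t)=\frac{e^{2t}}{2}\Big(\a-2\beta\b\u\Big)-\frac{e^{\t t}}{\t}\Big(\||x|^{-b}u\|_\t^\t+\||x|^{-b}v\|_\t^\t\Big)=:\frac{A}{2}e^{2t}-\frac{B}{\t}e^{\t t},
\end{align*}
with $A>0$ by the definition of $\mathcal T_\beta\r$ and $B>0$ because $\u\ne(0,0)$. The plan is to read off everything from this two-term structure, exactly as in the proof of Lemma~\ref{D6}.

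First, $(\Phi_\beta^\u)'(t)=Ae^{2t}-Be^{\t t}=e^{2t}\big(A-Be^{(\t-2)t}\big)$. Since $\t>2$, the bracket is strictly decreasing in $t$, positive as $t\to-\infty$ and negative as $t\to+\infty$. Hence there is a unique zero
\[
t_\beta^\u=\frac{1}{\t-2}\log\frac AB .
\]
The map $\Phi_\beta^\u$ increases before $t_\beta^\u$ and decreases after it. Moreover $\Phi_\beta^\u(-\infty)=0^+$, so the maximum is positive: explicitly it equals $\big(\tfrac12-\tfrac1\t\big)A\,(A/B)^{2/(\t-2)}>0$. This gives (ii) and the maximum part of (iii), since $(\Phi_\beta^\u)'(k)=P_\beta(k\star\u)$.

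For concavity on $(t_\beta^\u,\infty)$, compute $(\Phi_\beta^\u)''(t)=2Ae^{2t}-\t Be^{\t t}$. For $t\ge t_\beta^\u$ we have $Be^{\t t}\ge Ae^{2t}$, so $(\Phi_\beta^\u)''(t)\le(2-\t)Ae^{2t}<0$. For (i), take $\u\in\mathcal N_\beta\r$. The condition $P_\beta\u=0$ means $A=B$, and then $(\Phi_\beta^\u)''(0)=2A-\t B=(2-\t)B<0$, so $\mathcal N_\beta\r=\mathcal N_\beta^-\r$. Here one must note that $\mathcal N_\beta\r\subset\mathcal T_\beta\r$ automatically: $P_\beta=0$ gives $A=B>0$. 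This also shows that $\mathcal N^0_\beta$ is empty.

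The only genuinely non-computational points are the submanifold property and the $C^1$ regularity of $\u\mapsto t_\beta^\u$. For the submanifold property I would argue as in \cite[Lemma 3.2]{li2021normalized}. Write $\mathcal N_\beta\r=\{G=0\}$ with
\[
G=\big(P_\beta,\ \||x|^{-a}u\|_2^2-\rho_1^2,\ \||x|^{-a}v\|_2^2-\rho_2^2\big).
\]
If $dG$ failed to be surjective at some point, then $dP_\beta$ would be a combination of the mass differentials there. The pair would then solve an Euler--Lagrange type system, whose own Pohozaev identity forces $(\Phi_\beta^\u)''(0)=0$, contradicting $\mathcal N^0_\beta=\emptyset$. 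For (iv), the implicit function theorem applied to $F(t,\u)=(\Phi^\u_\beta)'(t)$ works, using $\partial_tF(t_\beta^\u,\u)<0$; alternatively, the explicit formula for $t_\beta^\u$ is $C^1$ in $\u$, since $A,B$ are $C^1$ and positive. I expect the submanifold step to be the only place requiring care, but it transfers verbatim from the unweighted case.
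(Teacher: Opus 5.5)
Your proposal is correct and is essentially the argument the paper intends: the paper's proof refers back to Lemma~\ref{D6}, namely the sign of $(\Phi_\beta^{(u,v)})''(0)$ on the Pohozaev manifold, the argument of \cite[Lemma 3.2]{li2021normalized} for the submanifold property, and the implicit function theorem. Your two-term form $\frac{A}{2}e^{2t}-\frac{B}{2^\sharp}e^{2^\sharp t}$, with $A=\||x|^{-a}\nabla u\|_2^2+\||x|^{-a}\nabla v\|_2^2-2\beta\mathcal{B}_{p,q}(u,v)>0$, carries that argument out explicitly, and your remark that $P_\beta(u,v)=0$ forces $A=B>0$, so that $\mathcal{N}_\beta(\rho_1,\rho_2)\subset\mathcal{T}_\beta(\rho_1,\rho_2)$, is a detail the paper leaves implicit but which lets (i) cover all of $\mathcal{N}_\beta(\rho_1,\rho_2)$ even when $\beta\ge\beta^*$.
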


	\begin{proof}
		The proof follows along the same lines as that of Lemma~\ref{D6}, relying on the monotonicity properties of the associated fibering map and the implicit function theorem.\qed
	\end{proof}
	\begin{lemma}\label{D301}
		$\displaystyle m_\beta\r=\inf_{\mathcal{T}_\beta\r}\max_{t\in\R}E_\beta(t\star\u)$.
	\end{lemma}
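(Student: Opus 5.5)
The plan is to prove the two inequalities separately, mirroring the proof of Lemma~\ref{D7}, with $S\r$ replaced by $\mathcal{T}_\beta\r$ and Lemma~\ref{D6} replaced by Lemma~\ref{D12}. First I record that $\mathcal{T}_\beta\r$ is invariant under the dilation $k\star$. In the mass-critical case $\pq=2$, so both $\||x|^{-a}\na(k\star u)\|_2^2+\||x|^{-a}\na(k\star v)\|_2^2$ and $\b(k\star u,k\star v)$ scale by the same factor $e^{2k}$. Hence the defining strict inequality of $\mathcal{T}_\beta\r$ is preserved along fibers, and the constraint $S\r$ is preserved as well.

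\textbf{Lower bound.} Take $\u\in\mathcal{T}_\beta\r$. By Lemma~\ref{D12}, the fiber map $\Phi_\beta^\u$ has a unique critical point $t_\beta^\u$, and $t_\beta^\u\star\u\in\mathcal{N}_\beta\r$. By Lemma~\ref{D12}(iii), this critical point is the global maximum. Therefore
$$\max_{t\in\R}E_\beta(t\star\u)=E_\beta(t_\beta^\u\star\u)\ge m_\beta\r.$$
Taking the infimum over $\mathcal{T}_\beta\r$ gives $m_\beta\r\le\inf_{\mathcal{T}_\beta\r}\max_{t}E_\beta(t\star\u)$.

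\textbf{Upper bound.} The key point is the inclusion $\mathcal{N}_\beta\r\subset\mathcal{T}_\beta\r$. Let $\u\in\mathcal{N}_\beta\r$. With $\pq=2$, the identity $P_\beta\u=0$ reads
$$\||x|^{-a}\na u\|_2^2+\||x|^{-a}\na v\|_2^2=2\beta\b\u+\||x|^{-b}u\|_\t^\t+\||x|^{-b}v\|_\t^\t.$$
Since $\u\ne(0,0)$ on $S\r$, the critical term is strictly positive, so $\u\in\mathcal{T}_\beta\r$. Lemma~\ref{D12} then applies to $\u$. Since $P_\beta\u=(\Phi_\beta^\u)'(0)=0$, uniqueness of the critical point gives $t_\beta^\u=0$, and therefore
$$E_\beta\u=\max_{t\in\R}E_\beta(t\star\u)\ge\inf_{\mathcal{T}_\beta\r}\max_{t}E_\beta(t\star\cdot).$$
Taking the infimum over $\mathcal{N}_\beta\r$ gives the reverse inequality, and the two bounds together prove the lemma.

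\textbf{Main obstacle.} The only delicate point is making sure Lemma~\ref{D12} really applies to every element of $\mathcal{N}_\beta\r$. Outside $\mathcal{T}_\beta\r$ the fiber map need not have a maximum: if the gradient term is at most $2\beta\b$, then $\Phi_\beta^\u$ is strictly decreasing. So the inclusion $\mathcal{N}_\beta\r\subset\mathcal{T}_\beta\r$ coming from the Pohozaev identity is the crux. I would state it explicitly, because it also shows that the infimum in the lemma could equivalently be taken over all of $S\r$ with the convention $\max=-\infty$ or $0$ off $\mathcal{T}_\beta\r$. When $0<\beta<\beta^*$, Lemma~\ref{D300} gives $\mathcal{T}_\beta\r=S\r$, so the statement reduces exactly to the form of Lemma~\ref{D7}.
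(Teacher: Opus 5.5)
Your proof is correct and is exactly the argument the paper intends. The paper only says the lemma follows from Lemma~\ref{D12}, meaning the proof of Lemma~\ref{D7} with Lemma~\ref{D12} in place of Lemma~\ref{D6}, and you usefully make explicit the inclusion $\mathcal{N}_\beta\r\subset\mathcal{T}_\beta\r$ (from $P_\beta\u=0$ with $\pq=2$), which the paper leaves implicit.

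Your closing aside is not right as stated. Off $\mathcal{T}_\beta\r$ the fiber map is negative with supremum $0$, so the convention $-\infty$ would make the infimum over $S\r$ equal to $-\infty$. The convention $0$ recovers $m_\beta\r$ only through the separate fact that $m_\beta\r=0$ whenever $\mathcal{T}_\beta\r\ne S\r$, and your inclusion does not give that fact.
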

	\begin{proof}
		The statement follows immediately from Lemma~\ref{D12}.  \qed
	\end{proof}
	\begin{lemma}\label{D91}
		If $0<\beta<\beta^*$, then the functional 
		$E_\beta\big|_{S\r}$ admits a Palais–Smale sequence 
		$\{\v\}\subset S\r$ at the level $m_\beta\r$ such that
		\begin{align*}  
			P_\beta\v\to 0,
			\qquad
			u_n^{-},\, v_n^{-}\to 0 
			\quad \text{a.e. in } \R^N
			\quad \text{as } n\to\infty.
		\end{align*}
	\end{lemma}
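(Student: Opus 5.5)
The plan is to mirror the proof of Lemma~\ref{D9}, using that for $0<\beta<\beta^*$ Lemma~\ref{D300} gives $\mathcal{T}_\beta\r=S\r$. Then Lemma~\ref{D12} applies to every $\u\in S\r$, and Lemma~\ref{D301} reduces to
\[
m_\beta\r=\inf_{\u\in S\r}\max_{t\in\R}E_\beta(t\star\u).
\]
This is exactly the structure exploited in the mass-supercritical case.

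\textbf{Step 1: geometry on the fibres.} In the mass-critical case $(p+q)\delta_{p+q}=2$, so
\[
\Phi_\beta^\u(t)=\frac{e^{2t}}{2}\Big(\a-2\beta\b\u\Big)-\frac{e^{\t t}}{\t}\Big(\||x|^{-b}u\|_\t^\t+\||x|^{-b}v\|_\t^\t\Big).
\]
The first coefficient is positive on $\mathcal{T}_\beta\r$. Hence $\Phi_\beta^\u(t)\to0^+$ as $t\to-\infty$ and $\Phi_\beta^\u(t)\to-\infty$ as $t\to+\infty$.

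I also need a uniform positive lower bound for $m_\beta\r$. On $\mathcal N_\beta\r$ we have
\[
\Big(1-\frac{\beta}{\beta^*}\Big)\a\le \||x|^{-b}u\|_\t^\t+\||x|^{-b}v\|_\t^\t\le C\a^{\t/2},
\]
so the gradient norm is bounded below on $\mathcal N_\beta\r$. Since $E_\beta-\frac12P_\beta=\frac dN\big(\||x|^{-b}u\|_\t^\t+\||x|^{-b}v\|_\t^\t\big)$, this gives $m_\beta\r>0$.

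Also, for $\u\in\mathcal G_r\r$ with $r$ small, we have $E_\beta\u\ge\frac12(1-\beta/\beta^*)r^2-Cr^\t$, which is small and positive. Hence every path joining $\{0\}\times\mathcal G_r\r$ to $\{(0,\u):E_\beta\u\le0\}$ crosses $\mathcal N_\beta\r$. This is where I would verify that $\max_{\gamma}\overline E_\beta\ge m_\beta\r$ and that the boundary values stay strictly below $m_\beta\r$.

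\textbf{Step 2: minimax class and Palais--Smale sequence.} With $\overline E_\beta(k,\u)=E_\beta(k\star\u)$ and $\Gamma_\beta$ defined as in Lemma~\ref{D9}, I will show that $m_\beta\r=\inf_{\gamma\in\Gamma_\beta}\max_{t\in[0,1]}\overline E_\beta(\gamma(t))$. The "$\le$" direction uses the crossing from Step 1. The "$\ge$" direction uses, for $\u\in S\r$, the path $t\mapsto(0,((1-t)k_0+tk_1)\star\u)$ with $k_0\ll0$ and $k_1\gg0$, together with Lemma~\ref{D12}(iii).

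Take a minimizing sequence of paths and replace $\gamma_n(t)=(k_n(t),(u,v)_n(t))$ by $(0,(k_n(t)\star (|u_n(t)|,|v_n(t)|)))$. This is allowed because $E_\beta$ is invariant under taking absolute values. Then Ghoussoub's principle \cite[Theorem~3.2]{ghoussoub1993duality} yields a Palais--Smale sequence $(k_n,(w_n,z_n))$ for $\overline E_\beta$ at level $m_\beta\r$ with $k_n\to0$ and $\operatorname{dist}((w_n,z_n),\gamma_n([0,1]))\to0$. Consequently $w_n^-,z_n^-\to0$, and along a subsequence this holds a.e.

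\textbf{Step 3: transfer.} Set $\v=k_n\star(w_n,z_n)$. Then $\partial_k\overline E_\beta=P_\beta(k\star\cdot)$ gives $P_\beta\v\to0$. Since $k_n\to0$, the dilation is uniformly bounded on $\y$ and on the tangent spaces of $S\r$, which converts the Palais--Smale property for $\overline E_\beta$ into one for $E_\beta|_{S\r}$, exactly as in \cite[Lemma~3.6]{bartsch2016normalized}.

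The main obstacle is Step 1: the strict separation between the boundary values of the paths and the level $m_\beta\r$. This is precisely where $\beta<\beta^*$ is needed, through the coercive factor $1-\beta/\beta^*$.
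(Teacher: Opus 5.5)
Your proposal is correct and follows the same route as the paper, whose proof of this lemma just refers back to Lemma~\ref{D9} (the class $\Gamma_\beta$ on $\R\times S\r$, Ghoussoub's principle, and the transfer through $k_n\star$), now on all of $S\r$ because $\mathcal T_\beta\r=S\r$ by Lemma~\ref{D300}, so that Lemmas~\ref{D12} and \ref{D301} apply; you supply details the paper leaves implicit. One small correction in Step~1: positivity of $E_\beta$ on $\mathcal G_r\r$ does not by itself force a path to cross $\mathcal N_\beta\r$, and your displayed bound should involve $\||x|^{-a}\na u\|_2^2+\||x|^{-a}\na v\|_2^2$, not $r^2$. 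Argue instead that $P_\beta>0$ on $\mathcal G_r\r$ for small $r$ (equivalently $t_\beta^{\u}>0$ there, by your lower bound for the gradient on $\mathcal N_\beta\r$) while $P_\beta<0$ on $\{E_\beta\le0\}$, and use the bound $E_\beta<r^2/2<m_\beta\r$ on $\mathcal G_r\r$ for the strict separation.
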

	\begin{proof}
		The argument follows along the same lines as in Lemma~\ref{D9}.\qed
	\end{proof}
	\begin{lemma}\label{D92}
		Let $0<\beta<\beta^*$. For any fixed $\rho_1,~\rho_2>0$, the following hold
		\begin{itemize}
			\item[(i)] $m_\beta\r\le m_\beta\rt$ for any $0<\tilde{\rho}_1<\rho_1$ and $0<\tilde{\rho}_2{<\rho_2}.$
			\item[(ii)] $m_\beta\r$ is nonincreasing with respect to $\beta\in(0,\beta^*).$
		\end{itemize}
	\end{lemma}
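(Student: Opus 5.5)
The plan is to follow the proof of Lemma~\ref{D11}, with Lemma~\ref{D12} and Lemma~\ref{D301} in place of Lemma~\ref{D6} and Lemma~\ref{D7}. The essential point is that for $0<\beta<\beta^*$, Lemma~\ref{D300} gives $\mathcal{T}_\beta\r=S\r$, and in the same way $\mathcal{T}_\beta\rt=S\rt$, because
\[
(\tilde\rho_1^2+\tilde\rho_2^2)^{\frac{p+q-2}{2}}\le(\rho_1^2+\rho_2^2)^{\frac{p+q-2}{2}}
\]
makes the threshold $\beta^*$ for the smaller masses at least as large. So every point of both tori has a unique Pohozaev projection $t_\beta^{\u}$, which is a global maximum of the fiber map and depends $C^1$ on $\u$.

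\textbf{Part (ii).} This is immediate. Since $\b\u\ge0$, $E_\beta\u$ is nonincreasing in $\beta$ for each fixed $\u$. By Lemma~\ref{D301}, with $\mathcal T_\beta\r=S\r$ for every $\beta\in(0,\beta^*)$, we have $m_\beta\r=\inf_{S\r}\max_{t}E_\beta(t\star\u)$. This is an infimum of maxima of nonincreasing functions of $\beta$, hence nonincreasing.

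\textbf{Part (i).} Fix $\varepsilon>0$.

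1. Pick $\u\in\mathcal N_\beta\rt$ with $E_\beta\u\le m_\beta\rt+\varepsilon/2$.

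2. Truncate to $(u_\delta,v_\delta)=(u\phi(\delta\cdot),v\phi(\delta\cdot))$. Using continuity of $\u\mapsto t_\beta^{\u}$ from Lemma~\ref{D12}(iv), choose $\delta$ so that
\[
E_\beta\big(t_\beta^{(u_\delta,v_\delta)}\star(u_\delta,v_\delta)\big)\le E_\beta\u+\varepsilon/4.
\]
Here one should note that $(u_\delta,v_\delta)$ lies on a torus with masses at most $\tilde\rho_i$. This is still covered by the fiber-map analysis, since the defining inequality of $\mathcal T_\beta$ holds on any smaller torus.

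3. Add the far-away bumps $k\star w_{\rho_i}$, supported outside $B(0,4/\delta)$, which restore the masses exactly. This gives $(\tilde u_k,\tilde v_k)\in S\r=\mathcal T_\beta\r$, and the supports are disjoint, so $\b$ splits additively.

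4. Let $t_k:=t_\beta^{(\tilde u_k,\tilde v_k)}$. Then $t_k\star(\tilde u_k,\tilde v_k)\in\mathcal N_\beta\r$, and
\[
E_\beta\big(t_k\star(\tilde u_k,\tilde v_k)\big)=E_\beta\big(t_k\star(u_\delta,v_\delta)\big)+E_\beta\big((t_k+k)\star(w_{\rho_1},w_{\rho_2})\big).
\]
The first term is at most $\max_t E_\beta(t\star(u_\delta,v_\delta))=E_\beta\big(t_\beta^{(u_\delta,v_\delta)}\star(u_\delta,v_\delta)\big)$ by Lemma~\ref{D12}(iii). The second term tends to $0$ once $t_k+k\to-\infty$.

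\textbf{Main obstacle: bounding $t_k$ above as $k\to-\infty$.} In the mass-critical case, $\pq=2$, so the Pohozaev identity reads
\[
e^{(\t-2)t_k}\big(\||x|^{-b}\tilde u_k\|_\t^\t+\||x|^{-b}\tilde v_k\|_\t^\t\big)=\||x|^{-a}\nabla\tilde u_k\|_2^2+\||x|^{-a}\nabla\tilde v_k\|_2^2-2\beta\b(\tilde u_k,\tilde v_k).
\]
The right-hand side is bounded: the gradient and coupling terms of the bumps vanish as $k\to-\infty$, and the gradient term of $(u_\delta,v_\delta)$ is fixed. The critical norms on the left tend to those of $(u_\delta,v_\delta)$, which are nonzero. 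Hence $\limsup_{k\to-\infty}t_k<\infty$.

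The right-hand side is also positive, since $\beta<\beta^*$ places $(\tilde u_k,\tilde v_k)$ in $\mathcal T_\beta$. Moreover, the bump part satisfies $E_\beta(s\star w)\le \frac{e^{2s}}2\|\cdot\|^2\to0$ as $s\to-\infty$. Combining the three estimates gives $m_\beta\r\le m_\beta\rt+\varepsilon$, which proves (i).
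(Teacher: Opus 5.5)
Your proposal is correct and follows the paper's route. Part (ii) is the same inf--max monotonicity argument, using Lemma~\ref{D301} with $\mathcal T_\beta(\rho_1,\rho_2)=S(\rho_1,\rho_2)$, and part (i) repeats the cut-off plus far-away-bump construction of Lemma~\ref{D11} with Lemma~\ref{D12} in place of Lemma~\ref{D6}. Your extra checks are details the paper leaves implicit: that $\beta^*$ grows as the masses shrink, so the fiber analysis covers $(u_\delta,v_\delta)$, and the bound on $t_k$ from $e^{(2^\sharp-2)t_k}C_k=A_k-2\beta B_k$ when $(p+q)\delta_{p+q}=2$.
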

	\begin{proof}
		\textit{(i)} Since $0<\beta<\beta^*$, Lemma~\ref{D300} gives $\mathcal{T}_\beta\r=S\r$, and  the inequality follows by the same argument as in Lemma~\ref{D11}, with Lemma~\ref{D12}(iv) in place of Lemma~\ref{D6}\textit{(iv)}.
		
		\textit{(ii)} For any $0<\beta'\le\beta<\beta^*$, we have
		\begin{align*}  m_\beta\r=\inf_{S\r}\max_{t\in\R}E_\beta(t\star\u)\le \inf_{S\r}\max_{t\in\R}E_{\beta'}(t\star\u)=m_{\beta'}\r.\end{align*}
		Therefore, $m_\beta\r$ is nonincreasing with respect to $\beta\in(0,\beta^*).$\qed
	\end{proof}
	
	\begin{lemma}\label{D93}
		Assume that $0 < \beta < \beta^*$. Set $r := \min\{p, q\}$. Then $$0 < m_\beta(\rho_1, \rho_2) < \frac{d}{N} {\mathcal S}(a, b)^{N/2d} ,$$ 
		provided one of the following conditions is satisfied
		\begin{itemize}
			\item[(i)] $a = \frac{N-4}{2} > 0$, and $1 < r < 2 - \frac{4d(4+2a-N)}{(N-2-2a)(N-2a+2b)}$.
			\item[(ii)] $\max\left\{0, \frac{N-4}{2}\right\} < a < \frac{N-2}{2}$, and $$\max\left\{ \frac{N}{N - 2(1+a) + b}, \, 2^\sharp - \frac{2(2 - r)(N - 2a - 2)}{4 + 2a - N} \right\} < p + q < 2^\sharp.$$
			\item[(iii)] $\max\left\{0, \frac{N-4}{2}\right\} < a < \frac{N-2}{2}$ $($with $b - a > \frac{1}{6}$ when $N = 3)$, and$$p + q \le \frac{N}{N - 2(1+a) + b},$$along with$$1 < r < 2 - \frac{N - 4 + 6(b-a)}{2(N - 2a + 2b)}.$$
		\end{itemize}
	\end{lemma}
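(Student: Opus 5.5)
The statement has two parts: a strict lower bound $m_\beta\r>0$, and an upper bound strictly below $\frac dN\sr^{N/2d}$. I would treat them separately, following the mass-supercritical arguments of Lemmas~\ref{D11}(iii) and~\ref{D201}, with $(p+q)\delta_{p+q}=2$ throughout.

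\textbf{Positivity.} Take $\u\in\mathcal N_\beta\r$; since $0<\beta<\beta^*$, Lemma~\ref{D300} gives $\mathcal N_\beta\r\subset S\r=\mathcal T_\beta\r$. The Pohozaev identity reads
\begin{align*}
\||x|^{-a}\na u\|_2^2+\||x|^{-a}\na v\|_2^2=2\beta\b\u+\||x|^{-b}u\|_\t^\t+\||x|^{-b}v\|_\t^\t .
\end{align*}
By \eqref{d5} with $(p+q)\delta_{p+q}=2$, we have $2\beta\b\u\le(\beta/\beta^*)\,T$, where $T$ is the total gradient norm. The definition of $\sr$ then gives $(1-\beta/\beta^*)\,T\le C\,T^{\t/2}$, so $T$ is bounded below on $\mathcal N_\beta\r$. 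Next compute $E_\beta-\frac12P_\beta=\frac dN(\||x|^{-b}u\|_\t^\t+\||x|^{-b}v\|_\t^\t)$. The Pohozaev identity again gives
\begin{align*}
\||x|^{-b}u\|_\t^\t+\||x|^{-b}v\|_\t^\t\ge(1-\beta/\beta^*)\,T,
\end{align*}
so $m_\beta\r\ge C\inf T>0$.

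\textbf{Upper bound.} I would reuse the test pairs from the proof of Lemma~\ref{D201}(I), assuming without loss of generality $q=r$. Set $u_\var=\rho_1\zeta_\var/\||x|^{-a}\zeta_\var\|_2$ and $v_\var=\var^\theta\zeta_\var/\||x|^{-a}\zeta_\var\|_2$, and let $t_\var=t_\beta^{(u_\var,v_\var)}$ be given by Lemma~\ref{D12}. Lemma~\ref{D92}(i), whose monotonicity is valid because $\beta<\beta^*$, together with the scaling computation \eqref{d29}, gives
\begin{align*}
m_\beta\r\le \frac dN\sr^{N/2d}+O(\var^{\frac{N-2d}{2d}})+O(\var^{2\theta})-C\var^{q\theta}e^{2t_\var}\frac{\||x|^{-b}\zeta_\var\|_{p+q}^{p+q}}{\||x|^{-a}\zeta_\var\|_2^{p+q}}.
\end{align*}

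\textbf{Lower bound on $e^{t_\var}$.} I would repeat \eqref{d24}--\eqref{d25}. In the mass-critical case the coupling term in $P_\beta$ scales like the gradient term. Solving $P_\beta(t_\var\star(u_\var,v_\var))=0$ therefore gives
\begin{align*}
e^{(\t-2)t_\var}=\frac{\|\na\|^2-2\beta\b(u_\var,v_\var)}{\|\cdot\|_\t^\t},
\end{align*}
which is now exact rather than an inequality. Since $\var^{q\theta}\to0$, the coupling term is a negligible fraction of the gradient term. Hence $e^{t_\var}\ge C\||x|^{-a}\zeta_\var\|_2$, and the negative term becomes $-C\var^{q\theta}$ times the ratio in \eqref{d26}/\eqref{d27} with $(p+q)(1-\delta_{p+q})=p+q-2$.

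\textbf{Main obstacle: choosing $\theta$.} The remaining task is the exponent bookkeeping. One needs $\theta$ such that the gain exponent, $q\theta$ plus the exponent in \eqref{d26}/\eqref{d27}, is strictly smaller than both $\frac{N-2d}{2d}$ and $2\theta$. In case (ii) this is the computation of Case 1 in Lemma~\ref{D201}(II), with $\theta\in(\theta_1,\theta_2)$. In case (i) the ratio is a power of $|\log\var|$, so any small $\theta$ with $q\theta<2\theta$ and $q\theta<\frac{N-2d}{2d}$ works. Case (iii) is analogous to Case 2. Along the lower line $(p+q)\delta_{p+q}=2$, the conditions on $r$ in (i) and (iii) should be exactly the nonemptiness of the resulting $\theta$-interval after substituting $p+q=\p$. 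This substitution, and the extra restriction $b-a>\frac16$ when $N=3$, are what I would verify carefully.
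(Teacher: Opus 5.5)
Your outline follows the paper's proof step for step, and the positivity part and the set-up of the upper bound are correct. In the mass-critical case you do not even need the lower bound on $e^{t_\varepsilon}$, because the fibre maximum is explicit:
\[
E_\beta\bigl(t_\varepsilon\star(u_\varepsilon,v_\varepsilon)\bigr)=\frac dN\Bigl(\frac{G_\varepsilon-2\beta\mathcal B_{p,q}(u_\varepsilon,v_\varepsilon)}{Q_\varepsilon^{2/2^\sharp}}\Bigr)^{N/2d},
\]
where $G_\varepsilon$ and $Q_\varepsilon$ are the total gradient and critical terms. Everything therefore hinges on one comparison. Set $R_\varepsilon:=\||x|^{-b}\zeta_\varepsilon\|_{p+q}^{p+q}/\||x|^{-a}\zeta_\varepsilon\|_2^{p+q-2}$; the question is whether the gain $\varepsilon^{q\theta}R_\varepsilon$ dominates both losses $\varepsilon^{2\theta}$ and $\varepsilon^{\frac{N-2d}{2d}}$. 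That is exactly the bookkeeping you defer, and it does not come out as you expect, except in case (i). There $4+2a-N=0$ and $R_\varepsilon\asymp|\log\varepsilon|^{-\frac{2d}{N-2a+2b}}$, so $r<2$ is enough.

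\textbf{Case (iii).} The test pair cannot work here. Since $a>\frac{N-4}{2}$ and $(p+q)_c\le\frac{N}{N-2(1+a)+b}$, both norms in $R_\varepsilon$ are carried by the tail $\zeta_\varepsilon\approx c\,\varepsilon^{\frac{N-2d}{4d}}\phi\,|x|^{-(N-2-2a)}$. For $s<\frac{N}{N-2(1+a)+b}$ one has $\||x|^{-b}\zeta_\varepsilon\|_s^s\asymp\varepsilon^{\frac{s(N-2d)}{4d}}$, while $\||x|^{-a}\zeta_\varepsilon\|_2^2\asymp\varepsilon^{\frac{N-2d}{2d}}$. The first line of Lemma~\ref{D19}(iii) is only an upper bound, and \eqref{d27} uses it as if it were sharp. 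Hence at $p+q=(p+q)_c$ one gets $R_\varepsilon\asymp\varepsilon^{\frac{N-2d}{2d}}$, with an extra $|\log\varepsilon|$ at the endpoint. So the gain is $o(\varepsilon^{\frac{N-2d}{2d}})$ for every $\theta>0$. On the other hand, integrating by parts against the equation of $U_\varepsilon$ gives
\[
\frac{\||x|^{-a}\nabla\zeta_\varepsilon\|_2^2}{\||x|^{-b}\zeta_\varepsilon\|_{2^\sharp}^2}-\mathcal S(a,b)\gtrsim\int|x|^{-2a}U_\varepsilon^2|\nabla\phi|^2\,dx\asymp\varepsilon^{\frac{N-2d}{2d}}.
\]
Thus the test level stays above $\frac dN\mathcal S(a,b)^{N/2d}$ for small $\varepsilon$, and ``analogous to Case 2'' only reproduces the non-sharp exponent of \eqref{d27}.

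\textbf{Case (ii).} Here $R_\varepsilon\asymp\varepsilon^{\sigma}$ with $\sigma=\frac{(N-2d)(4+2a-N)}{(N-2-2a)(N-2a+2b)}$. The interval $(\theta_1,\theta_2)$ is nonempty if and only if $r<2-\frac{4d(4+2a-N)}{(N-2-2a)(N-2a+2b)}$, which is the condition the paper's proof actually imposes. The stated hypothesis $2^\sharp-\frac{2(2-r)(N-2a-2)}{4+2a-N}<(p+q)_c$ puts a threshold on $2-r$ that is smaller by the factor $N-2d>1$, so it does not imply nonemptiness. For instance, $N=3$, $a=0.1$, $b=0.5$, $r=1.3$ satisfies (ii) but gives $\theta_1\approx1.02>\theta_2\approx0.61$.

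As it stands, your plan, like the argument it follows, gives the strict upper bound only in case (i), and in case (ii) only under that stronger restriction on $r$.
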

	\begin{proof}We first prove the positivity of $m_\beta(\rho_1,\rho_2)$.
		For $\u\in\mathcal{N}_\beta\r$ there holds
		\begin{align*}  
			\||x|^{-a}\na u\|_2^2+\||x|^{-a}\na v\|_2^2&=\||x|^{-b}u\|_\t^\t+\||x|^{-b}v\|_\t^\t+2\beta\b\u\\
			&\le \||x|^{-b}u\|_\t^\t+\||x|^{-b}v\|_\t^\t+\frac{\beta}{\beta^*}\left(\||x|^{-a}\na u\|_2^2+\||x|^{-a}\na v\|_2^2 \right)\\
			&\le \mathcal{S}(a,b)^{{-\t/2}} \left(\||x|^{-a}\na u\|_2^2+\||x|^{-a}\na v\|_2^2\right)^{\t/2}\\
            & ~~ ~~+\frac{\beta}{\beta^*}\left(\||x|^{-a}\na u\|_2^2+\||x|^{-a}\na v\|_2^2\right).
		\end{align*}
		Hence
		\begin{align*}  
			\inf_{(u,v)\in {\mathcal N}_\beta(\rho_1,\rho_2)}
			\left(
			\||x|^{-a}\nabla u\|_2^2
			+\||x|^{-a}\nabla v\|_2^2
			\right)>0,
		\end{align*}
		and therefore
		
		\begin{align*}  m_\beta\r=\inf_{\mathcal{N}_\beta\r}\frac{d}{N}\left(\||x|^{-b}u\|_\t^\t+\||x|^{-b}v\|_\t^\t\right)>0.\end{align*}
		Since the functional $E_\beta$ and the constraint
		$S(\rho_1,\rho_2)$ are invariant under the interchange
		$
		(p,u,\rho_1)$ and $(q,v,\rho_2),
		$
		we may assume, without loss of generality, that
		$
		p>q.
		$
		In this case, $r=q$. The case $q>p$ follows by interchanging
		the two components.
		
		Let $\zeta_\varepsilon=\varphi U_\varepsilon$ be the test
		function introduced in Lemma~\ref{D19}. For $\theta>0$, define
		\begin{align*}  
			u_\varepsilon
			=
			\frac{\rho_1}
			{\||x|^{-a}\zeta_\varepsilon\|_2}\zeta_\varepsilon,
			\qquad
			v_\varepsilon
			=
			\frac{\varepsilon^\theta}
			{\||x|^{-a}\zeta_\varepsilon\|_2}\zeta_\varepsilon .
		\end{align*}
		Then
		\begin{align*}  
			\||x|^{-a}u_\varepsilon\|_2=\rho_1,
			\qquad
			\||x|^{-a}v_\varepsilon\|_2=\varepsilon^\theta.
		\end{align*}
		
		Hence
		$
		(u_\varepsilon,v_\varepsilon)
		\in S(\rho_1,\varepsilon^\theta).
		$
		For $\varepsilon>0$ sufficiently small,
		$\varepsilon^\theta<\rho_2$, and therefore Lemma~{\ref{D92}}~$(i)$
		gives
		$
		m_\beta(\rho_1,\rho_2)
		\leq m_\beta(\rho_1,\varepsilon^\theta).
		$Let
		$
		t_\varepsilon
		=
		t_\beta^{(u_\varepsilon,v_\varepsilon)}
		$
		be given by Lemma~{\ref{D12}}; this applies because $\beta<\beta^*(\rho_1,\rho_2)\le\beta^*(\rho_1,\var^\theta)$, so that $(u_\var,v_\var)\in\mathcal T_\beta(\rho_1,\var^\theta)$ by Lemma~\ref{D300}. Then
		$
		t_\varepsilon\star
		(u_\varepsilon,v_\varepsilon)
		\in
		\mathcal N_\beta(\rho_1,\varepsilon^\theta),
		$
		and consequently
		\begin{align*}  
			m_\beta(\rho_1,\rho_2)
			\leq
			E_\beta
			\left(
			t_\varepsilon\star
			(u_\varepsilon,v_\varepsilon)
			\right).
		\end{align*}
		Using the definition of $E_\beta$ and the explicit form of
		$u_\varepsilon,v_\varepsilon$, we obtain
		\begin{align*}  
			\begin{aligned}
				m_\beta(\rho_1,\rho_2)
				\leq {}&
				\frac dN \mathcal{S}(a,b)^{\frac N{2d}}
				+O\left(\varepsilon^{\frac{N-2d}{2d}}\right)
				+O(\varepsilon^{2\theta})
				\\
				&-
				C\varepsilon^{q\theta}
				e^{(p+q)\delta_{p+q}t_\varepsilon}
				\frac{
					\||x|^{-b}\zeta_\varepsilon\|_{p+q}^{p+q}
				}{
					\||x|^{-a}\zeta_\varepsilon\|_2^{p+q}
				}.
			\end{aligned}
		\end{align*}
		Here we have used that $P_\beta(t_\var\star(u_\var,v_\var))=0$ and $(u_\var,v_\var)\in\mathcal T_\beta(\rho_1,\var^\theta)$ give
			\begin{align*}  
				e^{(\t-2)t_\var}=\frac{\||x|^{-a}\na u_\var\|_2^2+\||x|^{-a}\na v_\var\|_2^2-2\beta\b(u_\var,v_\var)}{\||x|^{-b}u_\var\|_\t^\t+\||x|^{-b}v_\var\|_\t^\t}\ge\Big(1-\frac{\beta}{\beta^*}\Big)\frac{\||x|^{-a}\na u_\var\|_2^2+\||x|^{-a}\na v_\var\|_2^2}{\||x|^{-b}u_\var\|_\t^\t+\||x|^{-b}v_\var\|_\t^\t},
			\end{align*}
			hence $e^{t_\var}\ge C\||x|^{-a}\zeta_\var\|_2$ by Lemma~\ref{D19}.

        \noi
		\textit{(i)}   If  $\frac{N}{N-2(1+a)+b}<(p+q)_c$, then 
		\begin{equation*}
			\frac{\||x|^{-b}\zeta_\var\|_{(p+q)_c}^{(p+q)_c}}{\||x|^{-a}\zeta_\var\|_2^{\frac{4d}{N-2a+2b}}}= \begin{cases}
				C|\log \var|^{-\frac{2d}{N-2a+2b}},&\text{if}~0<a=\frac{N-4}{2},\\
				C\var^{\frac{(N-2d)(4+2a-N)}{(N-2-2a)(N-2a+2b)}},&\text{if}~\max\{0,\frac{N-4}{2}\}<a<\frac{N-2}{2}.
			\end{cases}
		\end{equation*}
		and we choose \begin{align*}  \theta\in\left(\frac{(N-2d)(4+2a-N)}{(2-q)(N-2-2a)(N-2a+2b)},\frac{(N-2d)}{q}\left(\frac{1}{2d}-\frac{4+2a-N}{(N-2-2a)(N-2a+2b)}\right)\right)\end{align*}
		and $1<q<2-\frac{4d(4+2a-N)}{(N-2-2a)(N-2a+2b)}.$
		
		\noi
		\textit{(ii)} If $  (p+q)_c \leq \frac{N}{N-2(1+a)+b}$, then 
		\begin{equation*}
			\frac{\||x|^{-b}\zeta_\var\|_{(p+q)_c}^{(p+q)_c}}{\||x|^{-a}\zeta_{\var}\|_2^{\frac{4d}{N-2a+2b}}}=\begin{cases}
				C\var^{\frac{(N-2d)(N-4+6(b-a))}{4d(N-2a+2b)}}|\log \var|,\quad &\text{if}~~{(p+q)_c}= \frac{N}{N-2(1+a)+b},\\
				C\var^{\frac{(N-2d)(N-4+6(b-a))}{4d(N-2a+2b)}}, &\text{if}~~{(p+q)_c}<\frac{N}{N-2(1+a)+b}.
			\end{cases} 
		\end{equation*}
		Taking $1 < q < 2 - \frac{N-4+6(b-a)}{2(N-2a+2b)}$, we choose  $$\theta \in \left( \frac{(N-2d)(N-4+6(b-a))}{4d(2-q)(N-2a+2b)}, \; \frac{N-2d}{2dq}\left( 1 - \frac{N-4+6(b-a)}{2(N-2a+2b)} \right) \right).$$As a consequence, for $\varepsilon > 0$ sufficiently small, we obtain  $$m_\beta(\rho_1, \rho_2) < \frac{d}{N} {\mathcal S}(a, b)^{N/2d} .$$
		
		By choosing $\theta > 0$ in the stated range in  either case, we obtain for $\var > 0$ sufficiently small $$m_\beta(\rho_1, \rho_2) < \frac{d}{N}\mathcal{S}(a,b)^{N/2d}.$$ This completes the proof.\qed
	\end{proof}
	\noi \textbf{Proof of Theorem~\ref{DD9}:} The result follows immediately by combining Proposition~\ref{D5} with Lemmas~\ref{D91}, \ref{D92}, and \ref{D93}. \qed
	\section{Nonexistence of ground states for  \texorpdfstring{${\beta\le0}$}{beta <= 0}}\label{SD3}
	In this section, we show that for $\beta\le0$ the ground state level $m_\beta\r$ is not attained.
	\begin{lemma}\label{D78}
		Let $\beta\le0$ and $\u\in S\r$. Then there exists a unique $t_\beta^\u\in\R$ such that $t_\beta^\u\star\u\in\mathcal{N}_\beta\r$.  Moreover, $t_\beta^{\u}$ is the unique critical point of the fibering map
		$\Phi_\beta^{\u}$ and is a strict maximum point, at a positive level. In addition, the following properties hold
		\begin{itemize}
			\item[(i)] $\mathcal{N}_\beta\r=\mathcal{N}_\beta^-\r$.
			\item[(ii)] $t_\beta^\u<0$ if and only if $P_\beta\u<0$.
			\item[(iii)] $\Phi_\beta^\u$ is strictly decreasing and concave on $(t_\beta^\u,+\infty)$.
			\item[(iv)] The map $\u\mapsto t_\beta^\u$ is of class $C^1.$ 
		\end{itemize}
	\end{lemma}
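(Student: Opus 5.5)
The approach is to work with the explicit fibering map. For $(u,v)\in S\r$, write
\begin{align*}
A:=\||x|^{-a}\na u\|_2^2+\||x|^{-a}\na v\|_2^2>0,\qquad B:=\b\u\ge0,\qquad D:=\||x|^{-b}u\|_\t^\t+\||x|^{-b}v\|_\t^\t>0,
\end{align*}
so that
\begin{align*}
\Phi_\beta^\u(t)=\tfrac{A}{2}e^{2t}+|\beta|Be^{\pq t}-\tfrac{D}{\t}e^{\t t}.
\end{align*}
Since $\beta\le0$, the coupling term now carries a plus sign. Differentiating gives
\begin{align*}
(\Phi_\beta^\u)'(t)=e^{2t}g(t),\qquad g(t):=A+|\beta|\pq Be^{(\pq-2)t}-De^{(\t-2)t}.
\end{align*}
The first key step is to show that $g$ has exactly one zero, using the fact that $\pq<\t$ in every mass regime. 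Factor out $e^{(\t-2)t}$ to get
\begin{align*}
g(t)=e^{(\t-2)t}\Big(Ae^{-(\t-2)t}+|\beta|\pq Be^{-(\t-\pq)t}-D\Big).
\end{align*}
The bracket is a sum of nonincreasing exponentials minus $D$. It is strictly decreasing because $A>0$. It tends to $+\infty$ as $t\to-\infty$ and to $-D<0$ as $t\to+\infty$. Hence it has a unique zero $t_\beta^\u$, with $g>0$ before it and $g<0$ after it.

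This yields both the existence and the uniqueness of $t_\beta^\u$ with $t_\beta^\u\star\u\in\mathcal N_\beta\r$, because $(\Phi^\u_\beta)'(t)=P_\beta(t\star\u)$. It also shows that $t_\beta^\u$ is a strict global maximum. The maximum value is positive because $\Phi_\beta^\u(t)\to0^+$ as $t\to-\infty$: the $\tfrac A2e^{2t}$ term dominates there, since $2<\pq$ fails only in the mass-subcritical case, and then the term $|\beta|Be^{\pq t}\ge0$ only helps. Item (ii) follows immediately, since $P_\beta\u=(\Phi_\beta^\u)'(0)<0$ exactly when $0>t_\beta^\u$.

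For (i), the plan is to compute $(\Phi_\beta^\u)''(t_\beta^\u)=e^{2t}g'(t)$ at the zero, where $g'$ is the derivative. Writing $g(t)=e^{(\t-2)t}h(t)$ with $h$ the bracket above, we get
\begin{align*}
g'(t_\beta^\u)=e^{(\t-2)t}h'(t_\beta^\u)<0,
\end{align*}
because $h(t_\beta^\u)=0$ and $h'<0$. Therefore $\mathcal N_\beta\r=\mathcal N_\beta^-\r$ and $\mathcal N_\beta^0\r=\emptyset$.

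For (iii), strict decrease on $(t_\beta^\u,+\infty)$ is just $g<0$ there. For concavity, write
\begin{align*}
(\Phi_\beta^\u)''(t)=2Ae^{2t}+|\beta|(\pq)^2Be^{\pq t}-\t De^{\t t}.
\end{align*}
For $t>t_\beta^\u$ we have $De^{\t t}>Ae^{2t}+|\beta|\pq Be^{\pq t}$, and substituting this bound gives
\begin{align*}
(\Phi_\beta^\u)''(t)<(2-\t)Ae^{2t}+|\beta|\pq(\pq-\t)Be^{\pq t}<0.
\end{align*}

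For (iv), apply the implicit function theorem to $F(t,(u,v)):=(\Phi_\beta^{(u,v)})'(t)$. This map is $C^1$ on $\R\times\y$ because $E_\beta\in C^1$ (as $p,q>1$), and $\partial_tF=(\Phi^\u_\beta)''\neq0$ at $t_\beta^\u$. Uniqueness of the zero then makes the local $C^1$ branches coincide with $t_\beta^\u$, which gives global $C^1$ regularity.

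The main obstacle is mild: the monotonicity argument for $g$ has to be uniform across the three mass regimes, i.e.\ whether $\pq$ is less than, equal to, or greater than $2$. The factorization by $e^{(\t-2)t}$ handles all three regimes at once, since it only uses $\pq<\t$ and $A>0$. A second point to check is that $B=0$ is allowed, in which case the coupling term simply drops out and the same argument applies.
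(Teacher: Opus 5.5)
Your proof is correct and follows the paper's route. Both arguments write
$(\Phi_\beta^{(u,v)})'(t)=e^{2^{\sharp}t}\left(Ae^{-(2^{\sharp}-2)t}+|\beta|(p+q)\delta_{p+q}Be^{-(2^{\sharp}-(p+q)\delta_{p+q})t}-D\right)$ and use $(p+q)\delta_{p+q}<2^{\sharp}$ to get a strictly decreasing bracket with a single sign change; you then spell out the sign of the second derivative at the zero and on $(t_\beta^{(u,v)},+\infty)$, and the implicit function theorem step, which the paper leaves as ``follow from these properties.'' One wording point: positivity of $\Phi_\beta^{(u,v)}$ near $t=-\infty$ needs only $2^{\sharp}>2$ and $|\beta|B\ge0$, so the remark comparing $2$ with $(p+q)\delta_{p+q}$ is unnecessary, and the claim that the $e^{2t}$ term dominates is false in the mass-subcritical case when $|\beta|B>0$.
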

	\begin{proof}
		Notice that
		\begin{equation*}\begin{aligned}
				(\Phi_\beta^\u)'(t)=e^{\t t}\left(\frac{1}{e^{(\t-2)t}}\left(\||x|^{-a}\na u\|_2^2+\||x|^{-a}\na v\|_2^2\right)+
				\frac{|\beta|\pq}{e^{(\t-\pq)t}}\b\u \right. \\
				\quad \left.-\left(\||x|^{-b}u\|_\t^\t+\||x|^{-b}v\|_\t^\t\right)\right)
			\end{aligned}
		\end{equation*}
		is strictly decreasing because $\pq<\t$. It follows that $(\Phi_\beta^\u)'$ has a unique zero $t_\beta^\u$, and $t_\beta^\u\star\u\in\mathcal{N}_\beta\r.$ Moreover, $(\Phi_\beta^\u)'(t)>0$ for $t<t_\beta^\u $, and  $(\Phi_\beta^\u)'(t)<0$ for $t>t_\beta^\u $,  so $t_\beta^\u$ is a strict maximum point of $\Phi_\beta^\u$, and $\Phi_\beta^\u(t_\beta^\u)>0$ since $\Phi_\beta^\u(t)>0$ for $t\to-\infty$. By these properties of $\Phi_\beta^\u$, conclusions  \textit{(i)--(iv)} follow.\qed
	\end{proof}
	Define
	\begin{align*}  \mathcal{L}_\beta\r:=\{\u\in S\r: P_\beta\u<0\},\end{align*}
	\begin{align*}  {\mathcal{K}}_\beta\r:=\{\u\in S\r: P_\beta\u\le0\}\end{align*}
	and
	\begin{align*}  
		F_\beta\u:&=E_\beta\u-\frac{1}{\t}P_\beta\u\\
		&=\frac{d}{N}\left(\||x|^{-a}\na u\|_2^2+\||x|^{-a}\na v\|_2^2\right)+\frac{\t-\pq}{\t}|\beta|\b\u.
	\end{align*}
	\begin{lemma}
		Let $\beta\le0$ and $\rho_1,~\rho_2>0$. Then the following holds.
		\begin{itemize}
			\item[(i)] $\displaystyle m_\beta\r=\inf_{{\mathcal{K}_\beta}\r}F_\beta\u=\inf_{\mathcal{L}_\beta\r}F_\beta\u.$
			\item[(ii)] If $0<\tilde{\rho}_1\le \rho_1,~0<\tilde{\rho}_2<\rho_2$ then $m_\beta\r\le m_\beta(\tilde{\rho}_1,\tilde{\rho}_2).$
			\item[(iii)] For each $\beta\le 0$ there holds $m_\beta\r=\frac{d}{N}\mathcal{S}(a,b)^{N/2d}$. 
		\end{itemize}
	\end{lemma}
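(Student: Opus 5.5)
We treat the three assertions in order, using Lemma~\ref{D78} throughout.

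\emph{Part (i).} For $\u\in\mathcal N_\beta\r$ we have $P_\beta\u=0$, so $E_\beta\u=F_\beta\u$ and $\mathcal N_\beta\r\subset\mathcal K_\beta\r$. Hence $\inf_{\mathcal K_\beta}F_\beta\le m_\beta\r$.

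Conversely, let $\u\in\mathcal K_\beta\r$. By Lemma~\ref{D78}(ii) (and uniqueness when $P_\beta\u=0$) we get $t_\beta^\u\le0$. The map $t\mapsto F_\beta(t\star\u)$ equals
\[
\frac dN e^{2t}\Big(\||x|^{-a}\na u\|_2^2+\||x|^{-a}\na v\|_2^2\Big)+\frac{\t-\pq}{\t}|\beta|e^{\pq t}\b\u,
\]
which is nondecreasing in $t$ because $\pq>0$. Therefore
\[
m_\beta\r\le E_\beta(t_\beta^\u\star\u)=F_\beta(t_\beta^\u\star\u)\le F_\beta\u .
\]

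To replace $\mathcal K_\beta$ by $\mathcal L_\beta$, take $\u\in\mathcal N_\beta\r$ and consider $s\star\u$ for $s>0$ small. Since $t_\beta^{s\star\u}=-s<0$, Lemma~\ref{D78}(ii) gives $P_\beta(s\star\u)<0$. Moreover $F_\beta(s\star\u)\to F_\beta\u$ as $s\to0^+$, so the infimum over $\mathcal L_\beta$ equals the infimum over $\mathcal K_\beta$.

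\emph{Part (ii).} We copy the cut-off and disjoint-support construction of Lemma~\ref{D11}(i), now working with the characterization in (i) and with $F_\beta$, which is better adapted to $\beta\le0$.

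Take $\u\in\mathcal L_\beta(\tilde\rho_1,\tilde\rho_2)$ with $F_\beta\u\le m_\beta(\tilde\rho_1,\tilde\rho_2)+\var$. Truncate it to $(u_\delta,v_\delta)$; since $P_\beta$ is continuous, for $\delta$ small we still have strict negativity $P_\beta(u_\delta,v_\delta)<0$, and $F_\beta$ changes by at most $\var$.

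Next add $k\star(w_{\rho_1},w_{\rho_2})$, with $w_{\rho_i}$ supported far away and carrying the missing mass, and let $k\to-\infty$. Because the supports are disjoint, both $P_\beta$ and $F_\beta$ are additive over the two pieces. The contributions of $k\star w$ to the gradient, the coupling and the critical term all tend to $0$. Hence the resulting pair lies in $\mathcal L_\beta\r$ and its $F_\beta$-value is at most $m_\beta(\tilde\rho_1,\tilde\rho_2)+3\var$. By (i) this gives $m_\beta\r\le m_\beta(\tilde\rho_1,\tilde\rho_2)$.

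\emph{Part (iii).} This is the main step; it has a lower bound and an upper bound.

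For the lower bound, take $\u\in\mathcal K_\beta\r$. Since $\beta\le0$ and $\b\ge0$,
\[
\||x|^{-a}\na u\|_2^2+\||x|^{-a}\na v\|_2^2\le\||x|^{-b}u\|_\t^\t+\||x|^{-b}v\|_\t^\t\le\sr^{-\t/2}\Big(\||x|^{-a}\na u\|_2^\t+\||x|^{-a}\na v\|_2^\t\Big).
\]
The elementary minimization already carried out in Step 2 of Proposition~\ref{D5} then yields $\||x|^{-a}\na u\|_2^2+\||x|^{-a}\na v\|_2^2\ge\sr^{N/2d}$. The coupling term in $F_\beta$ is nonnegative, so $F_\beta\ge\frac dN\sr^{N/2d}$ on $\mathcal K_\beta$, and (i) gives $m_\beta\r\ge\frac dN\sr^{N/2d}$.

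The upper bound is where the real difficulty lies: the coupling now raises the energy, so the test pair must make $\b$ negligible. I would use an asymmetric pair of two separate bubbles, $u_\var=\rho_1\zeta_\var/\||x|^{-a}\zeta_\var\|_2$ concentrating at the origin, and $v$ a fixed radial profile of mass $\rho_2$ supported in an annulus. Then I would apply Lemma~\ref{D78} and estimate $\max_t E_\beta(t\star(u_\var,v))$ as in \eqref{d29}.

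The obstacle is that the two pieces react differently to the same dilation $t$. I would handle it as follows:
\begin{itemize}
\item Take $v$ to be itself a strongly spread profile $k\star v_0$ with $k\to-\infty$. Its gradient and critical contributions then vanish at the relevant $t_\var$, which stays bounded above by \eqref{d24}.
\item The overlap $\b(u_\var,k\star v_0)$ tends to $0$, because $\zeta_\var$ has small weighted $L^{p+q}$ mass away from the origin (Lemma~\ref{D19}(iii)).
\end{itemize}
With these two facts, $\max_t E_\beta\le\frac dN\sr^{N/2d}+o(1)$, and therefore $m_\beta\r=\frac dN\sr^{N/2d}$.
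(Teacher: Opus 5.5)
Your proof is correct. Parts (i) and (ii) follow the paper in substance. In (i) you make the density of $\mathcal L_\beta(\rho_1,\rho_2)$ explicit through $s\star(u,v)$ with $s\downarrow 0$. In (ii) you run the cut-off/disjoint-support construction of Lemma~\ref{D11}(i) directly on $F_\beta$ over $\mathcal L_\beta$, rather than through the fiber parameters $t_k$. This is convenient because $P_\beta$ and $F_\beta$ split over disjoint supports and the condition $P_\beta<0$ is open.

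Part (iii) takes a different route. The paper first proves $m_\beta(\rho_1,\rho_2)=m_0(\rho_1,\rho_2)$ for $\beta<0$:
\begin{itemize}
\item one inequality comes from $\mathcal K_\beta\subset\mathcal K_0$ and $F_0\le F_\beta$;
\item the other uses the dilation $t^{\frac{N-2a-2}{2}}u(tx)$, which preserves the gradient and critical norms, shrinks the masses to $t^{-1}\rho_i$ and suppresses the coupling as $t\to\infty$, combined with the monotonicity (ii).
\end{itemize}
Only then does the paper compute $m_0$, where the coupling is absent. You instead bound $m_\beta$ from below directly on $\mathcal K_\beta$, which is the CKN computation from the proof of Theorem~\ref{DD1} with the nonnegative coupling part of $F_\beta$ dropped. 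You bound it from above with a test pair whose components decouple. Your route is shorter and needs neither (ii) nor the non-mass-preserving dilation. The paper's route gives the extra information that the repulsive coupling does not change the level, $m_\beta=m_0$.

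Two justifications in your upper bound should be corrected, although neither affects the conclusion.
\begin{itemize}
\item The vanishing of $\mathcal B_{p,q}(u_\varepsilon,k\star v_0)$ does not follow from Lemma~\ref{D19}(iii), which only controls the total weighted $L^{p+q}$ norm of $\zeta_\varepsilon$ as $\varepsilon\to0$. It holds because $\operatorname{supp}\zeta_\varepsilon\subset B(0,2)$, while for $v_0$ supported in an annulus, $\operatorname{supp}(k\star v_0)$ lies outside $B(0,2)$ once $k\ll0$. Alternatively, use H\"older together with $\||x|^{-b}(k\star v_0)\|_{p+q}^{p+q}=e^{(p+q)\delta_{p+q}k}\||x|^{-b}v_0\|_{p+q}^{p+q}$.
\item \eqref{d24} was derived for $\beta>0$; for $\beta<0$ the coupling term changes side in $P_\beta=0$. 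However, once $\mathcal B_{p,q}(u_\varepsilon,k\star v_0)=0$, you have $E_\beta(t\star(u_\varepsilon,k\star v_0))=I(t\star u_\varepsilon)+I((t+k)\star v_0)$ for every $t$, with $I$ as in the proof of Lemma~\ref{D11}. So the fiber maximizer coincides with that of $E_0$ and is bounded above uniformly in $k\le0$, and the obstacle you describe disappears.
\end{itemize}
Then let $k\to-\infty$ with $\varepsilon$ fixed, and afterwards let $\varepsilon\to0$ using Lemma~\ref{D19}(i)--(ii).
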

	\begin{proof} 
		\textit{(i)} By Lemma~\ref{D78}, $\mathcal{L}{_\beta}\r$ is dense in $\mathcal{K}{_\beta}\r$.  Moreover, $\mathcal{N}_\beta\r\subset\mathcal{K}{_\beta}\r$. Therefore
		\begin{align*}  \inf_{{\mathcal{K}}_\beta\r}{F_\beta}\u=\inf_{\mathcal{L}_\beta\r}{F_\beta}\u\le \inf_{\mathcal{N}_\beta\r}{F_\beta}=\inf_{\mathcal{N}_\beta\r}{E_\beta}=m_\beta\r.\end{align*}
		Next, let $\u\in \mathcal{L}{_\beta}\r$. By Lemma~\ref{D78}, we have $t_\beta^\u<0$.  Hence, since $e^{2t_\beta^\u}<1$ and $e^{\pq t_\beta^\u}<1$,
		\begin{align*}  
			F_\beta\u&>\frac{d}{N}e^{2t_\beta^\u}\left(\||x|^{-a}\na u\|_2^2+\||x|^{-a}\na v\|_2^2\right)+\frac{\t-\pq}{\t}|\beta|e^{t_{\beta}^\u\pq}\b\u\\
			&=F_\beta(t_\beta^\u\star\u)=E_\beta(t_\beta^\u\star\u)\ge \inf_{\mathcal{N}_\beta\r}E_\beta.
		\end{align*}
		Therefore, $\displaystyle\inf_{\mathcal{L}_\beta\r}F_\beta\ge \inf_{\mathcal{N}_\beta\r}E_\beta$.\\
		\textit{(ii)} Using the same argument as in Lemma~\ref{D11}, one can prove that \textit{(ii)} holds.\\
		\textit{(iii)}  First we show that $m_\beta\r= m_0\r$ for $\beta<0.$ Since $P_0\u\le P_\beta\u$ and $F_0\u\le F_\beta\u$ we have $\mathcal{K}_\beta{\r}\subset \mathcal{K}_0{\r}$ and
		\begin{align*}  {m_0\r}=\inf_{\mathcal{K}_0\r}{F_0}\u\le\inf_{\mathcal{K}_\beta\r}{F_\beta}\u=m_\beta{\r}.\end{align*}
		On the other hand, for $\u\in\mathcal{L}_0\r$ and $t>0$ we set $$\left( u_t(x),v_t(x)\right)=\left(t^{\frac{N-2a-2}{2}}u(tx),t^{\frac{N-2a-2}{2}}v(tx)\right)$$
		and obtain for large $t>1$
		\begin{align*}  
			P_\beta(u_t,v_t)&=\||x|^{-a}\na u\|_2^2+\||x|^{-a}\na v\|_2^2-\||x|^{-b}u\|_\t^\t-\||x|^{-b}v\|_\t^\t+\frac{\beta\pq}{t^{(1-\delta_{p+q})(p+q)}}\b\u\\
			&=P_0\u+\frac{\beta\pq}{t^{(1-\delta_{p+q})(p+q)}}\b\u<0.
		\end{align*}
		Observing that $||x|^{-a}u_t\|_2=t^{-1}\rho_1,~\||x|^{-a}v_t\|_2=t^{-1}\rho_2,$ we get for $t>1$
		\begin{align*}  
			m_\beta\r&\le m_\beta(t^{-1}\rho_1,t^{-1}\rho_2)\le {F_\beta}(u_t,v_t)\\
			&=\frac{d}{N}\left(\||x|^{-a}\na u\|_2^2+\||x|^{-a}\na v\|_2^2\right)+\frac{\t-\pq}{\t}|\beta|\b(u_t,v_t)\\
			&= {F_0}\u+\frac{\t-\pq}{\t t^{(1-\delta_{p+q})(p+q)}}|\beta|\b\u\to~{F_0}\u\quad\textit{as}~t\to\infty.
		\end{align*}
		This proves our claim. Next we  show that $m_\beta\r=\frac{d}{N}\sr^{N/2d}.$
		For $\u\in\mathcal{N}_0\r,$ we have
		\begin{align*}  
			\||x|^{-a}\na u\|_2^2+\||x|^{-a}\na v\|_2^2&=\||x|^{-b}u\|_\t^\t+\||x|^{-b}v\|_\t^\t\\
			&\le\sr^{-\t/2}\left(\||x|^{-a}\na u\|_2^\t+\||x|^{-a}\na v\|_2^\t\right)\\
			&\le\sr^{-\t/2}\left(\||x|^{-a}\na u\|_2^2+\||x|^{-a}\na v\|_2^2\right)^{\t/2}.    \end{align*}
		and therefore $m_0\r\ge \frac{d}{N}\sr^{N/2d}$. Arguing as in \cite[Proposition 2.2]{soave2020normalized2},  for every $\var>0$ there exists $u\in X$ with $\||x|^{-a}u\|_2=\rho_1$ such that
		\begin{equation*}
			\max_{t\in \R} I(t\star u)\le \frac{d}{N}\sr^{N/2d}+\var.
		\end{equation*}
		Now, we choose any $v\in X$ with $\||x|^{-a}v\|_2=\rho_2,$ and define $t(k):=t_0^{(u,k\star v)}$. Observe that $\displaystyle\limsup_{{k}\to-\infty}t(k)<+\infty$ as in  the proof of Lemma~\ref{D11}~\textit{(i)}. We obtain
		\begin{align*}  
			m_0\r&\le E_0(t(k)\star (u,k\star v))=I(t(k)\star u)+I((t(k)+k)\star v)\\
			&\le\frac{d}{N}\sr^{N/2d}+\var+I((t(k)+k)\star v)\to \frac{d}{N}\sr^{N/2d}+\var\quad\text{as}~k\to-\infty,
		\end{align*}
		since $t(k)+k\to-\infty$ and $I(s\star v)\to0$ as $s\to-\infty$.
		Since this holds for any $\var>0,$ we deduce $m_0\r\le \frac {d}{N}\sr^{N/2d}$.\qed
	\end{proof}
	\noindent
	\textbf{Proof of Theorem~\ref{DD1}} For $\u \in\mathcal{N}_\beta\r$ we have for $\beta\le 0$
	\begin{align*}  
		\||x|^{-a}\na u\|_2^2+\||x|^{-a}\na v\|_2^2&=\||x|^{-b}u\|_\t^\t+\||x|^{-b}v\|_\t^\t+\pq\beta\b\u\\
		&\le \||x|^{-b}u\|_\t^\t+\||x|^{-b}v\|_\t^\t\\
		&\le \sr^{-\t/2}\left( \||x|^{-a}\na u\|_2^\t+\||x|^{-a}\na v\|_2^\t \right)\\
		&\le \sr^{-\t/2}\left( \||x|^{-a}\na u\|_2^2+\||x|^{-a}\na v\|_2^2 \right)^{\t/2}
	\end{align*}
	so that $\left( \||x|^{-a}\na u\|_2^2+\||x|^{-a}\na v\|_2^2\right)\ge\sr^{N/2d}.$ This implies for $\beta\le 0$
	\begin{align*}  
		E_\beta\u&= E_\beta\u-\frac{1}{\t}P_\beta\u
		\\&=\frac{d}{N} \left( \||x|^{-a}\na u\|_2^2+\||x|^{-a}\na v\|_2^2\right)-\frac{\t-\pq}{\t}\beta\b\u\\
		&\ge\frac{d}{N} \left( \||x|^{-a}\na u\|_2^2+\||x|^{-a}\na v\|_2^2\right)\ge\frac{d}{N}\sr^{N/2d}=m_\beta\r.
	\end{align*}
	It remains to exclude equality. If $E_\beta\u=m_\beta\r$ then every inequality above is an equality, so $\beta\mathcal B_{p,q}\u=0$ and $\||x|^{-a}\na u\|_2^2+\||x|^{-a}\na v\|_2^2=\mathcal S(a,b)^{N/2d}$ with equality in \eqref{d191}. Equality in the elementary inequality $x^{\t/2}+y^{\t/2}\le(x+y)^{\t/2}$ forces one of the two components to vanish, and the other is then an extremal of \eqref{d191}, hence equal to some $U_\var$. Since $U_\var\notin L^2(\R^N;|x|^{-2a})$, this contradicts $\u\in S\r$. Therefore $m_\beta\r$ is not attained.\qed
	\section{Ground states of the limit system}\label{SD4}
	In this section, we study the system \eqref{d60}, which has no critical term, and prove Theorem~\ref{DD4}. Following the approach in~\cite{szulkin2009ground}, for $(u, v) \in  S\r$ we define the limit functional
	\begin{equation*}
		J(u, v) := \frac{1}{2} \left( \||x|^{-a}\nabla u\|_2^2 + \||x|^{-a}\nabla v\|_2^2 \right) - \mathcal{B}_{p,q}(u, v),
	\end{equation*}
	and the corresponding limit Pohozaev manifold
	\begin{equation*}
		\mathcal{W}(\rho_1, \rho_2) := \left\{ (u, v) \in S\r : \||x|^{-a}\nabla u\|_2^2 + \||x|^{-a}\nabla v\|_2^2 = (p+q)\delta_{p+q} \mathcal{B}_{p,q}(u, v) \right\}.
	\end{equation*}
	For any $(u, v) \in  S\r$ with $\b\u>0$, the equation
	\begin{equation*}
		\frac{d}{dt} J(t \star (u, v)) = \frac{d}{dt} \left( \frac{e^{2t}}{2}\left( \||x|^{-a}\nabla u\|_2^2 + \||x|^{-a}\nabla v\|_2^2 \right) - e^{(p+q)\delta_{p+q}t} \mathcal{B}_{p,q}(u, v) \right) = 0
	\end{equation*}
	admits a unique solution $t^{(u,v)} \in \mathbb{R}$ given by
	\begin{equation*}
		e^{t^{(u,v)}} = \left( \frac{\||x|^{-a}\nabla u\|_2^2 + \||x|^{-a}\nabla v\|_2^2}{(p+q)\delta_{p+q} \mathcal{B}_{p,q}(u, v)} \right)^{\frac{1}{(p+q)\delta_{p+q} - 2}}.
	\end{equation*}
	Clearly, $t^{(u,v)} \star (u, v) \in \mathcal{W}(\rho_1, \rho_2)$. We now define the reduced functional $\widetilde{J} :  S\r \to \mathbb{R}$ by
	\begin{equation*}
		\widetilde{J}(u, v) := J(t^{(u,v)} \star (u, v)) = \frac{(p+q)\delta_{p+q} - 2}{2(p+q)\delta_{p+q}} \left( \frac{\left(\||x|^{-a}\nabla u\|_2^2 + \||x|^{-a}\nabla v\|_2^2\right)^{(p+q)\delta_{p+q}}}{\left((p+q)\delta_{p+q} \mathcal{B}_{p,q}(u, v)\right)^2} \right)^{\frac{1}{(p+q)\delta_{p+q} - 2}}.
	\end{equation*}
	Moreover, the ground state level \begin{equation}\label{d61}
		\kappa(\rho_1, \rho_2) := \displaystyle\inf_{\mathcal{W}(\rho_1, \rho_2)} J(u, v)
	\end{equation}  is the quantity of interest. We write
	\begin{align*}  \mathcal Z\r:=\{\u\in\mathcal W\r:J\u=\kappa\r\}\end{align*}
	for the set of ground states of \eqref{d60}. The level $\kappa\r$ has the following properties.
	
	\begin{lemma}\label{D105}
		$\kappa\r=\displaystyle\inf_{S\r}\widetilde{J}$ for all $\rho_1,\rho_2>0.$
	\end{lemma}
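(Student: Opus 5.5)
The identity should follow from two opposite inequalities, both based on the fact that the fibre $t\mapsto t\star(u,v)$ meets $\mathcal W\r$ exactly once. We first fix the convention that $\widetilde J$ is considered on those $(u,v)\in S\r$ with $\mathcal B_{p,q}(u,v)>0$, and we set $\widetilde J=+\infty$ when $\mathcal B_{p,q}(u,v)=0$. This loses nothing, since every $(u,v)\in\mathcal W\r$ has $\mathcal B_{p,q}(u,v)>0$. Indeed, $(u,v)\ne(0,0)$ on $S\r$, so $\||x|^{-a}\nabla u\|_2^2+\||x|^{-a}\nabla v\|_2^2>0$, and the defining identity of $\mathcal W\r$ then forces $\mathcal B_{p,q}(u,v)>0$.

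\emph{Step 1 (scaling and uniqueness).} For $(u,v)\in S\r$ the scaling $t\star(u,v)$ stays in $S\r$, and it multiplies the gradient term by $e^{2t}$ and $\mathcal B_{p,q}$ by $e^{(p+q)\delta_{p+q}t}$. Hence
\[
t\star(u,v)\in\mathcal W\r \iff e^{2t}\bigl(\||x|^{-a}\nabla u\|_2^2+\||x|^{-a}\nabla v\|_2^2\bigr)=(p+q)\delta_{p+q}e^{(p+q)\delta_{p+q}t}\mathcal B_{p,q}(u,v).
\]
This is exactly the equation $\frac{d}{dt}J(t\star(u,v))=0$. Since $p+q\neq(p+q)_c$, we have $(p+q)\delta_{p+q}\neq2$, so $t\mapsto e^{((p+q)\delta_{p+q}-2)t}$ is a strictly monotone bijection onto $(0,\infty)$. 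Therefore the equation has the unique solution $t^{(u,v)}$ given by the explicit formula. This holds in both the subcritical and the supercritical regime; only the sign of the exponent changes.

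\emph{Step 2 ($\inf\widetilde J\le\kappa$).} If $(u,v)\in\mathcal W\r$, then $t=0$ solves the equation of Step 1, so by uniqueness $t^{(u,v)}=0$. Hence $\widetilde J(u,v)=J(u,v)$, and taking the infimum over $\mathcal W\r\subset S\r$ gives $\inf_{S\r}\widetilde J\le\kappa\r$.

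\emph{Step 3 ($\inf\widetilde J\ge\kappa$).} Take any $(u,v)\in S\r$ with $\mathcal B_{p,q}(u,v)>0$. By Step 1, $t^{(u,v)}\star(u,v)\in\mathcal W\r$, so $\widetilde J(u,v)=J(t^{(u,v)}\star(u,v))\ge\kappa\r$. Pairs with $\mathcal B_{p,q}=0$ contribute $+\infty$. Taking the infimum gives $\inf_{S\r}\widetilde J\ge\kappa\r$, which together with Step 2 proves the lemma.

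The main point of care is Step 1, namely that the uniqueness of $t^{(u,v)}$ (and thus $t^{(u,v)}=0$ on $\mathcal W\r$) uses exactly the hypothesis $p+q\neq(p+q)_c$. The only other issue is handling pairs with $\mathcal B_{p,q}=0$, which the convention above settles. The closed form of $\widetilde J$ follows by substituting $e^{t^{(u,v)}}$ into $J(t\star(u,v))$, but it is not needed for the identity itself.
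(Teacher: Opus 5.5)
Your proposal is correct and follows the paper's proof. Both directions come from $t^{(u,v)}\star(u,v)\in\mathcal W(\rho_1,\rho_2)$ and from $t^{(u,v)}=0$ on $\mathcal W(\rho_1,\rho_2)$, and you also make explicit two points the paper leaves tacit: uniqueness of $t^{(u,v)}$ uses $(p+q)\delta_{p+q}\neq 2$, and pairs with $\mathcal B_{p,q}(u,v)=0$ must be excluded (or assigned the value $+\infty$), since the closed formula for $\widetilde J$ is undefined there.
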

	\begin{proof}
		For $\u\in S\r$ we have $\displaystyle\widetilde{J}\u\ge\inf_{\mathcal{W}\r}J =\kappa\r$. On the other hand
		\begin{align*}  \inf_{S\r}\widetilde{J}\u\le\inf_{\mathcal W\r}\widetilde J\u=\inf_{\mathcal W\r}J\u= \kappa\r,\end{align*}
		where in the first inequality we use $\mathcal{W}\r\subset S\r$, and the equalities hold because $t^{(u,v)}=0$ on $\mathcal W\r$.
		\qed
	\end{proof}
	\begin{lemma}\label{D106}
		\begin{itemize}
			\item[(i)] If $p+q<(p+q)_c$ then $\kappa\r<0.$
			\item[(ii)] For $p+q>(p+q)_c$ then $\kappa\r>0.$
			\item[(iii)] $\kappa\r\le \kappa(\tilde{\rho}_1,\tilde{\rho}_2)$ for $0<\tilde{\rho_1}<\rho_1$ and $0<\tilde{\rho}_2<\rho_2.$
		\end{itemize}
	\end{lemma}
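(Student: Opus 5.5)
Each of the three assertions follows from the explicit formula for $\widetilde J$ combined with Lemma~\ref{D105}, and the monotonicity is proved by the cut-off and disjoint-support argument already used for Lemma~\ref{D11}(i).

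\emph{Part (i).} Let $p+q<\p$, so that $\pq<2$. Fix any $\u\in S\r$ with $u,v\ge0$ and $\b\u>0$; for instance, take $u,v$ to be multiples of the same positive radial function. Along the fiber
\begin{align*}
J(t\star\u)=\frac{e^{2t}}{2}\a-e^{\pq t}\b\u .
\end{align*}
As $t\to-\infty$ the negative term dominates, because $\pq<2$. Hence $J(t\star\u)<0$ for $t$ sufficiently negative. The function $t\mapsto J(t\star\u)$ tends to $0^-$ at $-\infty$ and to $+\infty$ at $+\infty$. Its unique critical point $t^{\u}$ is therefore a global minimum at a negative level. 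Since $t^{\u}\star\u\in\mathcal W\r$, this gives $\kappa\r\le J(t^{\u}\star\u)<0$. Equivalently, the prefactor $\frac{\pq-2}{2\pq}$ in $\widetilde J$ is negative, so $\widetilde J<0$ everywhere on its domain.

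\emph{Part (ii).} Let $p+q>\p$, so that $\pq>2$. On $\mathcal W\r$ we have
\begin{align*}
J\u=\frac{\pq-2}{2\pq}\a .
\end{align*}
It therefore suffices to bound the gradient from below on $\mathcal W\r$. Insert \eqref{d5} into the constraint that defines $\mathcal W\r$:
\begin{align*}
\a=\pq\b\u\le C(\rho_1^2+\rho_2^2)^{\frac{(p+q)(1-\delta_{p+q})}{2}}\a^{\frac{\pq}{2}} .
\end{align*}
Since $\frac{\pq}{2}>1$ and the gradient term is positive, this yields a uniform positive lower bound on $\a$ over $\mathcal W\r$. Consequently $\kappa\r>0$. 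This is the same argument as the positivity step in Lemma~\ref{D11}(iii), with the critical term omitted.

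\emph{Part (iii).} We repeat the construction of Lemma~\ref{D11}(i), with $J$ in place of $E_\beta$ and $t^{\u}$ in place of $t_\beta^{\u}$. Fix $\var>0$ and pick $\u\in\mathcal W\rt$ with $J\u\le\kappa\rt+\var/2$. Truncate it to $(u_\delta,v_\delta)$ with the cutoff $\phi(\delta x)$; continuity of $\u\mapsto t^{\u}$, read off from the explicit formula, gives $J(t^{(u_\delta,v_\delta)}\star(u_\delta,v_\delta))\le J\u+\var/4$. Add the missing masses through $k\star(w_{\rho_1},w_{\rho_2})$, supported outside $B(0,4/\delta)$. The supports are disjoint, so $\b$ and the gradient terms split additively. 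We then show $t_k:=t^{(\tilde u_k,\tilde v_k)}$ stays bounded as $k\to-\infty$, and that $J((t_k+k)\star(w_{\rho_1},w_{\rho_2}))\to0$.

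The main obstacle is this last step. Here $\b(\tilde u_k,\tilde v_k)$ includes the contribution of the added pieces, and $J$ may be negative in the subcritical case, so the argument has to work for either sign of $J$. The formula
\begin{align*}
e^{(\pq-2)t_k}=\frac{\a}{\pq\b}\quad\text{evaluated at }(\tilde u_k,\tilde v_k)
\end{align*}
settles both boundedness claims at once. As $k\to-\infty$, the numerator and denominator converge to those of $(u_\delta,v_\delta)$, which are nonzero once $\delta$ is small. Hence $t_k$ stays bounded, $t_k+k\to-\infty$, and both terms of $J((t_k+k)\star(w_{\rho_1},w_{\rho_2}))$ tend to $0$.

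Combining the pieces gives
\begin{align*}
\kappa\r\le J(t_k\star(\tilde u_k,\tilde v_k))=J(t_k\star(u_\delta,v_\delta))+J((t_k+k)\star(w_{\rho_1},w_{\rho_2})) .
\end{align*}
The first term on the right is at most $\max_tJ(t\star(u_\delta,v_\delta))$ in the supercritical case, and at most $J(t_k\star(u_\delta,v_\delta))\to J(t^{(u_\delta,v_\delta)}\star(u_\delta,v_\delta))$ in the subcritical case. In both cases, letting $k\to-\infty$ gives $\kappa\r\le\kappa\rt+\var$, which proves (iii).
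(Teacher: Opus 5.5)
Your proof is correct. Parts (i) and (ii) match the paper's argument: the negative prefactor in $\widetilde J$ for (i), and the lower bound on the gradient over $\mathcal W(\rho_1,\rho_2)$ via \eqref{d5} for (ii). For $p+q>(p+q)_c$, your part (iii) is, as in the paper, the construction of Lemma~\ref{D11}(i) with $J$ in place of $E_\beta$.

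The routes differ in (iii) for $p+q<(p+q)_c$.

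\emph{The paper's route.} It cites the Gou--Jeanjean subadditivity $\kappa(\rho_1,\rho_2)\le\kappa(\tilde\rho_1,\tilde\rho_2)+\kappa\bigl(\sqrt{\rho_1^2-\tilde\rho_1^2},\sqrt{\rho_2^2-\tilde\rho_2^2}\bigr)$. Combined with (i), this gives a \emph{strict} inequality.

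\emph{Your route.} You run the same cut-off and dilation construction in both regimes. In the subcritical case the maximum of the fiber is no longer available, since $t^{(u,v)}$ is now a minimum. You replace it with the convergence $t_k\to t^{(u_\delta,v_\delta)}$, read off from the explicit formula for $e^{((p+q)\delta_{p+q}-2)t^{(u,v)}}$.

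\emph{What each buys.} The paper's route gives strict monotonicity. Yours gives only the non-strict inequality, which is exactly what is stated, but it treats both regimes uniformly and stays inside the radial space $\mathcal X$. That is a real advantage here. Subadditivity of Gou--Jeanjean type is proved in the unweighted setting by translating one piece far away, which is not allowed in $\mathcal X$. Separating the pieces by $k\star$ with $k\to-\infty$, as you do, sends the energy of the outer piece to $0$ rather than to $\kappa$ of its masses. So that inequality does not transfer verbatim.

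One small point: take $\psi$ radial, so that the test pairs lie in $\mathcal X$.
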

	\begin{proof}
		\textit{(i)} If $p+q<(p+q)_c$ then  $\pq<2$, and hence $\widetilde{J}\u<0$ for any $\u\in S\r$ with $\b\u>0$.

        \noi
		\textit{(ii)} 
		By \eqref{d5}, we have for $\u\in \mathcal{W}\r$
		\begin{align*} 
        \||x|^{-a}\na u\|_2^2+\||x|^{-a}\na v\|_2^2=& \pq\b\u \\
        & \le \pq C\left(\||x|^{-a}\na u\|_2^2+\||x|^{-a}\na v\|_2^2\right)^{\frac{\pq}{2}},
        \end{align*}
		which implies $\displaystyle\inf_{\mathcal{W}\r}\||x|^{-a}\na u\|_2^2+\||x|^{-a}\na v\|_2^2>0,$ and therefore
		\begin{align*}  
			\kappa\r=\inf_{\mathcal{W}\r}\left(\frac{1}{2}-\frac{1}{\pq}\right)\left(\||x|^{-a}\na u\|_2^2+\||x|^{-a}\na v\|_2^2\right)>0.\end{align*}
            \noi
		\textit{(iii)} If $p+q<(p+q)_c$,  arguing as in \cite[Lemma 3.1]{gou2016existence}, for $0<\tilde{\rho}_1<\rho_1$ and $0<\tilde{\rho}_2<\rho_2$, we get
		\begin{align*}  
			\kappa(\rho_1,\rho_2)\le \kappa(\tilde{\rho}_1,\tilde{\rho}_2)+\kappa\left(\sqrt{\rho^2_1-\tilde{\rho}^2_1},\sqrt{\rho_2^2-\tilde{\rho}_2^2}\right)<\kappa(\tilde{\rho}_1,\tilde{\rho}_2).\end{align*}
		For $p+q>\p$, the argument of Lemma~\ref{D11}\textit{(i)} applies.\qed
	\end{proof}
	\noi
	\textbf{ Proof of Theorem~\ref{DD4}} If  $p+q<(p+q)_c$ we obtain a Palais-Smale sequence by starting with a minimizing sequence $J\v\to\kappa\r$ with $\v\in S\r$. By Ekeland's variational principle, we may assume that $\v$ is a Palais-Smale sequence. Then we have
	\begin{align*}  
		J'\v+{\left(\lambda_{1,n}\frac{u_n}{|x|^{2a}},\lambda_{2,n}\frac{v_n}{|x|^{2a}}\right)}\to 0
		\end{align*}
	with 
	\begin{align*}  
		\lambda_{1,n}=-\frac{1}{\rho_1^2}J'\v[(u_n,0)]\quad\text{and}\quad\lambda_{2,n}=-\frac{1}{\rho_2^2}J'\v[(0,v_n)].
		\end{align*}
	Using inequality \eqref{d5}, we see that $J$ is coercive on $S\r$, hence $\v$ is bounded in $\y$ and $\lambda_{1,n},~\lambda_{2,n}$ are bounded in $\R$. Then there exists $\u\in\y$
	and $\lambda_1,~\lambda_2\in \R$ such that up to a subsequence:
	\begin{align*}  
		&\v\rightharpoonup\u\quad\text{in}~\y,\\
		&\v\to\u\quad\text{in}~{L^{p+q}_r(\R^N;|x|^{-b(p+q)})\times L^{p+q}_r(\R^N;|x|^{-b(p+q)})},\\
		&\v\to\u\quad\text{a.e. in }
		{\R^N}\\ &(\lambda_{1,n},\lambda_{2,n})\to(\lambda_{1},\lambda_{2})\quad\text{ in }
		{\R^2}.
	\end{align*}
	We claim that $u\ne0$ and $v\ne0$. Indeed, if $u=0$ or $v=0$, then $\b\v\to\b\u=0$ and
	\begin{align*}  
		\kappa\r=\lim_{n\to\infty}\left(\frac{1}{2}\left(\||x|^{-a}\na {u_n}\|_2^2+\||x|^{-a}\na {v_n}\|_2^2\right)-\b\v\right)\ge 0,
		\end{align*}
	a contradiction with $\kappa\r<0$. So we have $0<\||x|^{-a}u\|_2\le\rho_1$ and $0<\||x|^{-a}v\|_2\le \rho_2.$ Moreover, $\u$ is a solution of 
	$$\left\{
	\begin{aligned}
		-\text{div}(|x|^{-2a}\na u)+&\lambda_1 \frac{u}{|x|^{2a}}= p\frac{|v|^q|u|^{p-2}u}{|x|^{b(p+q)}},\\
		-\text{div}(|x|^{-2a}\na v)+&\lambda_2\frac{v}{|x|^{2a}}={q}\frac{|u|^p|v|^{q-2}v}{|x|^{b(p+q)}},\\
		u\ge 0,&\quad v\ge 0.
	\end{aligned}
	\right.
	$$ 
	We claim that $\lambda_1,~\lambda_2>0$.  Indeed, if $\lambda_1\le 0$ then
	\begin{align*}  -\text{div}(|x|^{-2a}\na u)=|\lambda_1| \frac{u}{|x|^{2a}}+ p\frac{|v|^q|u|^{p-2}u}{|x|^{b(p+q)}}\ge 0,\quad\text{in}~\R^N,
	\end{align*}
	and hence $u=0$ by Lemma~\ref{D2}, which is impossible. Analogously, we can prove $\lambda_2>0$. 
	Since $\v$ is a Palais--Smale sequence and $\u$ solves the system, for every $\varphi\in X$
	\begin{align}
		\label{d407}J'\v[(\varphi,0)]+\lambda_{1,n}\int_{\R^N}\frac{u_n\varphi}{|x|^{2a}}\dx&=o_n(1)\|\varphi\|_X,\\
		\label{d408}J'\u[(\varphi,0)]+\lambda_{1}\int_{\R^N}\frac{u\varphi}{|x|^{2a}}\dx&=0.
	\end{align}
	Taking $\varphi=u_n-u$ in \eqref{d407} and \eqref{d408} and subtracting,
	\begin{align*}  
		{o_n(1)}&{=\left[ J'\v-J'\u\right]\left[(u_n-u,0) \right]+\lambda_{1,n}\int_{\R^N}\frac{u_n(u_n-u)}{|x|^{2a}}\dx-\lambda_{1}\int_{\R^N}\frac{u(u_n-u)}{|x|^{2a}}\dx}\\
		&{=\||x|^{-a}\na(u_n-u)\|_2^2+\lambda_{1,n}\||x|^{-a}(u_n-u)\|_2^2+(\lambda_{1,n}-\lambda_1)\int_{\R^N}\frac{u(u_n-u)}{|x|^{2a}}\dx+o_n(1). }
	\end{align*}
Here we used
\begin{align*}
	\left[J'\v-J'\u\right]\left[(u_n-u,0)\right]=\||x|^{-a}\na(u_n-u)\|_2^2+o_n(1),\quad
		\int_{\R^N}\frac{u(u_n-u)}{|x|^{2a}}\dx=o_n(1).
\end{align*}
Since $\lambda_{1,n}\to\lambda_1>0$, we get $u_n\to u$ in $X$. Analogously we can prove $v_n\to v$ in $X$. Then $\u\in \mathcal{W}\r$, and $J\u=\kappa\r$. Clearly, $\u$ is a solution of {\eqref{d60}} and therefore a normalized ground state of {\eqref{d60}}. Moreover, $u,~v$ are positive by the maximum principle.
	
	If $p+q>(p+q)_c$ one can construct a Palais-Smale sequence $\v\in S\r$ for $J|_{S\r}$ at level $\kappa\r>0$ with $P\v\to0$ and $u_n^-,~v_n^-\to0$ a.e. in $\R^N$ as $n\to\infty.$ Then one can argue as in the case $p+q<(p+q)_c$; the only change is in the proof that $u\ne0$ and $v\ne0$: if $u=0$ or $v=0$ then $\b\v\to0$, and $P\v\to0$ forces $\||x|^{-a}\na u_n\|_2^2+\||x|^{-a}\na v_n\|_2^2\to0$, whence $\kappa\r=0$, contradicting Lemma~\ref{D106}~\textit{(ii)}.
	\qed
	\section{Asymptotic behavior as $\beta\ra 0^+$ and $\beta \ra \infty$}\label{SD5}
	In this section, we study the asymptotic behavior of the ground states $(u_\beta,v_\beta)$ of Theorems~\ref{DD2} and \ref{DD6} as $\beta\to 0^+$ or $\beta\to+\infty$. For simplicity, we write $A_\beta\sim B_\beta,~A_\beta\lesssim B_\beta$ and $A_\beta\gtrsim B_\beta$ if there exist $C_1,~C_2>0$ such that, respectively, $C_1B_\beta\le A_\beta\le C_2B_\beta,~A_\beta \le C_1 B_\beta$ and $A_\beta\ge C_1 B_\beta$ as $\beta\to 0^+$ (or $\beta\to \infty$).We first study the limit
	$
	\beta\to0^+.
	$
	
	The following result gives the first-order information on the
	energy scale and the Lagrange multipliers.
	\begin{lemma}\label{D51}
		For $p+q<(p+q)_c,$
		\begin{align*}  \lambda_{1,\beta}+\lambda_{2,\beta}\sim \||x|^{-a}\na u_\beta\|_2^2+\||x|^{-a}\na v_\beta\|_2^2\sim \beta^{\frac{2}{2-\pq}},\end{align*}
		as $\beta\to 0^+.$
	\end{lemma}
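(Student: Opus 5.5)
The plan is to compare $m_\beta\r$ with the rescaled limit level. Set $\gamma:=(p+q)\delta_{p+q}<2$, $T_\beta:=\||x|^{-a}\na u_\beta\|_2^2+\||x|^{-a}\na v_\beta\|_2^2$, and let $(u_\beta,v_\beta)$ be the ground state of Theorem~\ref{DD2}.

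\textbf{Upper bound on the energy.} Take $(u_0,v_0)\in\mathcal Z\r$, given by Theorem~\ref{DD4}, and put $e^{s}=\beta^{1/(2-\gamma)}$. The scaling identities give
\begin{align*}
E_\beta(s\star(u_0,v_0))=\beta^{\frac{2}{2-\gamma}}J(u_0,v_0)-\frac{\beta^{\frac{\t}{2-\gamma}}}{\t}\Bigl(\||x|^{-b}u_0\|_\t^\t+\||x|^{-b}v_0\|_\t^\t\Bigr).
\end{align*}
For small $\beta$ the pair $s\star(u_0,v_0)$ lies in $\mathcal G_{\kappa_1}\r$. Hence Lemma~\ref{D4}(i) yields
\[
m_\beta\r\le \beta^{\frac{2}{2-\gamma}}\kappa\r+o\bigl(\beta^{\frac{2}{2-\gamma}}\bigr),
\]
and this is $\le -C\beta^{\frac{2}{2-\gamma}}$ because $\kappa\r<0$ by Lemma~\ref{D106}(i).

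\textbf{Two-sided bound on $T_\beta$.} From $m_\beta<0$ and \eqref{d5} we get
\[
\tfrac12 T_\beta-\tfrac{1}{\t}\sr^{-\t/2}T_\beta^{\t/2}\le \beta C T_\beta^{\gamma/2}.
\]
Since $(u_\beta,v_\beta)\in\mathcal G_{\kappa_1}$ and $\kappa_1$ stays bounded (in fact $\kappa_1\to0$ as $\beta\to0$), the critical term is absorbed for small $\beta$. This gives $T_\beta^{1-\gamma/2}\lesssim\beta$, that is, $T_\beta\lesssim\beta^{2/(2-\gamma)}$. For the lower bound, use
\[
-C\beta^{\frac{2}{2-\gamma}}\ge m_\beta=E_\beta(u_\beta,v_\beta)\ge-\beta\,\b(u_\beta,v_\beta)-O\bigl(T_\beta^{\t/2}\bigr)\ge -C\beta T_\beta^{\gamma/2}-o\bigl(\beta^{\frac{2}{2-\gamma}}\bigr),
\]
which gives $T_\beta\gtrsim\beta^{2/(2-\gamma)}$.

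\textbf{Multipliers.} Test the equations with $u_\beta$ and $v_\beta$ and add:
\[
\lambda_{1,\beta}\rho_1^2+\lambda_{2,\beta}\rho_2^2=-T_\beta+\beta(p+q)\b+\||x|^{-b}u_\beta\|_\t^\t+\||x|^{-b}v_\beta\|_\t^\t .
\]
Use $P_\beta=0$ to substitute $T_\beta=\beta\gamma\b+(\text{critical})$. This gives
\[
\lambda_{1}\rho_1^2+\lambda_2\rho_2^2=\beta(p+q)(1-\delta_{p+q})\b .
\]
The critical terms cancel exactly, and $\b$ has positive coefficient since $\delta_{p+q}<1$. Now estimate $\beta\b$. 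The upper bound $\beta\b\lesssim\beta T_\beta^{\gamma/2}\sim\beta^{2/(2-\gamma)}$ follows from \eqref{d5}. For the lower bound, $P_\beta=0$ together with the critical term being $O(T_\beta^{\t/2})=o(T_\beta)$ gives $\beta\gamma\b=T_\beta(1+o(1))$. Since $\lambda_i>0$ and $\rho_i$ are fixed, $\lambda_{1,\beta}+\lambda_{2,\beta}\sim\lambda_1\rho_1^2+\lambda_2\rho_2^2$, which yields the claim.

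\textbf{Main obstacle.} The delicate step is justifying $T_\beta\to0$, needed to make the critical term $o(T_\beta)$. I would obtain it from $\kappa_1(\beta)\to0$: the first zero of $f$ satisfies $\kappa_1\sim\beta^{1/(2-\gamma)}$ by inspection of $f$. This, combined with $(u_\beta,v_\beta)\in\mathcal N^+_\beta\subset\mathcal G_{\kappa_1}$, gives the needed smallness.
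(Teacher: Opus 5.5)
Your proof is correct. It follows the paper's outline: a rescaled test pair gives $m_\beta\lesssim-\beta^{\frac{2}{2-\gamma}}$, $m_\beta$ is bounded below by a multiple of $-\beta\,\mathcal B_{p,q}(u_\beta,v_\beta)$ up to lower-order terms, \eqref{d5} is applied, and the equations are tested (your $\gamma=(p+q)\delta_{p+q}$). Two ingredients differ.

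\textbf{Upper bound on $T_\beta$.} The paper combines $P_\beta(u_\beta,v_\beta)=0$ with the second-order condition defining $\mathcal N_\beta^+$. This gives $(2^\sharp-2)T_\beta<\beta\gamma(2^\sharp-\gamma)\mathcal B_{p,q}(u_\beta,v_\beta)$ with no smallness required, and then \eqref{d5} finishes. Your route via $T_\beta<\kappa_1^2$ and $\kappa_1\sim\beta^{1/(2-\gamma)}$ works equally well, and in fact gives the upper bound at once, without the absorption step.

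\textbf{Upper bound on the energy.} The paper uses an arbitrary $(u,v)\in S(\rho_1,\rho_2)$ and its fibre point $t_\beta^{(u,v)}\star(u,v)\in\mathcal N_\beta^+$. This needs neither Theorem~\ref{DD4} nor any membership check, but it only captures the order of $m_\beta$. Your rescaled $(u_0,v_0)\in\mathcal Z(\rho_1,\rho_2)$ gives the sharper bound $m_\beta\le\beta^{\frac{2}{2-\gamma}}\kappa(\rho_1,\rho_2)$, which is the comparison with the limit problem later exploited in Lemma~\ref{D52}. Your identity $\lambda_{1,\beta}\rho_1^2+\lambda_{2,\beta}\rho_2^2=\beta(p+q)(1-\delta_{p+q})\mathcal B_{p,q}(u_\beta,v_\beta)$ also makes explicit what the paper compresses into ``testing the equations''.

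\textbf{One step needs a line of justification.} $\kappa_1$ shrinks at the same rate $\beta^{1/(2-\gamma)}$ as the gradient norm of $s\star(u_0,v_0)$. So ``for small $\beta$'' does not by itself place that pair in $\mathcal G_{\kappa_1}(\rho_1,\rho_2)$. It does lie there, for every $\beta\in(0,\beta_1)$. Set $K:=C_{a,b}^{p+q}(\rho_1^2+\rho_2^2)^{\frac{(p+q)(1-\delta_{p+q})}{2}}$. Then $f(\kappa_1)=0$ gives $\kappa_1^{2-\gamma}>2K\beta$. On $\mathcal W(\rho_1,\rho_2)$, inequality \eqref{d5} gives $\||x|^{-a}\nabla u_0\|_2^2+\||x|^{-a}\nabla v_0\|_2^2\le(\gamma K)^{\frac{2}{2-\gamma}}$. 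Since $\gamma<2$, the gradient energy of $s\star(u_0,v_0)$ is at most $(\gamma K\beta)^{\frac{2}{2-\gamma}}<\kappa_1^2$.
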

	\begin{proof}
		Since $(u_\beta, v_\beta)\in \mathcal{N}_\beta^+\r$, we have
		\begin{equation}\label{d105}
			\||x|^{-a}\na u_\beta\|_2^2+\||x|^{-a}\na v_\beta\|_2^2=\| |x|^{-b}u_\beta\|_\t^\t+\||x|^{-b}v_\beta\|_\t^\t+\beta\pq\b(u_\beta, v_\beta),
		\end{equation}
		and
		\begin{equation}\label{d107}
			2\left(\||x|^{-a}\na u_\beta\|_2^2+\||x|^{-a}\na v_\beta\|_2^2\right)\ge \t\left(\| |x|^{-b}u_\beta\|_\t^\t+\||x|^{-b}v_\beta\|_\t^\t\right)+\beta(p+q)^2\delta_{p+q}^2\b(u_\beta,v_\beta).
		\end{equation}
		It follows from inequality \eqref{d5} that
		\begin{align*}  \||x|^{-a}\na u_\beta\|_2^2+\||x|^{-a}\na v_\beta\|_2^2\lesssim \beta\b(u_\beta,v_\beta)\lesssim\beta\left(\||x|^{-a}\na u_\beta\|_2^2+\||x|^{-a}\na v_\beta\|_2^2\right)^{\frac{\pq}{2}},\end{align*}
		which implies
		\begin{equation}\label{d108}
			\||x|^{-a}\na u_\beta\|_2^2+\||x|^{-a}\na v_\beta\|_2^2\lesssim \beta^{\frac{2}{2-\pq}}
		\end{equation}
		and, by \eqref{d105}
		\begin{equation}\label{d106}
			\beta \b(u_\beta,v_\beta)\lesssim \beta^{\frac{2}{2-\pq}}.
		\end{equation}
		For $\u\in S\r$ and $\beta>0$ small there exists by Lemma~\ref{D3} a unique $t_\beta:=t^\u_\beta$ such that $t_\beta\star\u\in \mathcal{N}_\beta^+\r$, i.e
		\begin{align*}
        e^{2t_\beta}\left(\||x|^{-a}\na u\|_2^2+\||x|^{-a}\na v\|_2^2\right)=& e^{\t t_\beta}\left(\| |x|^{-b}u\|_\t^\t+\||x|^{-b}v\|_\t^\t\right)\\
        &+\beta\pq e^{\pq t_\beta}\b(u, v),
        \end{align*}
		and
		\begin{align*}  2e^{2t_\beta}\left(\||x|^{-a}\na u\|_2^2+\||x|^{-a}\na v\|_2^2\right)>&\t e^{\t t_\beta}\left(\| |x|^{-b}u\|_\t^\t+\||x|^{-b}v\|_\t^\t\right)\\& +\beta(p+q)^2\delta_{p+q}^2 e^{\pq t_\beta}\b(u, v).\end{align*}
		Therefore
		\begin{align*}  (\t-2)e^{2t_\beta}\left(\||x|^{-a}\na u\|_2^2+\||x|^{-a}\na v\|_2^2\right)<\beta (\t-\pq)\pq e^{\pq t_\beta}\b(u, v),\end{align*}
		
		which implies $e^{t_\beta}\to 0$ as $\beta\to 0$ because $\pq <2$. It follows that
		\begin{align*}  e^{2t_\beta}\sim \beta e^{\pq t_\beta},\quad \text{as}~\beta\to 0,\end{align*}
		which implies $e^{t_\beta}\sim\beta^{\frac{1}{2-\pq}}$ as $\beta\to0$, hence
		\begin{align*}  
			E_\beta(t_\beta\star\u)&=\frac{\pq-2}{2\pq}e^{2t_\beta}\left(\||x|^{-a}\na u\|_2^2+ \||x|^{-a}\na v\|_2^2\right) \\& +\frac{\t-\pq}{\t \pq}e^{\t t_\beta}\Big(\| |x|^{-b}u\|_\t^\t\qquad+\||x|^{-b}v\|_\t^\t\Big)\\
			&\sim -\beta^{\frac{2}{2-\pq}}.
		\end{align*}
		Using $E_\beta(t_\beta\star\u)\ge m_\beta\r$ and $m_\beta\r\gtrsim -\beta\b(u_\beta,v_\beta)$, next we deduce
		\begin{align*}  \beta\b(u_\beta,v_\beta)\gtrsim\beta^{\frac{2}{2-\pq}}, \end{align*}
		which together with \eqref{d106}, implies
		\begin{align*}  \beta\b(u_\beta,v_\beta)\sim \beta^{\frac{2}{2-\pq}}.\end{align*}
		Testing the two equations of \eqref{d1} with $(u_\beta,0)$ and $(0,v_\beta)$,
		\begin{align*}  \lambda_{1,\beta}+\lambda_{2,\beta}\sim\beta\b(u_\beta,v_\beta)\sim \beta^{\frac{2}{2-\pq}},\end{align*}
		and by \eqref{d107} and \eqref{d108} we get
		\begin{align*}  \||x|^{-a}\na {u_\beta}\|_2^2+\||x|^{-a}\na {v_\beta}\|_2^2\sim \beta\b(u_\beta,v_\beta)\sim \beta^{\frac{2}{2-\pq}}\end{align*}
		which completes the proof.\qed
	\end{proof}
	By Lemma~\ref{D105} we see that for any $\u\in S\r$ 
	\begin{equation}\label{d110}
		\b\u\le C_0\left(\||x|^{-a}\na u\|_2^2+\||x|^{-a}\na v\|_2^2\right)^{\frac{\pq}{2}}
	\end{equation}
	with \begin{align*}  C_0:=\frac{1}{\pq}\left(\frac{\pq-2}{2\pq\kappa\r}\right)^{\frac{\pq-2}{2}}>0.\end{align*}
	\begin{lemma}\label{D52}
		If $p+q<(p+q)_c$ then
		\begin{align*}  \text{dist}_{\mathcal{X}}((-t_\beta)\star(u_\beta,v_\beta),{\mathcal{Z}}\r)\to0\quad \text{as}~\beta\to 0^+,\end{align*}
		where $t_\beta:=t_\beta^{(u_0,v_0)}$ is from Lemma~\ref{D3}. Moreover we have $e^{t_\beta}\sim\beta^{\frac{1}{{2-\pq}}}$ as $\beta\to 0^+.$
	\end{lemma}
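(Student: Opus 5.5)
Set $(\tilde u_\beta,\tilde v_\beta):=(-t_\beta)\star(u_\beta,v_\beta)$, where $t_\beta=t_\beta^{(u_0,v_0)}$ is given by Lemma~\ref{D3} for a fixed $(u_0,v_0)\in\mathcal Z\r$. The plan is to show that $(\tilde u_\beta,\tilde v_\beta)$ is a minimizing sequence for $\widetilde J$ on $S\r$ which is bounded and away from zero, and then to apply the compactness argument from the proof of Theorem~\ref{DD4}. The claim $e^{t_\beta}\sim\beta^{\frac{1}{2-\pq}}$ has already been obtained in the proof of Lemma~\ref{D51}, since that argument works for any fixed $(u,v)\in S\r$, and in particular for $(u_0,v_0)$.

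\textbf{Energy expansion.} Write $\gamma:=\frac{1}{2-\pq}$. By the scaling formula for the fiber map,
\begin{align*}
E_\beta(t\star\u)=e^{2t}\Big(J_\beta^{(t)}\u\Big),\qquad J_\beta^{(t)}\u:=\tfrac12\a-\beta e^{(\pq-2)t}\b\u-\tfrac{e^{(\t-2)t}}{\t}\Big(\||x|^{-b}u\|_\t^\t+\||x|^{-b}v\|_\t^\t\Big).
\end{align*}
Choose $e^{t}=\beta^{\gamma}$. Then $\beta e^{(\pq-2)t}=1$ and $e^{(\t-2)t}=\beta^{(\t-2)\gamma}\to0$, so that
\begin{align*}
\beta^{-2\gamma}E_\beta(t\star\u)=J\u-o(1)\,\|\cdot\|_\t^\t,
\end{align*}
and the error is uniform on bounded subsets of $\y$. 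This gives two estimates.

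\textbf{Upper bound.} We have $m_\beta\r\le E_\beta(t_\beta\star(u_0,v_0))$, and by Lemma~\ref{D3} this is the minimum of the fiber map over the region where the gradient is small. Comparing it with the choice $e^t=\beta^\gamma$ yields $\limsup\beta^{-2\gamma}m_\beta\r\le\kappa\r$.

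\textbf{Lower bound.} Put $(w_\beta,z_\beta):=(-\gamma\ln\beta)\star(u_\beta,v_\beta)$. Lemma~\ref{D51} shows that $(w_\beta,z_\beta)$ is bounded in $\y$, with gradient norm of order one. Hence $\beta^{-2\gamma}m_\beta\r=J(w_\beta,z_\beta)-o(1)\ge\kappa\r-o(1)$. For this I use that $J\ge\widetilde J\ge\kappa$ on $S\r$ in the mass subcritical case: when $\pq<2$, $t^{(u,v)}$ is a global minimum of $t\mapsto J(t\star\u)$, so $J\u\ge\widetilde J\u\ge\kappa\r$ by Lemma~\ref{D105}. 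Consequently $J(w_\beta,z_\beta)\to\kappa\r$, and $(w_\beta,z_\beta)$ is a minimizing sequence for $J$ on $S\r$. Since $e^{t_\beta}\beta^{-\gamma}$ is bounded above and below, $(\tilde u_\beta,\tilde v_\beta)$ differs from $(w_\beta,z_\beta)$ by a bounded dilation $s_\beta\star$.

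\textbf{Compactness.} Next I need $(w_\beta,z_\beta)$ to be an almost critical sequence, in addition to being minimizing. The equations for $(u_\beta,v_\beta)$, rescaled, read
\begin{align*}
J'(w_\beta,z_\beta)+\big(\mu_{1,\beta}w_\beta|x|^{-2a},\mu_{2,\beta}z_\beta|x|^{-2a}\big)=o(1)\quad\text{in }\y^*,
\end{align*}
with $\mu_{i,\beta}=\beta^{-2\gamma}\lambda_{i,\beta}$. The error comes from the critical term and is $O(\beta^{(\t-2)\gamma})$ by the Caffarelli--Kohn--Nirenberg inequality, and Lemma~\ref{D51} gives that the $\mu_{i,\beta}$ are bounded. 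I then repeat the proof of Theorem~\ref{DD4}. By Proposition~\ref{D303} we get weak convergence to some $(w,z)$ and strong convergence in $L^{p+q}$. We have $w,z\ne0$, because $\kappa\r<0$ (Lemma~\ref{D106}(i)). The limits $\mu_i$ are positive by Lemma~\ref{D2}. Testing the equations as in \eqref{d407}--\eqref{d408} then gives strong convergence in $\y$. It follows that $(w,z)\in S\r$ with $J(w,z)=\kappa\r$, and since $J(w,z)\ge\widetilde J(w,z)\ge\kappa\r$, this forces $(w,z)\in\mathcal W\r$, i.e.\ $(w,z)\in\mathcal Z\r$. Applying this along every subsequence gives $\operatorname{dist}((w_\beta,z_\beta),\mathcal Z\r)\to0$.

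\textbf{Main obstacle.} The hard step is to pass from $(w_\beta,z_\beta)$ to $(\tilde u_\beta,\tilde v_\beta)=s_\beta\star(w_\beta,z_\beta)$, where $e^{s_\beta}=e^{-t_\beta}\beta^{\gamma}$ is only known to lie in a compact interval. Along a subsequence $s_\beta\to s$, and $s\star$ acts continuously on $\y$, so $(\tilde u_\beta,\tilde v_\beta)\to s\star(w,z)$. I must show $s\star(w,z)\in\mathcal Z\r$, equivalently $s=0$, since $\mathcal Z\subset\mathcal W$ and $t\star(w,z)\in\mathcal W$ only for $t=0$. My plan is to identify the exact asymptotics of $t_\beta$. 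Rescaling the defining identity of $t_\beta$ in Lemma~\ref{D3} shows that $e^{t_\beta}\beta^{-\gamma}$ converges to $e^{t^{(u_0,v_0)}}=1$, because $(u_0,v_0)\in\mathcal W$ and the critical term vanishes in the limit. Hence $s=\lim(\gamma\ln\beta-t_\beta)=0$, which completes the argument.
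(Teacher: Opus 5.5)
Your proof is correct, and its final compactness step is the paper's. The key step, how the rescaled limit is shown to be nontrivial and a ground state, is reached by a different route.

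\textbf{The paper's route.} It never rescales by the explicit factor $\beta^{\gamma}$, $\gamma=1/(2-(p+q)\delta_{p+q})$, and never identifies the limit of $\beta^{-2\gamma}m_\beta(\rho_1,\rho_2)$. It uses the sharp bound \eqref{d110}, which is equivalent to $\widetilde J\ge\kappa(\rho_1,\rho_2)$ and is an equality at $(u_0,v_0)$. Combined with the a priori bounds \eqref{d112}--\eqref{d113} from the $\mathcal N^+_\beta$ condition and the monotonicity of two explicit one-variable functions, this gives, for each small $\beta$, the comparisons \eqref{d132} and \eqref{d213}. These say that the gradient norm and $\mathcal B_{p,q}$ of $(u_\beta,v_\beta)$ dominate those of $t_\beta\star(u_0,v_0)$. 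The paper then passes to the limit directly in the rescaled system \eqref{d131} for $(-t_\beta)\star(u_\beta,v_\beta)$. Nontriviality of both components comes from \eqref{d213}. The inequality $J(\tilde u,\tilde v)\le\kappa(\rho_1,\rho_2)$ comes from \eqref{d132} together with $J=\frac{(p+q)\delta_{p+q}-2}{2(p+q)\delta_{p+q}}\bigl(\||x|^{-a}\nabla u\|_2^2+\||x|^{-a}\nabla v\|_2^2\bigr)$ on $\mathcal W(\rho_1,\rho_2)$.

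\textbf{Your route.} You replace these $\beta$-by-$\beta$ comparisons with an energy expansion. The global bound $J\ge\widetilde J\ge\kappa$ on $S(\rho_1,\rho_2)$ holds because $t^{(u,v)}$ is a global minimum of $t\mapsto J(t\star(u,v))$ when $(p+q)\delta_{p+q}<2$; it rests on the same fact as \eqref{d110}. It gives $\beta^{-2\gamma}m_\beta\to\kappa$. Hence $(w_\beta,z_\beta)$ is a minimizing, almost critical sequence for $J|_{S(\rho_1,\rho_2)}$. Nontriviality then follows from $\kappa<0$, and membership in $\mathcal Z(\rho_1,\rho_2)$ from minimality. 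You still need $t_\beta-\gamma\ln\beta\to0$, which is exactly \eqref{d212}, to transfer the convergence to $(-t_\beta)\star(u_\beta,v_\beta)$. The paper needs the same fact anyway, to get $\beta e^{((p+q)\delta_{p+q}-2)t_\beta}\to1$ in \eqref{d131}.

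\textbf{What each buys.} Your version avoids the constant $C_0$ and the auxiliary functions. It also gives the sharper asymptotics $m_\beta(\rho_1,\rho_2)=\kappa(\rho_1,\rho_2)\beta^{2\gamma}(1+o(1))$. The paper's version instead gives explicit inequalities for each fixed small $\beta$.

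\textbf{A detail in your upper bound.} Comparing $E_\beta(t_\beta\star(u_0,v_0))$ with the value at $k=\gamma\ln\beta$ requires $k\le s_\beta^{(u_0,v_0)}$, or $k\star(u_0,v_0)\in\mathcal G_{\kappa_1}(\rho_1,\rho_2)$. This holds for small $\beta$, since $\gamma\ln\beta\to-\infty$ while $s_\beta^{(u_0,v_0)}$ stays bounded below. Alternatively, the upper bound follows at once from $t_\beta-\gamma\ln\beta\to0$ and $J(u_0,v_0)=\kappa(\rho_1,\rho_2)$.
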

	\begin{proof}
		Let $(u_0,v_0)\in {\mathcal{Z}}\r.$ Then ${\widetilde{J}}(u_0,v_0)=\kappa\r$ which implies
		\begin{equation}\label{d111}
			\b(u_0,v_0)=C_0\left(\||x|^{-a}\na u_0\|_2^2+\||x|^{-a}\na v_0\|_2^2\right)^{\frac{\pq}{2}}
		\end{equation}
		Since $t_\beta{\star(u_0,v_0)}\in \mathcal{N}_\beta^+\r$ we have
		\begin{align}\label{d127}
			\notag e^{2t_\beta}\left(\||x|^{-a}\na u_0\|_2^2+\||x|^{-a}\na v_0\|_2^2\right)&=e^{\t t_\beta}\left(\||x|^{-b}u_0\|_\t^\t+\||x|^{-b}v_0\|_\t^\t\right)\\&\quad+e^{\pq t_\beta}\pq \beta\b(u_0,v_0),
		\end{align}
		and
		\begin{align}\label{d128}
			\notag 2e^{2t_\beta}\left(\||x|^{-a}\na u_0\|_2^2+\||x|^{-a}\na v_0\|_2^2\right)&>\t e^{\t t_\beta}\left(\||x|^{-b}u_0\|_\t^\t+\||x|^{-b}v_0\|_\t^\t\right)\\&\quad+e^{\pq t_\beta}(p+q)^2\delta_{p+q}^2 \beta\b(u_0,v_0).
		\end{align}
		Using \eqref{d110} we have
	\begin{equation}
	\begin{aligned}\label{d116}
			 & (\t-2)\left(\||x|^{-a}\na (t_\beta \star u_0)\|_2^2+\||x|^{-a}\na (t_\beta \star v_0)\|_2^2\right)\\
             & \qquad  <(\t-\pq) \pq\beta\b(t_\beta\star(u_0,v_0))\\
			& \qquad <(\t-\pq) \pq\beta C_0 \left(\||x|^{-a}\na (t_\beta \star u_0)\|_2^2+\||x|^{-a}\na (t_\beta \star v_0)\|_2^2\right)^{{\frac{\pq}{2}}}
		\end{aligned}
        \end{equation}	
		which yields
		\begin{equation}\label{d112}
			\||x|^{-a}\na (t_\beta \star u_0)\|_2^2+\||x|^{-a}\na (t_\beta \star v_0)\|_2^2<\left(\frac{\t-\pq}{\t-2}\pq \beta C_0\right)^{\frac{2}{2-\pq}}.
		\end{equation}
		Since $(u_\beta,v_\beta)\in \mathcal{N}_\beta^+\r$ we observe that
		\begin{equation}\label{d113}
			\||x|^{-a}\na u_\beta\|_2^2+\||x|^{-a}\na v_\beta\|_2^2<\left(\frac{\t-\pq}{\t-2}\pq \beta C_0\right)^{\frac{2}{2-\pq}}.
		\end{equation}
		Now using $t_\beta\star(u_0,v_0)$ as a test function of $m_\beta\r$ and by \eqref{d111} we get
		\begin{align}\label{d114}
			\notag m_\beta\r &\le E_\beta (t_\beta\star (u_0,v_0))\\\notag&=\frac{d}{N}\left(\||x|^{-a}\na (t_\beta \star u_0)\|_2^2+\||x|^{-a}\na (t_\beta \star v_0)\|_2^2\right)\\
			&\qquad-\beta \frac{\t-\pq}{\t}C_0\left(\||x|^{-a}\na (t_\beta \star u_0)\|_2^2+\||x|^{-a}\na (t_\beta \star v_0)\|_2^2\right)^{\frac{\pq}{2}}.
		\end{align}
		With the help of inequality  \eqref{d110},
		\begin{align}\label{d115}
			\notag m_\beta\r&=E_\beta(u_\beta,v_\beta)\\\notag&\ge\frac{d}{N}\left(\||x|^{-a}\na u_\beta\|_2^2+\||x|^{-a}\na v_\beta\|_2^2\right)\\&\qquad-\beta\frac{\t-\pq}{\t}C_0\left(\||x|^{-a}\na u_\beta\|_2^2+\||x|^{-a}\na v_\beta\|_2^2\right)^{\frac{\pq}{2}}.
		\end{align}
		A direct calculation shows that the function
		\begin{align*}  h(t):=\frac{d}{N}t-\beta\frac{\t-\pq}{\t}C_0 t^{\frac{\pq}{2}},\end{align*}
		is  strictly decreasing in $(0,t_0)$, where
		\begin{align*}  t_0:=\left(\frac{\t-\pq}{\t-2}\pq \beta C_0\right)^{\frac{2}{2-\pq}}.\end{align*}
		Thus \eqref{d112}-\eqref{d115} yield
		\begin{equation}\label{d132}
			\||x|^{-a}\na u_\beta\|_2^2+\||x|^{-a}\na v_\beta\|_2^2\ge \||x|^{-a}\na (t_\beta \star u_0)\|_2^2+\||x|^{-a}\na (t_\beta \star v_0)\|_2^2. 
		\end{equation}
		Moreover, similar to \eqref{d116}-\eqref{d115}, we have
		\begin{equation}\label{d121}
			\b(t_\beta\star(u_0,v_0))<\left(\frac{\t-\pq}{\t-2}\pq \beta C_0^{\frac{2}{\pq}}\right)^{\frac{\pq}{2-\pq}}
		\end{equation} 
		and 
		\begin{equation}\label{d122}
			\b(u_\beta,v_\beta)<\left(\frac{\t-\pq}{\t-2}\pq \beta C_0^{\frac{2}{\pq}}\right)^{\frac{\pq}{2-\pq}}.
		\end{equation}
		Now using $t_\beta\star(u_0,v_0)$ again as a test function of $m_\beta\r$ and by \eqref{d111}
		\begin{align}\label{d123}
			\notag  m_\beta\r\le E_\beta(t_\beta\star(u_0,v_0))=\frac{d}{N}C_0^{-\frac{2}{\pq}}\b (t_\beta\star(u_0,v_0))^{\frac{2}{\pq}}\\-\beta\frac{\t-\pq}{\t}\b(t_\beta\star(u_0,v_0))
		\end{align}
		Applying inequality \eqref{d110}, we infer
		\begin{equation}\label{d124}
			m_\beta\r=E_\beta(u_\beta,v_\beta)\ge\frac{d}{N}C_0^{\frac{-2}{\pq}}\b(u_\beta,v_\beta)^{\frac{2}{\pq}}
			-\beta\frac{\t-\pq}{\t}\b(u_\beta,v_\beta).
		\end{equation}
		The function 
		\begin{align*}  h(t)=\frac{d}{N}C_0^{\frac{-2}{\pq}}t^{
				\frac{2}{\pq}}-\beta\frac{\t-\pq}{\t}t\end{align*}
		is strictly decreasing in $(0,t_0)$, where
		\begin{align*}  t_0:=\left(\frac{\t-\pq}{\t-2}\pq \beta C_0^{\frac{2}{\pq}}\right)^{\frac{\pq}{2-\pq}}.\end{align*}
		Therefore,  by \eqref{d121}-\eqref{d124}, we have
		\begin{equation}\label{d213}
			\b(u_\beta, v_\beta)\ge \b(t_\beta\star(u_0,v_0)).
		\end{equation}
		Just like in Lemma~\ref{D51}, from \eqref{d127} and \eqref{d128}, it follows
		\begin{align*}  e^{t_\beta}\sim \beta^{\frac{1}{2-\pq}}\quad\text{as}~\beta\to0.\end{align*}
		Then using \eqref{d127}, we obtain
		\begin{equation}\label{d212}
			e^{t_\beta}=(1+o_\beta(1))\left(\frac{\beta\pq \b(u_0,v_0)}{\||x|^{-a}\na u_0\|_2^2+\||x|^{-a}\na v_0\|_2^2}\right)^{\frac{1}{2-\pq}}=(1+o_\beta(1))\beta^{\frac{1}{2-\pq}}.
		\end{equation}
		Set $(\tilde u_\beta,\tilde v_\beta):=(-t_\beta)\star(u_\beta,v_\beta)$. Since $(u_\beta,v_\beta)$ satisfies \eqref{d1}, $(\tilde{u}_\beta,\tilde{v}_\beta)$ satisfies the following equation
        \begin{small}
		\begin{equation}\label{d131}
			\begin{cases}
				-\text{div}(|x|^{-2a}\nabla \tilde{u}_\beta)
				+ \lambda_{1,\beta}e^{-2t_\beta} \dfrac{\tilde{u}_\beta}{|x|^{2a}}
				= \beta pe^{(\pq-2)t_\beta}\dfrac{|\tilde{v}_\beta|^{q}|\tilde{u}_\beta|^{p-2}\tilde{u}_\beta}{|x|^{b(p+q)}}
				+ e^{(\t-2)t_\beta}\dfrac{|\tilde{u}_\beta|^{2^{\sharp}-2}\tilde{u}_\beta}{|x|^{b2^{\sharp}}},~\text{in}~\R^N \\[0.3cm]
				-\text{div}(|x|^{-2a}\nabla \tilde{v}_\beta)
				+ \lambda_{2,\beta}e^{-2t_\beta} \dfrac{\tilde{v}_\beta}{|x|^{2a}}
				= \beta q e^{((p+q)\delta_{p+q}-2)t_\beta}\dfrac{|\tilde{u}_\beta|^{p}|\tilde{v}_\beta|^{q-2}\tilde{v}_\beta}{|x|^{b(p+q)}}
				+ e^{(\t-2)t_\beta}\dfrac{|\tilde{v}_\beta|^{2^{\sharp}-2}\tilde{v}_\beta}{|x|^{b2^{\sharp}}}, ~\text{in}~\R^N\\[0.3cm]
				\displaystyle
				\int_{\R^N}\frac{|\tilde{u}_\beta|^2}{|x|^{2a}}\dx=\rho_1^2, \qquad
				\int_{\R^N}\frac{|\tilde{v}_\beta|^2}{|x|^{2a}}\dx=\rho_2^2.
			\end{cases}
		\end{equation}
        \end{small}
		Taking into account Lemma~\ref{D51}, we get
		\begin{align*}  \||x|^{-a}\na \tilde{u}_\beta\|_2^2+\||x|^{-a}\na \tilde{v}_\beta\|_2^2=e^{-2t_\beta}\left(\||x|^{-a}\na u_\beta\|_2^2+\||x|^{-a}\na v_\beta\|_2^2\right)\sim 1.\end{align*}
		Therefore $\{(\tilde{u}_\beta,\tilde{v}_\beta)\}$ is bounded in $\y$. Moreover, up to a subsequence 
		\begin{align*}  
			(\tilde{u}_\beta,\tilde{v}_\beta)&\rightharpoonup(\tilde{u},\tilde{v})\quad\text{in}~\y,\\
			(\tilde{u}_\beta,\tilde{v}_\beta)&\to(\tilde{u},\tilde{v})\quad\text{in}~L^{p+q}_r(\R^N;|x|^{-b(p+q)})\times L^{p+q}_r(\R^N;|x|^{-b(p+q)}),\\
			(\tilde{u}_\beta,\tilde{v}_\beta)&\to(\tilde{u},\tilde{v})\quad\text{a.e in}~\R^N.
		\end{align*}
		Again using Lemma~\ref{D51}, we observe that $\{\lambda_{{1,\beta}}e^{-2t_\beta}\}$ and $\{\lambda_{{2,\beta}}e^{-2t_\beta}\}$ are bounded, hence
		\begin{align*}  \lambda_{1,\beta}e^{-2t_\beta}\to \lambda_1,\quad\text{and}\quad\lambda_{2,\beta}e^{-2t_\beta}\to\lambda_2,\quad\text{as}\beta\to0\end{align*}
		up to a subsequence. On the other hand by \eqref{d212}
		\begin{align*}  e^{(\t-2)t_\beta}\to0,\quad\text{and}\quad\beta e^{(\pq-2)t_\beta}\to 1,\quad\text{as}~\beta\to0.\end{align*}
		Thus, we know that
		\begin{equation*}
			\begin{cases}
				-\text{div}(|x|^{-2a}\nabla \tilde{u})
				+ \lambda_{1} \dfrac{\tilde{u}}{|x|^{2a}}
				= p\dfrac{|\tilde{v}|^{q}|\tilde{u}|^{p-2}\tilde{u}}{|x|^{b(p+q)}}, \\[0.3cm]
				-\text{div}(|x|^{-2a}\nabla \tilde{v})
				+ \lambda_{2}\dfrac{\tilde{v}}{|x|^{2a}}
				= q \dfrac{|\tilde{u}|^{p}|\tilde{v}|^{q-2}\tilde{v}}{|x|^{b(p+q)}}
			\end{cases}
		\end{equation*}
		Now \eqref{d213} implies $\b(\tilde{u},\tilde{v})\ge \b(u_0,v_0)$, which gives $\tilde{u}\ne0,~\tilde{v}\ne 0.$ By the maximum principle and  Lemma~\ref{D2}, we obtain $\lambda_1>0$ and $\lambda_2>0.$ Then testing \eqref{d131} with $(\tilde u_\beta-\tilde u,0)$ and $(0,\tilde v_\beta-\tilde v)$ and arguing as in the proof of Theorem~\ref{DD4}, the critical terms being controlled by $e^{(\t-2)t_\beta}\to0$, we conclude $(\tilde{u}_\beta,\tilde{v}_\beta)\to(\tilde{u},\tilde{v})$ in $\y$ as $\beta\to0.$ It follows that $\||x|^{-a}\tilde{u}\|_2^2=\rho_1^2,~\||x|^{-a}\tilde{v}\|_2^2=\rho_2^2$ and by \eqref{d132}
		\begin{align*}  
			\kappa\r\le J(\tilde{u},\tilde{v})&=\frac{\pq-2}{2\pq}\left(\||x|^{-a}\na \tilde{u}\|_2^2+\||x|^{-a}\na\tilde{v}\|_2^2\right)\\&\le \frac{\pq-2}{2\pq}\left(\||x|^{-a}\na u_0\|_2^2+\||x|^{-a}\na v_0\|_2^2\right)\\
			& =J(u_0,v_0)=\kappa\r.
		\end{align*}
		Finally, we have $(\tilde{u},\tilde{v})\in\mathcal{Z}\r$. Therefore, the lemma holds.\qed
	\end{proof}
	Now, for the case $p+q\ge\p$, we have the following lemma.
	\begin{lemma}\label{D53}
		For $p+q\ge \p$, we have
		\begin{align*}  \text{dist}_{\mathcal{D}_a^{1,2}\times\mathcal{D}_a^{1,2}}((u_\beta,v_\beta), \mathcal{U}\times \{0\})\to 0\quad \text{as}~\beta\to0^+,\end{align*}
		or  \begin{align*}  \text{dist}_{\mathcal{D}_a^{1,2}\times\mathcal{D}_a^{1,2}}((u_\beta,v_\beta),  \{0\}\times\mathcal{U})\to 0\quad \text{as}~\beta\to0^+.\end{align*}
		here $\mathcal{U}=\{U_\var:\var>0\}.$
	\end{lemma}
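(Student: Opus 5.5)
The plan is to show that the energy level tends to $\frac dN\sr^{N/2d}$, that each critical integral stays bounded by $\sr^{N/2d}$, and that exactly one component carries all of it. This turns $u_\beta$ or $v_\beta$ into a minimizing sequence for \eqref{d191}, and the minimizing sequence is then identified with the family $\mathcal U$.

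\textbf{Step 1: limit of the energy.} The ground states are those of Theorems~\ref{DD6} and~\ref{DD9}, so they exist for the $\beta$ considered. By Lemma~\ref{D201}(I) (respectively Lemma~\ref{D93}) we have $m_\beta\r\le\frac dN\sr^{N/2d}$.

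For the lower bound, use $P_\beta(u_\beta,v_\beta)=0$. Combined with \eqref{d5} and with $\pq\ge2$, this gives
\[
\||x|^{-a}\na u_\beta\|_2^2+\||x|^{-a}\na v_\beta\|_2^2\le \sr^{-\t/2}\Big(\||x|^{-a}\na u_\beta\|_2^2+\||x|^{-a}\na v_\beta\|_2^2\Big)^{\t/2}+C\beta\Big(\cdots\Big)^{\pq/2}.
\]
From this I plan to deduce two facts. First, the gradients are bounded: boundedness follows as in Step~1 of Proposition~\ref{D5}. Second, the same bound with $\beta\to0$ gives
\[
\liminf\big(\||x|^{-a}\na u_\beta\|_2^2+\||x|^{-a}\na v_\beta\|_2^2\big)\ge\sr^{N/2d}.
\]
Then $\beta\b(u_\beta,v_\beta)\to0$ by \eqref{d5} and boundedness. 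The identity $E_\beta-\frac12P_\beta$ written in \eqref{d84} then yields
\[
\||x|^{-b}u_\beta\|_\t^\t+\||x|^{-b}v_\beta\|_\t^\t\to \sr^{N/2d},
\]
and the sum of the gradient norms converges to the same limit.

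\textbf{Step 2: one component vanishes.} Write $A_\beta=\||x|^{-a}\na u_\beta\|_2^2$, $B_\beta=\||x|^{-a}\na v_\beta\|_2^2$, $a_\beta=\||x|^{-b}u_\beta\|_\t^\t$ and $b_\beta=\||x|^{-b}v_\beta\|_\t^\t$. Testing the first equation of \eqref{d1} with $u_\beta$ and using $\lambda_{1,\beta}>0$ gives
\[
A_\beta\le a_\beta+\beta p\b(u_\beta,v_\beta)=a_\beta+o(1).
\]
In the same way $B_\beta\le b_\beta+o(1)$. Combining this with $\sr a_\beta^{2/\t}\le A_\beta$ gives, along a subsequence, limits $A\le a$ with $\sr a^{2/\t}\le A$. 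Hence $a=0$ or $a\ge\sr^{N/2d}$, and likewise $b=0$ or $b\ge\sr^{N/2d}$. Since $a+b=\sr^{N/2d}$, exactly one of the two is nonzero; say $b=0$, so that $B_\beta\to0$ and $v_\beta\to0$ in $\mathcal D_a^{1,2}$. Then $A=a=\sr^{N/2d}$, so $u_\beta$ is a minimizing sequence for \eqref{d191}. The choice of which component survives may depend on the subsequence, and this is exactly the ``or'' in the statement.

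\textbf{Step 3: identification with $\mathcal U$ (the main obstacle).} It remains to show that a radial minimizing sequence for $\sr$ is close in $\mathcal D_a^{1,2}$ to $\mathcal U$. The mass-preserving dilation $k\star$ is not the invariance of \eqref{d191}. The correct invariance is the scaling $u\mapsto \tau^{\frac{N-2-2a}{2}}u(\tau x)$, which preserves both $\||x|^{-a}\na u\|_2$ and $\||x|^{-b}u\|_\t$; the $U_\var$ are exactly such rescalings of each other.

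I would first choose $\tau_\beta$ so that the rescaled $w_\beta$ satisfies a fixed normalization, for instance
\[
\int_{|x|\le1}|x|^{-b\t}|w_\beta|^\t\dx=\tfrac12\sr^{N/2d}.
\]
I would then apply a concentration-compactness argument in the radial weighted setting. Either Lions' lemma adapted to the CKN weights, or the Brezis--Lieb splitting used in Step~3 of Proposition~\ref{D5}, should give $w_\beta\to w$ strongly in $\mathcal D_a^{1,2}$. The normalization excludes concentration at $0$ and at $\infty$, and strict subadditivity of $t\mapsto t^{2/\t}$ excludes splitting. The limit $w$ is then an extremal, hence $w=U_{\var_0}$ for some $\var_0>0$. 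Scaling back, the distance from $u_\beta$ to $U_{\var_\beta}$ tends to $0$, where $\var_\beta$ corresponds to $\tau_\beta$; this distance is unchanged by the scaling. The corresponding bound $\|u_\beta\|\to0$ on the other side comes from Step~2.

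I expect the compactness-up-to-scaling in Step~3 to be the delicate part. The weights break translation invariance, so concentration can only occur at the origin or at infinity, and both are handled by the radial normalization and the scaling.
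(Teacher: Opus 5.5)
Your proposal is correct and follows the paper's proof. The chain is the same: the bound $m_\beta\le\frac dN\mathcal{S}(a,b)^{N/2d}$, together with the Pohozaev identity and the Caffarelli--Kohn--Nirenberg inequality, forces the total gradient norm to $\mathcal{S}(a,b)^{N/2d}$; one component then vanishes; and the other is a minimizing sequence for $\mathcal{S}(a,b)$, identified with $\mathcal U$ up to the dilation $u\mapsto\tau^{\frac{N-2-2a}{2}}u(\tau x)$ (the paper cites Willem's Theorem~1.41 where you sketch the radial concentration--compactness argument). The only variation is the vanishing step: you test each equation with its own component and use $\lambda_{i,\beta}>0$, whereas the paper uses the joint inequality $\ell_1^2+\ell_2^2\le\mathcal{S}(a,b)^{-2^\sharp/2}(\ell_1^{2^\sharp}+\ell_2^{2^\sharp})$ with strict superadditivity of $t\mapsto t^{2^\sharp/2}$, which does not need the sign of the multipliers; in both versions the surviving component is chosen only along subsequences, so what is actually proved is that the smaller of the two distances tends to $0$.
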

	\begin{proof}
		First observe that $\{(u_\beta,v_\beta)\}$ is bounded in $\y$ because $(u_\beta,v_\beta)\in \mathcal{N}_\beta\r$ satisfies $E_\beta(u_\beta,v_\beta)=m_\beta\r$. Let $\||x|^{-a}\na u_\beta\|_2\to \ell_1$ and $\||x|^{-a}\na v_\beta\|_2\to \ell_2$ with $\ell_1,~\ell_2\ge 0$. If possible suppose $\ell_1=\ell_2=0$ then
		\begin{align*}  0=\lim_{\beta\to0}E_\beta(u_\beta,v_\beta)=\lim_{\beta\to0}m_\beta\r>0,\end{align*}
		a contradiction. Definition of $\mathcal{S}(a,b)$ and Pohozaev identity $P_\beta(u_\beta,v_\beta)= 0$ imply
		\begin{align*}  \ell_1^2+\ell_2^2\le \mathcal{S}(a,b)^{-\t/2}(\ell_1^\t+\ell_2^\t).\end{align*}
		This inequality forces $\ell_1^2 + \ell_2^2 \ge \mathcal{S}(a,b)^{N/2d}$ and equality holds only if either $\ell_1=0$ or $\ell_2=0$. Subsequently we have
		\begin{align*}  \frac{d}{N}\mathcal{S}(a,b)^{N/2d}\ge\lim_{\beta\to0^+}E_\beta(u_\beta,v_\beta)=\frac{d}{N}(\ell_1^2+\ell_2^2).\end{align*}
		Therefore, when $\ell_1=0$, then $\ell_2^2=\mathcal{S}(a,b)^{N/2d}$ and we have
		\begin{align*}  \||x|^{-a}\na u_\beta\|_2^2\to 0,\quad\||x|^{-a}\na v_\beta\|_2^2\to\mathcal{S}(a,b)^{N/2d},\quad \||x|^{-b}v_\beta\|_\t^\t\to \mathcal{S}(a,b)^{N/2d}\quad \text{as}~\beta\to0^+,\end{align*}
		hence $v_\beta$  is a minimizing sequence for \eqref{d191}. Now \cite[Theorem 1.41]{willem2012minimax} implies that there exists $l_\beta>0$ and, along a subsequence, some $\var_*>0$ such that
		\begin{align*}  \left(u_\beta, l_\beta^{\frac{N-2-2a}{2}}v_\beta(l_\beta\,\cdot)\right)\to (0, U_{\var_*})\quad\text{ strongly in }~\mathcal{D}_a^{1,2}\times\mathcal{D}_a^{1,2}~\text{as}~\beta\to 0^+,\end{align*}
		up to a subsequence. The case when $\ell_1^2=\mathcal{S}(a,b)^{N/2d},~\ell_2=0$ can be treated in similar manner. \qed
	\end{proof}
    \noi
    \textbf{Proof of Theorem~\ref{DD11}} The proof follows immediately from Lemmas~\ref{D51},~\ref{D52}, and~\ref{D53}. \qed
    
	\noi
We now prove Theorem~\ref{DD10}. Throughout, $\pq>2$.
For $\var\ge0$ we set
\begin{align}\label{d420}
J_\var\u:=\tfrac12 \||x|^{-a}\na u\|_2^2+\||x|^{-a}\na v\|_2^2-\b\u-\tfrac{\var}{\t}(\||x|^{-b}u\|_\t^\t+\||x|^{-b}v\|_\t^\t) ,
\end{align}
\begin{equation*}
    P^\var\u:= \||x|^{-a}\na u\|_2^2+\||x|^{-a}\na v\|_2^2-\pq\b\u-\var(\||x|^{-b}u\|_\t^\t+\||x|^{-b}v\|_\t^\t) ,
\end{equation*}
so that $J_0=J$ and $\mathcal W\r=\{\u\in S\r:P^0\u=0\}$. Finally we put
\begin{align*}
s_\beta:=\frac{\ln\beta}{\pq-2},\qquad
(\hat u_\beta,\hat v_\beta):=s_\beta\star(u_\beta,v_\beta)\in S\r,\qquad
\var_\beta:=\beta^{-\frac{\t-2}{\pq-2}}\to0 .
\end{align*}

\begin{lemma}\label{D306}
For every $\beta>\beta_0$,
\begin{align}\label{d421}
\beta^{\frac{2}{\pq-2}}E_\beta(u_\beta,v_\beta)=J_{\var_\beta}(\hat u_\beta,\hat v_\beta),
\qquad
\beta^{\frac{2}{\pq-2}}P_\beta(u_\beta,v_\beta)=P^{\var_\beta}(\hat u_\beta,\hat v_\beta)=0 .
\end{align}
Moreover $(\hat u_\beta,\hat v_\beta)$ is a critical point of $J_{\var_\beta}|_{S\r}$ with
multipliers $\hat\lambda_{i,\beta}:=\beta^{\frac{2}{\pq-2}}\lambda_{i,\beta}>0$, the map
$t\mapsto J_{\var_\beta}(t\star(\hat u_\beta,\hat v_\beta))$ attains its maximum at $t=0$, and
\begin{align}\label{d422}
\beta^{\frac{2}{\pq-2}}m_\beta\r=\inf_{\u\in S\r}\max_{t\in\R}J_{\var_\beta}(t\star\u).
\end{align}
\end{lemma}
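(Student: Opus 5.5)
This is a scaling computation. Write $\mu:=\pq$ and $s:=s_\beta=\frac{\ln\beta}{\mu-2}$, so that $e^{s}=\beta^{\frac1{\mu-2}}$. For any $\u\in S\r$ the formula for the fibering map gives
\begin{align*}
E_\beta(t\star\u)=\frac{e^{2t}}{2}G-\beta e^{\mu t}\b\u-\frac{e^{\t t}}{\t}K,
\end{align*}
where $G$ and $K$ are the gradient and critical terms of $\u$. I evaluate this at $t=\tau-s$ and multiply by $e^{2s}=\beta^{\frac2{\mu-2}}$. The three factors become $e^{2\tau}$, $\beta e^{\mu\tau}e^{-(\mu-2)s}=e^{\mu\tau}$ and $e^{\t\tau}e^{-(\t-2)s}=\var_\beta e^{\t\tau}$. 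This yields the identity
\begin{align*}
\beta^{\frac{2}{\mu-2}}E_\beta((\tau-s)\star\u)=J_{\var_\beta}(\tau\star(s\star\u))\qquad\text{for all }\tau\in\R,
\end{align*}
using $(\tau-s)\star\u=(-s)\star(\tau\star(s\star\u))$ and the group property of $\star$. (Here I read the first term of \eqref{d420} as $\frac12$ times the full sum of gradient norms.) Setting $\tau=0$ and $\u=(u_\beta,v_\beta)$ gives the first identity in \eqref{d421}. Differentiating in $\tau$ at $\tau=0$, and using $(\Phi^\u)'(0)=P(\u)$ for both functionals, gives the second identity. Since $(u_\beta,v_\beta)\in\mathcal N_\beta\r$, the common value is $0$.

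For the critical point statement, I would not use the fibering identity but argue directly at the level of the Euler--Lagrange equations. Substitute $u_\beta(x)=e^{\frac{N-2a}{2}(-s)}\hat u_\beta(e^{-s}x)$ into \eqref{d1}, as was done for \eqref{d131} with $t_\beta$ replaced by $s$. The system for $(\hat u_\beta,\hat v_\beta)$ then has multiplier $\lambda_{i,\beta}e^{2s}$, coupling coefficient $\beta e^{-(\mu-2)s}=1$ and critical coefficient $e^{-(\t-2)s}=\var_\beta$. These are exactly the Euler--Lagrange equations of $J_{\var_\beta}|_{S\r}$, with $\hat\lambda_{i,\beta}=\beta^{\frac2{\mu-2}}\lambda_{i,\beta}$. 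Positivity of the $\hat\lambda_{i,\beta}$ follows from $\lambda_{i,\beta}>0$, which holds by Theorem~\ref{DD6}. The mass constraint is preserved because $\star$ is mass preserving.

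The remaining claims come from transporting the fibering structure. By the scaling identity, $\tau\mapsto J_{\var_\beta}(\tau\star(\hat u_\beta,\hat v_\beta))$ equals $\beta^{\frac2{\mu-2}}\Phi_\beta^{(u_\beta,v_\beta)}(\tau)$. By Lemma~\ref{D6}(ii)--(iii), $\Phi_\beta^{(u_\beta,v_\beta)}$ has its unique critical point, a global maximum, at $t=0$, since $(u_\beta,v_\beta)\in\mathcal N_\beta\r$. Hence the maximum of the rescaled map is also at $\tau=0$.

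For \eqref{d422}, I use that $\u\mapsto s\star\u$ is a bijection of $S\r$. The scaling identity then gives
\begin{align*}
\max_{\tau}J_{\var_\beta}(\tau\star\w)=\beta^{\frac2{\mu-2}}\max_t E_\beta(t\star((-s)\star\w))
\end{align*}
for every $\w\in S\r$. Taking the infimum over $\w\in S\r$ and applying Lemma~\ref{D7} yields \eqref{d422}. The only real care needed in the whole argument is bookkeeping of exponents and of the sign convention of $\star$; there is no analytic obstacle.
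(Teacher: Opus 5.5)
Your plan is the same as the paper's. You conjugate $E_\beta$ and $J_{\varepsilon_\beta}$ by the dilation $s\star$, then read off \eqref{d421}, the location of the maximum via Lemma~\ref{D6}, and \eqref{d422} via Lemma~\ref{D7}. However, the displayed scaling identity on which all of this rests is false as written. Write $\mu=(p+q)\delta_{p+q}$ and $s=s_\beta$, as you do. By the group property, $(-s)\star(\tau\star(s\star(u,v)))=\tau\star(u,v)$, not $(\tau-s)\star(u,v)$. Your exponent computation puts $e^{2\tau}$, $e^{\mu\tau}$ and $\varepsilon_\beta e^{2^\sharp\tau}$ in front of the gradient, coupling and critical terms of $(u,v)$ itself, so it actually proves
\[
\beta^{\frac{2}{\mu-2}}E_\beta((\tau-s)\star(u,v))=J_{\varepsilon_\beta}(\tau\star(u,v)).
\]
Since $J_{\varepsilon_\beta}(\tau\star(s\star(u,v)))$ equals $\beta^{\frac{2}{\mu-2}}\Phi_\beta^{(u,v)}(\tau)$, your version amounts to $\Phi_\beta^{(u,v)}(\tau-s)=\Phi_\beta^{(u,v)}(\tau)$ for all $\tau$, which fails unless $s=0$. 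Taken literally, it has three wrong consequences. At $\tau=0$ and $(u,v)=(u_\beta,v_\beta)$ it gives a statement about $E_\beta((-s)\star(u_\beta,v_\beta))$ rather than the first identity in \eqref{d421}. Differentiating at $\tau=0$ gives $P_\beta$ at the wrong point. It would also place the maximum of $\tau\mapsto J_{\varepsilon_\beta}(\tau\star(\hat u_\beta,\hat v_\beta))$ at $\tau=s$ instead of $\tau=0$.

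The identity you actually use in each later step is
\[
\beta^{\frac{2}{\mu-2}}E_\beta(\tau\star(u,v))=J_{\varepsilon_\beta}(\tau\star(s\star(u,v)))\qquad\text{for all }\tau\in\mathbb{R}.
\]
Equivalently, $J_{\varepsilon_\beta}=\beta^{\frac{2}{\mu-2}}E_\beta\circ((-s)\star\,\cdot\,)$ on $S(\rho_1,\rho_2)$, which is exactly how the paper phrases it. It follows from the same bookkeeping, since $e^{(\mu-2)s}=\beta$ and $\varepsilon_\beta e^{(2^\sharp-2)s}=1$. With this replacement, your derivations of \eqref{d421}, of the maximum at $\tau=0$ via Lemma~\ref{D6}, and of \eqref{d422} are correct. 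The last one is in fact insensitive to the shift, since a maximum over all $\tau\in\mathbb{R}$ is.

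The same sign slip appears in your Euler--Lagrange paragraph. Your substitution corresponds to replacing $t_\beta$ by $-s$ in \eqref{d131}, not by $s$, although the coefficients you list are the right ones. Checking the critical-point claim at the level of the equations is a legitimate alternative to the paper's argument. The paper instead observes that $(-s)\star$ is a mass-preserving diffeomorphism of $S(\rho_1,\rho_2)$ conjugating the two functionals. That gives the correspondence of constrained critical points and the scaling of the multipliers in one line, while your route reuses the computation behind \eqref{d131}.
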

\begin{proof}
Since $e^{s_\beta}=\beta^{\frac1{\pq-2}}$ gives
\begin{align*}
e^{-2s_\beta}=\beta e^{-\pq s_\beta}=\beta^{-\frac{2}{\pq-2}},\qquad
e^{-\t s_\beta}=\var_\beta\,e^{-2s_\beta},
\end{align*}
multiplying $E_\beta(u_\beta,v_\beta)=E_\beta((-s_\beta)\star(\hat u_\beta,\hat v_\beta))$ by
$e^{2s_\beta}$ gives the first identity in \eqref{d421}, and the same computation for $P_\beta$
gives the second. Thus $J_{\var_\beta}=\beta^{\frac{2}{\pq-2}}E_\beta\circ((-s_\beta)\star\,\cdot\,)$
on $S\r$. As $(-s_\beta)\star\,\cdot\,$ is a diffeomorphism of $S\r$ which leaves the two weighted
masses unchanged, constrained critical points correspond to each other and the multipliers are
multiplied by $\beta^{\frac{2}{\pq-2}}$. Since $s_\beta\star(t\star\u)=t\star(s_\beta\star\u)$,
the same identity gives $\beta^{\frac{2}{\pq-2}}\max_tE_\beta(t\star\u)=\max_tJ_{\var_\beta}(t\star(s_\beta\star\u))$;
taking the infimum over $\u\in S\r$ and using Lemma~\ref{D7} yields \eqref{d422}, and taking
$\u=(u_\beta,v_\beta)$ together with Lemma~\ref{D6}(iii) shows that $t=0$ maximizes
$t\mapsto J_{\var_\beta}(t\star(\hat u_\beta,\hat v_\beta))$.\qed
\end{proof}

\begin{lemma}\label{D307}
There exist $0<c_1\le c_2$ and $\beta_1>\beta_0$ such that, for all $\beta\ge\beta_1$,
\begin{align*}
c_1\le \||x|^{-a}\na \hat u\|_2^2+\||x|^{-a}\na\hat v\|_2^2\le c_2,\quad
c_1\le\b(\hat u_\beta,\hat v_\beta),\\
\||x|^{-b}\hat u\|_\t^\t+\||x|^{-b}\hat v\|_\t^\t\le c_2,\qquad
0<\hat\lambda_{i,\beta}\le c_2 .
\end{align*}
\end{lemma}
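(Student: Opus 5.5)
Throughout, $\hat u,\hat v$ in the statement stand for $\hat u_\beta,\hat v_\beta$. The plan is to work entirely with the rescaled pair $(\hat u_\beta,\hat v_\beta)$ and the perturbed functional $J_{\var_\beta}$ supplied by Lemma~\ref{D306}, and to compare with the limit functional $J=J_0$ and its level $\kappa\r>0$ (Lemma~\ref{D106}(ii)).

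\textbf{Upper bound for the level.} Fix $(u_0,v_0)\in\mathcal Z\r$ (Theorem~\ref{DD4}). Since $J_{\var}\le J$ on $S\r$ for $\var\ge0$, \eqref{d422} gives
\begin{align*}
\beta^{\frac{2}{\pq-2}}m_\beta\r\le\max_{t\in\R}J(t\star(u_0,v_0))=\kappa\r.
\end{align*}
Hence $J_{\var_\beta}(\hat u_\beta,\hat v_\beta)\le\kappa\r$ for every $\beta>\beta_0$.

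\textbf{Two-sided bounds on the gradient and on $\mathcal B_{p,q}$.} Set $T:=\||x|^{-a}\na\hat u_\beta\|_2^2+\||x|^{-a}\na\hat v_\beta\|_2^2$ and $Q:=\||x|^{-b}\hat u_\beta\|_\t^\t+\||x|^{-b}\hat v_\beta\|_\t^\t$. Using $P^{\var_\beta}(\hat u_\beta,\hat v_\beta)=0$, as in Lemma~\ref{D11}(iii) we get
\begin{align*}
J_{\var_\beta}(\hat u_\beta,\hat v_\beta)=J_{\var_\beta}-\tfrac1{\pq}P^{\var_\beta}=\tfrac{\pq-2}{2\pq}\,T+\var_\beta\tfrac{\t-\pq}{\pq\t}\,Q\ge\tfrac{\pq-2}{2\pq}\,T .
\end{align*}
Combined with the upper bound, this yields $T\le c_2$ and also $\var_\beta Q\le C$. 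For the lower bound on $T$, the Pohozaev identity together with \eqref{d5} and the definition of $\mathcal S(a,b)$ gives
\begin{align*}
T\le C\,T^{\pq/2}+\var_\beta C\,T^{\t/2},
\end{align*}
with constants independent of $\beta$ (here $\var_\beta\le1$ for $\beta\ge1$). Since both exponents exceed $1$, this forces $T\ge c_1>0$. Then $P^{\var_\beta}=0$ and $\var_\beta Q\le C\var_\beta T^{\t/2}\to0$ give $\pq\,\mathcal B_{p,q}(\hat u_\beta,\hat v_\beta)=T-\var_\beta Q\ge c_1/2$ for $\beta$ large. The bound $Q\le\mathcal S(a,b)^{-\t/2}T^{\t/2}\le c_2$ follows from the Caffarelli--Kohn--Nirenberg inequality and $T\le c_2$, after enlarging $c_2$.

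\textbf{Multipliers.} Positivity of $\hat\lambda_{i,\beta}$ is already in Lemma~\ref{D306}. Testing the equation for $(\hat u_\beta,\hat v_\beta)$, namely the critical point relation for $J_{\var_\beta}|_{S\r}$, with $(\hat u_\beta,0)$ gives
\begin{align*}
\hat\lambda_{1,\beta}\rho_1^2=-\||x|^{-a}\na\hat u_\beta\|_2^2+p\,\mathcal B_{p,q}+\var_\beta\||x|^{-b}\hat u_\beta\|_\t^\t\le p\,\mathcal B_{p,q}+\var_\beta Q,
\end{align*}
and $\hat\lambda_{2,\beta}$ is handled the same way. These right-hand sides are bounded by \eqref{d5} and the previous step.

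The only delicate point is keeping every constant uniform in $\beta$. This holds because \eqref{d5} and $\mathcal S(a,b)$ involve only the fixed masses $\rho_1,\rho_2$, and $\var_\beta\to0$. Finally, choose $\beta_1>\max\{\beta_0,1\}$ large enough that the smallness used in the lower bound for $\mathcal B_{p,q}$ holds.
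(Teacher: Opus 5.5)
Your proposal is correct and follows essentially the same route as the paper. Like the paper, you bound $\beta^{\frac{2}{\pq-2}}m_\beta\r$ by $\kappa\r$ via \eqref{d422} and $J_{\var}\le J$, take the upper gradient bound from $J_{\var_\beta}-\frac{1}{\pq}P^{\var_\beta}$, derive the lower bounds from the Pohozaev identity together with \eqref{d5} and the Caffarelli--Kohn--Nirenberg inequality, and bound the multipliers by testing the rescaled equation with $(\hat u_\beta,0)$ and $(0,\hat v_\beta)$.

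The differences are cosmetic and both valid: you evaluate at a fixed ground state from Theorem~\ref{DD4} instead of using Lemma~\ref{D105}, and you get your lower bound $T\ge c_1$ from $\var_\beta\le 1$ rather than from $T\le c_2$ and $\var_\beta\to0$.
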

\begin{proof}
 Since $J_{\var_{\beta}}\le J_0$ and $\max_tJ_0(t\star\u)=\widetilde J\u$, \eqref{d422}
and Lemma~\ref{D105} give $\beta^{\frac{2}{\pq-2}}m_\beta\r\le\inf_{S\r}\widetilde J=\kappa\r$.
By \eqref{d420} and $P^{\var_\beta}(\hat u_\beta,\hat v_\beta)=0$,
\begin{align*}
\frac{\pq-2}{2\pq} \||x|^{-a}\na \hat u\|_2^2+\||x|^{-a}\na\hat v\|_2^2+\var\frac{\t-\pq}{\pq\t}\||x|^{-b}\hat u\|_\t^\t+\||x|^{-b}\hat v\|_\t^\t\\
=J_{\var_\beta}(\hat u_\beta,\hat v_\beta)-\frac1\pq P^{\var_\beta}(\hat u_\beta,\hat v_\beta)
=\beta^{\frac{2}{\pq-2}}m_\beta\r\le\kappa\r ,
\end{align*}
and both terms on the left are nonnegative because $2<\pq<\t$. This bounds $ \||x|^{-a}\na \hat u\|_2^2+\||x|^{-a}\na\hat v\|_2^2$, and the
Caffarelli--Kohn--Nirenberg inequality then gives \begin{align*}
    \||x|^{-b}\hat u\|_\t^\t+\||x|^{-b}\hat v\|_\t^\t\le c_2\le\sr^{-\t/2}\left(\||x|^{-a}\na \hat u\|_2^2+\||x|^{-a}\na\hat v\|_2^2\right)^{\t/2}\le c_2.\end{align*}
For the lower bound, $P^{\var_\beta}(\hat u, \hat v)=0$ and \eqref{d5} on $S\r$ give
\begin{align*}
    \||x|^{-a}\na \hat u\|_2^2+\||x|^{-a}\na\hat v\|_2^2&=\pq{\mathcal B}(\hat u, \hat v)+\var_\beta\left( \||x|^{-b}\hat u\|_\t^\t+\||x|^{-b}\hat v\|_\t^\t\right)\\& \le\pq C\left(\||x|^{-a}\na \hat u\|_2^2+\||x|^{-a}\na\hat v\|_2^2\right)^{\pq/2}\\&\quad+\var_{\beta}\sr^{-\t/2}\left(\||x|^{-a}\na \hat u\|_2^2+\||x|^{-a}\na\hat v\|_2^2\right)^{\t/2}\end{align*}
with $C=C_{a,b}^{p+q}(\rho_1^2+\rho_2^2)^{\frac{(p+q)(1-\delta_{p+q})}{2}}$. Dividing by $(\||x|^{-a}\na \hat u\|_2^2+\||x|^{-a}\na\hat v\|_2^2)>0$,
\begin{align*}
1\le\pq C\left(\||x|^{-a}\na \hat u\|_2^2+\||x|^{-a}\na\hat v\|_2^2\right)^{\frac{\pq-2}{2}}\\+\var\sr^{-\t/2}\left(\||x|^{-a}\na \hat u\|_2^2+\||x|^{-a}\na\hat v\|_2^2\right)^{\frac{\t-2}{2}} .
\end{align*}
As $\left(\||x|^{-a}\na \hat u\|_2^2+\||x|^{-a}\na\hat v\|_2^2\right)\le c_2$ and $\var\to0$, the second term tends to $0$, so $$\left(\||x|^{-a}\na \hat u\|_2^2+\||x|^{-a}\na\hat v\|_2^2\right)\ge c_1>0$$ for
$\beta$ large, and then $$\pq\hat{\mathcal B}=\||x|^{-a}\na \hat u\|_2^2+\||x|^{-a}\na\hat v\|_2^2-\var\left( \||x|^{-b}\hat u\|_\t^\t+\||x|^{-b}\hat v\|_\t^\t\right)\ge c_1/2.$$ Finally, testing the
equation for $\hat u_\beta$ with $\hat u_\beta$,
\begin{align*}
\hat\lambda_{1,\beta}\rho_1^2
=p\,\hat{\mathcal B}+\var\||x|^{-b}\hat u_\beta\|_\t^\t-\||x|^{-a}\na\hat u_\beta\|_2^2\le pc_2+c_2 ,
\end{align*}
and $\hat\lambda_{1,\beta}>0$ by Theorem~\ref{DD6}; similarly for $\hat\lambda_{2,\beta}$.\qed
\end{proof}

\begin{lemma}\label{D308}
$\displaystyle\lim_{\beta\to+\infty}\beta^{\frac{2}{\pq-2}}m_\beta\r=\kappa\r$.
\end{lemma}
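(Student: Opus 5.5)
The upper bound is already contained in the proof of Lemma~\ref{D307}: since $J_{\var_\beta}\le J_0$ on $S\r$ and $\max_{t\in\R}J_0(t\star\u)=\widetilde J\u$ whenever $\b\u>0$, identity \eqref{d422} together with Lemma~\ref{D105} gives
\begin{align*}
\limsup_{\beta\to+\infty}\beta^{\frac{2}{\pq-2}}m_\beta\r\le\kappa\r .
\end{align*}
So the task is the matching lower bound $\liminf_{\beta\to+\infty}\beta^{\frac{2}{\pq-2}}m_\beta\r\ge\kappa\r$.

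For the lower bound I would work with the rescaled ground states $(\hat u_\beta,\hat v_\beta)$ of Lemma~\ref{D306}. By that lemma, $\beta^{\frac{2}{\pq-2}}m_\beta\r=J_{\var_\beta}(\hat u_\beta,\hat v_\beta)=\max_{t\in\R}J_{\var_\beta}(t\star(\hat u_\beta,\hat v_\beta))$. The plan is to evaluate this maximum at the specific point $t_\beta:=t^{(\hat u_\beta,\hat v_\beta)}$, which is well defined because $\b(\hat u_\beta,\hat v_\beta)\ge c_1>0$ by Lemma~\ref{D307}. This gives
\begin{align*}
\beta^{\frac{2}{\pq-2}}m_\beta\r\ge J_{\var_\beta}(t_\beta\star(\hat u_\beta,\hat v_\beta))
=\widetilde J(\hat u_\beta,\hat v_\beta)-\frac{\var_\beta e^{\t t_\beta}}{\t}\Big(\||x|^{-b}\hat u_\beta\|_\t^\t+\||x|^{-b}\hat v_\beta\|_\t^\t\Big).
\end{align*}
The first term is at least $\kappa\r$ by Lemma~\ref{D105}. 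The critical norms in the second term are bounded by $c_2$ (Lemma~\ref{D307}).

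The remaining point is a uniform upper bound on $e^{t_\beta}$. From the explicit formula,
\begin{align*}
e^{(\pq-2)t_\beta}=\frac{\||x|^{-a}\na\hat u_\beta\|_2^2+\||x|^{-a}\na\hat v_\beta\|_2^2}{\pq\,\b(\hat u_\beta,\hat v_\beta)}\le\frac{c_2}{\pq\,c_1}
\end{align*}
for $\beta\ge\beta_1$, again by Lemma~\ref{D307}. Hence $e^{\t t_\beta}$ is bounded, and since $\var_\beta\to0$ the correction term is $o(1)$. This yields $\liminf_{\beta\to+\infty}\beta^{\frac{2}{\pq-2}}m_\beta\r\ge\kappa\r$, and together with the upper bound the limit is $\kappa\r$.

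I do not expect a serious obstacle. The only delicate step is ensuring that $\b$ stays bounded away from zero along $(\hat u_\beta,\hat v_\beta)$, so that $t_\beta$ is uniformly controlled and the critical term can be discarded. That is exactly the content of the lower bound $c_1\le\b(\hat u_\beta,\hat v_\beta)$ in Lemma~\ref{D307}, which relies on $P^{\var_\beta}=0$ together with $\var_\beta\to0$.
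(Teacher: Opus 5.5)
Your proposal is correct and follows the paper's own proof. You take the upper bound from the proof of Lemma~\ref{D307}. For the lower bound you evaluate the maximum from Lemma~\ref{D306} at $\hat t_\beta=t^{(\hat u_\beta,\hat v_\beta)}$, then use Lemma~\ref{D105} and the bounds of Lemma~\ref{D307} on $e^{\hat t_\beta}$, so that the critical-term correction is $O(\var_\beta)$.
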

\begin{proof}
The inequality $\limsup\le\kappa\r$ was obtained in the proof of Lemma~\ref{D307}. For the
converse, let $\hat t_\beta:=t^{(\hat u_\beta,\hat v_\beta)}$, so that
$e^{(\pq-2)\hat t_\beta}=\left(\||x|^{-a}\na \hat u\|_2^2+\||x|^{-a}\na\hat v\|_2^2\right)/(\pq\hat{\mathcal B})$. By Lemma~\ref{D307}, $\hat t_\beta$ is
bounded from above. Using Lemma~\ref{D105}, then \eqref{d420}, and then the last assertion of
Lemma~\ref{D306},
\begin{align*}
\\\kappa\r\le\widetilde J(\hat u_\beta,\hat v_\beta)
& =J_{\var_\beta}\big(\hat t_\beta\star(\hat u_\beta,\hat v_\beta)\big)
+\frac{\var_\beta}{\t}e^{\t\hat t_\beta}\left( \||x|^{-b}\hat u\|_\t^\t+\||x|^{-b}\hat v\|_\t^\t\right)\\
& 
\le J_{\var_\beta}(\hat u_\beta,\hat v_\beta)+O(\var_\beta)\\
& 
=\beta^{\frac{2}{\pq-2}}m_\beta\r+O(\var_\beta).
\end{align*}
Letting $\beta\to+\infty$ finishes the proof.\qed
\end{proof}
\noi\textbf{Proof of Theorem~\ref{DD10}} Let $\beta_n\to+\infty$ and write
$(\hat u_n,\hat v_n):=(\hat u_{\beta_n},\hat v_{\beta_n})$, $\hat\lambda_{i,n}$, $\var_n$. By
Lemma~\ref{D307} the sequence is bounded in $\y$, so, up to a subsequence,
$(\hat u_n,\hat v_n)\rightharpoonup(\hat u,\hat v)$ in $\y$, strongly in
$L^{p+q}_r(\R^N;|x|^{-b(p+q)})\times L^{p+q}_r(\R^N;|x|^{-b(p+q)})$ by Proposition~\ref{D303},
a.e.\ in $\R^N$, and $\hat\lambda_{i,n}\to\hat\lambda_i\ge0$. Passing to the limit in the
equations of Lemma~\ref{D306} we find that $(\hat u,\hat v)$ solves \eqref{d60} with multipliers
$(\hat\lambda_1,\hat\lambda_2)$; the critical term disappears because
$\var_n\||x|^{-b}\hat u_n\|_\t^{\t-1}\le C\var_n\to0$. By Lemma~\ref{D307},
$\b(\hat u,\hat v)=\lim\b(\hat u_n,\hat v_n)\ge c_1>0$, so $\hat u\ne0$ and $\hat v\ne0$, and
Lemma~\ref{D2} gives $\hat\lambda_1,\hat\lambda_2>0$ exactly as in the proof of Theorem
\ref{DD4}. The same testing argument as there, the critical terms being $O(\var_n)$, yields
$(\hat u_n,\hat v_n)\to(\hat u,\hat v)$ in $\y$. Consequently $(\hat u,\hat v)\in S\r$,
$P^0(\hat u,\hat v)=\lim P^{\var_n}(\hat u_n,\hat v_n)=0$, so $(\hat u,\hat v)\in\mathcal W\r$,
and $J(\hat u,\hat v)=\lim J_{\var_n}(\hat u_n,\hat v_n)=\kappa\r$ by Lemma~\ref{D308}. Hence
$(\hat u,\hat v)\in\mathcal Z\r$. Since every sequence $\beta_n\to+\infty$ has a subsequence
along which $s_{\beta_n}\star(u_{\beta_n},v_{\beta_n})$ converges in $\y$ to a point of
$\mathcal Z\r$, the distance tends to $0$.\qed

    \renewcommand{\thesection}{A}
	\section{Appendix: A Liouville-type lemma}
	In this section, we establish a Liouville-type result for the weighted Laplacian.
	\begin{lemma}\label{D2}
		Let $N\ge3$, $0\le a<\frac{N-2}{2}$ and $1<p\le\frac{N}{N-2-a}$. Let
		$u\in L^p(\R^N;|x|^{-ap})$ be a non-negative radial weak supersolution of
		\begin{equation*}
			-\operatorname{div}(|x|^{-2a}\nabla u)\ge0\quad\text{in }\R^N .
		\end{equation*}
		Then $u\equiv0$.
	\end{lemma}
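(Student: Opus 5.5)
Since $u$ is radial, I will reduce the inequality to an ODE inequality in $r=|x|$ and exploit monotonicity. Write $u=u(r)$. Integrating $-\operatorname{div}(|x|^{-2a}\nabla u)\ge0$ over balls $B_r$ (rigorously, testing the weak formulation with radial cut-offs approximating $\chi_{B_r}$) gives that the flux $-r^{N-1-2a}u'(r)$ is nonnegative and nondecreasing in $r$. Hence $u'\le0$, so $u$ is nonincreasing. Moreover, if $u\not\equiv0$ we get a quantitative lower bound: the function $w(r):=r^{N-1-2a}(-u'(r))$ is nondecreasing, and if it were identically zero then $u$ would be constant. A nonnegative constant in $L^p(\R^N;|x|^{-ap})$ must vanish, since $\int|x|^{-ap}\dx=\infty$ (the integral diverges at infinity because $ap<N$, and otherwise diverges at the origin). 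So $w(r)\ge c>0$ for all $r\ge r_0$.

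Next I integrate from $r$ to $\infty$. Since $u$ is nonincreasing and nonnegative, $u(\infty)=\lim_{r\to\infty}u(r)\ge0$ exists. Then
\begin{align*}
u(r)\ge u(r)-u(\infty)=\int_r^\infty(-u'(s))\,ds\ge c\int_r^\infty s^{-(N-1-2a)}\,ds=\frac{c}{N-2-2a}\,r^{-(N-2-2a)}
\end{align*}
for $r\ge r_0$. This uses $N-2-2a>0$.

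Finally I plug this lower bound into the weighted norm:
\begin{align*}
\int_{|x|\ge r_0}|x|^{-ap}u^p\dx\ge C\int_{r_0}^\infty r^{N-1-ap-p(N-2-2a)}\,dr .
\end{align*}
This integral diverges exactly when $N-ap-p(N-2-2a)\ge0$, i.e.\ $p\le\frac{N}{N-2-a}$, which is the hypothesis. That contradicts $u\in L^p(\R^N;|x|^{-ap})$, so $u\equiv0$.

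The main obstacle is justifying the monotone-flux step for a merely weak supersolution. Near the origin the weight $|x|^{-2a}$ is singular and $u$ may be unbounded, so I will work on annuli: test with radial Lipschitz functions $\varphi_\eta$ that equal $1$ on $B_{r_1}$, vanish outside $B_{r_2}$, and are linear in between. This yields $\int_{r_1<|x|<r_2}|x|^{-2a}\nabla u\cdot\nabla\varphi_\eta\ge0$, i.e.\ the averaged flux condition, and then a Lebesgue-point argument gives monotonicity of $w$ almost everywhere. Absolute continuity of $u$ on $(0,\infty)$ follows because radial functions in $\mathcal D^{1,2}_a$ are locally $H^1$ away from the origin. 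Sign information at small radii is not needed, since only $r\ge r_0$ is used.
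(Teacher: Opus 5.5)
Your proof is correct and is essentially the paper's argument: monotonicity of the flux $r^{N-1-2a}u'(r)$, the tail bound $u(r)\gtrsim r^{2a+2-N}$, and divergence of $\int_{r_0}^\infty r^{N-1-p(N-2-a)}\,dr$ exactly when $p\le\frac{N}{N-2-a}$. The only difference is that you get $u'\le 0$ from test functions equal to $1$ near the origin (valid, since the inequality is posed on all of $\R^N$), whereas the paper tests only in $\R^N\setminus\{0\}$ and instead rules out an increasing branch via $\lim_{r\to\infty}u(r)=0$; note also that your cut-offs $\varphi_\eta$ by themselves give only $w\ge 0$, and monotonicity of $w$ comes from testing with nonnegative differences of two of them, i.e.\ trapezoids supported in an annulus.
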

	\begin{proof}
		Write $u=u(r)$. Since $u$ is a weak supersolution, $\int_0^\infty u'\varphi'\,
			r^{N-1-2a}\,dr\ge0$ for every non-negative radial $\varphi\in C_0^\infty(\R^N\setminus\{0\})$,
			so the function $h(r):=r^{N-1-2a}u'(r)$ is nonincreasing on $(0,\infty)$. Hence $u'$ changes
			sign at most once, and $\ell:=\lim_{r\to\infty}u(r)$ exists.
		
		If $\ell>0$, then $u\ge\ell/2$ for $r$ large, and since $p\le\frac{N}{N-2-a}$ gives
			$ap\le\frac{aN}{N-2-a}<N$,
		\begin{align*}
			\int_{\R^N}\frac{|u|^p}{|x|^{ap}}\dx\ \ge\ C\int_R^\infty r^{N-1-ap}\,dr=+\infty,
		\end{align*}
		contradicting $u\in L^p(\R^N;|x|^{-ap})$. Hence $\ell=0$.
		
		Suppose $u\not\equiv0$. If $h\equiv0$ on some $[r_1,\infty)$, then $u\equiv\ell=0$ there;
			as $h$ is nonincreasing, $h\ge0$ on $(0,r_1]$, so $u$ is nondecreasing there and $u(r_1)=0$
			forces $u\equiv0$, a contradiction. So there is $r_1>0$ with $h(r_1)=:-c<0$, and then
			$u'(r)\le-c\,r^{2a+1-N}$ for all $r\ge r_1$. Integrating from $r$ to $R$, letting
			$R\to\infty$ and using $u(R)\to0$ and $2a+2-N<0$,
		\begin{align*}
			u(r)\ \ge\ \frac{c}{N-2-2a}\,r^{2a+2-N}\qquad\text{for }r\ge r_1 .
		\end{align*}
		Consequently
		\begin{align*}
			\int_{\R^N}\frac{|u|^p}{|x|^{ap}}\dx\ \ge\ C\int_{r_1}^\infty r^{p(2-N+a)+N-1}\,dr,
		\end{align*}
		and this integral diverges precisely when $p(2-N+a)+N-1\ge-1$, that is, when
			$p\le\frac{N}{N-2-a}$. This contradicts $u\in L^p(\R^N;|x|^{-ap})$, so $u\equiv0$.\qed
	\end{proof}	

	\bibliographystyle{siam}
	\bibliography{ref}
\end{document}